\documentclass[10pt]{amsart}

\usepackage[utf8]{inputenc}
\usepackage[english]{babel}
\usepackage[letterpaper,margin=0.9in]{geometry}
\usepackage{microtype}
\usepackage{graphicx}
\usepackage[inline,shortlabels]{enumitem}
\usepackage{comment}
\usepackage{csquotes}

\usepackage{amsmath,amssymb,amsfonts,amsthm,amscd}
\usepackage{mathtools}
\usepackage{mathrsfs}
\usepackage{bm}

\usepackage{tikz}
\usepackage{tikz-cd}
\usetikzlibrary{shapes,backgrounds,matrix,positioning,arrows.meta}

\usepackage[citation-order]{amsrefs}
\usepackage[hidelinks]{hyperref}

\numberwithin{equation}{section}

\theoremstyle{plain}
\newtheorem{theorem}[equation]{Theorem}
\newtheorem{cor}[equation]{Corollary}
\newtheorem{corollary}[equation]{Corollary}
\newtheorem{lemma}[equation]{Lemma}
\newtheorem{proposition}[equation]{Proposition}
\newtheorem{hypothesis}[equation]{Hypothesis}

\theoremstyle{definition}
\newtheorem{definition}[equation]{Definition}

\theoremstyle{remark}
\newtheorem{remark}[equation]{Remark}

\newcommand{\cH}{\mathcal{H}}
\newcommand{\cO}{\mathcal{O}}
\newcommand{\cW}{\mathcal{W}}
\newcommand{\cG}{\mathcal{G}}
\newcommand{\cE}{\mathcal{E}}

\newcommand{\F}{\mathcal{F}}

\newcommand{\R}{\mathbb{R}}
\newcommand{\RR}{\mathbb{R}}
\newcommand{\ZZ}{\mathbb{Z}}
\newcommand{\QQ}{\mathbb{Q}}

\DeclareMathOperator{\codim}{codim}
\DeclareMathOperator{\rank}{rank}
\DeclareMathOperator{\pr}{pr}
\DeclareMathOperator{\id}{id}
\DeclareMathOperator{\Lie}{Lie}
\DeclareMathOperator{\Aut}{Aut}
\DeclareMathOperator{\Hom}{Hom}
\DeclareMathOperator{\bas}{bas}
\DeclareMathOperator{\ev}{ev}

\renewcommand{\subset}{\subseteq}

\title{A Generalization of Molino's Theory to Riemannian Groupoids}
\author{Lily Zhang}
\address{Department of Mathematics, Texas A\&M University, College Station, TX 77843, USA}
\email{lilyzhang@tamu.edu}
\subjclass[2020]{Primary 58H05; Secondary 53C12, 22A22}
\keywords{Riemannian groupoid, Molino's theory, basic Lie algebroid, representation up to homotopy, Morita equivalence}
\date{}

\begin{document}

\begin{abstract}

Riemannian groupoids describe Riemannian foliations together with their symmetries. In this article, we extend classical Molino's theory, which concerns the structure of Riemannian foliations on compact manifolds, to the setting of regular Riemannian groupoids with compact connected object manifolds.

We observe that the orbit foliation associated with a regular Riemannian groupoid defines a Riemannian foliation on the object manifold. The main result shows that the normal representation of a Riemannian groupoid extends naturally to an action on Molino's structures of the orbit foliation, thereby yielding the fundamental structural description of a regular Riemannian groupoid.

In addition to the main result, we clarify the essential role played by basic Lie algebroids in Molino's theory and identify the normal representation with the degree-one cohomology representation of the adjoint representation up to homotopy. We also establish comparison results under Riemannian Morita equivalence. For regular Riemannian groupoids, we give sufficient conditions for the associated basic Lie algebroids to become isomorphic after pullback to a common refinement.
\end{abstract}

\maketitle
\tableofcontents

\section{Introduction}\label{sec:introduction}

Within the theory of foliations, Reinhart's 1959 introduction of bundle-like metrics \cite{Reinhart:BundleLikeMetrics} singled out Riemannian foliations as those whose leaves remain locally at constant distance from one another. Reinhart also established two rigidity phenomena: every geodesic initially perpendicular to a leaf remains perpendicular to every leaf it meets, and, when the manifold is connected and the bundle-like metric is complete, all leaves have isomorphic universal covering spaces \cite{Molino:RF}*{Introduction}. The Riemannian condition does not, however, force the leaves to be closed.

Molino's work gave a global description of leaf closures in terms of transverse geometry. Passing to the transverse orthonormal frame bundle produces a lifted foliation for which the transverse canonical form and transverse Levi-Civita connection determine a transverse parallelism. For compact manifolds, this reduces the study to transversely parallelizable foliations and then to Lie foliations. On the original manifold, the leaf closures are the orbits, transverse to the leaves, of a locally constant sheaf of germs of transverse Killing fields \cite{Molino:RF}*{Introduction, Thm.~5.2}.

Molino's theory has subsequently been developed in several directions. These include Liu's work on two Riemannian foliations on the same manifold, one contained in the other \cite{Liu:MolinoTwoFoliations}; the topological theory of \'Alvarez L\'opez and Moreira Galicia \cite{AlvarezLopezMoreiraGalicia:TopologicalMolino}; and the theory of equicontinuous matchbox manifolds of Dyer, Hurder, and Lukina \cite{DyerHurderLukina:MolinoMatchbox}. Lin and Miyamoto \cite{LinMiyamoto:RiemannianFoliationsQuasifolds} consider complete Killing foliations on connected manifolds with a complete transverse action of the structure algebra. They obtain diffeological quasifold leaf spaces and local Morita descriptions of the holonomy groupoids. Groupoid descriptions of leaf closures and the transverse frame construction appear in \cite{GorokhovskyLott:TransverseIndex}.

In this article, we start from a given regular Lie groupoid with a compact connected object manifold, equipped with a $0$-metric. The connected components of its orbits form the orbit foliation \cite{Moerdijk:RegularGroupoids}, but this foliation alone does not determine the groupoid's isotropy or normal representation. We show that the normal representation lifts to the transverse orthonormal frame bundle and that the induced maps of local bisections preserve the lifted foliation and its transverse canonical and connection forms. On the domains where these maps descend to the leaf-closure base, they also transport the structural Lie algebras and fibrewise Maurer--Cartan forms.

The metric framework is the Riemannian geometry of Lie groupoids developed by del Hoyo and Fernandes \cite{MdHF-LieGpdMetrics}. Their notion of a $2$-metric encodes compatibility with groupoid multiplication and induces a $0$-metric on the object manifold. In the regular case, the orbit foliation is Riemannian for any $0$-metric, so Molino's theory applies without assuming a $2$-metric.

\subsection{Main results}

The main result of this article is the following structure theorem.

\begin{theorem}[{Theorem~\ref{thm:molino-groupoid}}]\label{thm:intro-main}
Let $\cG\rightrightarrows M$ be a regular Lie groupoid equipped with a $0$-metric, and assume that $M$ is compact and connected. Let $\mathcal O$ be its orbit foliation, with normal bundle $N:=TM/T\mathcal O$ and normal representation $\lambda^N$, and put $q:=\rank N$. Let $A_{\cG}:=\Lie(\cG)$, with anchor $\rho_{\cG}$. For the induced metric on $N$, let $\pi:OF(M,\mathcal O)\to M$ be the transverse orthonormal frame bundle, with lifted foliation $\widetilde{\mathcal O}$.
\begin{enumerate}[label=\textup{(\arabic*)}]
\item The orbit foliation $(M,\mathcal O)$ is a Riemannian foliation, and $(OF(M,\mathcal O),\widetilde{\mathcal O})$ is transversely parallelizable, with transverse canonical form $\theta$ and transverse Levi-Civita connection form $\omega$.

\item The closures of the leaves of $\widetilde{\mathcal O}$ are the fibres of an $O(q)$-equivariant fibre bundle $\kappa:OF(M,\mathcal O)\to B$. On each fibre, $\widetilde{\mathcal O}$ restricts to a Lie foliation with dense holonomy group. The basic Lie algebroid $A\to B$, with anchor $\rho_A$, has isotropy Lie algebras identified by restriction with the structural Lie algebras of these Lie foliations. Under these identifications, the fibrewise Maurer--Cartan forms assemble into a unique smooth section
\[
\omega^\kappa_{\mathrm{MC}}
\in\Gamma\bigl((\ker d\kappa)^*\otimes\kappa^*(\ker\rho_A)\bigr).
\]

\item The normal representation $\lambda^N$ lifts to a smooth
action of $\cG$ on $OF(M,\mathcal O)$,
$g\cdot e:=\lambda^N_g\circ e$, commuting with the right
$O(q)$-action and leaving the transverse canonical form
invariant. For every local bisection $\sigma:U\to\cG$,
the induced map on $\pi^{-1}(U)$ preserves
$\widetilde{\mathcal O}$, $\theta$, and $\omega$.
Its restriction to the union of the $\kappa$-fibres
contained in $\pi^{-1}(U)$ maps fibres onto fibres
and descends to a local $O(q)$-equivariant diffeomorphism
of $B$. The corresponding fibre maps transport the
structural Lie algebras and $\omega^\kappa_{\mathrm{MC}}$
equivariantly.
\end{enumerate}
\end{theorem}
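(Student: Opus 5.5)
The argument runs in three stages matching (1)–(3). Most of the work is in transporting Molino's classical constructions along local bisections.

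For (1), I first show the orbit foliation is Riemannian. Regularity makes the orbits (connected components) the leaves of a regular foliation $\mathcal O$ with $T\mathcal O=\rho_{\cG}(A_{\cG})$. The normal representation $\lambda^N$ of $\cG$ on $N$ restricts, on arrows of the holonomy/monodromy part, to linear holonomy of $\mathcal O$. The $0$-metric condition (transverse isometry of the normal representation, as in del Hoyo–Fernandes) then says the induced metric on $N$ is invariant under linear holonomy, equivalently $\mathcal L_X g^N=0$ for $X\in\Gamma(T\mathcal O)$. This is the definition of a bundle-like (transverse) metric. Once $\mathcal O$ is Riemannian, transverse parallelizability of $(OF(M,\mathcal O),\widetilde{\mathcal O})$ is Molino's classical theorem. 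The horizontal lift via the Bott connection defines $\widetilde{\mathcal O}$, and $\theta$ together with the transverse Levi-Civita form $\omega$ give $q+q(q-1)/2$ basic forms that trivialize the normal bundle of $\widetilde{\mathcal O}$.

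For (2), compactness of $M$ gives compactness of $OF(M,\mathcal O)$. I invoke Molino's structure theorem for transversely parallelizable foliations on compact manifolds. The basic fibration $\kappa$ exists with fibres equal to the leaf closures, each fibre carries a Lie $\mathfrak g$-foliation with dense leaves, and $O(q)$-equivariance follows because $O(q)$ permutes leaves of $\widetilde{\mathcal O}$. For the algebroid statement I use the basic Lie algebroid $A\to B$ as constructed earlier: it is the quotient of $\widetilde{\mathcal O}$-basic transverse fields, with kernel of the anchor given by the commuting-sheaf fields tangent to the closures. Identifying $(\ker\rho_A)_b$ with the structural Lie algebra of $\kappa^{-1}(b)$ is done by restriction. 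Smoothness and uniqueness of $\omega^\kappa_{\mathrm{MC}}$ then follow from the local triviality of the transverse parallelism, since the Maurer–Cartan form on a fibre is the unique $\mathfrak g$-valued form inverting the transverse parallelism restricted to the closure.

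For (3), I define $g\cdot e=\lambda^N_g\circ e$ for a frame $e:\RR^q\to N_{s(g)}$. This lands in orthonormal frames because $\lambda^N_g$ is an isometry (the $0$-metric property again). It commutes with the right $O(q)$-action by associativity of composition, and smoothness follows from smoothness of $\lambda^N$. Invariance of $\theta$ is the tautological computation $\theta(d(g\cdot)v)=(\lambda_g e)^{-1}\lambda_g\,d\pi(v)=e^{-1}d\pi(v)$. For a local bisection $\sigma$, the induced diffeomorphism $\Phi_\sigma$ of $\pi^{-1}(U)$ covers $t\circ\sigma$. Because $t\circ\sigma$ maps leaves of $\mathcal O$ into leaves and $d(t\circ\sigma)$ induces $\lambda^N_\sigma$ on $N$, $\Phi_\sigma$ is the natural lift of a local transverse isometry. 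It therefore preserves $\widetilde{\mathcal O}$, and by naturality of the transverse Levi-Civita connection it preserves $\omega$. Since $\Phi_\sigma$ preserves the foliation and is a homeomorphism onto its image, it maps leaf closures contained in $\pi^{-1}(U)$ onto leaf closures, so it descends to an $O(q)$-equivariant local diffeomorphism of $B$. It also preserves the transverse parallelism $(\theta,\omega)$, so it intertwines the commuting sheaves. Hence it transports the structural Lie algebras, and naturality of the Maurer–Cartan form yields equivariance of $\omega^\kappa_{\mathrm{MC}}$.

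The main obstacle I expect is the claim in (3) that $\Phi_\sigma$ maps closures to closures when $U$ is only a local domain, since a leaf closure is a global object. I would handle it by restricting to the saturated union of $\kappa$-fibres inside $\pi^{-1}(U)$, as the statement does. On that set I use that the image $\Phi_\sigma(\kappa^{-1}(b))$ is a compact connected invariant set of $\widetilde{\mathcal O}$ containing a dense leaf, which forces it to be a single closure.
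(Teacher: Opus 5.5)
Your outline follows the paper's proof. For (1) you derive that $\mathcal O$ is Riemannian from the $0$-metric; the paper makes your ``linear holonomy'' remark precise by writing a leafwise field as $X=\rho_{\cG}(a)$ and using local bisections $\sigma_\tau$ given by the flow of $a^r$, so that $t\circ\sigma_\tau=\varphi_\tau$ and $(d\varphi_\tau)^N_x=\lambda^N_{\sigma_\tau(x)}$ is an isometry. For (2) you use Molino's theorem, and for (3) the identity $f_\sigma=OF(t\circ\sigma)$, which reduces invariance of $\widetilde{\mathcal O},\theta,\omega$ to naturality under foliated transverse isometries.

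However, the step you flag as the main obstacle is not closed by what you wrote. Calling $f_\sigma(\kappa^{-1}(b))$ an \emph{invariant} set ``containing a dense leaf'' assumes the point at issue. A priori $f_\sigma$ sends a leaf $L\subset\kappa^{-1}(b)$ only onto a leaf of $\widetilde{\mathcal O}|_{\pi^{-1}(V)}$. That is one connected component of $L''\cap\pi^{-1}(V)$, where $L''$ is the global leaf through $f_\sigma(e)$.

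The missing argument (Lemma~\ref{lem:descend-to-B}) is an open--closed argument in $L''$:
\begin{itemize}
\item $f_\sigma(L)$ is open in $L''$.
\item Its closure in $OF(M,\mathcal O)$ is $f_\sigma(\kappa^{-1}(b))$, which is compact and lies in $\pi^{-1}(V)$.
\item So its closure in the leaf topology of $L''$ stays inside $L''\cap\pi^{-1}(V)$, where $f_\sigma(L)$ is closed, being a component.
\end{itemize}
Hence $f_\sigma(L)=L''$, and $f_\sigma(\kappa^{-1}(b))=\overline{L''}$ is a $\kappa$-fibre. You also need properness of $\kappa$ to see that the set of $b$ with $\kappa^{-1}(b)\subset\pi^{-1}(U)$ is open, and local sections of $\kappa$ to get smoothness of $\bar f_\sigma$.

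Second, $(\ker\rho_A)_b$ does not consist of commuting-sheaf fields. It consists of classes of \emph{global} transverse fields on $OF(M,\mathcal O)$ whose projection to $B$ vanishes at $b$, and $\operatorname{res}_b$ sends them to $l(L_b,\widetilde{\mathcal O}|_{L_b})$. Commuting-sheaf sections are local fields commuting with these, which is a different object. So the transport in (3) should not be routed through the commuting sheaf, since that does not directly give maps compatible with $\operatorname{res}_b$.

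Instead, once $f_{\sigma,b}:=f_\sigma|_{L_b}:L_b\to L_{b'}$ is known to be a foliated diffeomorphism of full closures:
\begin{itemize}
\item Push forward projectable fields to get $(f_{\sigma,b})_*:l(L_b,\widetilde{\mathcal O}|_{L_b})\to l(L_{b'},\widetilde{\mathcal O}|_{L_{b'}})$.
\item Put $\alpha_{\sigma,b}=\operatorname{res}_{b'}^{-1}\circ(f_{\sigma,b})_*\circ\operatorname{res}_b$.
\item Deduce equivariance of $\omega^\kappa_{\mathrm{MC}}$ from the naturality $f_{\sigma,b}^*\omega^{b'}_{\mathrm{MC}}=(f_{\sigma,b})_*\circ\omega^b_{\mathrm{MC}}$ together with $\operatorname{res}_b\circ\omega^\kappa_{\mathrm{MC}}|_{L_b}=\omega^b_{\mathrm{MC}}$.
\end{itemize}
Preservation of $(\theta,\omega)$ plays no role in this step.

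Finally, your computation for $\theta$ should be done on $\cH_1=\cG\times_M OF(M,\mathcal O)$, with tangent vectors $(Y,\xi)$ satisfying $ds(Y)=d\pi(\xi)$. For non-\'etale $\cG$ a single arrow does not act on a neighbourhood of $e$. The identity you actually need is $[dt(Y)]=\lambda^N_g[ds(Y)]$.
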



The proof combines the del Hoyo--Fernandes metric theory with Molino's theory. The normal representation acts by isometries, so it lifts to orthonormal frames. The local diffeomorphisms of $M$ induced by bisections are transverse isometries of the orbit foliation; their frame lifts preserve $\widetilde{\mathcal O}$, $\theta$, and $\omega$ (Proposition~\ref{prop:molino-data-invariant}). Lemma~\ref{lem:descend-to-B} gives the stated descent, while Lemma~\ref{lem:alpha-sigma} and Proposition~\ref{prop:vertical-MC} give the equivariance of the structural Lie algebras and Maurer--Cartan forms.

Proposition~\ref{prop:normal-global} constructs the normal representation directly from $ds$ and $dt$. Let $A_{\cG}:=\Lie(\cG)$, with anchor $\rho_{\cG}$. The lifted action defines the action groupoid $\cH=\cG\ltimes OF(M,\mathcal O)$, whose Lie algebroid is canonically the action Lie algebroid on $\pi^*A_{\cG}$. When the arrow manifold of $\cG$ admits a Riemannian metric, Proposition~\ref{prop:normal-adjoint-cohomology} identifies $\lambda^N$ with the ordinary representation on $H^1(\operatorname{Ad}(\cG))=N$ induced by the adjoint representation up to homotopy of Arias Abad and Crainic \cite{AriasAbadCrainic:RepUpToHomotopyGroupoids}.

Two Lie algebroids thus appear with different roles. The anchor of $A_{\cG}\to M$ recovers the orbit distribution, and its isotropy Lie algebras are those of the original groupoid (Proposition~\ref{prop:LieG-recovers-orbit}). The basic Lie algebroid $A\to B$ instead comes from the lifted transversely parallelizable foliation \cite{MoerdijkMrcun:IFLG}*{Sec.~6.4}. Its isotropy identifies with the structural Lie algebras on the leaf-closure fibres (Lemma~\ref{lem:isotropy-structural-clean}), describing directions tangent to a lifted leaf closure and transverse to $\widetilde{\mathcal O}$. Under the evaluation identification, $\omega^\kappa_{\mathrm{MC}}$ is the restriction to $\ker d\kappa$ of the associated Lie algebroid-valued Maurer--Cartan form \cite{MoerdijkMrcun:DevelopabilitySubalgebroids}.

The Kronecker flow groupoid $\RR\ltimes\mathbb T^2$, associated with an irrational linear flow, illustrates this distinction. It is regular and nonproper, but admits a $2$-metric: the author's thesis \cite{Zhang:MolinoThesis} adapts the gauge construction of \cite{MdHF-LieGpdMetrics}*{Prop.~4.2.4} to construct such a metric. The lifted leaf closures are the two components of the transverse orthonormal frame bundle, and
\[
\ker\rho_{\cG}=0,
\qquad
\ker\rho_A=A\cong B\times\RR.
\]

Finally, we establish comparison results under Riemannian Morita equivalence. For a proper effective \'etale Riemannian groupoid $\cE\rightrightarrows T$, with $q:=\dim T$, the lifted orbit foliation has point leaves, so its leaf-closure quotient is $OF(T)$. Here we consider the quotient manifold $B_{\cE}:=OF(T)/\widehat{\cE}$ by the full lifted frame groupoid. A Riemannian Morita equivalence bibundle $P$ between two such groupoids $\cE$ and $\cE'$ induces, through $OF(P)$, an $O(q)$-equivariant diffeomorphism $\Phi_P:B_{\cE}\to B_{\cE'}$ (Proposition~\ref{prop:OF-P-Morita-bibundle}). We also construct a $2$-metric on $B_{\cE}\rtimes O(q)$ for which $OF(T)$ is a Riemannian Morita equivalence bibundle (Corollary~\ref{cor:etale-compatible-2metric}).

In the regular case, under the metric and leaf-closure hypotheses of Proposition~\ref{prop:regular-morita-invariance}, the leaf-closure bases admit a common refinement by surjective local diffeomorphisms, and the basic Lie algebroids become isomorphic after pullback to this refinement (Lemma~\ref{lem:morita-basic-algebroid-comparison}). The induced frame maps identify the transverse canonical and connection forms by pullback. In particular, a Hausdorff Riemannian Morita equivalence bibundle between regular groupoids with connected source fibres, one of whose object manifolds is compact and connected, induces an $O(q)$-equivariant diffeomorphism of the leaf-closure bases. The corresponding basic Lie algebroid isomorphisms are compatible with the structural Lie algebras and fibrewise Maurer--Cartan forms (Corollary~\ref{cor:morita-bibundle-source-connected}).

\subsection{Organization of the article}

We divide the article into two parts. After the preliminaries in Section~\ref{ch:preliminaries}, we begin the \textit{first part} by developing the normal representation in Section~\ref{ch:regular}. In Section~\ref{sec:molino-frame-lifted-action}, we construct the lifted action on the transverse orthonormal frame bundle. In Section~\ref{sec:basic-algebroid-molino-theorem}, we develop the basic Lie algebroid, the fibrewise structural Lie algebras, and the global Maurer--Cartan section $\omega^\kappa_{\mathrm{MC}}$. We then return to the original groupoid in Section~\ref{sec:groupoid-lie-algebroid}, studying $A_{\cG}=\Lie(\cG)$ and its relation to the orbit foliation. We also interpret $\lambda^N$ through the adjoint representation up to homotopy and identify the Lie algebroid of the lifted action groupoid. These constructions lead to Molino's structure theorem for regular Riemannian groupoids, stated and proved in Section~\ref{subsec:regular-groupoid-molino-theorem}.

In the \textit{second part} (Section~\ref{sec:riemannian-morita-invariance}), we study Molino's structures under Riemannian Morita equivalence. We first treat proper effective \'etale groupoids by lifting Riemannian Morita equivalence bibundles to orthonormal frame bundles. We then establish the comparison results in the regular case under the stated hypotheses.


\subsection*{Acknowledgements}
The author would like to thank her advisor, Xiang Tang, for proposing this topic and for his helpful discussions and encouragement. She is grateful for the time and effort he devoted to their weekly meetings and to commenting on and reviewing this article.

The author also thanks the organizers of Poisson 2026 for the opportunity to present this work. She is grateful for the constructive feedback from Rui Loja Fernandes, Sven Holtrop, David Miyamoto, and many other friends she met at the conference.

\section{Preliminaries}\label{ch:preliminaries}

Throughout, we use the terminology of Lee \cite{Lee:ISM} for smooth manifolds and of Lee \cite{Lee:IRM} for Riemannian manifolds. For foliations and Lie groupoids, we follow Moerdijk--Mr\v cun \cite{MoerdijkMrcun:IFLG}; for regular Lie groupoids and their orbit foliations, we follow Moerdijk \cite{Moerdijk:RegularGroupoids}. The Riemannian groupoid background comes from del Hoyo--Fernandes \cite{MdHF-LieGpdMetrics}.

\subsection{Foliations and transverse geometry}

\subsubsection{Regular foliations and basic forms}

\begin{definition}[{\cite{MoerdijkMrcun:IFLG}*{Secs.~1.1--1.2}}]
A \emph{(regular) foliation} $\F$ of dimension $p$ on a manifold $M$ is an involutive rank-$p$ subbundle $T(\F)\subset TM$. Its maximal connected immersed integral manifolds are the \emph{leaves} of $\F$. The codimension of $\F$ is $q:=\dim(M)-p$, and its normal bundle is $N(\F):=TM/T(\F)$.
\end{definition}

We write $L_x$ for the leaf through $x\in M$. A subset $U\subset M$ is \emph{saturated} if it is a union of leaves. In a foliated chart $U\cong \RR^p\times\RR^q$, the leaves are locally the plaques $\RR^p\times\{y\}$ \cite{MoerdijkMrcun:IFLG}*{Sec.~1.1}.

\begin{definition}[{\cite{Molino:RF}*{Sec.~2.3}}]
A differential form $\omega\in\Omega^k(M)$ is \emph{basic} with respect to $\F$ if $\iota_X\omega=0$ and $\iota_X(d\omega)=0$ for every vector field $X\in\Gamma(T\F)$.
\end{definition}

Basic forms will reappear later in the descent arguments for transverse forms.

\subsubsection{Riemannian foliations and Molino's structure theorem}

\begin{definition}[{\cite{MoerdijkMrcun:IFLG}*{Sec.~2.2}}]
A Riemannian metric $g$ on $M$ is \emph{bundle-like} for a foliation $\F$ if the induced metric on the normal bundle $N(\F)$ is invariant under holonomy. A foliation is called \emph{Riemannian} if it admits a bundle-like metric.
\end{definition}

The term \emph{bundle-like metric} goes back to Reinhart's 1959 paper \cite{Reinhart:BundleLikeMetrics}.

For a Riemannian foliation $(\F,g)$ of codimension $q$ on $M$, the transverse orthonormal frame bundle \[\pi:OF(M,\F)\longrightarrow M\] carries a lifted foliation $\widetilde{\F}$. The transverse canonical form and the transverse Levi-Civita connection form on $OF(M,\F)$ furnish a transverse parallelism for $(OF(M,\F),\widetilde{\F})$ \cite{MoerdijkMrcun:IFLG}*{Ex.~4.19 and Thm.~4.20}.

\begin{theorem}[Molino, {\cite{MoerdijkMrcun:IFLG}*{Thm.~4.26}}]\label{thm:mol}
Let $(\F,g)$ be a Riemannian foliation of a compact connected manifold $M$, and let $\tilde \F$ be the associated lifted foliation of the transverse orthonormal frame bundle $OF(M,\F)$.
\begin{enumerate}
    \item The foliated manifold $(OF(M,\F),\tilde \F)$ is transversely parallelizable.
    \item There exists a manifold $B$ with an $O(q)$-action and an $O(q)$-equivariant fibre bundle $\kappa:OF(M,\F)\to B$ such that the fibres of $\kappa$ are exactly the closures of the leaves of $\tilde \F$.
    \item The Lie algebra $\mathfrak g$ of transverse vector fields of $\tilde \F|_{\kappa^{-1}(b)}$ is independent, up to isomorphism, of $b\in B$. The foliation $\tilde \F|_{\kappa^{-1}(b)}$ is a Lie foliation given by a canonical $\mathfrak g$-valued Maurer--Cartan form with a dense holonomy group.
\end{enumerate}
\end{theorem}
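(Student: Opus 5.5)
We would follow the classical route in Molino's book, with Moerdijk--Mr\v cun's presentation as the template. The argument reduces the Riemannian case to the transversely parallelizable one, and that case to Lie foliations with dense leaves.

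\textbf{Step 1: transverse parallelism.} Let $g$ be bundle-like, and give the normal bundle $N(\F)$ its holonomy-invariant metric.
\begin{itemize}
\item $OF(M,\F)$ is a principal $O(q)$-bundle over $M$.
\item The transverse Levi-Civita (Bott-compatible) connection defines a horizontal distribution. This distribution contains the lifts of $T\F$, and the lifted tangent vectors span an involutive rank-$p$ subbundle, which is $\tilde\F$.
\item The transverse canonical form $\theta$ (with values in $\RR^q$) and the connection form $\omega$ (with values in $\mathfrak{o}(q)$) both vanish on $T\tilde\F$.
\item Holonomy invariance of the transverse metric makes them $\tilde\F$-basic.
\end{itemize}
Consider the vector fields dual to $(\theta,\omega)$: the standard horizontal fields together with the fundamental vertical fields. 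Taken modulo $T\tilde\F$, they form a global framing of $N(\tilde\F)$ by foliated vector fields, which proves (1). Since $M$ and $O(q)$ are compact, $OF(M,\F)$ is compact. Hence lifts of these transverse fields to genuine vector fields are complete.

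\textbf{Step 2: leaf closures form a fibration.} For a transversely parallelizable foliation $(P,\tilde\F)$ on a compact manifold we proceed as follows.
\begin{enumerate}
\item The flows of the complete parallelizing fields permute the leaves. Their compositions act transitively on the local leaf space, since the framing spans $N(\tilde\F)$ everywhere.
\item These flows map leaf closures to leaf closures. Hence all leaf closures are diffeomorphic to one another by diffeomorphisms preserving $\tilde\F$.
\item Next we show that one leaf closure $\bar L$ is an embedded submanifold. We consider the sheaf of germs of local transverse fields commuting with the parallelism, following Molino's commuting sheaf. Its values form a finite-dimensional Lie algebra, because a transverse Killing-type field is determined by its one-jet, exactly as in Myers--Steenrod. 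We then show that $\bar L$ is locally the orbit of the leaf under the local flows of this sheaf. This gives a submanifold whose dimension is constant by the homogeneity in (2).
\item The resulting partition is into compact submanifolds of constant dimension, and it is locally trivial by the local product structure supplied by the flows. This yields a smooth quotient $B$ and a locally trivial fibre bundle $\kappa:P\to B$ via Ehresmann's theorem.
\item The right $O(q)$-action preserves $\tilde\F$, so it permutes leaf closures. It therefore descends to $B$, making $\kappa$ equivariant; this proves (2).
\end{enumerate}

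\textbf{Step 3: Lie foliation on each fibre.} On a fibre $F=\kappa^{-1}(b)$, the restricted foliation is transversely parallelizable with dense leaves. A foliated transverse field on $F$ has a coefficient function relative to the restricted parallelism, and this function is basic; density of the leaves makes it constant. Consequently the Lie algebra $\mathfrak g$ of transverse vector fields of $\tilde\F|_F$ has dimension equal to the codimension in $F$. The coframe dual to a basis of $\mathfrak g$ is then a $\mathfrak g$-valued Maurer--Cartan form defining $\tilde\F|_F$ as a Lie foliation. Its holonomy group is dense because the leaves are dense (Fedida's developing map). Finally, the diffeomorphisms from Step 2 move any fibre to any other while preserving $\tilde\F$. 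They therefore conjugate the transverse Lie algebras, giving independence of $b$ up to isomorphism; this proves (3).

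\textbf{Main obstacle.} The crux is item (3) of Step 2: proving that leaf closures are embedded submanifolds of constant dimension that fit together into a locally trivial fibration. This is where compactness and the finite dimensionality of the commuting sheaf are essential. We would first try Molino's commuting-sheaf argument, falling back on Salem's closure of the holonomy pseudogroup if needed.
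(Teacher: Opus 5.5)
Your Steps~1 and~3 largely follow the route the paper relies on. The paper quotes the theorem from \cite{MoerdijkMrcun:IFLG}*{Thm.~4.26} and re-derives its ingredients in Lemma~\ref{lem:theta-omega-parallelism}, Proposition~\ref{prop:basic-Lie-algebroid}, Lemma~\ref{lem:isotropy-structural-clean} and Corollary~\ref{cor:B-W-identification}. Step~2(3), however, fails as stated.

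\textbf{The sheaf in Step~2(3) is the wrong one.} The sheaf of germs of local transverse fields commuting with the \emph{chosen parallelism} does not have the leaf closures as its orbits. Here is a counterexample.
\begin{itemize}
\item Take $M=S^1\times T^2$ with the flat metric, and let $\F$ be the product of the point foliation of $S^1$ and a Kronecker foliation of $T^2$.
\item Then $OF(M,\F)\cong M\times O(2)$ and $\tilde\F=\F\times\{\mathrm{pt}\}$. The local leaf spaces are open pieces of $\RR^2\times O(2)\cong OF(\RR^2)$ with its $(\theta,\omega)$-parallelism.
\item The fields commuting with that parallelism are the lifts of Killing fields of the flat plane. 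This is a $3$-dimensional stalk acting locally transitively on the $3$-dimensional local leaf space.
\item The leaf closures $\{\vartheta\}\times T^2\times\{A\}$, by contrast, have transverse dimension $1$.
\end{itemize}

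Molino's commuting sheaf is defined differently: its sections commute with \emph{all} global transverse fields, i.e.\ with every $\sum_i a_i\bar Y_i$ with $a_i$ basic. Since $[\bar Z,a\bar Y]=Z(a)\,\bar Y+a\,[\bar Z,\bar Y]$, this forces its sections to kill the basic functions. But to identify its orbits with the leaf closures you would still need two facts: that basic functions separate leaf closures, and that the sheaf acts transitively on each closure. That is essentially the statement you are trying to prove, and finite-dimensionality of the stalks gives neither. Salem's alternative replaces the sheaf by the infinitesimal generators of the closure of the holonomy pseudogroup. Proving that this closure is a Lie pseudogroup whose orbits are the closures of the holonomy orbits is exactly the missing work.

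\textbf{Further gaps in Steps~2(4) and~3.}
\begin{itemize}
\item A partition into compact submanifolds of constant dimension need not be a fibre bundle over a Hausdorff manifold. You need the closures to form a regular foliation with trivial holonomy and Hausdorff leaf space.
\item Ehresmann's theorem presupposes a proper submersion onto a manifold, and none has been constructed.
\item In Step~3 you assert that $\tilde\F|_{\kappa^{-1}(b)}$ is transversely parallelizable, but the global parallelizing fields are not tangent to the fibres.
\item $OF(M,\F)$ can have two components, so your transitivity claims in Step~2 need the right $O(q)$-action as well as the flows.
\end{itemize}

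\textbf{How the paper's route avoids this.} It works with basic functions rather than a sheaf.
\begin{itemize}
\item Flows of complete projectable fields, together with the right $O(q)$-action, make $(OF(M,\F),\tilde\F)$ homogeneous. Hence the distribution $E$ of vectors killing all basic functions has constant rank.
\item On a compact manifold the resulting basic foliation is strictly simple, $W:=OF(M,\F)/(\tilde\F)_{\bas}$ is a Hausdorff manifold, and $\pi_{\bas}$ is a fibre bundle \cite{MoerdijkMrcun:IFLG}*{Thm.~4.3}.
\item On each basic leaf the restricted foliation is transversely parallelizable with only constant basic functions \cite{MoerdijkMrcun:IFLG}*{Thm.~4.9}. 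It is therefore a Lie foliation with dense leaves \cite{MoerdijkMrcun:IFLG}*{Thm.~4.24}.
\item So the basic leaves are exactly the leaf closures \cite{MoerdijkMrcun:IFLG}*{Cor.~4.25}, and one takes $\kappa=\pi_{\bas}$.
\end{itemize}
Replacing your Step~2(3)--(4) with this argument, or carrying out Salem's closure argument in full, would close the gap.
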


\subsection{Lie groupoids, Lie algebroids, and actions}

\subsubsection{Lie groupoids, actions, orbits, and isotropy}

\begin{definition}[{\cite{MoerdijkMrcun:IFLG}*{Sec.~5.1}}]
A \emph{Lie groupoid} $\cG\rightrightarrows M$ is a groupoid with object manifold $M$ and arrow manifold $\cG$, where $M$ is a smooth Hausdorff manifold and $\cG$ is a smooth manifold, possibly non-Hausdorff, such that the source map $s:\cG\to M$ is a smooth submersion with Hausdorff fibres, and all the other structure maps are smooth.
\end{definition}

The other structure maps are the target map $t:\cG\to M$, the unit map $u:M\to \cG$, the inverse map $i:\cG\to \cG$, and the multiplication map $m:\cG\times_M \cG\to \cG$, $(h,g)\mapsto hg$, defined on the fibre product $\cG\times_M \cG:=\{(h,g)\in \cG\times \cG\mid s(h)=t(g)\}$.

\begin{definition}[{\cite{MoerdijkMrcun:IFLG}*{Sec.~5.3}}]
Let $\cG\rightrightarrows M$ be a Lie groupoid and let $\epsilon:N\to M$ be a smooth map. A \emph{left action} of $\cG$ on $N$ along $\epsilon$ is a smooth map
\[
\cG\times_M N:=\{(g,y)\in \cG\times N\mid s(g)=\epsilon(y)\}\to N,\qquad (g,y)\mapsto g\cdot y,
\]
such that
\[
\epsilon(g\cdot y)=t(g),\qquad 1_{\epsilon(y)}\cdot y=y,\qquad g'\cdot(g\cdot y)=(g'g)\cdot y.
\]

The associated \emph{translation groupoid} (or \emph{action groupoid}) $\cG\ltimes N \rightrightarrows N$ has arrow manifold $\cG\times_M N$, source and target maps $s(g,y)=y$ and $t(g,y)=g\cdot y$, and multiplication $(g',g\cdot y)(g,y)=(g'g,y)$.
\end{definition}

With $N=M$ and $\epsilon=\id_M$, any Lie groupoid acts canonically on its object manifold by $g\cdot x:=t(g)$ whenever $s(g)=x$.

For $x\in M$, the \emph{isotropy group} at $x$ is \[\cG_x:=\{g\in \cG\mid s(g)=t(g)=x\},\] and the \emph{orbit} through $x$ is $O_x:=t\bigl(s^{-1}(x)\bigr)\subset M$ {\cite{Moerdijk:RegularGroupoids}*{1.5}}.

\begin{definition}[{\cite{Moerdijk:RegularGroupoids}*{1.3(b)}}]
A Lie groupoid $\cG\rightrightarrows M$ is \emph{proper} if the map $(s,t):\cG\longrightarrow M\times M$ is proper.
\end{definition}

Further background on Lie groupoids can be found in \cite{Li:ConstructionsLieGroupoids} and \cite{Holtrop:RiemannianGroupoids}.

\subsubsection{Lie algebroids, regularity, and normal representations}

Associated with a Lie groupoid $\cG\rightrightarrows M$ is its Lie algebroid $A_{\cG}:=\ker(ds)|_M\longrightarrow M$, with anchor $\rho_{\cG}:=dt|_{A_{\cG}}:A_{\cG}\to TM$ {\cite{MoerdijkMrcun:IFLG}*{Sec.~6.1}}; the construction, including the bracket and the isotropy Lie algebras, is recalled in Section~\ref{sec:groupoid-lie-algebroid}.

\begin{definition}[{\cite{Moerdijk:RegularGroupoids}*{1.3(g), 1.5}}]\label{def:orbit-regular}
A Lie groupoid $\cG\rightrightarrows M$ is \emph{regular} if its anchor $\rho_{\cG}:A_{\cG}\to TM$ has locally constant rank; equivalently, the orbit dimension is locally constant.
\end{definition}

If $\cG$ is regular, then $\rho_{\cG}(A_{\cG})\subset TM$ is a smooth involutive subbundle, and hence defines a foliation $\cO$ on $M$ whose leaves are the connected components of the orbits {\cite{Moerdijk:RegularGroupoids}*{1.5}}.

If $O\subset M$ is an orbit, the restricted groupoid $\cG|_O\rightrightarrows O$ acts on the normal bundle $\nu(O):=TM|_O/TO$ by the \emph{normal representation} {\cite{MdHF-LieGpdMetrics}*{Sec.~2.2}}.

\subsection{Riemannian groupoids}

\begin{definition}[{\cite{MdHF-LieGpdMetrics}*{Definition~2.2.2}}]\label{def:2.2.2}
Let $\theta:\cG\curvearrowright E$ be an action of a Lie groupoid $\cG\rightrightarrows M$ on a manifold $E$. A Riemannian metric on $E$ is \emph{transversely invariant} for $\theta$ if the normal representation of the action groupoid $\cG\ltimes E\rightrightarrows E$ acts by isometries. The normal representation is recalled in Definition~\ref{def:normal-orbit} below, applied to $\cG\ltimes E$ along each of its orbits.
\end{definition}

\begin{definition}[{\cite{MdHF-LieGpdMetrics}*{Definitions~3.1.1, 3.2.1, 3.3.2}}]\label{def:n-metrics}
Let $\cG\rightrightarrows M$ be a Lie groupoid.
\begin{enumerate}
\item A \emph{$0$-metric} on $\cG\rightrightarrows M$ is a Riemannian metric $\eta^{(0)}$ on $\cG^{(0)}=M$ that is transversely invariant for the canonical action $\cG\curvearrowright M$, given by $g\cdot s(g)=t(g)$.

\item A \emph{$1$-metric} on $\cG\rightrightarrows M$ is a Riemannian metric $\eta^{(1)}$ on $\cG^{(1)}=\cG$ that is transversely left-invariant and for which the inversion $i$ is an isometry.

\item A \emph{$2$-metric} on $\cG\rightrightarrows M$ is a Riemannian metric $\eta^{(2)}$ on $\cG^{(2)}:=\cG\times_M\cG$ that is transversely invariant for the action $\theta_1:\cG\curvearrowright \cG^{(2)}$, $k\cdot(h,g):=(kh,g)$, and for which the action of $S_3$ by permutation of the vertices of the corresponding commutative triangle is isometric.

In the terminology of del Hoyo and Fernandes, the pair $(\cG\rightrightarrows M,\eta^{(2)})$ is called a \emph{Riemannian groupoid}.
\end{enumerate}
\end{definition}

In this article, a \emph{Riemannian groupoid} is a Lie groupoid equipped with a $0$-metric. This is a broader use of the term than in \cite{MdHF-LieGpdMetrics}*{Definition~3.3.2}, where a $2$-metric is required.

Here \emph{transversely left-invariant} means transversely invariant for the left-translation action of $\cG$ on itself. Equivalently, $s:\cG\to M$ is a Riemannian submersion for the metric on $M$ induced by $\eta^{(1)}$. Since inversion is an isometry, $t$ is then a Riemannian submersion for the same metric {\cite{MdHF-LieGpdMetrics}*{Example~2.3.2 and Proposition~3.2.2(i)}}.

\begin{proposition}[{\cite{MdHF-LieGpdMetrics}*{Proposition~3.3.4}}]\label{prop:2metric-induces}
Let $\cG\rightrightarrows M$ be a Lie groupoid. A $2$-metric $\eta^{(2)}$ on $\cG^{(2)}$ induces a $1$-metric $\eta^{(1)}$ on $\cG^{(1)}$, and hence also a $0$-metric $\eta^{(0)}$ on $\cG^{(0)}$.
\end{proposition}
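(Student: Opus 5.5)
The plan is to realise each induced metric as the quotient of the previous one along a free and proper action, so that the projection becomes a Riemannian submersion. The symmetry assumptions then transfer the remaining invariance properties down the tower $\cG^{(2)}\to\cG^{(1)}\to\cG^{(0)}$. Two general facts will be used throughout.
\begin{enumerate}[label=\textup{(\alph*)}]
\item \emph{Descent.} If a Lie groupoid action has as action groupoid a submersion groupoid $E\times_Q E\rightrightarrows E$ of a surjective submersion $p:E\to Q$, then a metric on $E$ is transversely invariant exactly when the metric on $\ker(dp)^\perp$ is constant along the fibres. It then descends to a unique metric on $Q$ for which $p$ is a Riemannian submersion.
\item \emph{Transfer.} Let $p:E\to Q$ be a Riemannian submersion that is equivariant for actions of $\cG$ on $E$ and on $Q$. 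If the metric on $E$ is transversely invariant, then so is the metric on $Q$. The reason is that $dp$ induces an isometric, equivariant identification of normal spaces modulo the vertical directions, compatible with the two normal representations.
\end{enumerate}

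\textbf{Step 1: from $\eta^{(2)}$ to $\eta^{(1)}$.} Take the action $\theta_1(k,(h,g))=(kh,g)$ along $(h,g)\mapsto t(h)$. An orbit of $\theta_1$ is $\{(h',g): s(h')=s(h)\}$, which is exactly a fibre of $\pr_2:\cG^{(2)}\to\cG$. The action is free, and
\[
\cG\ltimes\cG^{(2)}\cong \cG^{(2)}\times_{\cG}\cG^{(2)},\qquad (k,(h,g))\mapsto\bigl((kh,g),(h,g)\bigr).
\]
Fact (a) then yields a metric $\eta^{(1)}$ on $\cG$ making $\pr_2$ a Riemannian submersion.

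Next I use the $S_3$-symmetry to compare $\pr_2$ with the other face maps. Writing the triangle as $(h,g,(hg)^{-1})$, permutations of the vertices carry $\pr_2$ to $\pr_1$, to $m$, and to $i\circ\pr_1$ and similar composites, and they do so isometrically. Consequently $\pr_1$, $m$ and $i\circ\pr_2$ are all Riemannian submersions onto the same $\eta^{(1)}$. Since $\pr_2$ and $i\circ\pr_2$ are both Riemannian submersions onto $\eta^{(1)}$ and $\pr_2$ is surjective, the inversion $i$ is an isometry.

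For transverse left-invariance, consider the action $k\cdot(h,g)=(h,gk^{-1})$. It is $\theta_1$ conjugated by an element of $S_3$, so $\eta^{(2)}$ is transversely invariant for it as well. This action commutes with $\theta_1$, and $\pr_2$ intertwines it with the translation $g\mapsto gk^{-1}$. Fact (b) gives transverse invariance of $\eta^{(1)}$ for right translation. Combining this with inversion being an isometry gives transverse invariance for left translation, so $\eta^{(1)}$ is a $1$-metric.

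\textbf{Step 2: from $\eta^{(1)}$ to $\eta^{(0)}$.} Apply the same scheme to the left-translation action of $\cG$ on itself. Its action groupoid is the submersion groupoid of $s$, so fact (a) produces $\eta^{(0)}$ on $M$ with $s$ a Riemannian submersion. Because $i$ is an isometry and $t=s\circ i$, the map $t$ is also a Riemannian submersion for $\eta^{(0)}$. Right translation commutes with left translation and is intertwined by $t$ with the canonical action on $M$. Fact (b) then shows that $\eta^{(0)}$ is transversely invariant for $\cG\curvearrowright M$, i.e.\ it is a $0$-metric.

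\textbf{Main obstacle.} I expect the hardest step to be fact (b): making precise that the normal representation of $\cG\ltimes Q$ is the quotient of that of $\cG\ltimes E$, given that $dp$ maps orbit directions onto orbit directions. I would first try a direct argument with local bisections. Lift a bisection of $\cG$ to one of the action groupoid on $E$ through the submersion, and check that its effect on normal spaces is computed by $dp$ restricted to horizontal vectors, which is isometric. I would also need to track carefully which $S_3$-permutation realises each identification in Step 1.
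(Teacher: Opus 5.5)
The paper does not prove this itself; it quotes \cite{MdHF-LieGpdMetrics}*{Proposition~3.3.4}. Your outline is essentially the standard argument for that result:
\begin{enumerate}[label=\textup{(\roman*)}]
\item descend $\eta^{(2)}$ along $\pr_2$, whose fibres are the $\theta_1$-orbits;
\item use the isometric $S_3$-action to make $\pr_1$ and $m$ Riemannian submersions and $i$ an isometry;
\item push transverse invariance down equivariant Riemannian submersions.
\end{enumerate}

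Your facts (a) and (b) are correct. For (b), naturality of the normal representation under $(k,e)\mapsto(k,p(e))$ gives $\lambda_{(k,p(e))}\circ\overline{dp}_e=\overline{dp}_{k\cdot e}\circ\lambda_{(k,e)}$. Equivariance gives $dp(T_eO_e)=T_{p(e)}O_{p(e)}$, so $\overline{dp}_e$ is isometric on the orthogonal complement of its kernel. Hence $\lambda_{(k,p(e))}$ and its inverse are both norm non-increasing, so $\lambda_{(k,p(e))}$ is an isometry. This settles the step you flagged as the main obstacle, without lifting bisections.

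There is one slip in Step~2. The map $t$ does not intertwine right translation with the canonical action on $M$, since $t(gk^{-1})=t(g)$. Either pair right translation with $s$, using $s(gk^{-1})=t(k)$, or pair $t$ with left translation, using $t(kg)=t(k)$. Both transverse invariances are already available from Step~1, so the conclusion stands.

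Finally, $\pr_1$ already intertwines $\theta_1$ with left translation, since $\pr_1(kh,g)=kh$. So in Step~1 you can apply (b) to $\pr_1$ directly and skip both the $S_3$-conjugated action and the detour through right translation.
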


\begin{theorem}[Del Hoyo--Fernandes, {\cite{MdHF-LieGpdMetrics}*{Thm.~1}}]\label{thm:existence-2metric}
Every Hausdorff proper Lie groupoid admits a $2$-metric.
\end{theorem}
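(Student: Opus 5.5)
The plan is to build the $2$-metric by averaging, using properness of $\cG$ to integrate over its arrows. The basic tool is an averaging lemma for proper actions: if $\cH\rightrightarrows X$ is a Hausdorff proper Lie groupoid, then it admits a normalized Haar system $\{\mu^x\}$ together with a cutoff function $c:X\to[0,\infty)$. Given any metric $\eta$ on $X$, one forms
\[
\widetilde\eta_x(v,w):=\int_{t^{-1}(x)} c(s(g))\,\eta\bigl(\lambda_{g^{-1}}v,\lambda_{g^{-1}}w\bigr)\,d\mu^x(g),
\]
where $\lambda$ is the normal representation (realised via a splitting) on the normal spaces. This yields a metric whose induced metric on the normal bundles of the orbits is invariant under $\lambda$, that is, a transversely invariant metric. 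I would first prove this lemma (or cite it in the form Weinstein and del Hoyo--Fernandes use), taking care that the normal representation is well defined only on normal spaces. For that reason one averages the \emph{normal} metric and then extends it to a metric on $X$ by choosing complements.

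The second step exploits the structure of $\cG^{(2)}$. The action $\theta_1$, $k\cdot(h,g)=(kh,g)$, is free and proper, and its quotient is identified with $\cG$ via $(h,g)\mapsto g$. Similarly, the other two actions of $\cG$ on $\cG^{(2)}$ (right multiplication on $g$, and the ``middle'' action $(h,g)\mapsto(hk^{-1},kg)$) are free and proper. The $S_3$-action permutes these three actions up to inversion. I would combine $\cG^{(2)}$ with all three commuting actions into the proper groupoid $\cK:=(\cG\times\cG\times\cG)\ltimes\cG^{(2)}$, suitably fibred, and then take the semidirect product $\cK\rtimes S_3$. This is still proper because $S_3$ is finite and normalises the three actions. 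Applying the averaging lemma to $\cK\rtimes S_3$, starting from an arbitrary metric on $\cG^{(2)}$, produces a metric that is transversely invariant for $\cK\rtimes S_3$. In particular it is $S_3$-invariant, since the $S_3$-part acts by genuine diffeomorphisms of $\cG^{(2)}$ and averaging a finite group yields honest isometries.

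The main obstacle is to pass from transverse invariance for the \emph{larger} groupoid to transverse invariance for $\theta_1$ alone. The orbits of $\cK$ are larger than those of $\theta_1$, so invariance of the normal metric to $\cK$-orbits says nothing a priori about directions tangent to $\cK$-orbits but normal to $\theta_1$-orbits. To deal with this, I would not average the full metric in one step. Instead, I would proceed in stages. First, average over $\theta_1$ to obtain a $\theta_1$-transversely invariant metric, equivalently a metric $\eta^{(1)}$ on $\cG$ making $\cG^{(2)}\to\cG$ a Riemannian submersion on horizontals. Next, average $\eta^{(1)}$ over the left-translation action and symmetrise by inversion to obtain a $1$-metric. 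Finally, pull it back along the three face maps and average over $S_3$. At this last stage one must check that $S_3$-averaging preserves $\theta_1$-transverse invariance. This holds because each element of $S_3$ carries $\theta_1$-orbits to orbits of one of the three actions, and the staged construction makes the normal metric invariant for all three. This compatibility check is the delicate point. If it fails directly, I would fall back on the gauge trick of \cite{MdHF-LieGpdMetrics}*{Prop.~4.2.4}: realise $\cG^{(2)}$ as a quotient of a proper free $\cG$-space equipped with an $S_3$-invariant metric, and then push that metric down.
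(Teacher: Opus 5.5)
The paper gives no proof of this statement; it is quoted from del Hoyo--Fernandes \cite{MdHF-LieGpdMetrics}. Judged on its own, your argument has a genuine gap, and it sits exactly at the step you call delicate.

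\textbf{The staged construction fails.} Pulling a $1$-metric back along the three face maps of $\cG^{(2)}$ and averaging over $S_3$ does not give a metric that is transversely invariant for $\theta_1$. Here is a counterexample.
\begin{enumerate}[label=\textup{(\roman*)}]
\item Let $\cG=(\RR,+)\rightrightarrows\mathrm{pt}$ and $\eta^{(1)}=f(x)\,dx^2$ with $f(x)=1+x^2$. This is a $1$-metric: left translation is transitive, so transverse left-invariance is vacuous, and inversion $x\mapsto -x$ is an isometry because $f$ is even.
\item On $\cG^{(2)}=\RR^2\ni(h,g)$ the face maps are $h$, $g$ and $h+g$. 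The resulting metric is $f(h)\,dh^2+f(g)\,dg^2+f(h+g)\,(dh+dg)^2$, which is already $S_3$-invariant, so averaging over $S_3$ changes nothing.
\item The $\theta_1$-orbits are the lines $\RR\times\{g\}$, and the normal representation is the identity in the coordinate $g$.
\item The squared quotient norm of $[\partial_g]$ at $(h,g)$ is $f(g)+f(h)f(h+g)/\bigl(f(h)+f(h+g)\bigr)$. At $g=0$ this equals $1+(1+h^2)/2$, which varies along the orbit.
\end{enumerate}
Hence $(h,g)\mapsto g$ is not a Riemannian submersion, and this is not a $2$-metric.

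Your justification, that the stages make the normal metric invariant for all three vertex actions, is not supported by anything in the construction. Moreover, the $\theta_1$-invariant metric from your first stage is simply discarded in the third.

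\textbf{The averaging steps are also flawed as stated.} Transverse invariance is a linear condition on the cometric restricted to the conormal spaces of the orbits, not on the metric. So averaging metrics over $S_3$, or symmetrising by inversion, generally destroys it. Even averaging cometrics over $S_3$ preserves $\theta_1$-transverse invariance only if the metric is already transversely invariant for all three conjugate actions at once. That simultaneous invariance is precisely the problem which, as you note yourself, averaging over $\mathcal K\rtimes S_3$ does not solve.

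\textbf{The missing idea is the gauge trick.} You mention it only as a fallback and without content, but it has to carry the proof.
\begin{enumerate}[label=\textup{(\arabic*)}]
\item Identify $\cG^{(2)}$ with the quotient of $\{(a,b,c)\in\cG^3: s(a)=s(b)=s(c)\}$ by the free and proper diagonal right action of $\cG$, via $(a,b,c)\mapsto(ab^{-1},bc^{-1})$.
\item Under this identification, $S_3$ permutes the three factors and commutes with the right action, and $\theta_1$ lifts to left multiplication on the first factor.
\item Give the triple fibre product the fibred-product metric induced by a metric on $\cG$ for which $s$ is a Riemannian submersion. Then $S_3$-invariance is automatic. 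Transverse invariance for left multiplication in each factor is also automatic, because forgetting one factor is a Riemannian submersion. So the simultaneous-invariance problem disappears upstairs.
\item What remains is to make this metric descend along the diagonal right action. Once it does, the descended metric is $S_3$-invariant. Because left multiplication and the right action commute, it is also $\theta_1$-transversely invariant.
\end{enumerate}
For a Lie group, a right-invariant metric on $\cG$ already gives the descent. For a general groupoid one needs a corresponding right-invariance property of the metric on the arrows. Producing that property is where properness, through a Haar system and cutoff function, has to be used. None of this appears in your proposal, so as written it does not prove the theorem.
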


\subsubsection{Morita equivalence}

We use Morita equivalence of Lie groupoids in the sense of \cite{MoerdijkMrcun:IFLG}*{Sec.~5.4}: two Lie groupoids are \emph{Morita equivalent} if they are connected by weak equivalences through a third Lie groupoid.

\begin{definition}[{\cite{MdHFStackMetrics}*{Sec.~6.1}; \cite{PTW-Resolutions}*{Defs.~2.1--2.2 and 6.11}}]\label{def:morita-data-prelim}
Let $\cG\rightrightarrows M$ and $\cG'\rightrightarrows M'$ be Lie groupoids.

\begin{enumerate}[label=(\roman*), leftmargin=*]
\item A Lie groupoid map $\phi:\widetilde{\cG}\to \cG$ is a \emph{Morita fibration} if it is fully faithful and its object map $\phi_0:\widetilde M\to M$ is a surjective submersion.


\item A \emph{Morita equivalence bibundle} from $\cG'$ to $\cG$ is a manifold $P$ equipped with commuting left $\cG'$- and right $\cG$-actions, with moment maps $\alpha':P\to M'$ and $\alpha:P\to M$ (the right action being defined for $\alpha(p)=t(g)$, with $\alpha(p\cdot g)=s(g)$), such that both actions are principal.

\item Suppose that $\cG$ and $\cG'$ are equipped with $0$-metrics. If $P$ is equipped with a metric $\eta_P$ such that both moment maps $\alpha:P\to M$ and $\alpha':P\to M'$ are Riemannian submersions with respect to the given $0$-metrics on $M$ and $M'$, then $(P,\alpha',\alpha,\eta_P)$ is called a \emph{Riemannian Morita equivalence bibundle}.
\end{enumerate}
\end{definition}

In \textup{(iii)}, we use the metric condition of \cite{PTW-Resolutions}*{Definition~6.11} for groupoids equipped with $0$-metrics. For groupoids with $2$-metrics, we use their induced $0$-metrics.


\section{Regular groupoids, metrics, and normal representations}\label{ch:regular}\label{sec:regular-groupoids-normal}

This section develops the metric and normal-representation input for Molino's structure theorem for regular Riemannian groupoids. In the regular case, the orbit foliation provides the link with Molino's theory. The given $0$-metric makes this foliation Riemannian, and the normal representation records the transverse groupoid action.

\subsection{Regular Riemannian groupoids and the orbit foliation}
\label{subsec:regular-orbit-foliation}

Recall from Definition~\ref{def:orbit-regular} that a Lie groupoid $\cG\rightrightarrows M$ is \emph{regular} if the anchor of its Lie algebroid, $\rho_{\cG}:A_{\cG}\rightarrow TM$, has locally constant rank. Equivalently, the rank of $t:s^{-1}(x)\to M$, which equals $\dim O_x$, is locally constant as a function of $x\in M$ {\cite{Moerdijk:RegularGroupoids}*{1.3(g)}}.

If this is the case, $T(\mathcal O):=\operatorname{im}(\rho_{\cG})\subset T M$ is a smooth involutive subbundle, defining a regular foliation $\mathcal O$ on $M$ whose leaves are the connected components of the orbits $O_x:=t(s^{-1}(x))$.

We fix the following standing hypotheses.

\begin{hypothesis}\label{hyp:standing}
Let $\cG \rightrightarrows M$ be a regular Lie groupoid. Assume that $M$ is compact and connected, and fix a $0$-metric $\eta^{(0)}$ on $M$. Let $\mathcal O$ be the orbit foliation, and set $N:=T M/T(\mathcal O)$ with $\rank N=q=\codim \mathcal O$.
\end{hypothesis}

Now we discuss Riemannian groupoids and induced transverse metrics. We follow del Hoyo--Fernandes \cite{MdHF-LieGpdMetrics}; in particular, the discussion below uses Sections~2 and~3 of that paper.

Let $\cG\rightrightarrows M$ be a Lie groupoid. For $n\geq1$, we write $\cG^{(n)} \subset \cG^n$ for the manifold consisting of chains of $n$ composable arrows:
\[
\cG^{(0)}=M,\qquad \cG^{(1)}=\cG,\qquad \cG^{(2)} = \cG\times_{M}\cG=\{(h,g)\in \cG\times \cG \mid s(h)=t(g)\}.
\]
In general,
\[
\cG^{(n)} = \underbrace{\cG\times_{M}\cdots\times_{M}\cG}_{n\ \text{factors}}.
\]
On $\cG^{(2)}$, we write
\[
\pi_1(h,g)=h,\qquad \pi_2(h,g)=g,\qquad m(h,g)=hg.
\]



We first record the metric invariance of the normal representation.

\begin{proposition}\label{prop:0-metric}
Under Hypothesis~\ref{hyp:standing}, let $g^M:=\eta^{(0)}$. For each orbit $O\subset M$, write $\cG_O:=\cG|_O$. Then the normal representation $\lambda:\cG_O\curvearrowright\nu(O)$ acts by fibrewise isometries for the metric on $\nu(O)$ induced by $g^M$.\qed
\end{proposition}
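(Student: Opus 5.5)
The proposition should follow almost directly from the definition of a $0$-metric, once the normal representation of the action groupoid $\cG\ltimes M$ is identified with that of $\cG$ along an orbit. The plan is to unwind Definition~\ref{def:n-metrics}(1) and Definition~\ref{def:2.2.2}.

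\emph{Step 1: compare the two groupoids.} For the canonical action $\cG\curvearrowright M$, $g\cdot s(g)=t(g)$, the action groupoid $\cG\ltimes M$ has arrow manifold $\cG\times_M M=\{(g,x)\mid s(g)=x\}$. This is canonically diffeomorphic to $\cG$ via $(g,x)\mapsto g$, and the identification respects source, target, and multiplication. So $\cG\ltimes M\cong\cG$ as Lie groupoids, and in particular they have the same orbits $O\subset M$.

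\emph{Step 2: compare the normal representations.} Recall the description of the normal representation (Definition~\ref{def:normal-orbit}). For $g\in\cG_O$ with $s(g)=x$ and $t(g)=y$, and $v\in T_xM$, choose $\tilde v\in T_g\cG$ with $ds(\tilde v)=v$; then $\lambda_g[v]=[dt(\tilde v)]\in\nu_y(O)$. Well-definedness modulo $T_xO$ and independence of $\tilde v$ follow from $dt(\ker ds_g)=T_yO$. This construction is expressed only through $s$, $t$ and their differentials, so it is transported verbatim by the isomorphism of Step 1. Hence the normal representation of $\cG\ltimes M$ along $O$ coincides with $\lambda:\cG_O\curvearrowright\nu(O)$.

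\emph{Step 3: conclude.} By Definition~\ref{def:n-metrics}(1), $\eta^{(0)}=g^M$ is transversely invariant for $\cG\curvearrowright M$. By Definition~\ref{def:2.2.2}, this means the normal representation of $\cG\ltimes M$ acts by isometries of the metric on $\nu(O)$ obtained by identifying $\nu_x(O)$ with the $g^M$-orthogonal complement $(T_xO)^\perp$. Combining this with Step 2 gives that each $\lambda_g:\nu_x(O)\to\nu_y(O)$ is a linear isometry.

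Regularity and compactness from Hypothesis~\ref{hyp:standing} are not needed for this argument. The only point requiring care is Step 2: one must check that the paper's normal representation (Definition~\ref{def:normal-orbit}) and the one implicit in Definition~\ref{def:2.2.2} are built by the same recipe. If the former is instead defined through curves or local bisections, one first verifies its agreement with the $ds$/$dt$ formula above, using any local bisection through $g$.
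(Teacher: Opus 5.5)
Your proposal is correct and matches the paper's approach. The paper states this with \qed and no argument, since it follows directly from Definition~\ref{def:n-metrics}(1) and Definition~\ref{def:2.2.2} once one notes that $\cG\ltimes M\cong\cG$ for the canonical action; your Steps 1--3 are exactly that unwinding, and your remark that regularity and compactness play no role is accurate.
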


For regular groupoids, this transverse invariance makes the orbit foliation Riemannian.

\begin{proposition}\label{prop:RF}

Let $\cG\rightrightarrows M$ be a regular Lie groupoid equipped with a $0$-metric $\eta^{(0)}$; in particular, this holds under Hypothesis~\ref{hyp:standing}. Then the metric $g^M:=\eta^{(0)}$ makes the orbit foliation $\mathcal O$ into a Riemannian foliation.
\end{proposition}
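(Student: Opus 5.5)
We will show that the induced metric on $N=TM/T(\mathcal O)$ is invariant under the holonomy of $\mathcal O$. By the definition of a bundle-like metric recalled in the preliminaries, this is exactly what is required. The idea is to realize holonomy of the orbit foliation through the normal representation of $\cG$. Proposition~\ref{prop:0-metric} already tells us that the normal representation acts by isometries.

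\textbf{Step 1: set up local data.} Since $\cG$ is regular, $T(\mathcal O)=\operatorname{im}\rho_{\cG}$ has constant rank on each component. Take a foliated chart $U\cong\RR^p\times\RR^q$ and a leaf $L$. For each orbit $O\supset L$, the bundle $\nu(O)$ restricted to $L$ is exactly $N|_L$. It then suffices to check that, along every leafwise path $\gamma$ from $x$ to $y$, the linear holonomy $\operatorname{hol}_\gamma:N_x\to N_y$ is an isometry.

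\textbf{Step 2: realize holonomy by the normal representation.} Cover $\gamma$ by finitely many small pieces. For each piece, choose a local bisection $\sigma$ of $\cG$ through an arrow $g$ with $s(g)=\gamma(t_i)$ and $t(g)=\gamma(t_{i+1})$. Such arrows exist because the piece lies in one orbit and $t:s^{-1}(x)\to O_x$ is a submersion. One can take $\sigma$ to be the flow of a time-dependent section of $A_{\cG}$ whose image under $\rho_{\cG}$ extends $\dot\gamma$. The induced local diffeomorphism $\phi_\sigma=t\circ\sigma$ maps plaques to plaques, since it maps orbits to orbits. It therefore induces a map of local leaf spaces, which is a germ of a holonomy transformation.

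I expect the main point to check is that this germ equals the holonomy of $\gamma$. For this, use the path of diffeomorphisms $\phi_{\sigma_t}$ obtained by flowing. Each $\phi_{\sigma_t}$ preserves leaves and moves $\gamma(t_i)$ along $\gamma$, so it is a leafwise homotopy, and holonomy is determined by the path up to leafwise homotopy. In addition, $d\phi_\sigma$ on $N_x$ equals $\lambda_g$ by the usual description $\lambda_g[v]=[dt(\tilde v)]$ with $\tilde v\in T_g\sigma(M)$ and $ds(\tilde v)=v$. Hence $\operatorname{hol}_\gamma$ is a composition of normal representation maps $\lambda_{g_i}$.

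\textbf{Step 3: conclude.} By Proposition~\ref{prop:0-metric}, each $\lambda_{g_i}$ is an isometry for the metric induced by $g^M$. So the linear holonomy preserves the transverse metric. Linear holonomy invariance alone is not enough; we need the transverse metric on local quotients $U\to\RR^q$ to be invariant under the full holonomy germs. To get this, apply the same argument at every point of a transversal: $\phi_\sigma$ induces the holonomy germ, and its differential at each nearby point of the transversal is again a $\lambda_{g'}$ with $g'=\sigma(\cdot)$, hence isometric. Therefore the projected metric on local transversals is holonomy-invariant, i.e.\ $g^M$ is bundle-like and $\mathcal O$ is Riemannian.

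Alternatively, one can phrase this pointwise: for $X\in\Gamma(T\mathcal O)$ of the form $\rho_{\cG}(\alpha)$, the flow of $X$ is induced by the bisections $\exp(t\alpha)$. Transverse invariance then gives $\mathcal L_X g^N=0$, and this is the standard criterion for a bundle-like metric. Since $T\mathcal O$ is locally spanned by such $X$, the criterion holds for all foliated vector fields.
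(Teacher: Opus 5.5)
Your proposal is correct, and your closing ``alternatively'' paragraph is exactly the paper's proof. The paper lifts a local leafwise field $X$ through the surjective anchor to a section $a$ with $\rho_{\cG}(a)=X$. Flowing the right-invariant extension $a^r$ from the units gives local bisections $\sigma_\tau$ with $t\circ\sigma_\tau=\varphi_\tau$, and from these one reads off $(d\varphi_\tau)^N_x=\lambda^N_{\sigma_\tau(x)}$. The $0$-metric condition then gives invariance of the pulled-back transverse form under all leafwise flows, which is the transverse-metric criterion of Moerdijk--Mr\v{c}un. The paper lifts every local leafwise field directly, so it needs no spanning argument. Your spanning version also works, because $\mathcal L_{fY}\tilde g=f\,\mathcal L_Y\tilde g$ for leafwise $Y$ when $\tilde g$ vanishes on $T\mathcal O$.

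Your main route through holonomy germs is a different, more global packaging. It buys the extra information that holonomy of $\mathcal O$ along any leafwise path is implemented by normal-representation maps of arrows. As written, however, Step~3 only shows that $d\phi_\sigma$ is isometric on $N$. The holonomy germ between given transversals $S_x$ and $S_y$ is $\phi_\sigma|_{S_x}$ followed by sliding along plaques from $\phi_\sigma(S_x)$ onto $S_y$. Likewise, your ``transverse metric on local quotients $U\to\RR^q$'' presupposes that $g^N$ is invariant under such plaque slides.

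That plaque-slide invariance is the local content of the proposition, and $d\phi_\sigma$ does not address it. The natural way to obtain it is to write the slide as the time-one map of a leafwise vector field and realize its flow, in short steps, by bisections. That is exactly your alternative argument.

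So the infinitesimal route is both shorter and logically prior. The holonomy route additionally requires time-dependent sections of $A_{\cG}$ and a lemma that a leafwise deformation $\tau\mapsto\phi_{\sigma_\tau}$ starting at the identity computes the holonomy of its trace path.
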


\begin{proof}
Let $g^N$ be the quotient metric on $N=TM/T(\mathcal O)$ induced by $g^M$, and let $X$ be a local vector field tangent to $\mathcal O$. Since $\cG$ is regular, the anchor $\rho_{\cG}:A_{\cG}\to T(\mathcal O)$ is a surjective vector bundle map. After restricting the domain of $X$, choose a local section $a$ of $A_{\cG}$ such that $\rho_{\cG}(a)=X$. Its right-invariant extension
\[
(a^r)_g=(dR_g)_{1_{t(g)}}a_{t(g)}
\]
satisfies $ds(a^r)=0$ and $dt(a^r)=X\circ t$ \cite{MoerdijkMrcun:IFLG}*{Proposition~6.1(ii) and the following discussion}. Locally near each unit, the flow of $a^r$ gives smooth maps $\sigma_\tau$, defined for sufficiently small $\tau$, such that
\[
\sigma_0(x)=1_x,\qquad
s\circ\sigma_\tau=\id,\qquad
t\circ\sigma_\tau=\varphi_\tau,
\]
where $\varphi_\tau$ is the local flow of $X$.

For $x$ in the domain of $\sigma_\tau$ and $v\in T_xM$, we have $ds_{\sigma_\tau(x)}\circ d(\sigma_\tau)_x(v)=v$. The description of the normal representation in \cite{MdHF-LieGpdMetrics}*{Sec.~2.2} therefore gives
\[
\lambda_{\sigma_\tau(x)}([v])
=
\bigl[dt_{\sigma_\tau(x)}
       \bigl(d(\sigma_\tau)_x(v)\bigr)\bigr]
=
[d(\varphi_\tau)_x(v)].
\]
Since $g^M$ is a $0$-metric, the normal representation acts by isometries for $g^N$. Thus the map on $N$ induced by $\varphi_\tau$ preserves $g^N$.

Since we chose $X$ arbitrarily, the symmetric form on $TM$ obtained by pulling back $g^N$ along $TM\to N$ is invariant under the local flows of all vector fields tangent to $\mathcal O$. Its kernel is $T(\mathcal O)$, so it is a transverse metric \cite{MoerdijkMrcun:IFLG}*{Sec.~2.2 and Remark~2.7(7)}. Hence $g^M$ is bundle-like for $\mathcal O$, and $(M,\mathcal O)$ is a Riemannian foliation.
%
\end{proof}

\subsection{Orbitwise and global normal representations}
\label{subsec:normal-representations}

We now turn to the \emph{orbitwise} normal representation.

\begin{lemma}\label{lem:orbit-submersion}
Let $\cG \rightrightarrows M$ be a Lie groupoid and let $x \in M$. Then the restriction $t_x := t|_{s^{-1}(x)} \colon s^{-1}(x) \to O_x := t\bigl(s^{-1}(x)\bigr)$ is a surjective submersion and a principal $\cG_x$-bundle.

Consequently, for any arrow $g \colon x \to y$, we have $dt_g(\ker ds_g) = T_y(O_x)$ and $ds_g(\ker dt_g) = T_x(O_x)$.
\end{lemma}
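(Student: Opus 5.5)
The plan is to use the standard fact that $s^{-1}(x)$ is a closed embedded submanifold of $\cG$, being a fibre of the submersion $s$. Right multiplication then gives a free action of the isotropy group $\cG_x$ on $s^{-1}(x)$, namely $h\cdot k:=hk$ for $h\in s^{-1}(x)$ and $k\in\cG_x$. The fibres of $t_x$ are exactly the orbits of this action: if $t(h)=t(h')$ with $h,h'\in s^{-1}(x)$, then $h^{-1}h'\in\cG_x$. The argument splits into three steps.

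\textbf{Step 1: constant rank.} I first show that $t_x$ has constant rank. For $h:x\to y$ in $s^{-1}(x)$, left translation $L_h:s^{-1}(y)\to s^{-1}(x)$, $k\mapsto hk$, is a diffeomorphism mapping $1_y\mapsto h$ (its inverse is $L_{h^{-1}}$). Moreover $t\circ L_h=t|_{s^{-1}(y)}$. This shows only that the rank of $t_x$ at $h$ equals the rank of $t_y$ at $1_y$, which is not yet enough.

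The better route is to use right translations within a single fibre. For $g\in s^{-1}(x)$ with $t(g)=z$, right multiplication $R_g:s^{-1}(z)\to s^{-1}(x)$ is a diffeomorphism, and $t\circ R_g=t$. Given two points $h,h'\in s^{-1}(x)$, the arrow $k:=h^{-1}h'$ lies in the relevant fibre, and $R_k$ restricted to $s^{-1}(x)\to s^{-1}(x)$ is a diffeomorphism fixing $s^{-1}(x)$ with $t\circ R_k=t$. Hence $t_x$ is equivariant with respect to a transitive group of diffeomorphisms of $s^{-1}(x)$, namely the right translations by $\cG_x$ together with the identifications of fibres.

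Cleanly, the argument is as follows. Since $t_x$ is invariant under the free proper $\cG_x$-action, where $\cG_x$ is a Lie group and properness follows from the fibres of $t_x$, I use the standard result \cite{MoerdijkMrcun:IFLG}*{Thm.~5.4} that $\cG_x$ is a Lie group, $O_x$ is an immersed submanifold, and $t_x:s^{-1}(x)\to O_x$ is a principal $\cG_x$-bundle. I expect the honest obstacle to be the smooth structure on $O_x$. I would define it as the quotient $s^{-1}(x)/\cG_x$, since the action is free and proper, being a restriction of the proper action of a closed subgroup. The map $t_x$ then descends to an injective immersion $s^{-1}(x)/\cG_x\to M$ with image $O_x$, and $t_x$ becomes a surjective submersion onto $O_x$ with this structure, because the quotient map of a principal bundle is a surjective submersion.

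\textbf{Step 2: $dt_g(\ker ds_g)=T_y(O_x)$.} For an arrow $g:x\to y$ we have $\ker ds_g=T_g(s^{-1}(x))$. Since $t_x$ is a submersion onto $O_x$, its differential at $g$ maps $\ker ds_g$ onto $T_{y}O_x$.

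\textbf{Step 3: $ds_g(\ker dt_g)=T_x(O_x)$.} I apply the inversion $i$, which swaps $s$ and $t$ and sends $g$ to $g^{-1}:y\to x$. Then $ds_g(\ker dt_g)=dt_{g^{-1}}(\ker ds_{g^{-1}})=T_x(O_y)=T_x(O_x)$, using Step 2 and $O_y=O_x$.
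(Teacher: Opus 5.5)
Your final argument is the paper's: cite \cite{MoerdijkMrcun:IFLG}*{Thm.~5.4} for the principal $\cG_x$-bundle structure of $t_x$, restrict $dt_g$ to $T_g(s^{-1}(x))=\ker ds_g$, and get the second identity by inversion together with $O_y=O_x$, so the proposal is correct. You should delete the abandoned constant-rank attempts in Step~1, since they contain errors: $k\mapsto hk$ is not defined on $s^{-1}(y)$ for $h:x\to y$ (you mean $k\mapsto kh$), and right translations by $\cG_x$ preserve the fibres of $t_x$, so they cannot act transitively on $s^{-1}(x)$.
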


\begin{proof}
The first part is Theorem~5.4(iv) of \cite{MoerdijkMrcun:IFLG}. For fixed $x \in M$,
\[
t_x := t|_{s^{-1}(x)} \colon s^{-1}(x) \to O_x
\]
is a principal $\cG_x$-bundle, hence a surjective submersion onto the orbit $O_x$.

The second part is also easy to show. Let $g \colon x \to y$. Since $s^{-1}(x) \subset \cG$ is a submanifold with $T_g\bigl(s^{-1}(x)\bigr) = \ker(ds_g)$, we have
\[
d(t_x)_g = dt_g|_{\ker(ds_g)} \colon \ker(ds_g) \to T_y(O_x),
\]
and it is surjective because $t_x$ is a submersion. Therefore
\[
dt_g(\ker ds_g) = T_y(O_x).
\]

Using that the inversion map interchanges $s$ and $t$, we get
\[
ds_g(\ker dt_g) = T_x(O_x).
\]
\end{proof}
\begin{definition}[{\cite{MdH:Orbispaces}*{Prop.~3.4.2}; \cite{MdHF-LieGpdMetrics}*{Sec.~2.2}}]\label{def:normal-orbit}
Let $O\subset M$ be an orbit of the Lie groupoid $\cG\rightrightarrows M$, and let $g:x\to y$ be an arrow in $\cG_O$. For $[v]\in \nu_x(O):=T_x(M)/T_x(O)$, choose a representative $v\in T_x(M)$. Since $s:\cG\to M$ is a submersion, there exists $X\in T_g(\cG)$ such that $ds_g(X)=v$. This allows us to define
\[
\lambda_g([v]):=[dt_g(X)]\in \nu_y(O):=T_y(M)/T_y(O),
\]
where $[dt_g(X)]$ denotes the class of $dt_g(X)$ in the quotient. We call this the normal representation along the orbit.
\end{definition}

The next lemma shows that this is independent of all choices.


\begin{lemma}\label{lem:normal-well-defined}
For an arrow $g:x\to y$ in $\cG_O$,
\[
[dt_g(X)]\in \nu_y(O)=T_y(M)/T_y(O)
\]
is independent of the choice of representative $v\in T_x(M)$ of $[v]\in \nu_x(O)=T_x(M)/T_x(O)$ and of the choice of lift $X\in T_g(\cG)$ with $ds_g(X)=v$. Hence
\[
\lambda_g:\nu_x(O)\to \nu_y(O),\qquad \lambda_g([v]):=[dt_g(X)],
\]
is a well-defined linear map.
\end{lemma}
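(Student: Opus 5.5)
The proof reduces to a single computation: the ambiguity in either choice is a vector in $\ker ds_g$, and Lemma~\ref{lem:orbit-submersion} shows that $dt_g$ sends such vectors into $T_y(O)$. I will check independence of the lift first, then independence of the representative, and deduce linearity last.

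\emph{Independence of the lift.} Fix $v\in T_x(M)$ and let $X,X'\in T_g(\cG)$ both satisfy $ds_g(X)=ds_g(X')=v$. Then $X-X'\in\ker ds_g$. By Lemma~\ref{lem:orbit-submersion}, $dt_g(\ker ds_g)=T_y(O_x)$. Since $g:x\to y$ lies in $\cG_O$, we have $O_x=O$, so $dt_g(X)-dt_g(X')\in T_y(O)$. Hence $[dt_g(X)]=[dt_g(X')]$ in $\nu_y(O)$.

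\emph{Independence of the representative.} Let $v,v'\in T_x(M)$ with $v-v'\in T_x(O)$. The one point needing care is to produce a vector $Z\in T_g(\cG)$ with $ds_g(Z)=v-v'$ and $dt_g(Z)\in T_y(O)$. Lemma~\ref{lem:orbit-submersion} gives $ds_g(\ker dt_g)=T_x(O_x)=T_x(O)$, so I choose $Z\in\ker dt_g$ with $ds_g(Z)=v-v'$. Now take any lift $X'$ of $v'$ and set $X:=X'+Z$, which is a lift of $v$. Then $dt_g(X)=dt_g(X')$, so the two classes agree. Combined with the first step, this means every lift of every representative yields the same class.

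\emph{Linearity.} Given lifts $X_1$ of $v_1$ and $X_2$ of $v_2$, the vector $aX_1+bX_2$ is a lift of $av_1+bv_2$, because $ds_g$ is linear. Applying the linear map $dt_g$ and passing to the quotient shows that $\lambda_g$ is linear. No step is a real obstacle; the representative-independence step is the only one that needs the second identity of Lemma~\ref{lem:orbit-submersion} rather than the first.
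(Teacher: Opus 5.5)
Your proposal is correct and follows the paper's proof essentially step for step: independence of the lift via $dt_g(\ker ds_g)=T_y(O)$, and independence of the representative via a vector $Z\in\ker dt_g$ with $ds_g(Z)$ equal to the difference, both taken from Lemma~\ref{lem:orbit-submersion}. Your explicit linearity check just spells out what the paper calls immediate from the construction.
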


\begin{proof}
From Lemma~\ref{lem:orbit-submersion}, we have
\begin{equation}\label{eq:dt-kerds}
dt_g\bigl(\ker ds_g\bigr)=T_y(O),
\end{equation}
\begin{equation}\label{eq:ds-kerdt}
ds_g\bigl(\ker dt_g\bigr)=T_x(O).
\end{equation}

If $X,X'\in T_g(\cG)$ satisfy $ds_g(X)=ds_g(X')=v$, then $X'-X\in\ker(ds_g)$. By \eqref{eq:dt-kerds}, $dt_g(X'-X)\in T_y(O)$, hence
\[
[dt_g(X')]=[dt_g(X)]\in T_y(M)/T_y(O).
\]

Suppose $v,v'\in T_x(M)$ represent the same class in $\nu_x(O)=T_x(M)/T_x(O)$, so $v'-v\in T_x(O)$. By \eqref{eq:ds-kerdt}, we can choose $Z\in\ker(dt_g)$ such that $ds_g(Z)=v'-v$. If $X$ is a lift of $v$ such that $ds_g(X)=v$, let $X':=X+Z$. Then
\[
ds_g(X')=v' \qquad\text{and}\qquad dt_g(X')=dt_g(X).
\]

This is to say $\lambda_g$ is well-defined, and linearity is immediate from the construction.
\end{proof}


\begin{lemma}[{\cite{MdHF-LieGpdMetrics}*{Sec.~2.2}}]\label{lem:normal-rep-axioms}
Let $O \subset M$ be an orbit and let $\lambda \colon \cG_O \curvearrowright \nu(O)$ be the normal representation. For the unit arrow $1_x$, we have $\lambda_{1_x} = \id_{\nu_x(O)}$. For composable arrows $g \colon x \to y$ and $h \colon y \to z$ in $\cG_O$, we have $\lambda_{hg} = \lambda_h \circ \lambda_g$.
\end{lemma}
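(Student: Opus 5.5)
The plan is to argue directly from Definition~\ref{def:normal-orbit} and Lemma~\ref{lem:normal-well-defined}, using the fact that $s$ and $t$ behave well with respect to the unit map and to multiplication. Since $\lambda_g([v])$ may be computed with \emph{any} lift $X\in T_g\cG$ of $v$ along $ds_g$, each identity reduces to producing one convenient lift.

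\textbf{Units.} Let $x\in O$ and $v\in T_xM$. Put $X:=du_x(v)\in T_{1_x}\cG$. Since $s\circ u=\id_M$, we get $ds_{1_x}(X)=v$, so $X$ is an admissible lift. Since $t\circ u=\id_M$ as well, $dt_{1_x}(X)=v$. Hence $\lambda_{1_x}([v])=[v]$, that is, $\lambda_{1_x}=\id_{\nu_x(O)}$.

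\textbf{Composition.} Let $g:x\to y$ and $h:y\to z$, and fix $v\in T_xM$. Choose $X\in T_g\cG$ with $ds_g(X)=v$, and set $w:=dt_g(X)$, so that $\lambda_g([v])=[w]$. Next choose $Y\in T_h\cG$ with $ds_h(Y)=w$. Then $ds_h(Y)=dt_g(X)$, so $(Y,X)$ is a tangent vector to $\cG^{(2)}$ at $(h,g)$. Here we use that $\cG^{(2)}$ is the transversal fibre product of $s$ and $t$, with tangent space $T_{(h,g)}\cG^{(2)}=\{(Y,X)\mid ds_h(Y)=dt_g(X)\}$. Let $Z:=dm_{(h,g)}(Y,X)\in T_{hg}\cG$. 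From $s\circ m=s\circ\pi_2$ we get $ds_{hg}(Z)=ds_g(X)=v$, so $Z$ is a lift of $v$. From $t\circ m=t\circ\pi_1$ we get $dt_{hg}(Z)=dt_h(Y)$. Therefore
\[
\lambda_{hg}([v])=[dt_h(Y)]=\lambda_h([w])=\lambda_h\bigl(\lambda_g([v])\bigr).
\]

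\textbf{Main obstacle.} The only nontrivial point is the description of $T\cG^{(2)}$ as the fibre product of tangent spaces. This holds because $s$ is a submersion, so $\cG^{(2)}$ is an embedded submanifold of $\cG\times\cG$ with the stated tangent space; note that this uses a submersion, not just transversality of an arbitrary pair of maps. Everything else follows from Lemma~\ref{lem:normal-well-defined}, which removes all dependence on the choices of lifts and representatives. Finally, one should check that all arrows involved lie in $\cG_O$, so that the quotients $\nu_x(O)$, $\nu_y(O)$, $\nu_z(O)$ are the relevant ones. This holds because $x,y,z$ all lie in the same orbit $O$.
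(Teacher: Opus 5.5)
Your proof is correct: differentiating $s\circ u=t\circ u=\id_M$ gives the unit identity, and differentiating $s\circ m=s\circ\pi_2$ and $t\circ m=t\circ\pi_1$ on $\cG^{(2)}$ gives the composition identity, with Lemma~\ref{lem:normal-well-defined} absorbing all choices of lifts and representatives. The paper gives no proof of its own and simply cites del Hoyo--Fernandes, where the statement comes from the tangent groupoid $T\cG\rightrightarrows TM$ descending to the normal groupoid over $\nu(O)$; that is essentially your computation done pointwise.
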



\begin{proposition}[{\cite{MdHF-LieGpdMetrics}*{Sec.~2.2}}]\label{prop:normal-orbit}
For each orbit $O \subset M$, $\{\lambda_g\}_{g \in \cG_O}$ defines a smooth representation of the restriction Lie groupoid $\cG_O \rightrightarrows O$ on the vector bundle $\nu(O) \to O$. In particular, for each arrow $g \colon x \to y$ in $\cG_O$, the map $\lambda_g \colon \nu_x(O) \to \nu_y(O)$ is a linear isomorphism with inverse $\lambda_{g^{-1}}$.

Moreover, under the hypotheses of Proposition~\ref{prop:0-metric}, each $\lambda_g$ is an isometry for the metric on $\nu(O)$ induced by $g^M$ \cite{MdHF-LieGpdMetrics}*{Def.~3.1.1}.
\end{proposition}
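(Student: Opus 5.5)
Three things must be shown for Proposition~\ref{prop:normal-orbit}: the family $\lambda_g$ is smooth as a map on $s^*\nu(O)\subset\cG_O\times_O\nu(O)$, it is a representation, and it consists of linear isomorphisms that are isometries under a $0$-metric. Well-definedness and linearity are Lemma~\ref{lem:normal-well-defined}, and the unit and composition laws are Lemma~\ref{lem:normal-rep-axioms}. Invertibility follows formally: $\lambda_{g^{-1}}\circ\lambda_g=\lambda_{1_x}=\id$ and $\lambda_g\circ\lambda_{g^{-1}}=\lambda_{1_y}=\id$, so $\lambda_g$ is an isomorphism with inverse $\lambda_{g^{-1}}$. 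The isometry statement is exactly Proposition~\ref{prop:0-metric}. If one prefers not to cite it, it unwinds Definition~\ref{def:n-metrics}(1): transverse invariance for $\cG\curvearrowright M$ means the normal representation of $\cG\ltimes M\cong\cG$ acts isometrically, and that normal representation is the $\lambda$ of Definition~\ref{def:normal-orbit}.

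The only real content is smoothness. The approach is local. Fix $g_0\colon x_0\to y_0$ in $\cG_O$ and choose a local bisection, or more simply a smooth local section of $s$: a map $\sigma\colon U\to\cG$ on a neighbourhood $U\subset M$ of $x_0$ with $s\circ\sigma=\id$ and $\sigma(x_0)=g_0$. Such a $\sigma$ exists because $s$ is a submersion. Translating by the local right multiplication of arrows near $g_0$, one gets a smooth map $\Sigma$ from a neighbourhood $W$ of $g_0$ in $\cG$ into $\cG$ that, for each $g\in W$, produces a germ of section through $g$. Concretely, take a submersion chart for $s$ near $g_0$, $W\cong U\times F$, and let $\sigma_f(x)=(x,f)$. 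Then $X_g:=d(\sigma_f)_x(v)$ is a lift of $v$ depending smoothly on $(g,v)$, and the formula
\[
\lambda_g([v])=\bigl[dt_g\bigl(d(\sigma_f)_x v\bigr)\bigr]
\]
exhibits $\lambda$ on $W\cap\cG_O$ as the composite of smooth maps $TM|_{U}\to TM$ followed by the quotient $TM|_O\to\nu(O)$.

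The main obstacle is that $O$ is only an immersed submanifold and $\cG_O$ need not be a submanifold in general. Smoothness must therefore be read in the category where $\cG_O\rightrightarrows O$ is a Lie groupoid. Since the problem is orbitwise, I would use Lemma~\ref{lem:orbit-submersion} to realise $\cG_O$ as the gauge groupoid $(s^{-1}(x)\times s^{-1}(x))/\cG_x$ with its smooth structure. The map $(g,v)\mapsto dt_g(d(\sigma_f)_xv)$ is smooth on $\cG\times_M TM$, hence smooth on $s^{-1}(O)$ in that structure. It descends to $\nu(O)$ because, by the independence in Lemma~\ref{lem:normal-well-defined}, the kernel $TO$ is killed, and $\nu(O)$ is smooth as a quotient bundle over $O$. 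This gives a smooth action map $\cG_O\times_O\nu(O)\to\nu(O)$, completing the proof.
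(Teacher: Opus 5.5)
Your proposal is correct. The paper does not prove this proposition itself: it quotes it from del Hoyo--Fernandes. The in-paper inputs are exactly the ones you use for the algebraic and metric parts, namely Lemma~\ref{lem:normal-well-defined}, Lemma~\ref{lem:normal-rep-axioms} and Proposition~\ref{prop:0-metric}, and invertibility follows formally from them as you say.

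The only smoothness argument in the paper is the global one in Proposition~\ref{prop:normal-global}, and it takes a different route from yours.
\begin{itemize}
\item \emph{The paper's route.} It avoids local lifts from submersion charts. It uses the canonical isomorphism $\overline{ds}\colon T\cG/\ker(ds)\to s^*TM$ together with the descended map $\overline{dt}$. This makes $\overline{dt}\circ\overline{ds}^{-1}$ a choice-free smooth bundle map, Lemma~\ref{lem:orbit-submersion} kills $s^*T\mathcal O$, and no gluing via the independence lemma is needed. Your lifts $d(\sigma_f)_x v$ are simply a local, non-canonical version of $\overline{ds}^{-1}$.
\item \emph{What your route adds.} The paper's argument only covers the regular case, where $N$ is a smooth bundle over $M$ and everything happens on $\cG$ itself. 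It never handles a single, merely immersed orbit. Your gauge-groupoid step is what the orbitwise statement genuinely needs.
\end{itemize}

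To make that step airtight, state explicitly that $[p,q]\mapsto pq^{-1}$ and $[p,q]\mapsto t(p)$ define smooth maps from $(s^{-1}(x)\times s^{-1}(x))/\cG_x$ to $\cG$ and to $O$. They are smooth and $\cG_x$-invariant upstairs, and the quotient map is a submersion. This is exactly what justifies two steps you currently assert without proof:
\begin{itemize}
\item that ``smooth on $\cG\times_M TM$'' implies ``smooth on $s^{-1}(O)$'';
\item that the resulting map lands smoothly in $TM|_O=O\times_M TM$.
\end{itemize}

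With that in hand, you could drop the charts altogether. Run the construction of Proposition~\ref{prop:normal-global} over $\cG_O$, replacing $t^*N$ by $t^*\nu(O)$ and using $dt_g(\ker ds_g)=T_{t(g)}O$ for $g\in\cG_O$.
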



Assume now that $\cG$ is regular, so the orbit foliation $\mathcal O$ has locally constant rank and $N:=T(M)/T(\mathcal O)\to M$ is a smooth vector bundle. Its rank is $q$ under Hypothesis~\ref{hyp:standing}. We now consider the \emph{global} normal representation on $N$.

\begin{proposition}\label{prop:normal-global}
The orbitwise normal representations of Proposition~\ref{prop:normal-orbit} assemble into a smooth representation $\lambda^N \colon \cG \times_M N \to N$, $(g,[v_x]) \mapsto \lambda^N_g([v_x])$, such that the bundle projection $\pi_N \colon N \to M$ satisfies $\pi_N(\lambda^N(g,[v_x])) = t(g)$. Equivalently, the following diagram commutes:
\[
\begin{tikzcd}
\cG \times_M N \arrow[r,"\lambda^N"] \arrow[d,"\pr_1"'] & N \arrow[d,"\pi_N"] \\
\cG \arrow[r,"t"'] & M.
\end{tikzcd}
\]

Under the hypotheses of Proposition~\ref{prop:0-metric}, each $\lambda^N_g$ is an isometry for the induced metric $g^N$ on $N$.
\end{proposition}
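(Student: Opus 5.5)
We want to show that the orbitwise maps $\lambda_g$ glue to a smooth map $\lambda^N:\cG\times_M N\to N$. The plan is to build $\lambda^N$ from $ds$ and $dt$ using a single formula that works uniformly in $g$, and then to read off smoothness from a local expression.

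\emph{Step 1: reconcile the fibres.} In the regular case $T_x(O_x)=T_x(\mathcal O)$ for every $x$, because the leaves of $\mathcal O$ are the connected components of the orbits. Hence $\nu_x(O_x)=N_x$, and we may set $\lambda^N_g:=\lambda_g:N_{s(g)}\to N_{t(g)}$. By Lemma~\ref{lem:normal-well-defined}, Lemma~\ref{lem:normal-rep-axioms} and Proposition~\ref{prop:normal-orbit}, the $\lambda^N_g$ are linear, satisfy $\lambda^N_{hg}=\lambda^N_h\lambda^N_g$ and $\lambda^N_{1_x}=\id$, and are isometries under the $0$-metric. The relation $\pi_N\circ\lambda^N=t\circ\pr_1$ holds by construction. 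It remains to prove smoothness.

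\emph{Step 2: local smooth splittings of $ds$.} Fix $g_0\in\cG$. Since $s$ is a submersion, there is a local section through $g_0$, i.e.\ a neighbourhood $W$ of $g_0$ and a smooth vector bundle map
\[
\Sigma:s^*TM|_W\to T\cG|_W,\qquad ds\circ\Sigma=\id .
\]
This can be obtained from any auxiliary metric on $W$ (horizontal lift), or from submersion charts. Then, for $g\in W$ and $v\in T_{s(g)}M$, Definition~\ref{def:normal-orbit} gives
\[
\lambda^N_g([v])=\bigl[dt_g(\Sigma_g v)\bigr].
\]
The map $(g,v)\mapsto dt_g(\Sigma_g v)$ is smooth on $s^*TM|_W$, and it is linear in $v$. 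Composing with the smooth quotient $TM\to N$ gives a smooth map $F:s^*TM|_W\to N$. By Lemma~\ref{lem:normal-well-defined}, $F$ kills $s^*T(\mathcal O)$, so it factors through $s^*N|_W=(\cG\times_M N)|_W$.

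\emph{Step 3: descent to the quotient.} The map $s^*TM\to s^*N$ is a surjective submersion of vector bundles; explicitly, choose a local complement of $T(\mathcal O)$, or a local frame of $N$ lifted to $TM$. A map from $s^*N$ that becomes smooth after precomposing with this submersion is itself smooth. Hence $\lambda^N$ is smooth on $(\cG\times_M N)|_W$, and so globally. Well-definedness is independent of the choice of $\Sigma$, again by Lemma~\ref{lem:normal-well-defined}.

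\emph{Main obstacle.} The only subtle point is Step 1 together with the fact that $N$ is a genuine smooth bundle. This is where regularity enters: $T(\mathcal O)=\operatorname{im}\rho_{\cG}$ has constant rank, so $TM\to N$ is a smooth bundle surjection and the quotient classes vary smoothly. Once that is in place, smoothness reduces to the existence of local smooth lifts $\Sigma$, which is routine. Finally, the isometry statement follows pointwise from Proposition~\ref{prop:normal-orbit}, since $g^N$ restricted to $N_x$ coincides with the metric on $\nu_x(O_x)$ induced by $g^M$.
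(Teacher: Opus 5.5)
Your argument is correct and is essentially the paper's proof. You choose local splittings $\Sigma$ of $ds$; the paper avoids choices by using the canonical isomorphism $\overline{ds}\colon T\cG/\ker(ds)\xrightarrow{\ \cong\ }s^*TM$ and the descended map $\overline{dt}\colon T\cG/\ker(ds)\to t^*N$, then factors $\overline{dt}\circ\overline{ds}^{-1}$ through $s^*N$ exactly as in your Steps~2--3, with the isometry statement read off pointwise in both.
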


\begin{proof}
For each arrow $g \colon x \to y$, let $O_x$ denote the orbit through $x$ (as well as through $y$). Because $\cG$ is regular, we have $N_x = T_xM/T_x(\mathcal O) = \nu_x(O_x)$ and $N_y = T_yM/T_y(\mathcal O) = \nu_y(O_x)$. Let $\lambda_g^N \colon N_x \to N_y$ be the orbitwise normal map of Proposition~\ref{prop:normal-orbit} under these identifications.

We want to show smoothness. Consider $Q := T\cG/\ker(ds) \to \cG$. Since $s \colon \cG \to M$ is a submersion, $ds$ induces a vector bundle isomorphism
\[
\overline{ds} \colon Q \xrightarrow{\ \cong\ } s^*(TM).
\]

We have used earlier that $dt_g(\ker ds_g) = T_y(\mathcal O)$ for each $g \colon x \to y$, by Lemma~\ref{lem:orbit-submersion}. Hence $dt$ descends to a smooth vector bundle map
\[
\overline{dt} \colon Q \to t^*(N).
\]

So far we can summarize the construction as follows:
\[
\begin{tikzcd}[column sep=large,row sep=large]
s^*TM \arrow[r,"\overline{ds}^{-1}"] \arrow[d,two heads] &
T\cG/\ker(ds) \arrow[d,"\overline{dt}"] \\
s^*N \arrow[r,dashed,"\lambda^N"] & t^*N .
\end{tikzcd}
\]

The composite $s^*(TM) \xrightarrow{\ \overline{ds}^{-1}\ } Q \xrightarrow{\ \overline{dt}\ } t^*(N)$ vanishes on $s^*(T(\mathcal O))$. Indeed, if $g:x\to y$ and $v \in T_x(\mathcal O)$, then by Lemma~\ref{lem:orbit-submersion} there exists $\xi \in \ker(dt_g)$ with $ds_g(\xi)=v$. Hence the image of $(g,v)\in s^*(T(\mathcal O))$ in $t^*(N)$ is zero. Therefore the composite factors uniquely through $s^*(N) = s^*(TM)/s^*(T(\mathcal O))$, which yields a smooth vector bundle map
\[
\lambda^N \colon s^*(N) \longrightarrow t^*(N).
\]
Its fibre at $g$ is $\lambda_g^N$. Equivalently, this is a smooth action map $\lambda^N \colon \cG \times_M N \to N$.

The last part follows from Proposition~\ref{prop:0-metric} quite directly.
\end{proof}


\section{The transverse orthonormal frame bundle and the lifted groupoid action}\label{sec:molino-frame-lifted-action}

\subsection{The transverse orthonormal frame bundle and lifted foliation}
\label{subsec:frame-bundle-lifted-foliation}

The following definitions and lemmas discuss Molino's construction of the transverse orthonormal frame bundle for the Riemannian foliation $(M,\mathcal O)$, together with the ingredients used below in Theorem~\ref{thm:molino-groupoid}.

\begin{definition}[{\cite{MoerdijkMrcun:IFLG}*{Ex.~4.19}}]\label{def:OF}
Let $q=\rank N$. The transverse orthonormal frame bundle of $(\mathcal O,g^N)$ is the principal $O(q)$-bundle $\pi:OF(M,\mathcal O)\rightarrow M$ whose fibre over $x\in M$ is
\[
\begin{aligned}
OF_x:=\pi^{-1}(x)
  &=\Bigl\{e:\R^q\to N_x \mid e\text{ is an orthogonal linear isomorphism}\Bigr\},\\
&\hspace{2em}\text{where }N_x\text{ is equipped with }g^N_x.
\end{aligned}
\]
The right action of $O(q)$ is given by $e\cdot A:=e\circ A$ for $A\in O(q)$.
\end{definition}


To apply Molino's theory to the regular Riemannian foliation $(M,\mathcal O)$, we recall the transverse orthonormal frame bundle $\pi \colon OF(M,\mathcal O) \to M$ and the associated structures: the lifted foliation $\widetilde{\mathcal O}$, the transverse canonical form $\theta$, and the transverse Levi-Civita connection $\nabla^{\mathrm{tr}}$ on $N$.

\begin{remark}[{\cite{MoerdijkMrcun:IFLG}*{Remark~2.7(2)}}]\label{rem:riemannian-cocycle}
Since $(M,\mathcal O)$ is a Riemannian foliation with respect to $g^M$ by Proposition~\ref{prop:RF}, we may choose a Haefliger cocycle $(s_i \colon U_i \to T_i)$, where $T_i \subset \RR^q$ is open, together with Riemannian metrics $g_i$ on the $T_i$, such that each $s_i$ is a submersion with connected fibres, $\ker(ds_i)_x=T_x(\mathcal O)$, and the induced map $(ds_i)^N_x:(N_x,g_x^N)\to\bigl(T_{s_i(x)}T_i,(g_i)_{s_i(x)}\bigr)$ is an isometry for every $x\in U_i$.

On overlaps $U_i\cap U_j$, the transition maps $s_{ij}:s_j(U_i\cap U_j)\to s_i(U_i\cap U_j)$ satisfy $s_i=s_{ij}\circ s_j$ and are local isometries.
\end{remark}

\begin{definition}[{\cite{MoerdijkMrcun:IFLG}*{Ex.~4.19}}]\label{def:lifted-foliation}
Choose a Haefliger cocycle $(s_i \colon U_i \to T_i)$ and metrics $g_i$ as in Remark~\ref{rem:riemannian-cocycle}. Let $OF(T_i)\to T_i$ be the orthonormal frame bundle of the Riemannian manifold $(T_i,g_i)$.

For $x \in U_i$, $(ds_i)_x \colon T_xM \to T_{s_i(x)}T_i$ has kernel $T_x(\mathcal O)$, hence induces a linear isomorphism
\[
(ds_i)^N_x:N_x=T_xM/T_x(\mathcal O)\longrightarrow T_{s_i(x)}T_i.
\]

Let $\pi \colon OF(M,\mathcal O)\to M$ be the transverse orthonormal frame bundle and define
\[
\widetilde s_i:\pi^{-1}(U_i)\longrightarrow OF(T_i), \qquad \widetilde s_i(e):=(ds_i)^N_{\pi(e)}\circ e.
\]
For $f \in OF(T_i)$, the restriction of $\pi$ identifies the fibre $\widetilde s_i^{-1}(f)$ diffeomorphically with the fibre $s_i^{-1}(\pi_{T_i}(f))$. In particular, each $\widetilde s_i$ is a submersion with connected fibres.

On overlaps $U_i\cap U_j$, define
\[
\widetilde s_{ij}:OF(T_j)|_{s_j(U_i\cap U_j)} \longrightarrow OF(T_i)|_{s_i(U_i\cap U_j)}, \qquad \widetilde s_{ij}(f):=(ds_{ij})_y\circ f,
\]
for $f\in OF(T_j)_y$. On $\pi^{-1}(U_i\cap U_j)$, we have
\[
\widetilde s_i=\widetilde s_{ij}\circ \widetilde s_j.
\]
Therefore the fibres of the $\widetilde s_i$ glue to a global foliation $\widetilde{\mathcal O}$ on $OF(M,\mathcal O)$, which we call the lifted orbit foliation.
\end{definition}

Thus $\pi:(OF(M,\mathcal O),\widetilde{\mathcal O})\to (M,\mathcal O)$ is a transverse principal $O(q)$-bundle.


\begin{remark}
We give some pictures. For each $i$, letting $\pi_{T_i}:OF(T_i)\to T_i$ be the orthonormal frame bundle projection, the local defining submersions of the lifted foliation fit into the commutative diagram
\[
\begin{tikzcd}
\pi^{-1}(U_i) \arrow[r, "\widetilde s_i"] \arrow[d, "\pi"'] &
OF(T_i) \arrow[d, "\pi_{T_i}"] \\
U_i \arrow[r, "s_i"'] & T_i .
\end{tikzcd}
\]
Thus the fibres of $\widetilde s_i$ project diffeomorphically to the fibres of $s_i$.

On overlaps $U_i \cap U_j$,
\[
\begin{tikzcd}
\pi^{-1}(U_i \cap U_j)
\arrow[r, "\widetilde s_j"]
\arrow[dr, "\widetilde s_i"'] &
OF(T_j)|_{s_j(U_i \cap U_j)}
\arrow[d, "\widetilde s_{ij}"] \\
&
OF(T_i)|_{s_i(U_i \cap U_j)} .
\end{tikzcd}
\]
\end{remark}


\begin{definition}[{\cite{MoerdijkMrcun:IFLG}*{Ex.~4.19}; \cite{Molino:RF}*{Sec.~2.4; Prop.~2.6; Sec.~3.3}}]\label{def:theta-omega}
Let $\pi:OF(M,\mathcal O)\to M$ be the transverse orthonormal frame bundle, $\widetilde{\mathcal O}$ be the lifted orbit foliation, and $\pr^N:TM\longrightarrow N:=TM/T(\mathcal O)$ be the quotient projection. We define the following.

\begin{enumerate}[label=\textup{(\roman*)}]
\item The \emph{transverse canonical form} is the $1$-form $\theta\in\Omega^1(OF(M,\mathcal O),\RR^q)$ defined by
\[
\theta_e(\xi) := e^{-1} \left(\pr^N_{\pi(e)}\bigl((d\pi)_e(\xi)\bigr)\right), \qquad e\in OF(M,\mathcal O),\ \xi\in T_eOF(M,\mathcal O).
\]
It is $O(q)$-equivariant, $R_A^*\theta=A^{-1}\theta$ for $A\in O(q)$, and satisfies $\ker(\theta_e)=\ker\bigl((d\pi)_e\bigr)\oplus T_e(\widetilde{\mathcal O})$.

\item For each $i$, let $(ds_i)^N:N|_{U_i}\longrightarrow s_i^*(TT_i)$ be the induced isometric bundle isomorphism, and let $\nabla^i$ be the ordinary Levi-Civita connection of the Riemannian manifold $(T_i,g_i)$. Define a connection on $N|_{U_i}$ by
\[
\nabla^{\mathrm{tr},i} := \bigl((ds_i)^N\bigr)^{-1}\circ s_i^*\nabla^i\circ (ds_i)^N.
\]
The $\nabla^{\mathrm{tr},i}$ can be glued to a global metric connection $\nabla^{\mathrm{tr}}$ on $N$, which we call the \emph{transverse Levi-Civita connection}.

\item Let $\varepsilon_1,\dots,\varepsilon_q$ be the standard basis of $\RR^q$. The tautological orthonormal frame $E_1,\dots,E_q$ of $\pi^*N$ is defined by $E_a(e):=e(\varepsilon_a)$, $e\in OF(M,\mathcal O)$. The \emph{transverse Levi-Civita connection form} is the unique $1$-form $\omega\in\Omega^1(OF(M,\mathcal O),\mathfrak o(q))$ characterized by
\[
(\pi^*\nabla^{\mathrm{tr}})_\xi E_a = \sum_{b=1}^q \omega_{ba}(\xi)\,E_b, \qquad \xi\in T_eOF(M,\mathcal O).
\]

Since $\nabla^{\mathrm{tr}}$ is metric and $E_1,\dots,E_q$ is orthonormal, the matrix $(\omega_{ba})$ is skew-symmetric.
\end{enumerate}
\end{definition}

\begin{proposition}\label{prop:transverse-LC-local}
The local connections $\nabla^{\mathrm{tr},i}$ are compatible on overlaps and determine a global metric connection $\nabla^{\mathrm{tr}}$ on $N$, independent of the chosen Haefliger cocycle. If $\pi_{T_i} \colon OF(T_i)\to T_i$ denotes the orthonormal frame bundle of $(T_i,g_i)$ and $\omega_i$ denotes its ordinary Levi-Civita connection form, then $\omega=\widetilde s_i^*\omega_i$ on $\pi^{-1}(U_i)$.

In particular, for every vector field $X$ tangent to $\widetilde{\mathcal O}$, one has $i_X\omega=0$ and $L_X\omega=0$.
\end{proposition}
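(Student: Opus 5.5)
The plan has three steps: first compare the local connections on overlaps, then show independence of the cocycle, and finally identify $\omega$ locally with a pullback. The whole argument rests on one fact: the Levi-Civita connection is natural under local isometries.

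\textbf{Step 1: compatibility on overlaps.} On $U_i\cap U_j$ we have $s_i=s_{ij}\circ s_j$ with $s_{ij}$ a local isometry. Hence
\[
(ds_i)^N=s_j^*(ds_{ij})\circ(ds_j)^N.
\]
Naturality of Levi-Civita connections under isometries gives $ds_{ij}\circ\nabla^j=s_{ij}^*\nabla^i\circ ds_{ij}$. Pulling this back along $s_j$ and conjugating by $(ds_j)^N$ gives $\nabla^{\mathrm{tr},i}=\nabla^{\mathrm{tr},j}$ on $N|_{U_i\cap U_j}$. Each $\nabla^{\mathrm{tr},i}$ is metric, since it is conjugate by an isometric bundle isomorphism to the pullback of a metric connection. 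So the glued connection $\nabla^{\mathrm{tr}}$ is a metric connection.

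\textbf{Step 2: independence of the cocycle.} Given two Haefliger cocycles as in Remark~\ref{rem:riemannian-cocycle}, take their union. Near any point $x\in U_i\cap U'_k$, after shrinking to a neighbourhood on which both submersions have connected fibres, the plaques coincide. Then $s'_k=\phi\circ s_i$ for a local diffeomorphism $\phi$. Since both $(ds_i)^N_x$ and $(ds'_k)^N_x$ are isometries, $\phi$ is an isometry from $g_i$ to $g'_k$. The Step 1 argument then shows that the two local connections agree.

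\textbf{Step 3: $\omega=\widetilde s_i^*\omega_i$.} Consider the tautological frame $E^i_a(f)=f(\varepsilon_a)$ of $\pi_{T_i}^*TT_i$ on $OF(T_i)$. The Levi-Civita form satisfies $(\pi_{T_i}^*\nabla^i)E^i_a=\sum_b(\omega_i)_{ba}E^i_b$. The bundle map $(ds_i)^N$ lifts to an isometric isomorphism $\pi^*N|_{\pi^{-1}(U_i)}\cong\widetilde s_i^{\,*}\pi_{T_i}^*TT_i$, using $\pi_{T_i}\circ\widetilde s_i=s_i\circ\pi$. By the definition of $\widetilde s_i$, this isomorphism sends $E_a$ to $\widetilde s_i^{\,*}E^i_a$. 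By definition of $\nabla^{\mathrm{tr},i}$, it also intertwines $\pi^*\nabla^{\mathrm{tr}}$ with $\widetilde s_i^{\,*}\pi_{T_i}^*\nabla^i$. Applying $\widetilde s_i^{\,*}$ to the structure equation for $E^i_a$ and transporting back then gives
\[
(\pi^*\nabla^{\mathrm{tr}})E_a=\sum_b(\widetilde s_i^*\omega_i)_{ba}E_b.
\]
Uniqueness in Definition~\ref{def:theta-omega}(iii) yields $\omega=\widetilde s_i^*\omega_i$.

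\textbf{Final claim.} If $X$ is tangent to $\widetilde{\mathcal O}$, then $d\widetilde s_i(X)=0$ because the leaves are locally the fibres of $\widetilde s_i$. Therefore
\[
i_X\omega=(\widetilde s_i^*\omega_i)(X)=\omega_i(d\widetilde s_i X)=0 .
\]
Also $d\omega=\widetilde s_i^*d\omega_i$, so $i_Xd\omega=0$ by the same reasoning. Cartan's formula then gives $L_X\omega=d\,i_X\omega+i_Xd\omega=0$.

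\textbf{Expected obstacle.} The main care is needed in the bookkeeping of Step 3: checking that the lift of $(ds_i)^N$ really matches the tautological frames. This holds because $\widetilde s_i(e)=(ds_i)^N\circ e$.
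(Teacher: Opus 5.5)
Your proposal is correct and follows essentially the same route as the paper. Naturality of the Levi-Civita connection under the local isometries $s_{ij}$, and under the comparison maps between two cocycles, gives compatibility on overlaps and independence of the cocycle. The correspondence of tautological frames under $\widetilde s_i$ gives $\omega=\widetilde s_i^*\omega_i$, and $i_X\omega=0$, $i_X d\omega=0$ together with Cartan's formula give $L_X\omega=0$.
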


\begin{proof}
Following Definition~\ref{def:theta-omega}, on $U_i\cap U_j$, the transition map $s_{ij}:s_j(U_i\cap U_j)\to s_i(U_i\cap U_j)$ is a local isometry. A local isometry restricts near each point to an isometry onto an open subset. The naturality of the Levi-Civita connection therefore applies \cite{Lee:IRM}*{Proposition~5.13}. Hence $s_{ij}^{*}\nabla^i=\nabla^j$ on $s_j(U_i\cap U_j)$. Because $(ds_i)^N=(ds_{ij})\circ (ds_j)^N$ on $U_i\cap U_j$, the pullback connections agree:
\[
\nabla^{\mathrm{tr},i}=\nabla^{\mathrm{tr},j}
\qquad\text{on }U_i\cap U_j.
\]
Therefore the $\nabla^{\mathrm{tr},i}$ glue to a global metric connection $\nabla^{\mathrm{tr}}$ on $N$.

To prove independence of the chosen cocycle, let $(s'_a:U'_a\to T'_a)$ be another Haefliger cocycle and let $\nabla'_a$ be the Levi-Civita connection on $TT'_a$. For $x\in U_i\cap U'_a$, after shrinking if necessary, there is an open neighbourhood $W\subset U_i\cap U'_a$ of $x$ and a local diffeomorphism $h_{ia}:s'_a(W)\longrightarrow s_i(W)$ such that $s_i|_W=h_{ia}\circ s'_a|_W$. Since both $(ds_i)^N$ and $(ds'_a)^N$ are fibrewise isometries, $h_{ia}$ is a local isometry. Applying \cite{Lee:IRM}*{Proposition~5.13} again, we obtain $h_{ia}^{*}\nabla^i=\nabla'_a$. Hence the corresponding local pullback connections agree on $W$, so the global connection $\nabla^{\mathrm{tr}}$ is independent of the cocycle.

It is clear that the lifted map $\widetilde s_i:\pi^{-1}(U_i)\to OF(T_i)$ identifies $\pi^*N|_{\pi^{-1}(U_i)}$ with $\widetilde s_i^*(\pi_{T_i}^*TT_i)$, and the tautological frame $E_1,\dots,E_q$ on $\pi^*N$ corresponds to the tautological frame $E^i_1,\dots,E^i_q$ on $\pi_{T_i}^*TT_i$. Hence, the pullback connection $\pi^*\nabla^{\mathrm{tr}}$ corresponds to $\widetilde s_i^*(\pi_{T_i}^*\nabla^i)$, and by the defining property of connection forms, $\omega|_{\pi^{-1}(U_i)}=(\widetilde s_i)^*\omega_i$.

Finally, on $\pi^{-1}(U_i)$, the foliation $\widetilde{\mathcal O}$ is the foliation by fibres of $\widetilde s_i$. Hence, for every $X\in\mathfrak X(\widetilde{\mathcal O})$, one has $i_X\omega=i_X(\widetilde s_i)^*\omega_i=0$ and $i_X(d\omega)=i_X(\widetilde s_i)^*(d\omega_i)=0$, so Cartan's formula gives
\[
L_X\omega=d(i_X\omega)+i_X(d\omega)=0.
\]
Thus, for every vector field $X$ tangent to $\widetilde{\mathcal O}$, one has $i_X\omega=0$ and $L_X\omega=0$.
\end{proof}


\begin{lemma}[{\cite{MoerdijkMrcun:IFLG}*{Theorem~4.20}}]\label{lem:theta-omega-parallelism}
Let $\pi:OF(M,\mathcal O)\to M$ be the transverse orthonormal frame bundle of a Riemannian foliation $(M,\mathcal O)$ of codimension $q$, with lifted foliation $\widetilde{\mathcal O}$, transverse canonical form $\theta$, and transverse Levi-Civita connection form $\omega$.

Then, $(\theta,\omega)$ induces a vector bundle isomorphism
\[
\overline{\Psi}: N(\widetilde{\mathcal O}) \longrightarrow OF(M,\mathcal O)\times\bigl(\RR^q\oplus\mathfrak o(q)\bigr), \qquad [\xi]\longmapsto \bigl(\theta(\xi),\omega(\xi)\bigr),
\]
where $N(\widetilde{\mathcal O})=TOF(M,\mathcal O)/T(\widetilde{\mathcal O})$. In particular, $(OF(M,\mathcal O),\widetilde{\mathcal O})$ is transversely parallelizable.
\end{lemma}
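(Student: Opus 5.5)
The plan is to reduce the statement to the classical case of an ordinary Riemannian manifold $(T_i,g_i)$. There, the canonical form $\theta_i$ and the Levi-Civita connection form $\omega_i$ on $OF(T_i)$ form an absolute parallelism: $(\theta_i,\omega_i)_f\colon T_fOF(T_i)\to\RR^q\oplus\mathfrak o(q)$ is a linear isomorphism for each frame $f$. This is standard; $\theta_i$ is injective on the horizontal space $\ker\omega_i$ and vanishes on the vertical fibre directions, where $\omega_i$ restricts to the Maurer--Cartan identification with $\mathfrak o(q)$. Dimensions match, since $\dim OF(T_i)=q+\dim\mathfrak o(q)$.

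First, I would check that $\overline{\Psi}$ is well defined, i.e.\ that $\theta$ and $\omega$ both vanish on $T(\widetilde{\mathcal O})$. For $\omega$ this is Proposition~\ref{prop:transverse-LC-local}. For $\theta$ it is the kernel description in Definition~\ref{def:theta-omega}(i). Alternatively, on $\pi^{-1}(U_i)$ one verifies $\theta=\widetilde s_i^*\theta_i$ directly. Indeed, for $\xi\in T_e\,OF(M,\mathcal O)$ we have
\[
\theta_i\bigl(d\widetilde s_i(\xi)\bigr)=\bigl((ds_i)^N\circ e\bigr)^{-1}\bigl(ds_i(d\pi(\xi))\bigr)=e^{-1}\bigl(\pr^N(d\pi(\xi))\bigr)=\theta_e(\xi),
\]
using $\pi_{T_i}\circ\widetilde s_i=s_i\circ\pi$ and $ds_i=(ds_i)^N\circ\pr^N$. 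Since $T(\widetilde{\mathcal O})=\ker d\widetilde s_i$, both forms descend to smooth bundle maps $N(\widetilde{\mathcal O})\to\RR^q\oplus\mathfrak o(q)$, and smoothness of $\overline\Psi$ is clear.

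Second, I would prove bijectivity fibrewise. Because $\widetilde s_i$ is a submersion with kernel $T(\widetilde{\mathcal O})$ (Definition~\ref{def:lifted-foliation}), $d\widetilde s_i$ induces an isomorphism
\[
N_e(\widetilde{\mathcal O})\cong T_{\widetilde s_i(e)}OF(T_i).
\]
By $\theta=\widetilde s_i^*\theta_i$ and $\omega=\widetilde s_i^*\omega_i$, the map $\overline\Psi_e$ is the composite of this isomorphism with the parallelism $(\theta_i,\omega_i)$ at $\widetilde s_i(e)$. It is therefore an isomorphism. Rank counting gives a consistency check: $\operatorname{codim}\widetilde{\mathcal O}=q+\dim O(q)$.

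Finally, the constant sections of the trivial bundle pull back under $\overline\Psi^{-1}$ to a global frame of $N(\widetilde{\mathcal O})$. By Proposition~\ref{prop:transverse-LC-local} and the corresponding statement for $\theta$ (basic, being a pullback by $\widetilde s_i$), these sections are invariant under flows tangent to $\widetilde{\mathcal O}$, hence are transverse (foliated) vector fields. Thus the foliation is transversely parallelizable. The only step needing real care is the identity $\theta=\widetilde s_i^*\theta_i$ together with the classical parallelism on $OF(T_i)$; I expect no serious obstacle beyond this.
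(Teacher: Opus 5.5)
Your proof is correct, but its core step differs from the paper's.

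\textbf{How the paper proves bijectivity.} It argues intrinsically on $OF(M,\mathcal O)$. If $\theta_e(\xi)=\omega_e(\xi)=0$, the decomposition $\ker\theta_e=\ker(d\pi_e)\oplus T_e(\widetilde{\mathcal O})$ writes $\xi$ as a vertical vector plus a leafwise one. Since $\omega$ kills the leafwise part and is a principal connection form, it is injective on vertical vectors, so $\xi$ is leafwise. Surjectivity then follows from $\codim\widetilde{\mathcal O}=q+\dim\mathfrak o(q)$.

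\textbf{How you prove it.} You factor $\overline\Psi_e=(\theta_i,\omega_i)_{\widetilde s_i(e)}\circ\overline{d\widetilde s_i}_e$ and import the classical absolute parallelism of $OF(T_i)$. This needs the identity $\theta=\widetilde s_i^*\theta_i$, which the paper only asserts in its last paragraph and which you check explicitly.

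\textbf{What each route buys.} Yours makes the local picture the whole mechanism: the transverse parallelism is the pullback of the parallelism downstairs, and basicness of $\theta$ and $\omega$ comes for free. The paper's route avoids treating the classical parallelism as a black box and uses only the properties of $\theta$ and $\omega$ recorded in Definition~\ref{def:theta-omega}. Its dimension count, however, still rests on $\widetilde s_i$ being a submersion onto $OF(T_i)$.

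\textbf{Sharpening your last step.} The phrase ``invariant under flows tangent to $\widetilde{\mathcal O}$'' can be made precise and global. Let $Y$ be any vector field with $\theta(Y)$ and $\omega(Y)$ constant, and let $X\in\mathfrak X(\widetilde{\mathcal O})$. Then $\theta([X,Y])=X(\theta(Y))-d\theta(X,Y)=0$ by basicness, and likewise $\omega([X,Y])=0$. Hence $[X,Y]\in\ker\theta\cap\ker\omega=T(\widetilde{\mathcal O})$, so every lift of a constant section is projectable. This gives global representatives in $L(OF(M,\mathcal O),\widetilde{\mathcal O})$ directly, which is slightly cleaner than the paper's local projection along $\widetilde s_i$. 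That argument leaves the passage from local to global projectable lifts implicit.
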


\begin{proof}
For each $e\in OF(M,\mathcal O)$, define
\[
\Psi_e:
T_eOF(M,\mathcal O)/T_e(\widetilde{\mathcal O})
\longrightarrow
\RR^q\oplus\mathfrak o(q),
\qquad
[\xi]\longmapsto \bigl(\theta_e(\xi),\omega_e(\xi)\bigr).
\]

First, by Definition~\ref{def:theta-omega}\textup{(i)} we have $\theta|_{T(\widetilde{\mathcal O})}=0$, and by Proposition~\ref{prop:transverse-LC-local} we also have $\omega|_{T(\widetilde{\mathcal O})}=0$. Hence $\Psi_e$ is well-defined.

Recall that $\ker(\theta_e)=\ker\bigl((d\pi)_e\bigr)\oplus T_e(\widetilde{\mathcal O})$. So let $[\xi]\in T_eOF(M,\mathcal O)/T_e(\widetilde{\mathcal O})$ satisfy $\Psi_e([\xi])=0$. Choose $\xi\in T_eOF(M,\mathcal O)$ with $\theta_e(\xi)=0$ and $\omega_e(\xi)=0$. Since $\theta_e(\xi)=0$ and $\ker(\theta_e)=\ker\bigl((d\pi)_e\bigr)\oplus T_e(\widetilde{\mathcal O})$, we can write $\xi=v+\xi_{\widetilde{\mathcal O}}$, with $v\in\ker\bigl((d\pi)_e\bigr)$ and $\xi_{\widetilde{\mathcal O}}\in T_e(\widetilde{\mathcal O})$.

Because $\omega$ vanishes on $T(\widetilde{\mathcal O})$, we have $0=\omega_e(\xi)=\omega_e(v)+\omega_e(\xi_{\widetilde{\mathcal O}}) =\omega_e(v)$. Since $\omega$ is a connection form on the principal $O(q)$-bundle $\pi:OF(M,\mathcal O)\to M$, its restriction $\omega_e:\ker\bigl((d\pi)_e\bigr)\longrightarrow \mathfrak o(q)$ to the vertical tangent space is a linear isomorphism. Hence $v=0$, so $\xi\in T_e(\widetilde{\mathcal O})$, and therefore $[\xi]=0$. Thus $\Psi_e$ is injective.

Finally, $\dim N_e(\widetilde{\mathcal O}) = q+\dim\mathfrak o(q) = \dim\bigl(\RR^q\oplus\mathfrak o(q)\bigr)$, so $\Psi_e$ is an isomorphism. Since the construction is smooth in $e$, the maps $\Psi_e$ assemble to a smooth vector bundle isomorphism $\overline{\Psi}$.

By Definition~\ref{def:theta-omega} and
Proposition~\ref{prop:transverse-LC-local},
$\theta$ and $\omega$ are locally the pullbacks
along $\widetilde s_i$ of the canonical form and
the Levi-Civita connection form on $OF(T_i)$.
Consequently, the inverse images under
$\overline{\Psi}$ of constant sections of
$\RR^q\oplus\mathfrak o(q)$ project along
$\widetilde s_i$ to vector fields on $OF(T_i)$.
They are therefore transverse vector fields,
and the inverse images of a basis give a
transverse parallelism.
\end{proof}

\begin{definition}\label{def:local-transverse-isometry}
A diffeomorphism $\varphi:U\xrightarrow{\cong}V$ between open subsets \(U,V\subset M\) is called a foliated transverse isometry if:
\begin{enumerate}[label=\textup{(\roman*)}]
\item $\varphi$ is foliated, i.e.\ it sends leaves of $\mathcal O|_U$ to leaves of $\mathcal O|_V$. Equivalently, $d\varphi_x\bigl(T_x(\mathcal O)\bigr)=T_{\varphi(x)}(\mathcal O)$ for all $x\in U$. In this case $\varphi$ induces a well-defined linear map
\[
\begin{aligned}
(d\varphi)^N_x:\ N_x=T_x(M)/T_x(\mathcal O) &\longrightarrow N_{\varphi(x)}=T_{\varphi(x)}(M)/T_{\varphi(x)}(\mathcal O),\\
(d\varphi)^N_x([v])&:=[d\varphi_x(v)].
\end{aligned}
\]
\item For every $x\in U$, the induced map $(d\varphi)^N_x$ is an isometry for $g^N$. We write
\[
g^N_{\varphi(x)}\bigl((d\varphi)^N_x(u),(d\varphi)^N_x(v)\bigr)=g^N_x(u,v), \qquad u,v\in N_x.
\]
\end{enumerate}
\end{definition}

We summarize the naturality of the transverse orthonormal frame bundle, its lifted foliation, and the forms $\theta$ and $\omega$ under foliated transverse isometries.

\begin{lemma}\label{lem:functoriality-molino}
Let $\pi:OF(M,\mathcal O)\to M$ be the transverse orthonormal frame bundle, with lifted foliation $\widetilde{\mathcal O}$, transverse canonical form $\theta$, transverse Levi-Civita connection $\nabla^{\mathrm{tr}}$ on $N$, and associated principal connection form $\omega$ on $OF(M,\mathcal O)$. If $\varphi:U\to V$ is a foliated transverse isometry, then
\[
OF(\varphi):OF(M,\mathcal O)|_U=\pi^{-1}(U)\rightarrow OF(M,\mathcal O)|_V=\pi^{-1}(V),
\]
defined by $OF(\varphi)(e):=(d\varphi)^N_{\pi(e)}\circ e$, is a principal $O(q)$-bundle isomorphism such that $\pi\circ OF(\varphi)=\varphi\circ\pi$, and
\[
\begin{aligned}
OF(\varphi)^*\bigl(\theta|_{\pi^{-1}(V)}\bigr)&=\theta|_{\pi^{-1}(U)},\\
OF(\varphi)^*\bigl(\omega|_{\pi^{-1}(V)}\bigr)&=\omega|_{\pi^{-1}(U)},\\
dOF(\varphi)\bigl(T(\widetilde{\mathcal O})|_{\pi^{-1}(U)}\bigr)&=T(\widetilde{\mathcal O})|_{\pi^{-1}(V)}.
\end{aligned}
\]
\end{lemma}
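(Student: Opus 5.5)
The plan is to reduce everything to the local Haefliger description of Remark~\ref{rem:riemannian-cocycle} and Definition~\ref{def:lifted-foliation}. There $\theta$ and $\omega$ are pullbacks along $\widetilde s_i$ of the canonical and Levi-Civita forms on $OF(T_i)$. Then I would use the naturality of those objects under local isometries of the $T_i$. First I check that $OF(\varphi)$ is a principal bundle isomorphism. For $e\in OF_x$, the map $(d\varphi)^N_x\circ e:\RR^q\to N_{\varphi(x)}$ is an orthogonal isomorphism by Definition~\ref{def:local-transverse-isometry}(ii). Hence $OF(\varphi)$ maps $OF_x$ to $OF_{\varphi(x)}$, and $\pi\circ OF(\varphi)=\varphi\circ\pi$ holds by construction. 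Equivariance $OF(\varphi)(e\circ A)=OF(\varphi)(e)\circ A$ is immediate. Smoothness follows because $(d\varphi)^N$ is a smooth bundle map $N|_U\to\varphi^*N|_V$. The inverse is $OF(\varphi^{-1})$, since $(d\varphi^{-1})^N=((d\varphi)^N)^{-1}$.

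For $\theta$ I would compute directly. Let $\xi\in T_eOF$ and set $x=\pi(e)$. Then
\[
\theta_{OF(\varphi)(e)}\bigl(dOF(\varphi)\xi\bigr)=\bigl((d\varphi)^N_x\circ e\bigr)^{-1}\pr^N\bigl(d\varphi_x\, d\pi_e\xi\bigr)=e^{-1}\pr^N(d\pi_e\xi),
\]
using $\pi\circ OF(\varphi)=\varphi\circ\pi$ and $\pr^N\circ d\varphi=(d\varphi)^N\circ\pr^N$.

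For the foliation and $\omega$ I would localize. Fix $e_0$ with $x_0=\pi(e_0)\in U$, and choose $i,j$ with $x_0\in U_j$ and $\varphi(x_0)\in U_i$. Shrinking to a neighbourhood $W$ of $x_0$ on which $s_j$ has connected fibres (plaques), consider $s_i\circ\varphi$ on $W$. Since $\varphi$ is foliated, this map is constant on plaques of $s_j$, so it factors as $s_i\circ\varphi=h\circ s_j$ for a local diffeomorphism $h$ between open subsets of $T_j$ and $T_i$. The transverse isometry condition, combined with $(ds_i)^N$ and $(ds_j)^N$ being isometries, shows that $h$ is a local isometry of $(T_j,g_j)\to(T_i,g_i)$. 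On $\pi^{-1}(W)$ one then gets
\[
\widetilde s_i\circ OF(\varphi)=OF(h)\circ\widetilde s_j,
\]
where $OF(h)(f)=dh\circ f$; this is a chain-rule identity on the quotient differentials. Since $OF(\varphi)$ intertwines the local defining submersions up to the diffeomorphism $OF(h)$, it maps fibres of $\widetilde s_j$ into fibres of $\widetilde s_i$. Hence $dOF(\varphi)$ sends $T(\widetilde{\mathcal O})$ into $T(\widetilde{\mathcal O})$, and equality follows by dimension, or by applying the same argument to $\varphi^{-1}$. For $\omega$, Proposition~\ref{prop:transverse-LC-local} gives
\[
OF(\varphi)^*\omega=OF(\varphi)^*\widetilde s_i^*\omega_i=\widetilde s_j^*OF(h)^*\omega_i.
\]
The classical naturality of the Levi-Civita connection form under local isometries (\cite{Lee:IRM}*{Proposition~5.13}) gives $OF(h)^*\omega_i=\omega_j$. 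Therefore $OF(\varphi)^*\omega=\widetilde s_j^*\omega_j=\omega$ near $e_0$.

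The main obstacle is the factorization step producing $h$. One must arrange that $\varphi(W)\subset U_i$ and that $s_j|_W$ has connected fibres, so that $h$ is well defined and smooth, with smoothness coming from local sections of the submersion $s_j$. One must also verify that $h$ is a local diffeomorphism with isometric differential. The differential claim follows from $dh\circ(ds_j)^N=(ds_i)^N\circ(d\varphi)^N$, in which all three maps on the right and $(ds_j)^N$ are isometric isomorphisms. Alternatively, for $\omega$ one could avoid $h$: show that $(d\varphi)^N$ intertwines $\nabla^{\mathrm{tr}}$ with $\varphi^*\nabla^{\mathrm{tr}}$, using the cocycle-independence in Proposition~\ref{prop:transverse-LC-local} applied to the pulled-back cocycle $s_i\circ\varphi$, then transport the tautological frames.
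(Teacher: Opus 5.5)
Your proposal is correct and takes essentially the same route as the paper. Both arguments use the direct computation for $\theta$, the local factorization $s_i\circ\varphi=h\circ s_j$ through a local isometry $h$ of the transversals, the intertwining identity $\widetilde s_i\circ OF(\varphi)=OF(h)\circ\widetilde s_j$ to preserve $\widetilde{\mathcal O}$, and naturality of the Levi-Civita connection form to get $OF(\varphi)^*\omega=\omega$. Your handling of the factorization step (connected plaques, smoothness via local sections, isometric differential of $h$) is somewhat more careful than the paper's ``after shrinking if necessary.''
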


\begin{proof}
For each $x \in U$, the induced linear map $(d\varphi)^N_x \colon N_x \to N_{\varphi(x)}$ is an isometry by Definition~\ref{def:local-transverse-isometry}(ii). Thus, if $e \colon \mathbb R^q \to N_x$ is an orthonormal frame, then
\[
(d\varphi)^N_x \circ e \colon \mathbb R^q \to N_{\varphi(x)}
\]
is again an orthonormal frame. Therefore $OF(\varphi)$ is well-defined.

It is $O(q)$-equivariant by construction because
\begin{align*}
OF(\varphi)(e\cdot A)
&=(d\varphi)^N_{\pi(e)}\circ (e\circ A)\\
&=\bigl((d\varphi)^N_{\pi(e)}\circ e\bigr)\circ A\\
&=OF(\varphi)(e)\cdot A.
\end{align*}
Since $\varphi$ is a local diffeomorphism, the same construction applies to $\varphi^{-1}$; hence $OF(\varphi)$ is a principal $O(q)$-bundle isomorphism.

Let $e\in OF(M,\mathcal O)|_U$ and $\xi\in T_eOF(M,\mathcal O)$, and write $x:=\pi(e)$. Using $\pi\circ OF(\varphi)=\varphi\circ\pi$, the definition of $\theta$, and the identity $\pr^N_{\varphi(x)}\circ d\varphi_x = (d\varphi)^N_x\circ \pr^N_x$ (which clearly holds because $\varphi$ is foliated), we compute
\begin{align*}
\bigl(OF(\varphi)^*\theta\bigr)_e(\xi)
&=\theta_{OF(\varphi)(e)}\bigl(dOF(\varphi)_e(\xi)\bigr)\\
&=OF(\varphi)(e)^{-1}\left(\pr^N_{\varphi(x)}\Bigl(d\pi_{OF(\varphi)(e)}\bigl(dOF(\varphi)_e(\xi)\bigr)\Bigr)\right)\\
&=OF(\varphi)(e)^{-1}\left(\pr^N_{\varphi(x)}\Bigl(d(\pi\circ OF(\varphi))_e(\xi)\Bigr)\right)\\
&=OF(\varphi)(e)^{-1}\left((d\varphi)^N_x\Bigl(\pr^N_x\bigl(d\pi_e(\xi)\bigr)\Bigr)\right)\\
&=e^{-1}\left(\pr^N_x\bigl(d\pi_e(\xi)\bigr)\right)\\
&=\theta_e(\xi),
\end{align*}
since $OF(\varphi)(e)=(d\varphi)^N_x\circ e$. Thus,
\[
OF(\varphi)^*\bigl(\theta|_{\pi^{-1}(V)}\bigr)=\theta|_{\pi^{-1}(U)}.
\]

Then we choose a Haefliger cocycle $(s_i \colon U_i \to T_i)$. For each $i$, the lifted foliation $\widetilde{\mathcal O}$ on $\pi^{-1}(U_i)$ is the foliation by fibres of the submersion $\widetilde s_i \colon \pi^{-1}(U_i) \to OF(T_i)$. Since $\varphi$ is foliated, after shrinking if necessary, we may assume that for each $i$ there exists $j$ and a local diffeomorphism $h_{ji} \colon s_i(U\cap U_i) \to s_j(V\cap U_j)$ such that $s_j\circ\varphi = h_{ji}\circ s_i$ on $U\cap U_i$. 

Because $\varphi$ is a transverse isometry, each $h_{ji}$ is a local isometry between the Riemannian manifolds $(T_i,g_i)$ and $(T_j,g_j)$. Then
\[
\widetilde s_j\circ OF(\varphi)=OF(h_{ji})\circ \widetilde s_i \quad\text{on }\pi^{-1}(U\cap U_i).
\]
Hence $OF(\varphi)$ sends fibres of $\widetilde s_i$ to fibres of $\widetilde s_j$, so it preserves the lifted foliation $\widetilde{\mathcal O}$.

That is, the following diagram commutes:
\[
\begin{tikzcd}
\pi^{-1}(U\cap U_i) \arrow[r, "OF(\varphi)"] \arrow[d, "\widetilde s_i"'] &
\pi^{-1}(V\cap U_j) \arrow[d, "\widetilde s_j"] \\
OF(T_i)|_{s_i(U\cap U_i)}\arrow[r, "OF(h_{ji})"'] & OF(T_j)|_{s_j(V\cap U_j)}
\end{tikzcd}
\]

Let $\omega_i$ and $\omega_j$ be the ordinary Levi-Civita connection forms on $OF(T_i)$ and $OF(T_j)$. By Proposition~\ref{prop:transverse-LC-local},
\[
\omega|_{\pi^{-1}(U_i)}=(\widetilde s_i)^*\omega_i, \qquad \omega|_{\pi^{-1}(U_j)}=(\widetilde s_j)^*\omega_j.
\]
Since $h_{ji}$ is a local isometry, the naturality theorem applies to the Levi-Civita connection, as we discussed earlier \cite{Lee:IRM}*{Proposition~5.13}. Equivalently, on orthonormal frame bundles the induced map preserves the Levi-Civita connection forms: $OF(h_{ji})^*\omega_j=\omega_i$.

Therefore, on $\pi^{-1}(U\cap U_i)$, it is easy to compute that
\begin{align*}
OF(\varphi)^*\omega
&=OF(\varphi)^*(\widetilde s_j)^*\omega_j\\
&=(\widetilde s_i)^*OF(h_{ji})^*\omega_j\\
&=(\widetilde s_i)^*\omega_i\\
&=\omega.
\end{align*}
Hence
\[
OF(\varphi)^*\bigl(\omega|_{\pi^{-1}(V)}\bigr)=\omega|_{\pi^{-1}(U)}.
\]

Since $OF(\varphi)$ preserves $\widetilde{\mathcal O}$, we also have
\[
dOF(\varphi)\bigl(T(\widetilde{\mathcal O})|_{\pi^{-1}(U)}\bigr) = T(\widetilde{\mathcal O})|_{\pi^{-1}(V)}.
\]
\end{proof}

\subsection{Lifted groupoid action and invariance}

We now use the global normal representation (Proposition \ref{prop:normal-global}) to lift the action of $\cG$ on $M$ to an action on the transverse orthonormal frame bundle $\pi:OF(M,\mathcal O)\to M$.

\begin{proposition}\label{prop:action-axioms}
Let $\lambda^N:\cG\curvearrowright N$ be the global normal
representation on $N=T(M)/T(\mathcal O)$, and assume that it acts by fibrewise isometries for $g^N$. For an arrow $g\in \cG$ with $s(g)=x$ and $t(g)=y$, and for a frame $e\in OF_x:=\pi^{-1}(x)$, set
\[
g\cdot e:=\lambda^N_g\circ e\in OF_y:=\pi^{-1}(y).
\]
Then $(g,e)\mapsto g\cdot e$ is a smooth left action of $\cG$ on the principal $O(q)$-bundle $\pi:OF(M,\mathcal O)\to M$, defined on $\cG\times_M OF(M,\mathcal O):=\{(g,e)\in \cG\times OF(M,\mathcal O)\mid s(g)=\pi(e)\}$. In particular, for all composable arrows $g,h$ and all frames $e$, we have $h\cdot(g\cdot e)=(hg)\cdot e$ and $1_x\cdot e=e$, and the action commutes with the right $O(q)$-action: $g\cdot(eA)=(g\cdot e)A$ for $A\in O(q)$.

Moreover, this action is the unique lift of $\lambda^N$ in the following sense: if $\cG$ acts on $OF(M,\mathcal O)\to M$ by principal $O(q)$-bundle isomorphisms over the action on $M$, that is, by a smooth map $\cG\times_M OF(M,\mathcal O)\to OF(M,\mathcal O)$, $(g,e)\mapsto g\cdot e$, and the induced action on the associated bundle $OF(M,\mathcal O)\times_{O(q)}\RR^q\cong N$ equals $\lambda^N$, then $g\cdot e=\lambda^N_g\circ e$ for every arrow $g:x\to y$ and every frame $e\in OF_x$.
\end{proposition}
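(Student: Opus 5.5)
The plan is to check, in order, that the action is well defined, that it is smooth, that it satisfies the action axioms and commutes with $O(q)$, and finally that it is unique.

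\emph{Well-definedness.} For $g:x\to y$ and $e\in OF_x$, the map $\lambda^N_g:N_x\to N_y$ is a linear isometry for $g^N$ (Proposition~\ref{prop:normal-global}, using the hypothesis). Hence $\lambda^N_g\circ e:\RR^q\to N_y$ is an orthogonal isomorphism and lies in $OF_y$. In particular $\pi(g\cdot e)=t(g)$, which is the moment-map condition for a left action along $\pi$.

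\emph{Action axioms and $O(q)$-equivariance.} These follow from Lemma~\ref{lem:normal-rep-axioms}, applied orbitwise, together with the identification $N_x=\nu_x(O_x)$ made in the proof of Proposition~\ref{prop:normal-global}:
\[
1_x\cdot e=\id\circ e=e,\qquad h\cdot(g\cdot e)=\lambda^N_h\lambda^N_g e=\lambda^N_{hg}e=(hg)\cdot e .
\]
Commutation with the right action is associativity of composition, $\lambda^N_g\circ(e\circ A)=(\lambda^N_g\circ e)\circ A$.

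\emph{Smoothness.} I expect this to be the only step needing care. First I view $OF(M,\mathcal O)$ as an open subset of the bundle $\Hom(\underline{\RR}^q,N)\cong N^{\oplus q}$ over $M$. Under this identification, the map $(g,e)\mapsto g\cdot e$ becomes the restriction of
\[
\cG\times_M N^{\oplus q}\to N^{\oplus q},\qquad (g,v_1,\dots,v_q)\mapsto\bigl(\lambda^N(g,v_1),\dots,\lambda^N(g,v_q)\bigr).
\]
Each component is smooth by Proposition~\ref{prop:normal-global}, being $\lambda^N$ composed with the smooth projections $\cG\times_M N^{\oplus q}\to\cG\times_M N$. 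Here $\cG\times_M N^{\oplus q}$ is a manifold because $s$ is a submersion. The domain $\cG\times_M OF(M,\mathcal O)$ is an open subset of it. The action preserves the open subset $OF(M,\mathcal O)$ by the well-definedness step, so the restricted map is smooth.

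\emph{Uniqueness.} Suppose we have another action by principal bundle isomorphisms, written $g\ast e$, over the action on $M$, and inducing $\lambda^N$ on the associated bundle. The associated-bundle isomorphism $OF\times_{O(q)}\RR^q\cong N$ is $[e,u]\mapsto e(u)$. The induced action is therefore $g\cdot[e,u]=[g\ast e,u]$, and it corresponds to $e(u)\mapsto (g\ast e)(u)$. The hypothesis gives $(g\ast e)(u)=\lambda^N_g(e(u))$ for all $u\in\RR^q$, hence $g\ast e=\lambda^N_g\circ e$.
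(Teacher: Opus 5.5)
Your well-definedness, action-axiom, and uniqueness steps coincide with the paper's; the uniqueness argument via $[e,u]\mapsto e(u)$ is exactly the paper's. The smoothness step takes a different route, and as written it rests on a false claim. For $q\geq 1$, $OF(M,\mathcal O)$ is \emph{not} open in $\Hom(\underline{\RR}^q,N)\cong N^{\oplus q}$. What is open there is the bundle of all linear frames. The orthonormal frames form the level set $\{e \mid e^*g^N=\id\}$, which is a closed embedded submanifold of codimension $q(q+1)/2$: the fibrewise map $e\mapsto e^*g^N$ into symmetric $q\times q$ matrices is a submersion at invertible $e$. Likewise $\cG\times_M OF(M,\mathcal O)$ is not open in $\cG\times_M N^{\oplus q}$. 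So ``the action preserves the open subset'' does not justify smoothness of the restricted and corestricted map.

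The repair is routine. $\cG\times_M OF(M,\mathcal O)$ is the preimage of $OF(M,\mathcal O)$ under the projection $\cG\times_M N^{\oplus q}\to N^{\oplus q}$. That projection is a submersion, being a base change of $s$, so the preimage is an embedded submanifold and your componentwise map restricts smoothly to it. Since the image lies in the embedded submanifold $OF(M,\mathcal O)$, the map is also smooth as a map into $OF(M,\mathcal O)$. You should also note that this submanifold structure agrees with the principal-bundle structure given by local orthonormal frames of $N$.

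With this fix, your argument is a valid alternative to the paper's. The paper instead chooses local sections $b_U,b_V$ of $OF(M,\mathcal O)$. It shows that in the induced trivializations the action is $(g,(x,A))\mapsto(t(g),L(g)A)$, where $L(g)=b_V(t(g))^{-1}\circ\lambda^N_g\circ b_U(s(g))$ is a smooth $O(q)$-valued function. The paper's version works directly in principal-bundle charts and makes visible that the action is by bundle maps given by left multiplication by $L(g)$. Yours is global and chart-free, at the cost of the submanifold facts above.
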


\begin{proof}
Well-definedness is clear. Since each $\lambda^N_g:N_x\to N_y$ is a linear isomorphism, $\lambda^N_g\circ e$ is a linear isomorphism $\R^q\to N_y$. Moreover, $\lambda^N_g$ is an isometry with respect to the metric $g^N$, hence $\lambda^N_g\circ e$ is again an orthonormal frame, so $g\cdot e\in OF_y$.

We now show that $\mu:\cG\times_M OF(M,\mathcal O)\longrightarrow OF(M,\mathcal O)$, where $(g,e)\mapsto \lambda^N_g\circ e$, is smooth. Let $U,V\subset M$ be open sets and choose smooth local sections $b_U:U\to OF(M,\mathcal O)$ and $b_V:V\to OF(M,\mathcal O)$. They give local trivializations of the principal $O(q)$-bundle $\pi:OF(M,\mathcal O)\to M$ such that $\psi_U:U\times O(q)\xrightarrow{\ \cong\ }\pi^{-1}(U)$, $\psi_U(x,A)=b_U(x)\cdot A=b_U(x)\circ A$, and similarly $\psi_V:V\times O(q)\xrightarrow{\ \cong\ }\pi^{-1}(V)$.

Consider the open set $\cG_U^V:=\{ g\in \cG \mid s(g)\in U,\ t(g)\in V \}\subset \cG$. Define a map $L:\cG_U^V\to O(q)$ by $L(g):=b_V \bigl(t(g)\bigr)^{-1}\circ \lambda^N_g\circ b_U \bigl(s(g)\bigr)\in O(q)$. This is well defined because $b_U(s(g))$ and $b_V(t(g))$ are orthonormal frames and $\lambda^N_g$ is an isometry, hence the above composite is an orthogonal linear map $\R^q\to\R^q$. Moreover, by Proposition~\ref{prop:normal-global}, $L$ is smooth since $\lambda^N:\cG\times_M N\to N$ is smooth and $b_U,b_V$ are smooth.

Now take $g\in \cG_U^V$ and write $x:=s(g)$. For $A\in O(q)$ we have
\[
\mu\bigl(g,\psi_U(x,A)\bigr)
=\lambda^N_g\circ (b_U(x)\circ A)
=\bigl(\lambda^N_g\circ b_U(x)\bigr)\circ A
=\bigl(b_V(t(g))\circ L(g)\bigr)\circ A
=\psi_V\bigl(t(g), L(g)A\bigr).
\]
Therefore, in the local coordinates given by $\psi_U$ and $\psi_V$, the action map is $(g,(x,A))\longmapsto \bigl(t(g),L(g)A\bigr)$, which is smooth. Since such trivializations cover $M$, it follows that $\mu$ is smooth globally.

The remaining action axioms follow from Proposition~\ref{prop:normal-global}, together with the definition of the right action $e \cdot A = e \circ A$ on $OF(M,\mathcal O)$.

For the uniqueness claim, let $\Xi:OF(M,\mathcal O)\times_{O(q)}\RR^q\to N$ be the canonical identification. That is, $\Xi([e,u])=e(u)\in N_{\pi(e)}$, where the equivalence relation is $(eA,u)\sim(e,Au)$. For $u\in\RR^q$, by definition of the induced action on the associated bundle we have $g\cdot [e,u]=[g\cdot e,u]$. Under $\Xi$, this says $(g\cdot e)(u)=\lambda^N_g(e(u))$. Since this holds for all $u\in\RR^q$, it follows that $g\cdot e=\lambda^N_g\circ e$.
\end{proof}

Following Proposition~\ref{prop:action-axioms}, we obtain the left action groupoid
\[
\cH:=\cG\ltimes OF(M,\mathcal O)\rightrightarrows OF(M,\mathcal O),
\]
with manifold of arrows
\[
\cH_1=\cG\times_M OF(M,\mathcal O)=\{(g,e)\in \cG\times OF(M,\mathcal O)\mid s(g)=\pi(e)\}.
\]
The structure maps are $s_{\cH}(g,e)=e$ and $t_{\cH}(g,e)=g\cdot e$. Composition is given by $(h,g\cdot e)\circ(g,e)=(hg,e)$, units by $u_{\cH}(e)=(1_{\pi(e)},e)$, and inversion by $(g,e)^{-1}=(g^{-1},g\cdot e)$.

\begin{proposition}\label{prop:lifted-action-groupoid}
The lifted action defines a Lie groupoid
\[
\cH:=\cG\ltimes OF(M,\mathcal O)\rightrightarrows OF(M,\mathcal O),
\]
and the transverse canonical form is $\cH$-invariant: $t_{\cH}^*\theta=s_{\cH}^*\theta$.
\end{proposition}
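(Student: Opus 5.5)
The plan has two parts: first show that $\cH$ is a Lie groupoid, then verify the identity $t_{\cH}^*\theta=s_{\cH}^*\theta$ pointwise using the description of $\lambda^N$ via $ds$ and $dt$ from Proposition~\ref{prop:normal-global}.

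\textbf{Lie groupoid structure.} Since $\pi:OF(M,\mathcal O)\to M$ is a submersion, the fibre product $\cH_1=\cG\times_M OF(M,\mathcal O)$ is an embedded submanifold of $\cG\times OF(M,\mathcal O)$. The source $s_{\cH}=\pr_2$ is a submersion with Hausdorff fibres, because its fibre over $e$ is $s^{-1}(\pi(e))$. By Proposition~\ref{prop:action-axioms} the action map is smooth and satisfies the action axioms, so the general translation-groupoid construction of \cite{MoerdijkMrcun:IFLG}*{Sec.~5.3} gives a Lie groupoid. Smoothness of multiplication, units and inversion is then immediate from the formulas listed before the statement.

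\textbf{Invariance of $\theta$.} Fix $(g,e)\in\cH_1$ with $g:x\to y$, and a tangent vector $(X,\xi)\in T_{(g,e)}\cH_1$. Tangency to the fibre product means $ds_g(X)=d\pi_e(\xi)=:v\in T_xM$. On one side, $(s_{\cH}^*\theta)(X,\xi)=\theta_e(\xi)=e^{-1}(\pr^N_x v)$.

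On the other side, $\pi\circ t_{\cH}=t\circ\pr_1$ by Proposition~\ref{prop:normal-global}, so $d\pi\bigl(dt_{\cH}(X,\xi)\bigr)=dt_g(X)$. Therefore
\[
(t_{\cH}^*\theta)(X,\xi)=(g\cdot e)^{-1}\bigl(\pr^N_y\, dt_g(X)\bigr).
\]
By Definition~\ref{def:normal-orbit} and Lemma~\ref{lem:normal-well-defined}, since $X$ is a lift of $v$ along $ds_g$, we have $\pr^N_y\,dt_g(X)=\lambda^N_g(\pr^N_x v)$. As $g\cdot e=\lambda^N_g\circ e$, this gives
\[
(t_{\cH}^*\theta)(X,\xi)=e^{-1}(\lambda^N_g)^{-1}\lambda^N_g(\pr^N_x v)=\theta_e(\xi),
\]
which is the claimed identity.

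The main point needing care is identifying $d\pi\circ dt_{\cH}$ with $dt\circ d\pr_1$ on $T\cH_1$. This follows from differentiating $\pi\circ t_{\cH}=t\circ\pr_1$. One should also check that this uses only the tangent-space constraint $ds_g(X)=d\pi_e(\xi)$, so the vertical and foliated parts of $\xi$ cause no issue. Everything else is formal, given the earlier results.
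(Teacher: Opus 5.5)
Your proposal is correct and follows the paper's proof essentially step for step. Both use the fibre-product description of $\cH_1$ with source fibres $s^{-1}(\pi(e))$, test $\theta$ on tangent vectors $(X,\xi)$ with $ds_g(X)=d\pi_e(\xi)$, and combine the differentiated identity $\pi\circ t_{\cH}=t\circ\pr_1$ with $[dt_g(X)]=\lambda^N_g([ds_g(X)])$. The only cosmetic difference is that you cite the general translation-groupoid construction for the smoothness of the structure maps, whereas the paper checks it directly; also, $s_{\cH}$ is a submersion because $s$ is one, not because $\pi$ is.
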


\begin{proof}
Let $X:=OF(M,\mathcal O)$, which is a Hausdorff manifold. Since $s:\cG\to M$ is a submersion, the manifold of arrows $\cH_1=\cG\times_M X$ above is a smooth manifold, and the source map $s_{\cH}$ is a submersion.

For each $e\in X$,
\[
s_{\cH}^{-1}(e)=s^{-1}(\pi(e))\times\{e\}\cong s^{-1}(\pi(e)),
\]
which is Hausdorff because the source fibres of $\cG$ are.

The target map, units, inversion, and multiplication we discussed above are smooth because they are induced from the smooth structure maps of $\cG$ and the smooth lifted action of Proposition~\ref{prop:action-axioms}. Hence $\cH$ is a Lie groupoid.

It remains to prove the invariance of the transverse canonical form. Let $(g,e)\in \cH_1$, where $g:x\to y$ and $e\in OF_x(M,\mathcal O)$. A tangent vector to $\cH_1=\cG\times_M OF(M,\mathcal O)$ at $(g,e)$ is a pair $(Y,\xi)\in T_g\cG\oplus T_eOF(M,\mathcal O)$ satisfying $ds_g(Y)=d\pi_e(\xi)$.

Using the definition of the transverse canonical form, we compute
\[
(s_{\cH}^*\theta)_{(g,e)}(Y,\xi)
=\theta_e(\xi)
=e^{-1}\bigl([d\pi_e(\xi)]\bigr)
=e^{-1}\bigl([ds_g(Y)]\bigr).
\]
On the other hand,
\[
t_{\cH}(g,e)=g\cdot e=\lambda_g^N\circ e.
\]
Since $g:x\to y$, the frame $g\cdot e$ lies over $y=t(g)$. Equivalently, on the arrow manifold $\cH_1$ we have the identity $\pi\circ t_{\cH}=t\circ \operatorname{pr}_1$, where $\operatorname{pr}_1:\cH_1=\cG\times_M OF(M,\mathcal O)\to \cG$. Differentiating this identity at $(g,e)$ gives
\[
d\pi_{g\cdot e}\bigl(d(t_{\cH})_{(g,e)}(Y,\xi)\bigr)=dt_g(Y).
\]
Therefore
\[
(t_{\cH}^*\theta)_{(g,e)}(Y,\xi)
=\theta_{g\cdot e}\bigl(d(t_{\cH})_{(g,e)}(Y,\xi)\bigr)
=(\lambda_g^N\circ e)^{-1}\bigl([dt_g(Y)]\bigr).
\]
By the definition of the normal representation, $[dt_g(Y)]=\lambda_g^N\bigl([ds_g(Y)]\bigr)$. Thus
\[
(t_{\cH}^*\theta)_{(g,e)}(Y,\xi)
=e^{-1}\bigl([ds_g(Y)]\bigr)
=(s_{\cH}^*\theta)_{(g,e)}(Y,\xi).
\]
Since $(g,e)$ and $(Y,\xi)$ were arbitrary, $t_{\cH}^*\theta=s_{\cH}^*\theta$.
\end{proof}

We will use the corresponding invariance of the transverse Levi-Civita connection form only at the level of local bisections (Proposition~\ref{prop:molino-data-invariant}), where it follows from the preservation of the transverse metric and the naturality of the Levi-Civita connection.


We next discuss bisections, the normal representation, and invariance of Molino's structures.

\begin{definition}[{\cite{MoerdijkMrcun:IFLG}*{Sec.~5.1}}]\label{def:H-invariance}
Let $\cG\rightrightarrows M$ be a Lie groupoid. A \emph{local bisection} is a smooth map $\sigma:U\to \cG$ defined on an open set $U\subset M$ such that $s\circ\sigma=\id_U$ and $t\circ\sigma:U\to t(\sigma(U))$ is a diffeomorphism onto an open subset.
\end{definition}

For a local bisection $\sigma:U\to \cG$, we write
\[
f_\sigma:\pi^{-1}(U)\to \pi^{-1}(t(\sigma(U))), \qquad f_\sigma(e):=\sigma(\pi(e))\cdot e,
\]
for the induced principal $O(q)$-bundle isomorphism.

Next we study the local diffeomorphisms induced by local bisections. We first show that they preserve the orbit foliation, and then identify their induced normal maps with the normal representation.

\begin{lemma}\label{lem:bisection-foliated}
Let $\sigma:U\to \cG$ be a local bisection of $\cG$ and set $\varphi=t\circ\sigma:U\to t(\sigma(U)):=V$. Then $\varphi$ is orbit-preserving. For any $x_1,x_2\in U$ in the same $\cG$-orbit, $\varphi(x_1),\varphi(x_2)\in V$ lie in the same $\cG$-orbit. Moreover, $\varphi$ maps each leaf of $\mathcal O|_U$ diffeomorphically onto a leaf of $\mathcal O|_V$, and hence for all $x\in U$ we have $d\varphi_x\bigl(T_x(\mathcal O)\bigr)=T_{\varphi(x)}(\mathcal O)$.
\end{lemma}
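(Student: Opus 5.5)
The argument splits into an algebraic part (orbits go to orbits) and a differential part (leaves go to leaves, tangent spaces correspond), built from right translation by the bisection.

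\emph{Orbits are preserved.} For $x\in U$, the arrow $\sigma(x)$ goes from $x$ to $\varphi(x)$, so $\varphi(x)$ lies in the orbit of $x$. Now let $x_1,x_2\in U$ be joined by an arrow $g:x_1\to x_2$. Then
\[
\sigma(x_2)\,g\,\sigma(x_1)^{-1}:\varphi(x_1)\to\varphi(x_2)
\]
is an arrow, so $\varphi(x_1)$ and $\varphi(x_2)$ lie in the same $\cG$-orbit. More simply, each $\varphi(x)$ lies in the orbit of $x$, so $\varphi$ maps $O\cap U$ into $O\cap V$ for every orbit $O$. The same holds for the inverse bisection $\sigma^{-1}:V\to\cG$, $y\mapsto \sigma(\varphi^{-1}(y))^{-1}$, which satisfies $t\circ\sigma^{-1}=\varphi^{-1}$. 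Hence $\varphi(O\cap U)=O\cap V$.

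\emph{Tangent spaces.} I would prove $d\varphi_x(T_x\mathcal O)=T_{\varphi(x)}\mathcal O$ pointwise, using Lemma~\ref{lem:orbit-submersion}. Take $v\in T_x(\mathcal O)$. By Lemma~\ref{lem:orbit-submersion} applied to the unit arrow $1_x$, there is $Y\in T_{1_x}\cG$ with $ds(Y)=0$ and $dt(Y)=v$. Choose a curve $h(\tau)$ in $s^{-1}(x)$ with $h(0)=1_x$ and $h'(0)=Y$, and set $c(\tau):=t(h(\tau))$. Then
\[
k(\tau):=\sigma(c(\tau))\,h(\tau)
\]
is a curve in $s^{-1}(x)$ with $t\circ k=\varphi\circ c$. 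Differentiating at $\tau=0$ gives $d\varphi_x(v)=dt(k'(0))$ with $k'(0)\in\ker ds_{\sigma(x)}$. By Lemma~\ref{lem:orbit-submersion} applied to $\sigma(x)$, this vector lies in $T_{\varphi(x)}(O_x)=T_{\varphi(x)}(\mathcal O)$. This gives the inclusion $d\varphi_x(T_x\mathcal O)\subseteq T_{\varphi(x)}\mathcal O$. Since $d\varphi_x$ is injective and both spaces have the same dimension (the orbit rank is locally constant, and $x,\varphi(x)$ lie in the same orbit), the inclusion is an equality. Alternatively, the inverse bisection gives the reverse inclusion.

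\emph{Leaves.} Since $d\varphi$ maps $T(\mathcal O)|_U$ onto $T(\mathcal O)|_V$, $\varphi$ is a diffeomorphism carrying the involutive distribution to the involutive distribution. It therefore maps connected integral manifolds to connected integral manifolds. Maximal integral manifolds go to maximal ones, because $\varphi^{-1}$ has the same property. So $\varphi$ maps each leaf of $\mathcal O|_U$ diffeomorphically onto a leaf of $\mathcal O|_V$.

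The main obstacle is justifying the curve $k(\tau)$, namely that it stays in $s^{-1}(x)$ and depends smoothly on $\tau$. It lies in $s^{-1}(x)$ because $s(k(\tau))=s(h(\tau))=x$, and its smoothness follows from the smoothness of multiplication and of $\sigma$. A cleaner alternative is to use right translation: the map $g\mapsto \sigma(t(g))\,g$ is a diffeomorphism of $t^{-1}(U)$ that preserves source fibres and covers $\varphi$ under $t$. Since $t:s^{-1}(x)\to O_x$ is a submersion (Lemma~\ref{lem:orbit-submersion}), this map descends to $\varphi|_{O_x\cap U}$, which gives both orbit preservation and the tangent-space identity at once.
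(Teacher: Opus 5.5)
Your proof is correct, and it reaches the tangent identity by a slightly different mechanism than the paper.

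The paper conjugates by the bisection, $g\mapsto\sigma(t(g))\,g\,\sigma(s(g))^{-1}$, to get a Lie groupoid isomorphism $\cG|_U\to\cG|_V$ covering $\varphi$. Differentiating at the units gives a Lie algebroid isomorphism that intertwines the anchors with $d\varphi$. So $d\varphi_x(\operatorname{im}\rho_{\cG,x})=\operatorname{im}\rho_{\cG,\varphi(x)}$ follows in one step, with both inclusions at once.

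You instead use only left translation by $\sigma$: your curve $k(\tau)=\sigma(c(\tau))h(\tau)$, or the map $g\mapsto\sigma(t(g))g$. This moves the relevant vector to the non-unit arrow $\sigma(x)$. You then need Lemma~\ref{lem:orbit-submersion} at $\sigma(x)$ to identify $dt(\ker ds_{\sigma(x)})$ with $T_{\varphi(x)}(\mathcal O)$. You also need a second argument (the inverse bisection or a dimension count) to upgrade the inclusion to an equality. If you right-multiplied $k(\tau)$ by $\sigma(x)^{-1}$, you would land in $s^{-1}(\varphi(x))$ through $1_{\varphi(x)}$ and recover exactly the paper's conjugation.

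The paper's route is shorter and more conceptual. However, it takes for granted that conjugation is a groupoid isomorphism and that the induced algebroid map respects anchors. Yours is more elementary and self-contained, relying only on Lemma~\ref{lem:orbit-submersion}, which the paper already proves. It also supplies the maximal-integral-manifold argument for the leaf statement, which the paper compresses into ``Thus''.
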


\begin{proof}
For each $x\in U$, $\sigma(x)$ is an arrow from $x$ to $\varphi(x)$, so $\varphi$ preserves the $\cG$-orbits. Moreover, the local bisection $\sigma$ induces a Lie groupoid isomorphism
\[
\cG|_U\longrightarrow\cG|_V
\]
over $\varphi$. Differentiating at the units and using compatibility with the anchors gives
\[
d\varphi_x\bigl(T_x(\mathcal O)\bigr) = T_{\varphi(x)}(\mathcal O).
\]
Thus $\varphi$ maps the leaves of $\mathcal O|_U$ diffeomorphically onto the leaves of $\mathcal O|_V$.
\end{proof}

The next lemma identifies the normal map induced by a local bisection with the global normal representation.

\begin{lemma}\label{lem:bisection-normal}
Let $\cG\rightrightarrows M$ be regular with orbit foliation $\mathcal O$ and normal bundle $N=T(M)/T(\mathcal O)$. Let $\sigma:U\to \cG$ be a local bisection and $\varphi:=t\circ\sigma:U\to V$. Then for every $x\in U$ we have $(d\varphi)^N_x=\lambda^N_{\sigma(x)}:N_x\rightarrow N_{\varphi(x)}$.
\end{lemma}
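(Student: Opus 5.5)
The plan is to compute $(d\varphi)^N_x$ directly from the chain rule applied to $\varphi=t\circ\sigma$, and then recognise the result as an instance of Definition~\ref{def:normal-orbit}. Fix $x\in U$ and put $g:=\sigma(x)$, an arrow $x\to\varphi(x)$. By Lemma~\ref{lem:bisection-foliated}, $\varphi$ is foliated. Hence $(d\varphi)^N_x([v])=[d\varphi_x(v)]$ is well defined for $v\in T_xM$.

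Given such $v$, I take as lift the vector $X:=d\sigma_x(v)\in T_g\cG$. Differentiating $s\circ\sigma=\id_U$ at $x$ gives $ds_g(X)=v$, so $X$ is an admissible lift in the definition of the normal representation. Differentiating $t\circ\sigma=\varphi$ gives $dt_g(X)=d\varphi_x(v)$. Therefore
\[
\lambda_g([v])=[dt_g(X)]=[d\varphi_x(v)]=(d\varphi)^N_x([v]).
\]

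To conclude, I invoke Lemma~\ref{lem:normal-well-defined}, which says the left-hand side does not depend on the chosen lift or representative. I then use the identifications $N_x=\nu_x(O_x)$ and $N_{\varphi(x)}=\nu_{\varphi(x)}(O_x)$ made in the proof of Proposition~\ref{prop:normal-global}, where $\lambda^N_g$ was defined to be the orbitwise map. Together these yield $(d\varphi)^N_x=\lambda^N_{\sigma(x)}$.

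The only point needing care is that $\varphi(x)$ lies in the orbit of $x$. This is immediate because $g$ is an arrow from $x$ to $\varphi(x)$, and it guarantees that the quotients by $T(\mathcal O)$ and by the tangent spaces of the orbit agree. Beyond that, the argument is essentially the computation already used in the proof of Proposition~\ref{prop:RF}, and I expect no genuine obstacle.
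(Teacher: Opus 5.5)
Your proposal is correct and follows essentially the same route as the paper. The paper also takes the lift $X=d\sigma_x(v)$, reads off $ds_{\sigma(x)}(X)=v$ and $dt_{\sigma(x)}(X)=d\varphi_x(v)$, identifies $N_x=\nu_x(O)$ under regularity, and invokes Lemma~\ref{lem:bisection-foliated} to write $[d\varphi_x(v)]=(d\varphi)^N_x([v])$.
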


\begin{proof}
Let $x\in U$ and $v\in T_x(M)$, and let $X=d\sigma_x(v)\in T_{\sigma(x)}(\cG)$. Since $s\circ\sigma=\id_U$, we have $ds_{\sigma(x)}(X)=v$.

Let $O$ be the orbit through $x$. Under regularity, we identify $N_x=T_x(M)/T_x(\mathcal O)=\nu_x(O)$ and similarly $N_{\varphi(x)}=\nu_{\varphi(x)}(O)$. Recall from Proposition~\ref{prop:normal-global} that $\lambda^N$ is the orbitwise normal representation, so Definition~\ref{def:normal-orbit} gives $\lambda^N_{\sigma(x)}([v])=[dt_{\sigma(x)}(X)]$.

But $dt_{\sigma(x)}(X)=d(t\circ\sigma)_x(v)=d\varphi_x(v)$, hence $\lambda^N_{\sigma(x)}([v])=[d\varphi_x(v)]$. By Lemma~\ref{lem:bisection-foliated}, $\varphi$ is foliated, so $[d\varphi_x(v)]=(d\varphi)^N_x([v])$.
\end{proof}

\begin{remark}[Clarification]
For a non-\'etale Lie groupoid, an arrow $g:x\to y$ need not determine a unique germ of a local diffeomorphism of $M$ near $x$: different local bisections $\sigma$ through $g$ may yield different maps $\varphi=t\circ\sigma$. Lemma~\ref{lem:bisection-normal} shows that for any such local bisection we have $(d\varphi)^N_x=\lambda_g^N:N_x\to N_y$. So the induced normal map depends only on the arrow $g$, not on the choice of local bisection.
\end{remark}


\begin{proposition}\label{prop:molino-data-invariant}
Under Hypothesis~\ref{hyp:standing}, the lifted foliation
$\widetilde{\mathcal O}$, the transverse canonical form $\theta$, and the
transverse Levi-Civita connection form $\omega$ are invariant under the local
diffeomorphisms of $OF(M,\mathcal O)$ induced by local bisections of $\cG$.
Explicitly, for each local bisection $\sigma:U\to \cG$ with $V=t(\sigma(U))$, the
induced map
\[
f:\pi^{-1}(U)\to \pi^{-1}(V)
\]
satisfies
\[
f^*\bigl(\theta|_{\pi^{-1}(V)}\bigr)=\theta|_{\pi^{-1}(U)},\qquad
f^*\bigl(\omega|_{\pi^{-1}(V)}\bigr)=\omega|_{\pi^{-1}(U)},\qquad
(df)\bigl(T(\widetilde{\mathcal O})|_{\pi^{-1}(U)}\bigr)
=
T(\widetilde{\mathcal O})|_{\pi^{-1}(V)}.
\]
\end{proposition}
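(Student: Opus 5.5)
The plan is to reduce the statement to Lemma~\ref{lem:functoriality-molino} by showing that $\varphi:=t\circ\sigma:U\to V$ is a foliated transverse isometry in the sense of Definition~\ref{def:local-transverse-isometry}, and that the induced map $f=f_\sigma$ coincides with $OF(\varphi)$.

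The first step is to check that $\varphi$ is a diffeomorphism $U\to V$ onto an open set. This holds by the definition of a local bisection. Next, Lemma~\ref{lem:bisection-foliated} shows that $\varphi$ is foliated, which is condition (i). For condition (ii), Lemma~\ref{lem:bisection-normal} identifies $(d\varphi)^N_x=\lambda^N_{\sigma(x)}$. Under Hypothesis~\ref{hyp:standing}, Proposition~\ref{prop:normal-global} (through Proposition~\ref{prop:0-metric}) shows that each $\lambda^N_g$ is an isometry for $g^N$. Hence $(d\varphi)^N_x$ is an isometry for every $x\in U$, and $\varphi$ is a foliated transverse isometry.

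The second step is to compare the maps. For $e\in\pi^{-1}(U)$ with $x=\pi(e)$, Proposition~\ref{prop:action-axioms} gives
\[
f_\sigma(e)=\sigma(x)\cdot e=\lambda^N_{\sigma(x)}\circ e .
\]
By Lemma~\ref{lem:bisection-normal}, this equals $(d\varphi)^N_x\circ e=OF(\varphi)(e)$, so $f_\sigma=OF(\varphi)$ on $\pi^{-1}(U)$. Lemma~\ref{lem:functoriality-molino} then yields all three identities: $f^*\theta=\theta$, $f^*\omega=\omega$, and $df(T\widetilde{\mathcal O})=T\widetilde{\mathcal O}$.

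I do not expect any step here to be a real obstacle; the argument is bookkeeping. The one point needing care is inside Lemma~\ref{lem:functoriality-molino}, which is already proved: the claim that a foliated transverse isometry locally intertwines the Haefliger submersions via local isometries $h_{ji}$ of the transversals. Since $\varphi$ is foliated and each $s_i$ has connected fibres on plaques, $\varphi$ descends locally to the transversals after shrinking $U$. These local statements patch, because all the identities to be proved are pointwise.

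As an independent check on the $\theta$ identity, I would also note that it follows directly from Proposition~\ref{prop:lifted-action-groupoid}. Pull back $t_{\cH}^*\theta=s_{\cH}^*\theta$ along the section $e\mapsto(\sigma(\pi(e)),e)$ of $s_{\cH}$; the composite with $t_{\cH}$ is $f$, which gives $f^*\theta=\theta$.
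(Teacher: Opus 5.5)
Your proposal is correct and follows the paper's proof essentially verbatim: you use Lemma~\ref{lem:bisection-foliated}, Lemma~\ref{lem:bisection-normal} and Proposition~\ref{prop:normal-global} to show that $\varphi=t\circ\sigma$ is a foliated transverse isometry, identify $f_\sigma=OF(\varphi)$, and conclude with Lemma~\ref{lem:functoriality-molino}. Your independent check of $f^*\theta=\theta$, obtained by pulling back $t_{\cH}^*\theta=s_{\cH}^*\theta$ along the section $e\mapsto(\sigma(\pi(e)),e)$ of $s_{\cH}$, is also valid.
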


\begin{proof}
Let $\sigma:U\to \cG$ be a local bisection and let $\varphi:=t\circ\sigma:U\to V$. By Lemma~\ref{lem:bisection-foliated}, $\varphi$ preserves the orbit foliation. By Lemma~\ref{lem:bisection-normal}, $(d\varphi)^N_x=\lambda^N_{\sigma(x)}$ on the normal bundles. By Proposition~\ref{prop:normal-global}, the maps $\lambda^N_g$ are fibrewise isometries for $g^N$. Hence $\varphi$ is a foliated transverse isometry in the sense of Definition~\ref{def:local-transverse-isometry}.

We compare the two induced maps on frames. For $e\in OF_x$ with $x=\pi(e)$,
\[
f(e)=\sigma(x)\cdot e=\lambda^N_{\sigma(x)}\circ e=(d\varphi)^N_x\circ e=OF(\varphi)(e).
\]

Hence $f=OF(\varphi)$, and the conclusion follows from Lemma~\ref{lem:functoriality-molino}.
\end{proof}

We next describe each leaf of $\mathcal O$ as the quotient of a leaf of $\widetilde{\mathcal O}$ by its isotropy subgroup in $O(q)$.

\begin{proposition}\label{leafwise-Oq}
Let $(Q,\mathcal{F})$ be a foliated manifold, and let $\pi:\widehat{M}\to Q$ be a principal $O(q)$-bundle. Assume that $\widehat{M}$ carries a foliation $\widehat{\mathcal{F}}$ such that $\pi:(\widehat{M},\widehat{\mathcal{F}})\to (Q,\mathcal{F})$ is a transverse principal $O(q)$-bundle. Let $L_{\widehat M}$ be a leaf of $\widehat{\mathcal{F}}$ and let $L_Q:=\pi(L_{\widehat M})$. Then
\begin{enumerate}[label=\textup{(\roman*)}]
\item The right $O(q)$-action sends leaves of $\widehat{\mathcal{F}}$ to leaves. Hence, the isotropy subgroup $O(q)_{L_{\widehat M}}:=\{A\in O(q)\mid L_{\widehat M}\cdot A=L_{\widehat M}\}$ acts on $L_{\widehat M}$.

\item The restricted map $\pi|_{L_{\widehat M}}:L_{\widehat M}\longrightarrow L_Q$ is a covering projection.

\item Consequently, $L_{\widehat M}/O(q)_{L_{\widehat M}}$ can be identified with $L_Q$.
\end{enumerate}
\end{proposition}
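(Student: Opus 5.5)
The plan is to work in local models built from the transverse principal bundle structure. By definition, $\pi:(\widehat M,\widehat{\mathcal F})\to(Q,\mathcal F)$ being a transverse principal $O(q)$-bundle means two things. First, $\widehat{\mathcal F}$ is invariant under the right $O(q)$-action. Second, $d\pi$ maps $T_e\widehat{\mathcal F}$ isomorphically onto $T_{\pi(e)}\mathcal F$, so $T\widehat{\mathcal F}$ is a flat partial connection transverse to the $O(q)$-orbits. In our situation the local models come from Definition~\ref{def:lifted-foliation}: we have submersions $\widetilde s_i$ whose fibres project diffeomorphically onto fibres of $s_i$, and these are $O(q)$-equivariant because $\widetilde s_i(eA)=\widetilde s_i(e)A$. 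For the abstract statement I will use only these two properties. I will also use the standard fact that $\widehat{\mathcal F}$ is locally given by equivariant simple foliations whose plaques map diffeomorphically onto plaques of $\mathcal F$.

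For (i), fix $A\in O(q)$. Since $R_A$ is a diffeomorphism with $dR_A(T\widehat{\mathcal F})=T\widehat{\mathcal F}$ (invariance, e.g.\ from equivariance of $\widetilde s_i$), it maps connected integral manifolds to connected integral manifolds. Maximality is preserved because $R_{A^{-1}}$ is the inverse, so $R_A$ sends leaves onto leaves. The isotropy set $O(q)_{L_{\widehat M}}$ is then clearly a subgroup, and it acts on $L_{\widehat M}$ by restriction of $R_A$. Since $R_A$ is a diffeomorphism of $\widehat M$ preserving the leaf, the restriction is smooth for the leaf's intrinsic manifold structure.

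For (ii), I will show that $\pi|_{L_{\widehat M}}$ is a local diffeomorphism with the path-lifting property, using the partial connection. It is a local diffeomorphism onto $L_Q$ because $d\pi$ is an isomorphism $T\widehat{\mathcal F}\to T\mathcal F$. Its image is a connected integral manifold, hence open in the leaf $L_Q$. For lifting, take a path $\gamma$ in $L_Q$ starting at $\pi(e)$. Locally over a plaque, the equivariant foliated chart identifies $\pi^{-1}(\text{plaque})\cong \text{plaque}\times O(q)$ with horizontal leaves $\text{plaque}\times\{A\}$, so $\gamma$ lifts uniquely and locally within the leaf of $e$. Compactness of $O(q)$, or equivalently the fact that the lift stays in a principal bundle fibre over each point, prevents escape. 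A standard continuation argument then gives unique global lifts. This also shows the image is all of $L_Q$. Moreover, every $e'\in\pi^{-1}(x)\cap L_{\widehat M}$ has an evenly covered neighbourhood: a plaque $P\ni x$ has preimage in $L_{\widehat M}$ equal to a disjoint union of horizontal plaques $P\times\{A\}$, indexed by the $A$ meeting the leaf. These are open in the leaf topology, even though they may accumulate in $\widehat M$. This local triviality is the main obstacle, because it must be argued in the leaf topology rather than the ambient one. I would handle it by working with the plaque chart and noting that distinct plaques of one leaf are disjoint open subsets of that leaf.

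For (iii), note that the fibre of $\pi|_{L_{\widehat M}}$ over $x$ is $L_{\widehat M}\cap\pi^{-1}(x)$. Given two points $e,eA$ in it, $R_A$ maps $L_{\widehat M}$ to the leaf through $eA$, which is $L_{\widehat M}$, so $A\in O(q)_{L_{\widehat M}}$. Conversely, the orbit of $e$ under the isotropy group lies in the fibre. Hence $O(q)_{L_{\widehat M}}$ acts simply transitively on each fibre, with freeness inherited from the principal action. The covering $L_{\widehat M}\to L_Q$ is therefore the quotient map by this group action, giving a continuous bijection $L_{\widehat M}/O(q)_{L_{\widehat M}}\to L_Q$. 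This bijection is a homeomorphism, and a diffeomorphism, since the covering map is open and a local diffeomorphism.
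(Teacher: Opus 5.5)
Your argument is correct. For (i) and (iii) you argue as the paper does. Invariance of $\widehat{\mathcal F}$ under $R_A$, with $R_{A^{-1}}$ as inverse, sends leaves to leaves. For two points $e, eA$ of $L_{\widehat M}$ in one $\pi$-fibre, the leaf $L_{\widehat M}\cdot A$ contains $eA$, which forces $A\in O(q)_{L_{\widehat M}}$.

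The real difference is (ii). The paper simply cites Moerdijk--Mr\v cun (Sec.~4.2.2(ii)). You prove it directly by trivializing $\pi^{-1}(P)\cong P\times O(q)$ over a plaque $P$, so that the lifted plaques are the horizontal slices $P\times\{A\}$. These slices are disjoint, open in the leaf topology, and mapped diffeomorphically onto $P$, which gives evenly covered neighbourhoods. A path-lifting (or open--closed) argument then shows that $\pi(L_{\widehat M})$ is the whole leaf of $\mathcal F$. The statement uses this tacitly when it treats $L_Q$ as the base of a covering. Once the plaques are evenly covered, your continuation argument is only needed for this surjectivity.

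Your route is self-contained, but it rests on the local model, which you call standard. Abstractly it holds because $T\widehat{\mathcal F}$ restricted to $\pi^{-1}(P)$ is a flat principal connection over a simply connected plaque. Horizontal lifts therefore exist globally, by completeness of principal connections for any structure group (compactness of $O(q)$ is not really the point), and have trivial holonomy. In the paper's setting the model is immediate from Definition~\ref{def:lifted-foliation}, since $\pi$ maps each fibre of $\widetilde s_i$ diffeomorphically onto a fibre of $s_i$.

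Finally, your ``diffeomorphism'' in (iii) goes beyond the paper's identification. Note that $O(q)_{L_{\widehat M}}$ need not be closed in $O(q)$. What makes the quotient a manifold is that this group acts on $L_{\widehat M}$, in the leaf topology, by deck transformations of the covering, hence properly discontinuously.
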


\begin{proof}
(i) follows from \cite{MoerdijkMrcun:IFLG}*{Sec.~4.2.2, (i)}: the right $O(q)$-action preserves $\widehat{\mathcal{F}}$, hence maps leaves to leaves. (ii) is \cite{MoerdijkMrcun:IFLG}*{Sec.~4.2.2, (ii)}.

For (iii), if $x\in L_{\widehat M}$ and $A\in O(q)_{L_{\widehat M}}$, then $\pi(x\cdot A)=\pi(x)$, so $\pi|_{L_{\widehat M}}$ factors through $L_{\widehat M}/O(q)_{L_{\widehat M}}$. Conversely, if $x,x'\in L_{\widehat M}$ satisfy $\pi(x)=\pi(x')$, then $x'=x\cdot A$ for a unique $A\in O(q)$. Since the right $O(q)$-action sends leaves to leaves and $L_{\widehat M}\cdot A$ contains $x'$, we have $L_{\widehat M}\cdot A=L_{\widehat M}$, so $A\in O(q)_{L_{\widehat M}}$. Hence $L_{\widehat M}/O(q)_{L_{\widehat M}}$ can be identified with $L_Q$.
\end{proof}

\section{The basic Lie algebroid and Molino's theory in the regular case}\label{sec:basic-algebroid-molino-theorem}

\subsection{Projectable transverse fields and the basic Lie algebroid}\label{subsec:basic-lie-algebroid}

We next review transverse vector fields and the associated structural Lie algebra \cite{Molino:RF}. We use them to describe the Lie foliation structure on the fibres. 

\begin{definition}[{\cite{MoerdijkMrcun:IFLG}*{Sec.~4.1.2}}]\label{def:structural-Lie-algebra}
Let $(M,\F)$ be a foliated manifold. Let $\mathfrak X(\F)\subset \mathfrak X(M)$ be the Lie algebra of vector fields tangent to $\F$. A vector field $X\in\mathfrak X(M)$ is \emph{projectable} if $[X,Y]\in \mathfrak X(\F)$ for all $Y\in\mathfrak X(\F)$. Let $L(M,\F)$ be the Lie algebra of projectable vector fields, and let $l(M,\F):=L(M,\F)/\mathfrak X(\F)$. Elements of $l(M,\F)$ are called transverse vector fields.
\end{definition}

\begin{definition}[{\cite{MoerdijkMrcun:IFLG}*{Secs.~4.1.1--4.1.2}}]\label{def:homogeneous-transversely-parallelizable-basic-foliation}
Let $(M,\F)$ be a foliated manifold.

An \emph{automorphism} of $(M,\F)$ is a diffeomorphism $\phi:M\to M$ which preserves the foliation, equivalently, $d\phi_x(T_x\F)=T_{\phi(x)}\F$ for every $x\in M$. The group of automorphisms is denoted by $\Aut(M,\F)$. The foliation $\F$ is called \emph{homogeneous} if $\Aut(M,\F)$ acts transitively on $M$.

If $\F$ has codimension $q$, then $(M,\F)$ is called \emph{transversely parallelizable} if there exist transverse vector fields $\bar Y_1,\dots,\bar Y_q\in l(M,\F)$ which form a global frame of the normal bundle $N(\F)=TM/T\F$. Such a frame is called a \emph{transverse parallelism}.

For a homogeneous foliation, let $\mathfrak X_{\bas}(\F):=\{X\in\mathfrak X(M)\mid X(f)=0\text{ for all }f\in\Omega^0_{\bas}(M,\F)\}$. For each $x\in M$, put $E_x:=\{X_x\mid X\in\mathfrak X_{\bas}(\F)\}$, and let $E:=\bigcup_{x\in M}E_x\subset TM$. Since $\F$ is homogeneous, $E$ is an involutive subbundle of $TM$. The associated foliation $\F_{\bas}$, characterized by $T\F_{\bas}=E$, is called the \emph{basic foliation} associated to $\F$. Since $T\F\subset T\F_{\bas}$, every leaf of $\F$ is contained in a leaf of $\F_{\bas}$; the leaves of $\F_{\bas}$ are called the \emph{basic leaves}.
\end{definition}

\begin{proposition}[cf. {\cite{MoerdijkMrcun:IFLG}*{Sec.~6.4, Lemma~6.9}}]\label{prop:basic-Lie-algebroid}
Let $\F$ be a homogeneous transversely parallelizable foliation of codimension $r$ on a compact manifold $X$. Then $W:=X/\F_{\bas}$ is a smooth Hausdorff manifold, $\pi_{\bas}:X\to W$ is a fibre bundle, and there exists a transitive Lie algebroid $A:=b(X,\F)\to W$ such that $\Gamma(A)\cong l(X,\F)$ as a $C^\infty(W)$-module. Its anchor $\rho_A:A\to TW$ is induced by the $C^\infty(W)$-linear Lie algebra homomorphism $\varrho:l(X,\F)\longrightarrow l(X,\F_{\bas})\cong \mathfrak X(W)$, and the resulting Lie algebroid $A$ is independent, up to Lie algebroid isomorphism over $W$, of the choice of transverse parallelism used to identify $l(X,\F)$ with $\Gamma(W\times\mathbb R^r)$.
\end{proposition}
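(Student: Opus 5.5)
The proof follows the construction in Moerdijk--Mr\v cun, Section~6.4, in three steps: build the base $W$ and the bundle $\pi_{\bas}$, build $A$ from the transverse parallelism, and check independence.

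\textbf{Step 1: the base $W$.} Fix a transverse parallelism $\bar Y_1,\dots,\bar Y_r\in l(X,\F)$. Since $X$ is compact, representatives $Y_a\in L(X,\F)$ are complete, so their flows are automorphisms of $(X,\F)$. The basic functions are constant along $\F_{\bas}$. By homogeneity, automorphisms permute basic leaves, so all basic leaves have the same dimension and $E$ is a regular involutive subbundle. To show that $W=X/\F_{\bas}$ is Hausdorff, I would use the standard fact that the closures of the leaves of a transversely parallelizable foliation on a compact manifold are the fibres of a locally trivial fibration given by the basic functions. Concretely, $\Omega^0_{\bas}(X,\F)$ separates basic leaves, and choosing finitely many basic functions gives local coordinates on $W$. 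Local triviality of $\pi_{\bas}$ then comes from flowing along projectable fields transverse to $\F_{\bas}$. These flows descend to $W$ because projectable fields preserve the basic functions up to basic functions. I expect this step to be the main obstacle, namely proving that $\F_{\bas}$ is simple with Hausdorff leaf space. I would cite Moerdijk--Mr\v cun's Theorem~4.22 together with Lemma~6.9 rather than reprove it.

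\textbf{Step 2: the Lie algebroid $A$.} The parallelism identifies $l(X,\F)$ with the functions $X\to\RR^r$ that are constant on leaves of $\F$, i.e.\ basic. Since $\F$ is transversely parallelizable, a basic function is constant on leaf closures, which are the basic leaves. Hence $l(X,\F)\cong C^\infty(W)^r=\Gamma(W\times\RR^r)$ as $C^\infty(W)$-modules. Set $A:=W\times\RR^r$, with bracket transported from the Lie bracket on $l(X,\F)$. The map $\varrho$ is induced by $T\F\subset T\F_{\bas}$. It is $C^\infty(W)$-linear because $\varrho(f\bar Y)=f\varrho(\bar Y)$ for $f$ basic. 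The anchor $\rho_A$ comes from $\varrho$ composed with the isomorphism $l(X,\F_{\bas})\cong\mathfrak X(W)$, which holds because $\pi_{\bas}$ is a fibre bundle with connected fibres. The Leibniz rule follows from $[\bar Y,f\bar Z]=f[\bar Y,\bar Z]+\bar Y(f)\bar Z$ with $\bar Y(f)=\rho_A(\bar Y)(f)$. Transitivity holds because the $\bar Y_a$ span the normal bundle $N(\F)$, so their images span $TX/T\F_{\bas}\cong\pi_{\bas}^*TW$.

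\textbf{Step 3: independence of the parallelism.} For a second parallelism $\bar Y'_b$, write $\bar Y'_b=\sum_a f_{ab}\bar Y_a$. The coefficients $f_{ab}$ are basic, hence lie in $C^\infty(W)$, and the matrix $(f_{ab})$ is invertible because both families frame $N(\F)$. This matrix gives a vector bundle automorphism of $W\times\RR^r$ over $W$. It intertwines the two module identifications with the same $l(X,\F)$, so it automatically preserves both brackets and anchors. It is therefore a Lie algebroid isomorphism.
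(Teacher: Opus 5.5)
Your proposal is correct and follows the paper's proof essentially step by step. You cite Moerdijk--Mr\v cun for the basic fibration; the paper uses their Theorem~4.3 for homogeneous foliations on compact manifolds. You then identify $l(X,\F)$ with $C^\infty(W)^r$ through the basic coefficients of a transverse parallelism, transport the bracket, obtain the anchor by projecting along $\pi_{\bas}$, check Leibniz and transitivity, and get independence from the change-of-parallelism automorphism of $W\times\RR^r$.

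The one point to tighten is the well-definedness of $\varrho$. The inclusion $T\F\subset T\F_{\bas}$ only shows that $\varrho$ is independent of the representative modulo $\mathfrak X(\F)$. You also need $L(X,\F)\subseteq L(X,\F_{\bas})$, i.e.\ that $\F$-projectable fields actually project to vector fields on $W$. This is Lemma~4.5 in Moerdijk--Mr\v cun, and it also follows from your own remark that projectable fields send basic functions to basic functions.
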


\begin{proof}
1. Geometry of basic fibre bundle.

Since $\F$ is homogeneous and $X$ is compact, \cite{MoerdijkMrcun:IFLG}*{Theorem~4.3 (iii), (v)} shows that the basic foliation is strictly simple, that $W:=X/\F_{\bas}$ is a smooth Hausdorff manifold, and that $\pi_{\bas}:X\to W$ is a fibre bundle.

Now choose a transverse parallelism $\bar Y_1,\dots,\bar Y_r\in l(X,\F)$ where $r=\codim \F$ with representatives $Y_1,\dots,Y_r\in L(X,\F)$.

2. $l(X,\F)$ is a free $C^\infty(W)$-module of rank $r$.

Because $\bar Y_1,\dots,\bar Y_r$ form a global frame of the normal bundle $N(\F)$, every $\bar Y\in l(X,\F)$ can be written uniquely as $\bar Y=\sum_{i=1}^r a_i\bar Y_i$ for $a_i\in C^\infty(X)$.

Now we want to show that the $a_i$ are basic. Choose a projectable representative $Y$ of $\bar Y$, and let $T\in\mathfrak X(\F)$. Since $Y$ and each $Y_i$ are projectable, $[T,Y]$ and $[T,Y_i]$ are tangent to $\F$. Then,
\begin{align*}
0=\overline{[T,Y]}=\overline{\left[T,\sum_{i=1}^r a_iY_i\right]}&=\overline{\left(\sum_{i=1}^r T(a_i)Y_i+\sum_{i=1}^r a_i[T,Y_i]\right)}\\
&=\sum_{i=1}^r T(a_i)\bar Y_i+\sum_{i=1}^r a_i\overline{[T,Y_i]}\\
&=\sum_{i=1}^r T(a_i)\bar Y_i.
\end{align*}

Because the $\bar Y_i$ are pointwise linearly independent in $N(\F)$, $T(a_i)=0$ for all $i$. Therefore each $a_i$ is $\F$-basic. By \cite{MoerdijkMrcun:IFLG}*{Theorem~4.3\textup{(iv)}}, for each $i$ there is a unique $f_i\in C^\infty(W)$ such that $a_i=f_i\circ\pi_{\bas}$. Thus $\bar Y=\sum_{i=1}^r f_i\bar Y_i$ with $f_i\in C^\infty(W)$.

3. Vector bundle identifications.

After choosing $\bar Y_1,\dots,\bar Y_r$, we define $A:=W\times \mathbb R^r$. Let $e_1,\dots,e_r$ be the global frame of $A$, and we identify sections
\[
\Gamma(A)\ni \sum f_ie_i\longleftrightarrow \sum f_i\bar Y_i\in l(X,\F).
\]

4. Anchor map identification.

Define
\[
\varrho:l(X,\F)\longrightarrow l(X,\F_{\bas})\cong \mathfrak X(W)
\]
by projection along $\pi_{\bas}$ as follows. By \cite{MoerdijkMrcun:IFLG}*{Lemma~4.5}, $L(X,\F)\subseteq L(X,\F_{\bas})$. If $\bar Y\in l(X,\F)$ is represented by $Y\in L(X,\F)$, then $Y$ is projectable for $\F_{\bas}$. Since the leaves of $\F_{\bas}$ are the fibres of $\pi_{\bas}:X\to W$, there is a unique vector field $Y_W\in\mathfrak X(W)$ such that $d\pi_{\bas}(Y_x)=(Y_W)_{\pi_{\bas}(x)}$ for every $x\in X$. Let $\varrho(\bar Y):=Y_W$, using the identification $l(X,\F_{\bas})\cong\mathfrak X(W)$.

We check that $\varrho$ is well-defined. If $Y'=Y+U$ with $U\in \mathfrak X(\F)$, then $U$ is tangent to $\F_{\bas}$ as well, so $d\pi_{\bas}(U)=0$. Hence $Y$ and $Y'$ project to the same vector field on $W$.

Then we check that $\varrho$ is $C^\infty(W)$-linear and a Lie algebra homomorphism: For $f\in C^\infty(W)$, the section $f\bar Y$ is represented by $(f\circ\pi_{\bas})Y$. If $Y$ projects to $Y_W$, then $(f\circ\pi_{\bas})Y$ projects to $fY_W$, so $\varrho(f\bar Y)=f\varrho(\bar Y)$. If $Y,Y'\in L(X,\F)$ project to $Y_W,Y'_W\in\mathfrak X(W)$, then $[Y,Y']$ projects to $[Y_W,Y'_W]$. Hence $\varrho([\bar Y,\bar Y'])=[\varrho(\bar Y),\varrho(\bar Y')]$.

Under the identification $\Gamma(A)\cong l(X,\F)$, the map $\varrho$ therefore induces a bundle map
\[
\rho_A:A\to TW,
\]
which is the anchor.

5. Define bracket.

We define the bracket on $\Gamma(A)$ by transporting the Lie bracket on $l(X,\F)$ through the identification
\[
\Phi:\Gamma(A)\xlongrightarrow{\cong} l(X,\F),\qquad \sum_{i=1}^r f_i e_i \longmapsto \sum_{i=1}^r f_i\bar Y_i.
\]
Thus, for $\xi,\eta \in \Gamma(A)$, we write $[\xi,\eta]_A:=\Phi^{-1}\big([\Phi(\xi),\Phi(\eta)]\big)$.

Equivalently, if $\bar Y,\bar Z\in l(X,\F)$ are represented by $Y,Z\in L(X,\F)$, then $[\bar Y,\bar Z]:=\overline{[Y,Z]}$.

This is well defined: if $Y'=Y+U$ and $Z'=Z+V$ with $U,V\in \mathfrak X(\F)$, then $[Y',Z']-[Y,Z]=[U,Z]+[Y,V]+[U,V]\in \mathfrak X(\F)$. Skew-symmetry and the Jacobi identity follow from the corresponding properties of the Lie bracket on $l(X,\F)$.

It remains to verify the Leibniz rule. Let $\xi,\eta\in \Gamma(A)$ correspond to $\bar Y,\bar Z\in l(X,\F)$, and let $f\in C^\infty(W)$. If $Y,Z\in L(X,\F)$ represent $\bar Y,\bar Z$, then $(f\circ \pi_{\bas})Z$ represents $f\eta$. Hence
\[
[\xi,f\eta]_A=\Phi^{-1}\!\big(\overline{[Y,(f\circ \pi_{\bas})Z]}\big).
\]
Using the Leibniz rule for vector fields on $X$, we get
\[
[Y,(f\circ \pi_{\bas})Z]=(f\circ \pi_{\bas})[Y,Z]+Y(f\circ \pi_{\bas})\,Z.
\]
Since $\rho_A(\xi)=\varrho(\bar Y)$ is the vector field on $W$ projected from $Y$, we have
\[
Y(f\circ \pi_{\bas})=(\rho_A(\xi)f)\circ \pi_{\bas}.
\]
Therefore
\[
[\xi,f\eta]_A=f[\xi,\eta]_A+\rho_A(\xi)(f)\eta.
\]

Since $\rho_A$ is induced by the $C^\infty(W)$-linear Lie algebra homomorphism $\varrho:l(X,\F)\to \mathfrak X(W)$, the anchor preserves brackets. Hence $(A,[\,,\,]_A,\rho_A)$ is a Lie algebroid over $W$.

6. $A$ is transitive.

Recall that a Lie algebroid is transitive if its anchor
is surjective at every point
\cite{MoerdijkMrcun:IFLG}*{Sec.~6.2}.

Let $w\in W$ and $v\in T_wW$. Let $x\in X$ with $\pi_{\bas}(x)=w$. Since $\pi_{\bas}$ is a submersion, there exists $\zeta\in T_xX$ such that $d\pi_{\bas}(\zeta)=v$. Recall that we chose representatives $Y_1,\dots,Y_r\in L(X,\F)$. Then $(Y_1)_x,\dots,(Y_r)_x$ span a subspace complementary to $T_x\F$ in $T_xX$. Therefore, there exist scalars $a_1,\dots,a_r\in \mathbb R$ such that
\[
a_1(Y_1)_x+\dots+a_r(Y_r)_x-\zeta\in T_x\F.
\]

Let $Y:=a_1Y_1+\dots+a_rY_r$. Then $Y\in L(X,\F)$. Since $T_x\F\subset \ker(d\pi_{\bas})_x$, we get $d\pi_{\bas}(Y_x)=d\pi_{\bas}(\zeta)=v$. Therefore the class $\bar Y\in l(X,\F)\cong \Gamma(A)$ satisfies $\rho_A(\bar Y)(w)=v$. Since $v$ was arbitrary, $\rho_{A,w}$ is surjective. Since $w$ was arbitrary as well, $\rho_A$ is surjective everywhere. So $A$ is transitive.

We also get an exact sequence
\[
0\longrightarrow \ker \rho_A\longrightarrow A\xrightarrow{\rho_A}TW\longrightarrow 0.
\]

This is the transitive basic Lie algebroid attached to $(X,\F)$.

7. $A$ is independent of the transverse parallelism.

Suppose $\bar Y_1,\dots,\bar Y_r$ and $\bar Y_1',\dots,\bar Y_r'$ are two transverse parallelisms of $(X,\F)$. They yield identifications
\[
\Phi,\Phi':\Gamma(W\times \R^r)\xlongrightarrow{\cong} l(X,\F),
\]
hence two Lie algebroid structures $([\,,\,],\rho)$ and $([\,,\,]',\rho')$ on $W\times \R^r$.

Let
\[
T:=(\Phi')^{-1}\circ \Phi:\Gamma(W\times \R^r)\to \Gamma(W\times \R^r).
\]
Then for any sections $\xi,\eta$,
\[
T([\xi,\eta])
=(\Phi')^{-1}\big([\Phi(\xi),\Phi(\eta)]\big)
=(\Phi')^{-1}\big([\Phi'(T\xi),\Phi'(T\eta)]\big)
=[T\xi,T\eta]'.
\]
Also,
\[
\rho'(T\xi)=\varrho(\Phi'(T\xi))=\varrho(\Phi(\xi))=\rho(\xi).
\]
Since $\Phi$ and $\Phi'$ are isomorphisms of $C^\infty(W)$-modules, $T$ is $C^\infty(W)$-linear and is therefore induced by a smooth vector bundle automorphism of $W\times \mathbb R^r$. Since $T$ also preserves the bracket and the anchor, it is an isomorphism of Lie algebroids. Therefore the Lie algebroid $A$ is independent, up to isomorphism, of the chosen transverse parallelism.
\end{proof}

\subsection{Fibrewise structural Lie algebras}\label{subsec:structural-lie-algebras}

\begin{lemma}\label{lem:isotropy-structural-clean}
Let $\F$ be a homogeneous transversely parallelizable foliation of a compact manifold $X$, let $\pi_{\bas}:X\to W:=X/\F_{\bas}$ be the basic fibre bundle, and let $A:=b(X,\F)\to W$ be the basic Lie algebroid. For $w\in W$, write $L_w:=\pi_{\bas}^{-1}(w)$. Then restriction to the fibre $L_w$ induces a Lie algebra isomorphism
\[
(\ker\rho_A)_w \cong l(L_w,\F|_{L_w}),
\]
and $(L_w,\F|_{L_w})$ is a Lie foliation.
\end{lemma}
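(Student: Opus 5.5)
Under the identification $\Gamma(A)\cong l(X,\F)$ of Proposition~\ref{prop:basic-Lie-algebroid}, I would work at the level of transverse vector fields. Fix a transverse parallelism $\bar Y_1,\dots,\bar Y_r$ with projectable representatives $Y_1,\dots,Y_r$. A point $a\in A_w$ corresponds to a constant combination $\sum c_i e_i(w)$, and it lies in $(\ker\rho_A)_w$ exactly when $\sum c_i\,d\pi_{\bas}(Y_i)_x=0$ for $x\in L_w$. Since the coefficients are basic and hence constant on $L_w$, the vector field $\sum c_i Y_i$ is then tangent to $T\F_{\bas}|_{L_w}=TL_w$ along the fibre. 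It is projectable for $\F|_{L_w}$, because brackets with fields tangent to $\F$ stay tangent to $\F$.

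This gives a restriction map $r_w:(\ker\rho_A)_w\to l(L_w,\F|_{L_w})$, with $\sum c_ie_i\mapsto\overline{(\sum c_iY_i)|_{L_w}}$. It is well defined, since changing representatives changes the field by something in $\mathfrak X(\F)$. It is injective, since $\bar Y_i$ are pointwise independent in $N(\F)$.

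To show $r_w$ is a Lie algebra map, I would define the bracket on $(\ker\rho_A)_w$ in the usual way: extend elements to sections of $\ker\rho_A$ and take the bracket in $\Gamma(A)$, evaluated at $w$. This is $C^\infty$-linear on $\ker\rho_A$ because the anchor kills the Leibniz terms. Choose extensions $\xi,\eta\in\Gamma(\ker\rho_A)$ near $w$, represented by projectable fields $Y,Z$ tangent to the fibres of $\pi_{\bas}$. The restriction of $[Y,Z]$ to $L_w$ equals $[Y|_{L_w},Z|_{L_w}]$, because both fields are tangent to the submanifold $L_w$.

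For surjectivity I would compare dimensions. On $L_w$, $\F|_{L_w}$ has codimension $\dim L_w-\dim\F=r-\dim W$, which equals $\operatorname{rank}\ker\rho_A$ by transitivity of $A$ (step~6 of Proposition~\ref{prop:basic-Lie-algebroid}). So it suffices to show $\dim l(L_w,\F|_{L_w})\le r-\dim W$, and that the image of $r_w$ is a transverse parallelism of $\F|_{L_w}$.

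The image frames $N(\F|_{L_w})$ pointwise. At each $x\in L_w$, the vectors $\sum c_i(Y_i)_x$ with $c\in\ker\rho_{A,w}$ are exactly the combinations of the $Y_i$ tangent to $L_w$, and these span $T_xL_w$ modulo $T_x\F$ because the $(Y_i)_x$ span $T_xX/T_x\F$.

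The bound on $\dim l(L_w,\F|_{L_w})$ is the step I expect to be the main obstacle. By the same argument as step~2 of Proposition~\ref{prop:basic-Lie-algebroid}, any transverse field on $L_w$ is a combination of the image frame with $\F|_{L_w}$-basic coefficients. So I need every basic function on $L_w$ to be constant, that is, $\F|_{L_w}$ has dense leaves. This holds because $\overline{L}=L_w$: for homogeneous transversely parallelizable foliations on compact manifolds, the basic fibres are the leaf closures (\cite{MoerdijkMrcun:IFLG}*{Theorem~4.3}), and a continuous basic function is constant on a dense leaf. Hence $l(L_w,\F|_{L_w})$ is finite-dimensional of dimension $\codim\F|_{L_w}$, spanned by a global parallelism closed under brackets with constant structure functions, and $r_w$ is an isomorphism.

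Finally, $(L_w,\F|_{L_w})$ is a Lie foliation with structural Lie algebra $\mathfrak g:=l(L_w,\F|_{L_w})$. Let $\bar Z_1,\dots,\bar Z_k$ be a basis of $\mathfrak g$, which is a transverse parallelism with constant structure constants. The dual $\mathfrak g$-valued form $\omega$, vanishing on $T\F$ with $\omega(Z_j)=\varepsilon_j$, satisfies the Maurer--Cartan equation $d\omega+\tfrac12[\omega,\omega]=0$, by Cartan's formula applied to the projectable representatives. Its kernel is $T\F|_{L_w}$, so it defines the Lie $\mathfrak g$-foliation structure (\cite{MoerdijkMrcun:IFLG}*{Sec.~4.2}).
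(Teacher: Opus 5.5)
Your proof is correct, and its skeleton matches the paper's. Both arguments build a restriction map on the isotropy, get injectivity from pointwise independence of the parallelism in $N(\F)$, and get surjectivity because every transverse field on $L_w$ is a constant combination of a parallelism. The Lie-foliation conclusion then comes from the Maurer--Cartan form as in \cite{MoerdijkMrcun:IFLG}*{Theorem~4.24}. Your definition of the isotropy bracket via local sections of $\ker\rho_A$ is equivalent to the paper's quotient $\mathfrak k_w/I_w\,l(X,\F)$. The projectability of $Y|_{L_w}$ needs local extensions of fields tangent to $\F|_{L_w}$ to fields tangent to $\F$; you gloss over this, and the paper spells it out.

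The real difference is where the two inputs for surjectivity come from. The paper cites \cite{MoerdijkMrcun:IFLG}*{Theorem~4.9} for both facts at once: $\F|_{L_w}$ is transversely parallelizable, and its basic functions are constant. It deduces that $\ev^L_x$ is an isomorphism and concludes from the commutative square $\ev^L_x\circ\operatorname{res}_w=\ev^X_x|_{(\ker\rho_A)_w}$. You instead build the parallelism of $\F|_{L_w}$ from the image of $r_w$, and you get constancy of basic functions from density of the leaves in $L_w$.

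Your argument is precisely the one the paper uses later in Lemma~\ref{lem:basic-algebroid-closure-submersion}(ii). There compactness, homogeneity and Theorem~4.9 are unavailable, and only a leaf-closure submersion is assumed, so your route is the one that generalizes. In the present compact homogeneous setting, however, it is not more elementary. The identification of basic fibres with leaf closures is not what \cite{MoerdijkMrcun:IFLG}*{Theorem~4.3} provides. The paper takes it from \cite{MoerdijkMrcun:IFLG}*{Corollary~4.25} (see Corollary~\ref{cor:B-W-identification}), which is stated after the Lie-foliation Theorem~4.24. You should correct that citation, and note that you are relying on a result downstream of the ones the paper invokes directly.
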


\begin{proof}
Let $w\in W$ and $L_w=\pi_{\bas}^{-1}(w)$. By Proposition~\ref{prop:basic-Lie-algebroid}, we have that $A=b(X,\F)\to W$ is a transitive Lie algebroid with $\Gamma(A)\cong l(X,\F)$ as a $C^\infty(W)$-module, and its anchor $\rho_A$ is induced by the $C^\infty(W)$-linear Lie algebra homomorphism $\varrho:l(X,\F)\longrightarrow l(X,\F_{\bas})\cong \mathfrak X(W)$.

\emph{Step 1}: We want to rewrite the isotropy fibre as a quotient.

Let
\[
I_w:=\{f\in C^\infty(W)\mid f(w)=0\},\qquad \mathfrak k_w:=\{\bar Y\in l(X,\F)\mid \varrho(\bar Y)(w)=0\}.
\]

Since $\Gamma(A)\cong l(X,\F)$, the identification $A_w\cong \Gamma(A)/I_w\Gamma(A)$ gives $A_w \cong l(X,\F)/I_w\,l(X,\F)$. Under this identification, $\rho_{A,w}:A_w\to T_wW$ is induced by $\bar Y\mapsto \varrho(\bar Y)(w)$. Therefore, $(\ker\rho_A)_w\cong \mathfrak k_w/I_w\,l(X,\F)$.

We verify that $\mathfrak k_w/I_w\,l(X,\F)$ carries a well-defined Lie algebra structure:

1. $I_w\,l(X,\F)\subset \mathfrak k_w$: if $f\in I_w$ and $\bar Y\in l(X,\F)$, then $\varrho(f\bar Y)(w)=f(w)\varrho(\bar Y)(w)=0$.

2. $\mathfrak k_w$ is a Lie subalgebra of $l(X,\F)$: let $\mathfrak m_w:=\{Z\in \mathfrak X(W):Z_w=0\}$. It is easy to show that $\mathfrak m_w$ is a Lie subalgebra of $\mathfrak X(W)$. Because $\varrho$ is a Lie algebra homomorphism, $\mathfrak k_w=\varrho^{-1}(\mathfrak m_w)$ is a Lie subalgebra of $l(X,\F)$.

3. $I_w\,l(X,\F)$ is an ideal in $\mathfrak k_w$: by the Leibniz rule, if $\bar Y\in \mathfrak k_w$ and $f\bar Z\in I_w\,l(X,\F)$, then $[\bar Y,f\bar Z]=f[\bar Y,\bar Z]+(\varrho(\bar Y)f)\bar Z\in I_w\,l(X,\F)$, because $(\varrho(\bar Y)f)(w)=df_w(\varrho(\bar Y)(w))=0$.

\emph{Step 2}: We construct the restriction map to the fibre.

Let $\bar Y\in \mathfrak k_w$, choose a representative $Y\in L(X,\F)$, and let $Z:=\varrho(\bar Y)\in\mathfrak X(W)$. Since $\bar Y\in\mathfrak k_w$, we have $Z_w=0$, so for every $x\in L_w$ we have
\[
d\pi_{\bas}(Y_x)=Z_{\pi_{\bas}(x)}=Z_w=0.
\]
Hence $Y_x\in \ker(d\pi_{\bas})_x=T_xL_w$, so $Y$ restricts to a vector field $Y|_{L_w}\in \mathfrak X(L_w)$.

To check that $Y|_{L_w}$ is projectable for $\F|_{L_w}$, let $X_0\in \mathfrak X(\F|_{L_w})$. Choose a local extension $\widetilde X_0\in \mathfrak X(\F)$ near each point of $L_w$ with $\widetilde X_0|_{L_w}=X_0$. Since $Y\in L(X,\F)$, we have $[Y,\widetilde X_0]\in \mathfrak X(\F)$, hence also $[\widetilde X_0,Y]\in \mathfrak X(\F)$. Thus for every $x\in L_w$, $[\widetilde X_0,Y](x)\in T_x\F\subset T_x\F_{\bas}=T_xL_w$. This implies that $[\widetilde X_0,Y]|_{L_w}$ is a well-defined vector field on $L_w$. If $f\in C^\infty(L_w)$ and $\widetilde f$ is an extension of $f$, then
\[
([\widetilde X_0,Y]|_{L_w})(f)=[\widetilde X_0,Y](\widetilde f)|_{L_w}.
\]
We compute that
\begin{align*}
[\widetilde X_0,Y](\widetilde f)|_{L_w}
&=\widetilde X_0(Y\widetilde f)|_{L_w}-Y(\widetilde X_0\widetilde f)|_{L_w}\\
&=X_0((Y\widetilde f)|_{L_w})-Y|_{L_w}((\widetilde X_0\widetilde f)|_{L_w})\\
&=X_0(Y|_{L_w}(f))-Y|_{L_w}(X_0(f))\\
&=[X_0,Y|_{L_w}](f).
\end{align*}

This is to say $[X_0,Y|_{L_w}]=[\widetilde X_0,Y]|_{L_w}\in \mathfrak X(\F|_{L_w})$, so $Y|_{L_w}\in L(L_w,\F|_{L_w})$, and we may define
\[
\operatorname{Res}_w(\bar Y):=\overline{Y|_{L_w}}\in l(L_w,\F|_{L_w}).
\]

This is well defined: if $Y'$ is another representative of $\bar Y$, then $Y'-Y\in \mathfrak X(\F)$, so $(Y'-Y)|_{L_w}\in \mathfrak X(\F|_{L_w})$, hence $\overline{Y'|_{L_w}}=\overline{Y|_{L_w}}$. It is a Lie algebra homomorphism because restriction commutes with Lie brackets for vector fields tangent to $L_w$, so $\operatorname{Res}_w([\bar Y,\bar Z])=[\operatorname{Res}_w(\bar Y),\operatorname{Res}_w(\bar Z)]$.

Now let $f\in I_w$ and let $\bar Y\in l(X,\F)$ be represented by $Y\in L(X,\F)$. Since $\pi_{\bas}|_{L_w}\equiv w$ and $f\bar Y\in I_w\,l(X,\F)\subset \mathfrak k_w$, we have
\[
\operatorname{Res}_w(f\bar Y) = \overline{((f\circ\pi_{\bas})Y)|_{L_w}} = \overline{f(w)Y|_{L_w}} = 0.
\]
Therefore $\operatorname{Res}_w$ vanishes on $I_w\,l(X,\F)$ and hence descends to a Lie algebra homomorphism
\[
\operatorname{res}_w:(\ker\rho_A)_w\longrightarrow l(L_w,\F|_{L_w}).
\]

\emph{Step 3}: It remains to prove that $\operatorname{res}_w$ is an isomorphism.

Let $x\in L_w$. Choose a transverse parallelism $\bar Y_1,\dots,\bar Y_r$ on $(X,\F)$. Then $l(X,\F)$ is a free $C^\infty(W)$-module with basis $\bar Y_1,\dots,\bar Y_r$, so the classes $[\bar Y_1]_w,\dots,[\bar Y_r]_w$ in $A_w\cong l(X,\F)/I_w\,l(X,\F)$ form a basis of $A_w$. Since $(\bar Y_1)_x,\dots,(\bar Y_r)_x$ form a basis of $N_x(\F)$, evaluation at $x$ defines a linear isomorphism
\[
\operatorname{ev}_x^X:A_w\longrightarrow N_x(\F), \qquad [\bar Y]\longmapsto \bar Y_x.
\]
This map is well defined because if $\bar s=f\bar Y\in I_w\,l(X,\F)$, then for $x\in L_w$, $(f\circ \pi_{\bas})(x)=f(w)=0$, so
\[
\bar s_x=\overline{((f\circ \pi_{\bas})Y)_x}=0.
\]

Similarly, by \cite{MoerdijkMrcun:IFLG}*{Theorem~4.9}, $(L_w,\F|_{L_w})$ is transversely parallelizable and all its basic functions are constant, so the same argument as in the proof of \cite{MoerdijkMrcun:IFLG}*{Theorem~4.24} shows that the evaluation map
\[
\operatorname{ev}_x^L:l(L_w,\F|_{L_w})\longrightarrow N_x(\F|_{L_w}), \qquad \bar Z\longmapsto \bar Z_x,
\]
is a linear isomorphism.

Under the isomorphism $\operatorname{ev}_x^X$, we claim that the anchor $\rho_{A,w}:A_w\to T_wW$ corresponds to the map induced by $d\pi_{\bas}$ on normal spaces, $\overline{d\pi_{\bas}}:N_x(\F)\to T_wW$, $[v]\mapsto d\pi_{\bas}(v)$.

Indeed, if $\bar Y\in l(X,\F)$ is represented by $Y\in L(X,\F)$, then $d\pi_{\bas}(Y)=\varrho(\bar Y)\circ\pi_{\bas}$, and therefore $\overline{d\pi_{\bas}}(\bar Y_x)=\varrho(\bar Y)(w)=\rho_{A,w}([\bar Y])$. Hence $\operatorname{ev}_x^X$ restricts to an isomorphism
\[
(\ker\rho_A)_w \xrightarrow{\ \cong\ } \ker \bigl(\overline{d\pi_{\bas}}:N_x(\F)\to T_wW\bigr)= T_xL_w/T_x(\F) = N_x(\F|_{L_w}),
\]
since $T_xL_w=\ker(d\pi_{\bas})_x$ and $T_x(\F|_{L_w})=T_x(\F)$.

Finally, for $\xi=[\bar Y]\in(\ker\rho_A)_w$, we have
\[
\operatorname{ev}_x^L(\operatorname{res}_w(\xi)) = \operatorname{ev}_x^L(\overline{Y|_{L_w}}) = \overline{Y_x} = \operatorname{ev}_x^X(\xi).
\]
This is saying that the following square commutes:
\[
\begin{tikzcd}[column sep=large, row sep=large]
(\ker \rho_A)_w \arrow[r, "\operatorname{res}_w"] \arrow[d, "\operatorname{ev}_x^X|_{(\ker\rho_A)_w}" left, "\cong" right]
& l(L_w,\F|_{L_w}) \arrow[d, "\operatorname{ev}_x^L" left, "\cong" right] \\
N_x(\F|_{L_w}) \arrow[r, equal]
& N_x(\F|_{L_w})
\end{tikzcd}
\]
Thus $\operatorname{ev}_x^L\circ \operatorname{res}_w=\operatorname{ev}_x^X\big|_{(\ker\rho_A)_w}$. Both maps are isomorphisms onto $N_x(\F|_{L_w})$, so $\operatorname{res}_w$ is an isomorphism. Since it is already a Lie algebra homomorphism, it is a Lie algebra isomorphism.

\emph{Step 4}: We want to show that $(L_w,\F|_{L_w})$ is a Lie foliation.

Since $L_w=\pi_{\bas}^{-1}(w)$ and $X$ is compact, the fibre $L_w$ is compact. Since $L_w$ is a leaf of $\F_{\bas}$, it is connected. By \cite{MoerdijkMrcun:IFLG}*{Theorem~4.9}, the foliated manifold $(L_w,\F|_{L_w})$ is transversely parallelizable and its basic functions are constant. Hence \cite{MoerdijkMrcun:IFLG}*{Theorem~4.24} implies that $(L_w,\F|_{L_w})$ is a Lie foliation.
\end{proof}

\begin{proposition}\label{prop:algebroid-triv}
    For every $w_0\in W$, there exists an open neighborhood $U\subset W$ of $w_0$ and sections $\sigma_1,\dots,\sigma_s\in \Gamma(U,\ker \rho_A)$, where $s:=\mathrm{rank}(\ker \rho_A)$, such that, for every $w\in U$, the elements $\operatorname{res}_w(\sigma_1(w)),\dots,\operatorname{res}_w(\sigma_s(w))$ form a basis of $l(L_w,\F|_{L_w})$.
\end{proposition}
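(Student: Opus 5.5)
The argument rests on two facts: $\ker\rho_A$ is a smooth vector bundle of constant rank, and $\operatorname{res}_w$ is a linear isomorphism fibre by fibre (Lemma~\ref{lem:isotropy-structural-clean}). Any local frame of $\ker\rho_A$ near $w_0$ will therefore do.

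First, $\ker\rho_A$ is a subbundle. By Proposition~\ref{prop:basic-Lie-algebroid}, $A$ is transitive, so $\rho_A:A\to TW$ is surjective at every point. It therefore has constant rank $r-\dim W$ on each component of $W$. A constant-rank vector bundle morphism has kernel a smooth subbundle of $A$, and we set $s:=\operatorname{rank}(\ker\rho_A)$. Because $\ker\rho_A$ is locally trivial, we can choose an open neighbourhood $U$ of $w_0$ and smooth sections $\sigma_1,\dots,\sigma_s\in\Gamma(U,\ker\rho_A)$ that form a basis of $(\ker\rho_A)_w$ for every $w\in U$. Concretely, one may take the exact sequence $0\to\ker\rho_A\to A\to TW\to0$, pick a local splitting, and project the global frame $e_1,\dots,e_r$ of $A=W\times\RR^r$ onto $\ker\rho_A$. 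This yields a spanning family, from which $s$ pointwise independent sections can be extracted near $w_0$.

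Second, for each fixed $w\in U$, Lemma~\ref{lem:isotropy-structural-clean} says that $\operatorname{res}_w:(\ker\rho_A)_w\to l(L_w,\F|_{L_w})$ is a linear (indeed Lie algebra) isomorphism. A linear isomorphism carries a basis to a basis, so $\operatorname{res}_w(\sigma_1(w)),\dots,\operatorname{res}_w(\sigma_s(w))$ is a basis of $l(L_w,\F|_{L_w})$. This also shows that $\dim l(L_w,\F|_{L_w})=s$ is independent of $w$.

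There is one point to check, and it is the only real obstacle. Here $\sigma_i(w)$ is an element of the fibre $(\ker\rho_A)_w$, and $\operatorname{res}_w$ is defined on that fibre through the identification $(\ker\rho_A)_w\cong\mathfrak k_w/I_w\,l(X,\F)$ from Step~1 of the proof of Lemma~\ref{lem:isotropy-structural-clean}. We must verify that evaluating a section at $w$ is compatible with this identification. This holds because $A_w\cong\Gamma(A)/I_w\Gamma(A)$ is exactly the evaluation map at $w$. Hence the fibre value $\sigma_i(w)$ corresponds to the class of any global section of $A$ that agrees with $\sigma_i$ near $w$; such a section can be obtained by multiplying $\sigma_i$ by a bump function equal to $1$ near $w$. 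No uniformity in $w$ is needed beyond the smoothness of the $\sigma_i$, since the basis property is checked pointwise.
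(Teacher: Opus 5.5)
Your proof is correct and follows the paper's route: transitivity makes $\ker\rho_A$ a smooth subbundle, you take a local frame of it near $w_0$, and the fibrewise isomorphism $\operatorname{res}_w$ of Lemma~\ref{lem:isotropy-structural-clean} carries that frame to a basis of $l(L_w,\F|_{L_w})$. (One small slip: $\rho_A$ itself has rank $\dim W$; it is $\ker\rho_A$ that has rank $r-\dim W$.)

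The only difference is how you justify applying $\operatorname{res}_w$ to $\sigma_p(w)$. The paper writes explicit projectable representatives $\widetilde Y_p=\sum_i(c_{pi}\circ\pi_{\bas})Y_i$ on $\pi_{\bas}^{-1}(U)$ and restricts them to $L_w$. You instead extend $\sigma_p$ to a global section with a bump function, which handles the same point at least as carefully, since $\operatorname{res}_w$ was defined using global transverse fields.
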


\begin{proof}
Since $A\to W$ is transitive, $\rho_A:A\to TW$ is surjective at every point and has constant rank $\dim W$. Hence $\ker \rho_A\subset A$ is a smooth vector subbundle.

Shrink around $w_0$ and choose a smooth local frame $\sigma_1,\dots,\sigma_s$ of $\ker \rho_A|_U$. We can do this because there is a vector bundle isomorphism $\Theta:(\ker \rho_A)|_U\xrightarrow{\cong}U\times \RR^s$. Let $\varepsilon_1,\dots,\varepsilon_s$ be the standard basis of $\RR^s$. Using $\Theta^{-1}$, these sections are given by
\[
\sigma_p(w):=\Theta^{-1}(w,\varepsilon_p),\qquad p=1,\dots,s.
\]
Each $\sigma_p$ is smooth because $\Theta^{-1}$ is smooth.

Choose a transverse parallelism on $(X,\F)$, and let $e_1,\dots,e_r$ be the corresponding global frame of $A$, where $r=\mathrm{rank}(A)=\codim \F$. Let $Y_1,\dots,Y_r\in L(X,\F)$ be projectable vector fields representing $e_1,\dots,e_r$, respectively.

On $U$, let \[\sigma_p=\sum_{i=1}^r c_{pi}e_i,\quad c_{pi}\in C^\infty(U),\quad p=1,\dots,s.\]
Define on $\pi_{\bas}^{-1}(U)$,
\[
\widetilde Y_p:=\sum_{i=1}^r(c_{pi}\circ \pi_{\bas})Y_i.
\]
We construct $\widetilde Y_p$ this way because the $C^\infty(W)$-module structure on $\Gamma(A)\cong l(X,\F)$ is implemented upstairs by pullback along $\pi_{\bas}$: $f\cdot\bar Y\leftrightarrow(f\circ \pi_{\bas})Y$. Because $c_{pi}\circ \pi_{\bas}$ are basic and $Y_i$ are projectable, each $\widetilde Y_p$ belongs to $L(\pi_{\bas}^{-1}(U),\F)$. By construction, $\widetilde Y_p$ represents the section $\sigma_p$.

Since $\sigma_p\in \Gamma(U,\ker \rho_A)$, its anchor vanishes, so the projected vector field of $\widetilde Y_p$ on $U$ is zero. Equivalently, $d\pi_{\bas}(\widetilde Y_p)=0$, so for each $x\in L_w$ we have
\[
\widetilde Y_p(x)\in \ker(d\pi_{\bas})_x=T_xL_w.
\]
Hence for each $w\in U$, $\widetilde Y_p|_{L_w}$ is a vector field on $L_w$. Since $\widetilde Y_p$ is projectable for $\F$, its restriction is projectable for $\F|_{L_w}$, as in the proof of Lemma~\ref{lem:isotropy-structural-clean}.

By construction, $\operatorname{res}_w(\sigma_p(w))=\overline{\widetilde Y_p|_{L_w}}$. Because $\sigma_1,\dots,\sigma_s$ is a local frame of $\ker \rho_A|_U$, the vectors $\sigma_1(w),\dots,\sigma_s(w)$ form a basis of $(\ker\rho_A)_w$ for each $w\in U$. Then, by Lemma~\ref{lem:isotropy-structural-clean}, the elements $\operatorname{res}_w(\sigma_1(w)),\dots,\operatorname{res}_w(\sigma_s(w))$ form a basis of $l(L_w,\F|_{L_w})$.
\end{proof}

\subsection{Descent and Maurer--Cartan forms}\label{subsec:descent-mc-forms}

The following records a descent lemma that will be used later, after $\kappa$ is constructed, to push bisection-induced maps down to the base $B$.

\begin{lemma}\label{lem:descend-to-B}
Let $\kappa:OF(M,\mathcal O)\to B$ be a fibre bundle whose fibres are the closures of the leaves of $\widetilde{\mathcal O}$, and assume that $\kappa$ is proper. Let $U,V\subset M$ be open and let $f:\pi^{-1}(U)\to \pi^{-1}(V)$ be a diffeomorphism sending leaves of $\widetilde{\mathcal O}|_{\pi^{-1}(U)}$ to leaves of $\widetilde{\mathcal O}|_{\pi^{-1}(V)}$. Let $B_U:=\{b\in B\mid \kappa^{-1}(b)\subseteq \pi^{-1}(U)\}$, $B_V:=\{b\in B\mid \kappa^{-1}(b)\subseteq \pi^{-1}(V)\}.$ Then $B_U$ and $B_V$ are open. 

For every $b\in B_U$, there exists a unique $b'\in B_V$ such that $f(\kappa^{-1}(b))=\kappa^{-1}(b')$. $b\mapsto b'$ defines a smooth diffeomorphism $\bar f:B_U\to B_V$ satisfying $\kappa\circ f=\bar f\circ \kappa$ on $\kappa^{-1}(B_U)$.
\end{lemma}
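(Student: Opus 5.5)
The plan is to prove the statement in three steps: openness of $B_U$ and $B_V$, the fact that $f$ carries $\kappa$-fibres onto $\kappa$-fibres, and smoothness of the induced map on the base. Throughout I use that the fibres of $\kappa$ are compact (since $\kappa$ is proper) and that they partition $OF(M,\mathcal O)$ into closures of leaves of $\widetilde{\mathcal O}$. In particular, each fibre $\kappa^{-1}(b)$ is saturated for $\widetilde{\mathcal O}$.

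\emph{Openness.} Put $C:=OF(M,\mathcal O)\setminus\pi^{-1}(U)$, which is closed. By definition, $B\setminus B_U=\kappa(C)$. A proper continuous map into the locally compact Hausdorff manifold $B$ is closed, so $\kappa(C)$ is closed and $B_U$ is open. The same argument applies to $B_V$.

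\emph{Fibres go to fibres.} Fix $b\in B_U$. Write $\kappa^{-1}(b)=\overline{L}$ for a leaf $L$ of $\widetilde{\mathcal O}$, and note that $L\subseteq\pi^{-1}(U)$. Since $L$ is connected and maximal, it is also a leaf of $\widetilde{\mathcal O}|_{\pi^{-1}(U)}$. Therefore $f(L)$ is a leaf of $\widetilde{\mathcal O}|_{\pi^{-1}(V)}$. Set $K:=f(\kappa^{-1}(b))$. Then $K$ is compact, and because $f$ is a homeomorphism, $K$ is the closure of $f(L)$ in $\pi^{-1}(V)$. Since $K$ is compact, this closure is also the closure in $OF(M,\mathcal O)$. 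Moreover $\kappa^{-1}(b)$ is a union of leaves of the restricted foliation on $\pi^{-1}(U)$, so $K$ is a union of leaves of the restricted foliation on $\pi^{-1}(V)$.

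The key point, which I expect to be the main obstacle, is to upgrade this restricted saturation to global saturation. Let $y\in K$, and let $L''$ be the global leaf of $\widetilde{\mathcal O}$ through $y$.
\begin{itemize}
\item[(a)] $L''\cap K$ is open in $L''$ (in its leaf topology). Near any of its points, a small plaque of $L''$ lies in $\pi^{-1}(V)$, hence in a restricted leaf, hence in $K$.
\item[(b)] $L''\cap K$ is closed in $L''$, because $K$ is closed and $L''\hookrightarrow OF(M,\mathcal O)$ is continuous.
\end{itemize}
Since $L''$ is connected, $L''\subseteq K\subseteq\pi^{-1}(V)$. Taking $y\in f(L)$, this shows that $L''$ is a connected subset of $\pi^{-1}(V)$ containing the restricted leaf $f(L)$, so $L''=f(L)$. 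Hence $f(L)$ is a global leaf, and $K=\overline{f(L)}=\kappa^{-1}(b')$ with $b'=\kappa(f(x))$ for any $x\in\kappa^{-1}(b)$. Since $K\subseteq\pi^{-1}(V)$, we get $b'\in B_V$. Uniqueness of $b'$ is clear because distinct $\kappa$-fibres are disjoint. Applying the same argument to $f^{-1}$ gives the inverse assignment $B_V\to B_U$. So $\bar f:B_U\to B_V$ is a bijection with $\kappa\circ f=\bar f\circ\kappa$ on $\kappa^{-1}(B_U)$.

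\emph{Smoothness.} The map $\kappa$ is a fibre bundle, hence a surjective submersion, and $\kappa^{-1}(B_U)\subseteq\pi^{-1}(U)$ is open. Around any $b_0\in B_U$, choose a smooth local section $\tau$ of $\kappa$ defined on a neighbourhood $W\subseteq B_U$ of $b_0$. Then $\tau(W)\subseteq\kappa^{-1}(B_U)$, and $\bar f|_W=\kappa\circ f\circ\tau$ is smooth. The same argument with $f^{-1}$ shows $\bar f^{-1}$ is smooth, so $\bar f$ is a diffeomorphism.
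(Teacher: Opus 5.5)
Your proof is correct and follows essentially the same route as the paper. Openness comes from closedness of the proper map $\kappa$. A clopen argument inside the global leaf $L''$, with closedness supplied by compactness of $f(\kappa^{-1}(b))$, shows that $f(L)$ is a global leaf. Smoothness of $\bar f$ comes from local sections of $\kappa$. The only difference is cosmetic: you run the clopen argument on $L''\cap f(\kappa^{-1}(b))$, using that $f(\kappa^{-1}(b))$ is saturated by leaves of $\widetilde{\mathcal O}|_{\pi^{-1}(V)}$, whereas the paper runs it on $f(L)$ itself, viewed as a connected component of $L''\cap\pi^{-1}(V)$.
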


\begin{proof}
Since $\kappa$ is proper, it is a closed map. Hence $B_U = B\setminus \kappa\bigl(OF(M,\mathcal O)\setminus \pi^{-1}(U)\bigr)$ is open, and similarly for $B_V$. 

Let $b\in B_U$ and choose $e\in \kappa^{-1}(b)$. Let $L$ be the leaf of $\widetilde{\mathcal O}$ through $e$. By definition of $\kappa$, $\overline L=\kappa^{-1}(b)$. Since $b\in B_U$, we have $\kappa^{-1}(b)\subseteq \pi^{-1}(U)$,  and therefore $L\subseteq \overline L=\kappa^{-1}(b)\subseteq \pi^{-1}(U)$. Hence $f(L)$ is defined. Since $L$ is a leaf of $\widetilde{\mathcal O}$ contained in $\pi^{-1}(U)$, it is also a leaf of $\widetilde{\mathcal O}|_{\pi^{-1}(U)}$, so $f(L)$ is a leaf of $\widetilde{\mathcal O}|_{\pi^{-1}(V)}$. Let $L''$ be the leaf of $\widetilde{\mathcal O}$ through $f(e)$. In the manifold topology of $L''$, the set $f(L)$ is a connected component of $L''\cap\pi^{-1}(V)$; in particular, $f(L)$ is open in $L''$.

We claim that $f(L)=L''$. Because $\kappa^{-1}(b)$ is compact and $f:\pi^{-1}(U)\to \pi^{-1}(V)$ is a homeomorphism onto its image, the set $f(\kappa^{-1}(b))$ is compact and therefore closed in $OF(M,\mathcal O)$. Since $L$ is dense in $\kappa^{-1}(b)$, the set $f(L)$ is dense in $f(\kappa^{-1}(b))$, so $\overline{f(L)}=f(\overline L)=f(\kappa^{-1}(b))\subseteq\pi^{-1}(V)$. As the inclusion of the leaf $L''$ into $OF(M,\mathcal O)$ is continuous, the closure of $f(L)$ in the manifold topology of $L''$ is contained in $\overline{f(L)}\cap L''\subseteq L''\cap\pi^{-1}(V)$; and since $f(L)$ is a connected component of $L''\cap\pi^{-1}(V)$, it is closed in $L''\cap\pi^{-1}(V)$, hence closed in $L''$. Being nonempty, open, and closed in the connected leaf $L''$, the set $f(L)$ equals $L''$. Therefore $\overline{f(L)}=\overline{L''}=\kappa^{-1}(b')$ for some $b'\in B$, and $f(\kappa^{-1}(b))=\kappa^{-1}(b')$; since $\kappa^{-1}(b')=f(\kappa^{-1}(b))\subseteq\pi^{-1}(V)$, we moreover have $b'\in B_V$.

Since $f(\kappa^{-1}(b))=\kappa^{-1}(b')$, the point $b'$ is uniquely determined by $b$. Thus $\bar f(b):=b'$ is well-defined and satisfies $\kappa\circ f=\bar f\circ\kappa$ on $\kappa^{-1}(B_U)$. Applying the same construction to $f^{-1}$ gives the inverse map $B_V\to B_U$.

Finally, $\bar f$ is smooth because $\kappa$ is a fibre bundle. For any $b\in B_U$, choose a local section $s:W\to OF(M,\mathcal O)$ of $\kappa$ with $b\in W\subseteq B_U$. Then on $W$ we have $\bar f = \kappa\circ f\circ s$, hence $\bar f$ is smooth. The same applies to $\bar f^{-1}$.
\end{proof}

\begin{lemma}\label{lem:bisection-pseudogroup}
Let $\kappa:OF(M,\mathcal O)\to B$ be the fibre bundle of Theorem~\ref{thm:mol}, whose fibres are the closures of the leaves of $\widetilde{\mathcal O}$. Let $\delta:U\to \cG$ and $\tau:V\to \cG$ be local bisections such that $t(\delta(U))\subset V$. For any local bisection $\sigma:W\to \cG$, let $f_\sigma:\pi^{-1}(W)\to \pi^{-1}(t(\sigma(W)))$ be the induced map on $OF(M,\mathcal O)$, and let $B_W:=\{b\in B\mid \kappa^{-1}(b)\subset \pi^{-1}(W)\}$.

Since $M$ and $O(q)$ are compact, $OF(M,\mathcal O)$ is compact; as $B$ is a Hausdorff manifold, $\kappa$ is proper. By Proposition~\ref{prop:molino-data-invariant} and Lemma~\ref{lem:descend-to-B}, $f_\sigma$ descends to a unique diffeomorphism $\bar f_\sigma:B_W\to B_{t(\sigma(W))}$ such that $\kappa\circ f_\sigma=\bar f_\sigma\circ\kappa$ on $\kappa^{-1}(B_W)$. Then:
\begin{enumerate}[label=\textup{(\arabic*)}]
\item On $\pi^{-1}(U)$, $f_{\tau*\delta}=f_\tau\circ f_\delta$, where $\tau*\delta$ denotes the product bisection.
\item On $B_U$, $\bar f_{\tau*\delta}=\bar f_\tau\circ \bar f_\delta$.
\item For every $b\in B_U$ and every $A\in O(q)$, $\bar f_\delta(b\cdot A)=\bar f_\delta(b)\cdot A$.
\end{enumerate}
\end{lemma}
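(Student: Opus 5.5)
Part (1) follows directly from the action axioms of Proposition~\ref{prop:action-axioms}. The product bisection is $(\tau*\delta)(x)=\tau\bigl(t(\delta(x))\bigr)\,\delta(x)$, defined on $U$ because $t(\delta(U))\subset V$. For $e\in\pi^{-1}(U)$ with $x=\pi(e)$, we compute
\[
f_{\tau*\delta}(e)=\bigl(\tau(t(\delta(x)))\,\delta(x)\bigr)\cdot e=\tau\bigl(t(\delta(x))\bigr)\cdot\bigl(\delta(x)\cdot e\bigr).
\]
Since $\pi(\delta(x)\cdot e)=t(\delta(x))$, the right-hand side is $f_\tau(f_\delta(e))$. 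One should also check that $\tau*\delta$ is again a local bisection. This holds because $s\circ(\tau*\delta)=\id_U$, and $t\circ(\tau*\delta)=(t\circ\tau)\circ(t\circ\delta)$ is a composite of diffeomorphisms onto open sets.

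For (2), let $b\in B_U$. The fibre $\kappa^{-1}(b)$ lies in $\pi^{-1}(U)$. By Lemma~\ref{lem:descend-to-B}, $f_\delta(\kappa^{-1}(b))=\kappa^{-1}(\bar f_\delta(b))$, and $\bar f_\delta(b)\in B_{t(\delta(U))}$. This set is contained in $\pi^{-1}(V)$, so $\bar f_\delta(b)\in B_V$ and $\bar f_\tau$ is defined there. Now apply $\kappa$ to (1) at any point $e\in\kappa^{-1}(b)$:
\[
\bar f_{\tau*\delta}(b)=\kappa\bigl(f_\tau(f_\delta(e))\bigr)=\bar f_\tau\bigl(\kappa(f_\delta(e))\bigr)=\bar f_\tau\bigl(\bar f_\delta(b)\bigr).
\]
The only care needed here is the bookkeeping of domains. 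It uses the fact that $B_{(\cdot)}$ is monotone in the open set and the intertwining identity $\kappa\circ f=\bar f\circ\kappa$ from Lemma~\ref{lem:descend-to-B}.

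For (3), the $O(q)$-action on $B$ is the one making $\kappa$ equivariant (Theorem~\ref{thm:mol}), so $\kappa^{-1}(bA)=\kappa^{-1}(b)\cdot A$. Since $\pi(eA)=\pi(e)$ and $\pi^{-1}(U)$ is $O(q)$-saturated, $B_U$ is $O(q)$-invariant, and $bA\in B_U$. Take $e\in\kappa^{-1}(b)$. Then $eA\in\kappa^{-1}(bA)$. Proposition~\ref{prop:action-axioms} gives $f_\delta(eA)=\delta(x)\cdot(eA)=(\delta(x)\cdot e)A=f_\delta(e)A$, where $x=\pi(e)$. Applying $\kappa$, we get
\[
\bar f_\delta(bA)=\kappa\bigl(f_\delta(e)A\bigr)=\kappa\bigl(f_\delta(e)\bigr)\cdot A=\bar f_\delta(b)\cdot A.
\]

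The main obstacle is mild. It is verifying that the domains match in (2), namely that $\bar f_\delta(B_U)\subset B_V$, and that $B_U$ is $O(q)$-invariant in (3). Both follow from the fibre-to-fibre statement of Lemma~\ref{lem:descend-to-B} together with $\pi$-saturation of the open sets involved.
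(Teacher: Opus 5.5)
Your proof is correct and follows the paper's argument essentially step by step: (1) comes from the action axioms, (2) from applying $\kappa$ to (1) together with the intertwining identity (the paper phrases this as uniqueness in Lemma~\ref{lem:descend-to-B}), and (3) from the $O(q)$-equivariance of $f_\delta$ and $\kappa$. Your extra checks, that $\tau*\delta$ is again a local bisection and that $B_U$ is $O(q)$-invariant, are details the paper leaves implicit.
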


\begin{proof}
\textup{(1)} Let $e\in\pi^{-1}(U)$ and $x=\pi(e)$. Then
\[
f_{\tau*\delta}(e)
=(\tau*\delta)(x)\cdot e
=\bigl(\tau(t(\delta(x)))\delta(x)\bigr)\cdot e
=\tau(t(\delta(x)))\cdot\bigl(\delta(x)\cdot e\bigr)
=f_\tau\bigl(f_\delta(e)\bigr),
\]
where we used that $\pi(\delta(x)\cdot e)=t(\delta(x))$ and $t(\delta(U))\subset V$.

\textup{(2)} Since $t(\delta(U))\subset V$, we have
$\bar f_\delta(B_U)\subset B_V$, so $\bar f_\tau\circ\bar f_\delta$ is
defined on $B_U$. For $e\in\kappa^{-1}(B_U)\subset\pi^{-1}(U)$,
\[
\kappa\bigl(f_{\tau*\delta}(e)\bigr)
=\kappa\bigl(f_\tau(f_\delta(e))\bigr)
=\bar f_\tau\bigl(\kappa(f_\delta(e))\bigr)
=\bar f_\tau\bigl(\bar f_\delta(\kappa(e))\bigr)
=(\bar f_\tau\circ\bar f_\delta)\bigl(\kappa(e)\bigr).
\]
By uniqueness in Lemma~\ref{lem:descend-to-B}, this yields $\bar f_{\tau*\delta}=\bar f_\tau\circ\bar f_\delta$ on $B_U$.

\textup{(3)} Since $f_\delta$ is $O(q)$-equivariant and $\kappa$ is
$O(q)$-equivariant, for $e\in \kappa^{-1}(b)$ we have
\[
\bar f_\delta(b\cdot A)
=\kappa\bigl(f_\delta(e\cdot A)\bigr)
=\kappa\bigl(f_\delta(e)\cdot A\bigr)
=\kappa\bigl(f_\delta(e)\bigr)\cdot A
=\bar f_\delta(b)\cdot A.
\]
\end{proof}

\begin{lemma}\label{lem:same-fibres-diffeo}
Let $X$ be a manifold, and let $p:X\to B$, $q:X\to W$ be surjective submersions onto Hausdorff manifolds. Assume that $p$ and $q$ have the same fibres. Then there exists a unique diffeomorphism $\Phi:B\xrightarrow{\ \cong\ }W$ such that $q=\Phi\circ p$.
\end{lemma}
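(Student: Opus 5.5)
The idea is to define $\Phi$ set-theoretically from the common fibres, and then get smoothness in both directions from local sections of the two submersions. Since $p$ and $q$ have the same fibres, for each $b\in B$ the fibre $p^{-1}(b)$ is nonempty (by surjectivity of $p$) and equals $q^{-1}(w)$ for a unique $w\in W$. Set $\Phi(b):=w$, equivalently $\Phi(b):=q(x)$ for any $x\in p^{-1}(b)$.

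This map is well defined because $q$ is constant on the fibres of $p$. It satisfies $q=\Phi\circ p$ by construction. It is unique, because $p$ is surjective, so $\Phi$ is determined on all of $B$ by the identity $q=\Phi\circ p$. Exchanging the roles of $p$ and $q$ gives $\Psi:W\to B$ with $p=\Psi\circ q$. Then $\Psi\circ\Phi\circ p=p$ and $\Phi\circ\Psi\circ q=q$, and surjectivity shows that $\Phi$ and $\Psi$ are mutually inverse bijections.

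For smoothness, I would use local sections of submersions. Fix $b\in B$ and choose $x\in p^{-1}(b)$. Since $p$ is a submersion, there is an open neighbourhood $U$ of $b$ and a smooth local section $s:U\to X$ with $p\circ s=\id_U$ and $s(b)=x$; this follows from the local normal form of submersions. On $U$ we then have $\Phi=\Phi\circ p\circ s=q\circ s$, which is smooth. The same argument applied to $q$ shows that $\Psi$ is smooth. Hence $\Phi$ is a diffeomorphism.

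No step here is a real obstacle. The only point that needs care is that $\Phi$ is smooth and not merely continuous, and the local section argument settles this without any use of quotient topologies. The Hausdorff assumption on $B$ and $W$ is not needed for the argument itself; it only ensures that both targets are genuine manifolds in the paper's sense.
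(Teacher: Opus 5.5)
Your proposal is correct and follows essentially the same route as the paper: you define $\Phi$ and $\Psi$ from the common fibres, get that they are mutually inverse from surjectivity, and obtain smoothness from local sections via $\Phi|_U=q\circ s$. The paper additionally invokes the quotient property of $p$ to get continuity first, which your direct local-section argument makes unnecessary.
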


\begin{proof}
Since $q$ is constant on the fibres of $p$, the quotient property of $p$ gives a unique continuous map $\Phi:B\to W$ with $q=\Phi\circ p$. Similarly, there is a unique continuous map $\Psi:W\to B$ with $p=\Psi\circ q$. Then it is straightforward to check that $(\Psi\circ\Phi)\circ p=\Psi\circ q=p$, so $\Psi\circ\Phi=\id_B$ because $p$ is surjective. Likewise, $\Phi\circ\Psi=\id_W$. Thus $\Phi$ is a homeomorphism.

To prove smoothness, let $b\in B$. Since $p$ is a submersion, there is a local section $s:U\to X$ of $p$ through $b$. On $U$, $\Phi|_U=q\circ s$, so $\Phi$ is smooth. The same argument applied to $\Psi$ shows that $\Phi^{-1}$ is smooth. Hence $\Phi$ is a diffeomorphism.
\end{proof}


\begin{corollary}\label{cor:B-W-identification}
For $X=OF(M,\mathcal O)$ and $\F=\widetilde{\mathcal O}$, let $\kappa:X\to B$ be the fibre bundle of Theorem~\ref{thm:mol} and let $\pi_{\bas}:X\to W$ be the basic fibre bundle. The two maps have the same fibres. Hence Lemma~\ref{lem:same-fibres-diffeo} gives a unique diffeomorphism
\[
\Phi:B\xrightarrow{\ \cong\ }W
\]
such that $\pi_{\bas}=\Phi\circ\kappa$. In what follows, whenever this setting is fixed, we identify $B$ and $W$ through this diffeomorphism and regard $A=b(X,\F)$ as a Lie algebroid over $B$.
\end{corollary}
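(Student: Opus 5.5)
The claim in Corollary~\ref{cor:B-W-identification} that needs proof is that $\kappa$ and $\pi_{\bas}$ have the same fibres. Once this is established, the diffeomorphism $\Phi$ follows directly from Lemma~\ref{lem:same-fibres-diffeo}. Both maps are surjective submersions onto Hausdorff manifolds. For $\kappa$ this holds because it is a fibre bundle over the manifold $B$ of Theorem~\ref{thm:mol}. For $\pi_{\bas}$ it is Proposition~\ref{prop:basic-Lie-algebroid}, applied once we know that $\F=\widetilde{\mathcal O}$ is a homogeneous transversely parallelizable foliation of the compact manifold $X=OF(M,\mathcal O)$. So the plan has two parts. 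First, verify the hypotheses of Proposition~\ref{prop:basic-Lie-algebroid} for $(X,\widetilde{\mathcal O})$. Second, show that the basic leaves coincide with the leaf closures.

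For the hypotheses, compactness of $X$ follows from compactness of $M$ and $O(q)$, as already noted in Lemma~\ref{lem:bisection-pseudogroup}. Transverse parallelizability is Lemma~\ref{lem:theta-omega-parallelism}. Homogeneity on each connected component I would obtain from the standard fact that a transversely parallelizable foliation is locally homogeneous. The flows of projectable representatives $Y_1,\dots,Y_r$ of the parallelism, together with flows of fields tangent to $\widetilde{\mathcal O}$, are automorphisms of $(X,\widetilde{\mathcal O})$, since projectable fields have foliation-preserving flows. Their values span $T_eX$ at every point, so the orbits of $\Aut(X,\widetilde{\mathcal O})$ are open and hence are unions of components. $X$ may have two components, as the Kronecker example shows. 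I would handle this either by working componentwise or by appealing directly to \cite{MoerdijkMrcun:IFLG}*{Thm.~4.3}, which is the input used in Proposition~\ref{prop:basic-Lie-algebroid}. This step is where I expect some care: one must check that "homogeneous" in Proposition~\ref{prop:basic-Lie-algebroid} can be taken componentwise, or that $O(q)$ permutes the components so that the argument goes through.

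For the fibres, I would use the characterization in \cite{MoerdijkMrcun:IFLG}*{Thm.~4.3 and Thm.~4.9} of the fibres of $\pi_{\bas}$ as the leaf closures. Concretely, a basic function is constant on each leaf and continuous, so it is constant on each leaf closure. Hence every leaf closure $\overline L=\kappa^{-1}(b)$ lies inside a single fibre of $\pi_{\bas}$. Conversely, $\kappa^{-1}(b)\mapsto b$ shows that functions $f\circ\kappa$ with $f\in C^\infty(B)$ are basic: $\kappa$ is constant along leaves, since the fibres of $\kappa$ are saturated. These functions separate distinct $\kappa$-fibres because $B$ is a Hausdorff manifold. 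Therefore each $\pi_{\bas}$-fibre, on which all basic functions are constant, lies in a single $\kappa$-fibre.

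Combining the two inclusions, the fibres agree, and Lemma~\ref{lem:same-fibres-diffeo} yields the unique $\Phi$ with $\pi_{\bas}=\Phi\circ\kappa$.
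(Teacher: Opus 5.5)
Your proposal is correct, but it handles the key step, equality of the fibres, by a different route from the paper. The paper's proof rests on two citations. Theorem~\ref{thm:mol} says the fibres of $\kappa$ are the leaf closures. \cite{MoerdijkMrcun:IFLG}*{Corollary~4.25} says the basic leaves, i.e.\ the fibres of $\pi_{\bas}$, are also the leaf closures. Lemma~\ref{lem:same-fibres-diffeo} then finishes.

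You instead compare the two partitions directly through basic functions:
\begin{itemize}
\item every basic function is constant on leaf closures;
\item basic functions are constant on basic leaves, by the definition of $\F_{\bas}$;
\item pullbacks of smooth functions from the Hausdorff manifolds $W$ and $B$ are basic, and they separate the fibres of $\pi_{\bas}$ and of $\kappa$, respectively.
\end{itemize}
In your first inclusion, the word ``hence'' silently uses that $f\circ\pi_{\bas}$, for $f\in C^\infty(W)$, is basic and separates $\pi_{\bas}$-fibres. This is the same observation you make explicitly for $B$, so you should state it.

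What your route buys is this: it uses the existence of $\kappa$ itself to show that basic functions separate leaf closures. The argument is therefore self-contained and does not depend on whether $X$ is connected. It works for any leaf-closure submersion onto a Hausdorff manifold. It is essentially the argument the paper later uses in Lemma~\ref{lem:basic-algebroid-closure-submersion}(i). The paper's route is shorter but relies on the external structure theory of transversely parallelizable foliations. Incidentally, the result that identifies basic leaves with leaf closures is Corollary~4.25 of Moerdijk--Mr\v cun, not Theorems~4.3 and~4.9; your direct argument makes that citation unnecessary.

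Your check of homogeneity goes beyond what the corollary's proof does. The statement takes $\pi_{\bas}$ as given, and the paper obtains homogeneity inside the proof of Theorem~\ref{thm:molino-groupoid} by citing Moerdijk--Mr\v cun. Of your two proposed resolutions, the second goes through as follows:
\begin{itemize}
\item each right translation by $A\in O(q)$ is an automorphism of $(X,\widetilde{\mathcal O})$, by Proposition~\ref{leafwise-Oq}(i);
\item each component of the compact manifold $X$ is open and closed, so its image under the open map $\pi$ is open and compact, and therefore equals the connected $M$;
\item hence $O(q)$ permutes the components of $X$ transitively.
\end{itemize}
Combined with your flow argument on each component, this gives transitivity of $\Aut(X,\widetilde{\mathcal O})$ on all of $X$.
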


\begin{proof}
By Theorem~\ref{thm:mol}, the fibres of $\kappa$ are the closures of the leaves of $\F$. By \cite{MoerdijkMrcun:IFLG}*{Corollary~4.25}, the leaves of the basic foliation $\F_{\bas}$ are also the closures of the leaves of $\F$. Since $\pi_{\bas}$ is the quotient map by $\F_{\bas}$, its fibres are the leaves of $\F_{\bas}$. Thus $\kappa$ and $\pi_{\bas}$ have the same fibres, and the conclusion follows from Lemma~\ref{lem:same-fibres-diffeo}.
\end{proof}

\begin{definition}[{\cite{MoerdijkMrcun:IFLG}*{Sec.~4.3.1}}]\label{def:mc}
Recall that a $\mathfrak g$-valued $1$-form $\eta$ is a Maurer--Cartan form if $d\eta+\frac12[\eta,\eta]=0$.
\end{definition}

\begin{remark}[{\cite{MoerdijkMrcun:IFLG}*{Theorem~4.24}}]
If $(L,\mathcal F)$ is transversely parallelizable on a compact connected manifold and all basic functions on $(L,\mathcal F)$ are constant, then the evaluation maps $\ev_x \colon l(L,\mathcal F)\xrightarrow{\ \cong\ }N_x(\mathcal F)$ are isomorphisms, and the associated canonical Maurer--Cartan form is $(\omega_{\mathrm{MC}})_x(\xi):=\ev_x^{-1}(\pr_x^N(\xi))$.
\end{remark}


\begin{lemma}\label{lem:MC-naturality}
Let $(L_b,\mathcal F_b)$ and $(L_{b'},\mathcal F_{b'})$ be transversely parallelizable foliations on compact connected manifolds, and assume that all basic functions on $(L_b,\mathcal F_b)$ and $(L_{b'},\mathcal F_{b'})$ are constant. Let $\omega^{b}_{\mathrm{MC}}\in\Omega^1\bigl(L_b,l(L_b,\mathcal F_b)\bigr)$ and $\omega^{b'}_{\mathrm{MC}}\in\Omega^1\bigl(L_{b'},l(L_{b'},\mathcal F_{b'})\bigr)$ be the canonical Maurer--Cartan forms. If $\psi:(L_b,\mathcal F_b)\to(L_{b'},\mathcal F_{b'})$ is a foliated diffeomorphism, then
\begin{enumerate}[label=\textup{(\roman*)}]
\item $\psi_*$ induces a Lie algebra isomorphism $\psi':l(L_b,\mathcal F_b)\xrightarrow{\ \cong\ }l(L_{b'},\mathcal F_{b'})$, $\psi'(\bar Y)=\overline{\psi_*Y}$, where $Y\in L(L_b,\mathcal F_b)$ is a projectable vector field representing $\bar Y\in l(L_b,\mathcal F_b)$.
\item $\psi^*(\omega^{b'}_{\mathrm{MC}})=\psi'\circ\omega^{b}_{\mathrm{MC}}$.
\end{enumerate}
\end{lemma}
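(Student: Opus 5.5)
For (i), I will first show that $\psi_*$ maps projectable vector fields to projectable vector fields and tangential ones to tangential ones. Since $\psi$ is a foliated diffeomorphism, $d\psi(T\F_b)=T\F_{b'}$. Hence $\psi_*$ restricts to a Lie algebra isomorphism $\mathfrak X(\F_b)\to\mathfrak X(\F_{b'})$, with inverse $(\psi^{-1})_*$.

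Next, let $Y\in L(L_b,\F_b)$ and $T'\in\mathfrak X(\F_{b'})$. Write $T'=\psi_*T$ with $T\in\mathfrak X(\F_b)$. Naturality of the bracket gives $[\psi_*Y,T']=\psi_*[Y,T]$, and this lies in $\mathfrak X(\F_{b'})$. So $\psi_*$ restricts to $L(L_b,\F_b)\to L(L_{b'},\F_{b'})$ and preserves the ideal of tangential fields. It therefore descends to a Lie algebra homomorphism $\psi'$ on the quotients $l=L/\mathfrak X(\F)$. The same argument applied to $\psi^{-1}$ produces the inverse, so $\psi'$ is an isomorphism.

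For (ii), I will compare the two sides pointwise through the evaluation isomorphisms of the remark preceding the lemma. The key identity is
\[
\ev_{\psi(x)}\bigl(\psi'(\bar Y)\bigr)=\overline{(\psi_*Y)_{\psi(x)}}=\overline{d\psi_x(Y_x)}=(d\psi)^N_x\bigl(\ev_x(\bar Y)\bigr),
\]
where $(d\psi)^N_x:N_x(\F_b)\to N_{\psi(x)}(\F_{b'})$ is the induced normal map. This is well defined because $\psi$ is foliated, and it satisfies $\pr^N_{\psi(x)}\circ d\psi_x=(d\psi)^N_x\circ\pr^N_x$. In other words, $\ev_{\psi(x)}\circ\psi'=(d\psi)^N_x\circ\ev_x$.

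Now take $\xi\in T_xL_b$. Then
\[
\bigl(\psi^*\omega^{b'}_{\mathrm{MC}}\bigr)_x(\xi)=\ev_{\psi(x)}^{-1}\bigl(\pr^N_{\psi(x)}(d\psi_x\xi)\bigr)=\ev_{\psi(x)}^{-1}\bigl((d\psi)^N_x\pr^N_x\xi\bigr).
\]
By the key identity, $\ev_{\psi(x)}^{-1}\circ(d\psi)^N_x=\psi'\circ\ev_x^{-1}$. The right-hand side therefore equals $\psi'\bigl(\ev_x^{-1}(\pr^N_x\xi)\bigr)=\psi'\bigl((\omega^b_{\mathrm{MC}})_x(\xi)\bigr)$, which proves (ii).

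The only delicate point is the interchange of $\ev$ with $\psi'$, namely $(\psi_*Y)_{\psi(x)}=d\psi_x(Y_x)$. This holds by the definition of pushforward under a diffeomorphism. The hypotheses on basic functions and compactness enter only to ensure that $\ev_x$ and $\ev_{\psi(x)}$ are isomorphisms, so that the canonical Maurer--Cartan forms are defined.
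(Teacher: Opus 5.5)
Your proposal is correct and follows the paper's proof essentially step by step. For (i) you use naturality of the bracket under $\psi_*$ to show that projectable and tangential fields are preserved, with the inverse induced by $(\psi^{-1})_*$. For (ii) you use the intertwining identity $\mathrm{ev}_{\psi(x)}\circ\psi'=(d\psi)^N_x\circ\mathrm{ev}_x$ and then the pointwise pullback computation.
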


\begin{proof}
(i) Since $\psi$ is foliated, $d\psi$ sends $T(\mathcal F_b)$ into $T(\mathcal F_{b'})$, hence $\psi_*$ sends $\mathfrak X(\mathcal F_b)$ into $\mathfrak X(\mathcal F_{b'})$. Let $Y\in L(L_b,\mathcal F_b)$ be projectable. For any $X'\in\mathfrak X(\mathcal F_{b'})$, let $X:=(\psi^{-1})_*X'\in\mathfrak X(\mathcal F_b)$. Then
\[
[X',\psi_*Y]=[\psi_*X,\psi_*Y]=\psi_*[X,Y]\in \psi_*\mathfrak X(\mathcal F_b)\subseteq \mathfrak X(\mathcal F_{b'}),
\]
so $\psi_*Y\in L(L_{b'},\mathcal F_{b'})$. Therefore $\psi_*$ induces a map $\psi'\colon l(L_b,\mathcal F_b)\to l(L_{b'},\mathcal F_{b'})$ by passing to the quotient, $\psi'(\bar Y):=\overline{\psi_*Y}$.

It is a Lie algebra homomorphism because $\psi_*$ preserves Lie brackets. Its inverse is induced by $(\psi^{-1})_*$, hence $\psi'$ is a Lie algebra isomorphism.

(ii) Let $N(\mathcal F_b):=T(L_b)/T(\mathcal F_b)$ and $N(\mathcal F_{b'}):=T(L_{b'})/T(\mathcal F_{b'})$ be the normal bundles, with projections $\pr^N\colon T(L_b)\to N(\mathcal F_b)$ and $\pr^{N'}\colon T(L_{b'})\to N(\mathcal F_{b'})$.

Since $\psi$ is foliated, $d\psi$ induces $(d\psi)^N_x\colon N_x(\mathcal F_b)\to N_{\psi(x)}(\mathcal F_{b'})$ by $(d\psi)^N_x([v]):=[d\psi_x(v)]$. For $x\in L_b$, let $\mathrm{ev}_x\colon l(L_b,\mathcal F_b)\to N_x(\mathcal F_b)$, $\bar Y\mapsto \bar Y_x$, be the evaluation map, and similarly $\mathrm{ev}_{\psi(x)}$ for $(L_{b'},\mathcal F_{b'})$. Under our hypotheses, these are linear isomorphisms, and the canonical Maurer--Cartan forms are $(\omega^{b}_{\mathrm{MC}})_x(\xi)=\mathrm{ev}_x^{-1}\bigl(\pr^N_x(\xi)\bigr)$, $(\omega^{b'}_{\mathrm{MC}})_{\psi(x)}(\zeta)=\mathrm{ev}_{\psi(x)}^{-1}\bigl((\pr^{N'})_{\psi(x)}(\zeta)\bigr)$.

We claim that for all $x\in L_b$,
\begin{equation}\label{eq:ev-commutes-fixed}
\mathrm{ev}_{\psi(x)}\circ \psi'=(d\psi)^N_x\circ \mathrm{ev}_x.
\end{equation}
Take $\bar Y\in l(L_b,\mathcal F_b)$ represented by a projectable $Y$. Then
\begin{align*}
\mathrm{ev}_{\psi(x)}(\psi'(\bar Y))
&=\mathrm{ev}_{\psi(x)}(\overline{\psi_*Y})
=[(\psi_*Y)_{\psi(x)}]
=[d\psi_x(Y_x)]
=(d\psi)^N_x([Y_x])
=(d\psi)^N_x(\mathrm{ev}_x(\bar Y)),
\end{align*}
which proves \eqref{eq:ev-commutes-fixed}. Since $\mathrm{ev}_x$ and $\mathrm{ev}_{\psi(x)}$ are isomorphisms, it follows that $\mathrm{ev}_{\psi(x)}^{-1}\circ (d\psi)^N_x=\psi'\circ \mathrm{ev}_x^{-1}$.

Now let $\xi\in T_x(L_b)$. Because $\psi$ is foliated, $(\pr^{N'})_{\psi(x)}(d\psi_x(\xi))=(d\psi)^N_x(\pr^N_x(\xi))$. Therefore,
\begin{align*}
(\psi^*\omega^{b'}_{\mathrm{MC}})_x(\xi)
&=(\omega^{b'}_{\mathrm{MC}})_{\psi(x)}(d\psi_x(\xi))\\
&=\mathrm{ev}_{\psi(x)}^{-1}\bigl((\pr^{N'})_{\psi(x)}(d\psi_x(\xi))\bigr)\\
&=\mathrm{ev}_{\psi(x)}^{-1}\bigl((d\psi)^N_x(\pr^N_x(\xi))\bigr)\\
&=\psi'\left(\mathrm{ev}_x^{-1}\bigl(\pr^N_x(\xi)\bigr)\right)\\
&=\psi'\bigl((\omega^{b}_{\mathrm{MC}})_x(\xi)\bigr),
\end{align*}
which is $\psi^*(\omega^{b'}_{\mathrm{MC}})=\psi'\circ\omega^{b}_{\mathrm{MC}}$.
\end{proof}


\begin{lemma}\label{lem:alpha-sigma}
Let $X:=OF(M,\mathcal O)$ and $\F:=\widetilde{\mathcal O}$, let $\kappa:X\to B$ be the fibre bundle of Theorem~\ref{thm:mol}, identified with the basic fibre bundle $\pi_{\bas}:X\to W$ via Corollary~\ref{cor:B-W-identification}, and let $A:=b(X,\F)\to B$ be the basic Lie algebroid under this identification. Since $M$ and $O(q)$ are compact, $X$ is compact; as $B$ is a Hausdorff manifold, $\kappa$ is proper, so Lemma~\ref{lem:descend-to-B} applies to the maps induced by local bisections, which are foliated for $\widetilde{\mathcal O}$ by Proposition~\ref{prop:molino-data-invariant} and hence send leaves of $\widetilde{\mathcal O}|_{\pi^{-1}(U)}$ to leaves of $\widetilde{\mathcal O}|_{\pi^{-1}(V)}$.

Let $\sigma:U\to\cG$ be a local bisection, let $V:=t(\sigma(U))$, and let $f_\sigma$ be the induced map on $OF(M,\mathcal O)$, with descended diffeomorphism $\bar f_\sigma:B_U\to B_V$ satisfying $\kappa\circ f_\sigma=\bar f_\sigma\circ\kappa$ on $\kappa^{-1}(B_U)$. For $b\in B_U$, write $b':=\bar f_\sigma(b)$ and $f_{\sigma,b}:=f_\sigma|_{L_b}:L_b\to L_{b'}$, and define
\[
\alpha_{\sigma,b}
:=
\operatorname{res}_{b'}^{-1}
\circ
(f_{\sigma,b})_*
\circ
\operatorname{res}_b
:
(\ker\rho_A)_b\longrightarrow(\ker\rho_A)_{b'}.
\]
Then each $\alpha_{\sigma,b}$ is a Lie algebra isomorphism, and these maps assemble into a smooth Lie algebra bundle isomorphism
\[
\alpha_\sigma:(\ker\rho_A)|_{B_U}\xrightarrow{\ \cong\ }(\ker\rho_A)|_{B_V}
\]
covering $\bar f_\sigma$.
\end{lemma}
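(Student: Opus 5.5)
The plan has two parts. The fibrewise statement follows from Lemma~\ref{lem:MC-naturality} applied to the fibre maps. Smoothness then follows from a global description of $\alpha_\sigma$ via pushforward of projectable fields on $X$.

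\emph{Fibrewise isomorphism.} Fix $b\in B_U$ with $b'=\bar f_\sigma(b)$. By Lemma~\ref{lem:descend-to-B}, $f_\sigma(L_b)=L_{b'}$. By Proposition~\ref{prop:molino-data-invariant}, $f_\sigma$ preserves $\widetilde{\mathcal O}$, so $f_{\sigma,b}:(L_b,\F|_{L_b})\to(L_{b'},\F|_{L_{b'}})$ is a foliated diffeomorphism. The fibres are compact and connected, they are transversely parallelizable, and their basic functions are constant (Step~4 of Lemma~\ref{lem:isotropy-structural-clean}). Lemma~\ref{lem:MC-naturality}(i) therefore shows that $(f_{\sigma,b})_*=\psi'$ is a Lie algebra isomorphism $l(L_b,\F|_{L_b})\to l(L_{b'},\F|_{L_{b'}})$. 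Composing with the Lie algebra isomorphisms $\operatorname{res}_b$ and $\operatorname{res}_{b'}^{-1}$ from Lemma~\ref{lem:isotropy-structural-clean} shows that $\alpha_{\sigma,b}$ is a Lie algebra isomorphism.

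\emph{Global description and smoothness.} Since $f_\sigma$ maps $\kappa^{-1}(B_U)$ diffeomorphically onto $\kappa^{-1}(B_V)$ and preserves $\F$, pushforward $Y\mapsto (f_\sigma)_*Y$ sends $L(\kappa^{-1}(B_U),\F)$ to $L(\kappa^{-1}(B_V),\F)$; this is the computation in Lemma~\ref{lem:MC-naturality}(i). It also sends $\mathfrak X(\F)$ to $\mathfrak X(\F)$, so it induces a map $l(\kappa^{-1}(B_U),\F)\to l(\kappa^{-1}(B_V),\F)$. This map intertwines the module structures by $(f_\sigma)_*\bigl((h\circ\kappa)Y\bigr)=(h\circ\bar f_\sigma^{-1}\circ\kappa)(f_\sigma)_*Y$. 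Its projection along $\kappa$ is $(\bar f_\sigma)_*\varrho(\bar Y)$, so it preserves the vertical classes $\varrho=0$.

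Next, fix $w_0\in B_U$. Take the local frame $\sigma_1,\dots,\sigma_s$ of $\ker\rho_A$ near $w_0$ from Proposition~\ref{prop:algebroid-triv}, with representatives $\widetilde Y_p$ on $\kappa^{-1}(U')$, and a similar frame $\sigma'_1,\dots,\sigma'_s$ with representatives $\widetilde Y'_q$ near $\bar f_\sigma(w_0)$. The field $(f_\sigma)_*\widetilde Y_p$ is projectable and vertical, so it represents a section of $\ker\rho_A$ near $\bar f_\sigma(w_0)$. Its restriction to $L_{b'}$ is $(f_{\sigma,b})_*(\widetilde Y_p|_{L_b})$.

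Restriction is compatible with the module structure, as in the proof of Lemma~\ref{lem:isotropy-structural-clean}. Hence this section evaluated at $b'$ equals $\alpha_{\sigma,b}(\sigma_p(b))$. Writing it as $\sum_q a_{pq}\,\sigma'_q$ with coefficients smooth in $b'$, the matrix of $\alpha_\sigma$ in these frames is $a_{pq}\circ\bar f_\sigma$, which is smooth. The same argument applied to $\sigma^{-1}$, which induces $f_\sigma^{-1}$, gives a smooth inverse, so $\alpha_\sigma$ is a smooth Lie algebra bundle isomorphism covering $\bar f_\sigma$.

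\emph{Main obstacle.} The delicate point is the identification of $\Gamma(\ker\rho_A)$ over an open set with \emph{vertical} projectable classes over its $\kappa$-preimage. Proposition~\ref{prop:basic-Lie-algebroid} only identifies $\Gamma(A)$ with $l(X,\F)$ globally. To get the local version, I would repeat Step~2 of that proof on the saturated open set $\kappa^{-1}(B_U)$: the transverse parallelism restricts there, and the coefficients of any projectable field in it are $\F$-basic and hence pulled back from $B_U$. This gives $\Gamma(B_U,A)\cong l(\kappa^{-1}(B_U),\F)$, and the local-frame computation above then goes through.
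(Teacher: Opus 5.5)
Your proposal is correct and follows essentially the same route as the paper. The fibrewise isomorphism comes from Lemma~\ref{lem:MC-naturality}(i) composed with the restriction isomorphisms. Smoothness comes from pushing a vertical projectable representative forward by $f_\sigma$, checking verticality via $\kappa\circ f_\sigma=\bar f_\sigma\circ\kappa$, and inverting with $\sigma^{-1}$.

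The differences are cosmetic. You work with the local frames of Proposition~\ref{prop:algebroid-triv} rather than an arbitrary local section. You also make explicit the local identification $\Gamma(A|_{U})\cong l(\kappa^{-1}(U),\F)$ on saturated open sets, which the paper uses tacitly here and only proves later, in Lemma~\ref{lem:basic-algebroid-closure-submersion}(i).
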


\begin{proof}
Since $f_\sigma$ preserves $\widetilde{\mathcal O}$ and sends $L_b$ to $L_{b'}$, the restriction $f_{\sigma,b}:L_b\to L_{b'}$ is a foliated diffeomorphism from $(L_b,\widetilde{\mathcal O}|_{L_b})$ to $(L_{b'},\widetilde{\mathcal O}|_{L_{b'}})$. By Lemma~\ref{lem:MC-naturality}(i), it induces an isomorphism of structural Lie algebras
\[
(f_{\sigma,b})_*:
 l(L_b,\widetilde{\mathcal O}|_{L_b})\to
 l(L_{b'},\widetilde{\mathcal O}|_{L_{b'}}).
\]
Composing with the restriction isomorphisms of Lemma~\ref{lem:isotropy-structural-clean} gives the Lie algebra isomorphism $\alpha_{\sigma,b}$.

We now check smoothness. Let $\cW\subseteq B_U$ be open and let $\zeta$ be a smooth local section of $\ker\rho_A$ over $\cW$. By the construction of $A$ in Proposition~\ref{prop:basic-Lie-algebroid}, write $\zeta=\sum_i f_i\,e_i$ with $f_i\in C^\infty(\cW)$ in the trivialization $A\cong W\times \mathbb R^r$ determined by the transverse parallelism $\{\bar Y_1,\dots,\bar Y_r\}$; then the projectable vector field $Y:=\sum_i (f_i\circ\pi_{\bas})\,Y_i$ on $\kappa^{-1}(\cW)$, where $Y_i\in L(X,\F)$ is the chosen projectable representative of $\bar Y_i$, represents $\zeta$. Since $\zeta$ takes values in $\ker\rho_A$, the definition of the anchor gives
\[
d\pi_{\bas}(Y)=\rho_A(\zeta)\circ\pi_{\bas}=0.
\]
After identifying $B$ and $W$, this is $d\kappa(Y)=0$. Since $f_\sigma$ is a foliated diffeomorphism, $(f_\sigma)_*Y$ is again projectable. Moreover, since
\[
\kappa\circ f_\sigma=\bar f_\sigma\circ \kappa,
\]
we have
\[
d\kappa((f_\sigma)_*Y)
=
d\bar f_\sigma(d\kappa(Y))=0.
\]
Therefore $(f_\sigma)_*Y$ represents a smooth local section of $\ker\rho_A$ over $\bar f_\sigma(\cW)$, and its value at $b'=\bar f_\sigma(b)$ is
\[
\operatorname{res}_{b'}^{-1}\Bigl(\,\overline{\bigl((f_\sigma)_*Y\bigr)\big|_{L_{b'}}}\,\Bigr)
=
\operatorname{res}_{b'}^{-1}\Bigl((f_{\sigma,b})_*\bigl(\,\overline{Y|_{L_b}}\,\bigr)\Bigr)
=
\alpha_{\sigma,b}\bigl(\zeta(b)\bigr).
\]
Hence $\alpha_\sigma$ sends smooth local sections to smooth local sections, so it is a smooth bundle map covering $\bar f_\sigma$ which is fibrewise a Lie algebra isomorphism. Applying the same construction to the inverse bisection $\sigma^{-1}$ yields the inverse bundle map, which is smooth by the same argument. Therefore $\alpha_\sigma$ is a smooth Lie algebra bundle isomorphism.
\end{proof}

\begin{proposition}\label{prop:vertical-MC}
In the situation of Lemma~\ref{lem:alpha-sigma}, for $b\in B$ write $L_b:=\kappa^{-1}(b)$, and let $\omega^{b}_{\mathrm{MC}}$ denote the canonical Maurer--Cartan form of the Lie foliation $(L_b,\F|_{L_b})$, valued in $l(L_b,\F|_{L_b})$.
\begin{enumerate}[label=\textup{(\roman*)}]
\item There exists a unique smooth section
\[
\omega^{\kappa}_{\mathrm{MC}} \in \Gamma\bigl((\ker d\kappa)^*\otimes \kappa^*(\ker\rho_A)\bigr)
\]
such that, for every $b\in B$, every $x\in L_b$, and every $\xi\in T_xL_b=\ker(d\kappa)_x$,
\begin{equation}\label{eq:MC-res}
\operatorname{res}_b \bigl((\omega^{\kappa}_{\mathrm{MC}})_x(\xi)\bigr)=(\omega^{b}_{\mathrm{MC}})_x(\xi).
\end{equation}
\item The form $\omega^{\kappa}_{\mathrm{MC}}$ is equivariant under local bisections: for every local bisection $\sigma:U\to\cG$, every $b\in B_U$, every $x\in L_b$, and every $\xi\in T_xL_b$,
\[
(\omega^{\kappa}_{\mathrm{MC}})_{f_\sigma(x)}\bigl((df_\sigma)_x\xi\bigr)
=
\alpha_{\sigma,b}\bigl((\omega^{\kappa}_{\mathrm{MC}})_x(\xi)\bigr).
\]
\end{enumerate}
\end{proposition}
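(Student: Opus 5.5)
For part (i) I will define the section pointwise and then check smoothness in a local frame. Fix $b\in B$, $x\in L_b$ and $\xi\in\ker(d\kappa)_x=T_xL_b$. Lemma~\ref{lem:isotropy-structural-clean} gives that $\operatorname{res}_b:(\ker\rho_A)_b\to l(L_b,\F|_{L_b})$ is a Lie algebra isomorphism. So \eqref{eq:MC-res} forces
\[
(\omega^\kappa_{\mathrm{MC}})_x(\xi):=\operatorname{res}_b^{-1}\bigl((\omega^b_{\mathrm{MC}})_x(\xi)\bigr),
\]
which settles existence and uniqueness pointwise. The step I expect to be the main obstacle is smoothness, because both the fibres $L_b$ and the Lie algebras $l(L_b,\F|_{L_b})$ vary with $b$. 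I will get around this with the square in Step~3 of Lemma~\ref{lem:isotropy-structural-clean}. It gives $\operatorname{ev}^L_x\circ\operatorname{res}_b=\operatorname{ev}^X_x|_{(\ker\rho_A)_b}$, and $(\omega^b_{\mathrm{MC}})_x=(\operatorname{ev}^L_x)^{-1}\circ\pr_x$. Combining these yields the intrinsic formula
\[
(\omega^\kappa_{\mathrm{MC}})_x(\xi)=\bigl(\operatorname{ev}^X_x|_{(\ker\rho_A)_b}\bigr)^{-1}\bigl([\xi]\bigr),\qquad [\xi]\in N_x(\F|_{L_b})=T_xL_b/T_x\F\subset N_x(\F).
\]

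Now $\operatorname{ev}^X:\kappa^*A\to N(\F)$, $(x,a)\mapsto a(x)$, is a smooth vector bundle isomorphism over $X$. In the global frame $e_1,\dots,e_r$ coming from the transverse parallelism, it sends $e_i$ to $\bar Y_i$, and the $\bar Y_i$ form a smooth frame of $N(\F)$. By Step~3 it carries the smooth subbundle $\kappa^*(\ker\rho_A)$ onto the subbundle $\ker d\kappa/T\F$. Hence $\omega^\kappa_{\mathrm{MC}}$ is the composite
\[
\ker d\kappa\to\ker d\kappa/T\F\xrightarrow{(\operatorname{ev}^X)^{-1}}\kappa^*(\ker\rho_A)
\]
of smooth bundle maps. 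It is therefore a smooth section of $(\ker d\kappa)^*\otimes\kappa^*(\ker\rho_A)$. If a more hands-on argument is preferred, one can use the local frame $\sigma_1,\dots,\sigma_s$ of Proposition~\ref{prop:algebroid-triv}. The representatives $\widetilde Y_p$ are smooth vector fields tangent to the fibres, and the coefficients of $\omega^\kappa_{\mathrm{MC}}$ in that frame are obtained by inverting a smooth family of invertible matrices.

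For part (ii) I will combine Lemma~\ref{lem:MC-naturality} with the definition of $\alpha_{\sigma,b}$ in Lemma~\ref{lem:alpha-sigma}. Let $b\in B_U$ and $b'=\bar f_\sigma(b)$. The restriction $f_{\sigma,b}:L_b\to L_{b'}$ is a foliated diffeomorphism by Proposition~\ref{prop:molino-data-invariant} and Lemma~\ref{lem:descend-to-B}. Both fibres are compact and connected, and they carry transversely parallelizable foliations with constant basic functions (\cite{MoerdijkMrcun:IFLG}*{Theorem~4.9}). Lemma~\ref{lem:MC-naturality}(ii) then gives
\[
(\omega^{b'}_{\mathrm{MC}})_{f_\sigma(x)}\bigl(df_\sigma\xi\bigr)=(f_{\sigma,b})_*\bigl((\omega^b_{\mathrm{MC}})_x(\xi)\bigr).
\]
I apply $\operatorname{res}_{b'}^{-1}$ to both sides, using the definition from part (i). The left side becomes $(\omega^\kappa_{\mathrm{MC}})_{f_\sigma(x)}(df_\sigma\xi)$. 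On the right, I substitute $(\omega^b_{\mathrm{MC}})_x(\xi)=\operatorname{res}_b\bigl((\omega^\kappa_{\mathrm{MC}})_x(\xi)\bigr)$, so the right side becomes $\operatorname{res}_{b'}^{-1}\circ(f_{\sigma,b})_*\circ\operatorname{res}_b$ applied to $(\omega^\kappa_{\mathrm{MC}})_x(\xi)$. That composite is exactly $\alpha_{\sigma,b}$, which proves the equivariance.
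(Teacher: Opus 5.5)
Your proposal is correct and follows the paper's proof essentially step for step. The paper also defines $\omega^\kappa_{\mathrm{MC}}=(\operatorname{ev}^X|_{\kappa^*(\ker\rho_A)})^{-1}\circ q$ with $q:\ker d\kappa\to\ker d\kappa/T\F$, gets smoothness from $\operatorname{ev}^X:\kappa^*A\to N(\F)$ being a smooth bundle isomorphism, and checks \eqref{eq:MC-res} via $\operatorname{ev}^{L_b}_x\circ\operatorname{res}_b=\operatorname{ev}^X_x|_{(\ker\rho_A)_b}$; you run this in the opposite order, defining the section pointwise first, and your part (ii) is the same chain through Lemma~\ref{lem:MC-naturality}(ii) and the definition of $\alpha_{\sigma,b}$, except that the paper applies $\operatorname{res}_{b'}$ and uses injectivity where you apply $\operatorname{res}_{b'}^{-1}$.
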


\begin{proof}
Recall that $X$ is compact, since $M$ and $O(q)$ are, and that $(X,\F)$ is transversely parallelizable (Lemma~\ref{lem:theta-omega-parallelism}).

(i) For $x\in X$, let $b:=\kappa(x)$. By the proof of Lemma~\ref{lem:isotropy-structural-clean}, evaluation at $x$ induces a linear isomorphism $\operatorname{ev}_x^X:A_b\longrightarrow N_x(\F)$, $[\bar Y]\longmapsto \bar Y_x$, and, under this identification, the anchor $\rho_{A,b}:A_b\to T_bB$ corresponds to the map induced by $d\kappa:N_x(\F)\longrightarrow T_bB$. Using local frames of $A$ coming from a transverse parallelism, these fibrewise maps assemble to a smooth vector bundle isomorphism $\operatorname{ev}^X:\kappa^*A\xrightarrow{\ \cong\ }N(\F)$, which identifies $\kappa^*(\ker\rho_A)$ with $\ker\bigl(d\kappa:N(\F)\to \kappa^*(TB)\bigr)$.

Since $T(\F)\subseteq \ker d\kappa$, the quotient map $TX\to N(\F)=TX/T(\F)$ restricts to
\[
q:\ker d\kappa\longrightarrow \ker\bigl(d\kappa:N(\F)\to \kappa^*(TB)\bigr).
\]
Define
\[
\omega^{\kappa}_{\mathrm{MC}}:=\bigl(\operatorname{ev}^X|_{\kappa^*(\ker\rho_A)}\bigr)^{-1}\circ q.
\]
So at $x\in X$, if $\xi\in \ker(d\kappa)_x$, then $(\omega_{\mathrm{MC}}^{\kappa})_x(\xi)\in (\ker\rho_A)_b$. Because $q$ is smooth and $\operatorname{ev}^X|_{\kappa^*(\ker\rho_A)}$ is a smooth vector bundle isomorphism, the composition is smooth. So we have a smooth section of $(\ker d\kappa)^*\otimes \kappa^*(\ker\rho_A)$.

Now let $b\in B$, write $L_b=\kappa^{-1}(b)$, and note that, since $\kappa$ is constant on $L_b$, the pullback coefficient bundle restricts to the trivial bundle
\[
\kappa^*(\ker \rho_A)|_{L_b}\cong L_b\times (\ker \rho_A)_b,
\]
hence $\omega_{\mathrm{MC}}^{\kappa}|_{L_b}$ is a $(\ker \rho_A)_b$-valued $1$-form on $L_b$.

For $x\in L_b$, let $\operatorname{ev}_x^{L_b}:l(L_b,\F|_{L_b})\longrightarrow N_x(\F|_{L_b})$ be the evaluation isomorphism. By Lemma~\ref{lem:isotropy-structural-clean}, $(L_b,\F|_{L_b})$ is a Lie foliation and $\operatorname{res}_b:(\ker\rho_A)_b\xrightarrow{\ \cong\ }l(L_b,\F|_{L_b})$ is a Lie algebra isomorphism. Moreover, we have shown that $\operatorname{ev}_x^{L_b}\circ \operatorname{res}_b = \operatorname{ev}_x^X\big|_{(\ker\rho_A)_b}$.

For $\xi\in T_xL_b=\ker(d\kappa)_x$, let $\bar\xi$ denote the class of $\xi$ in $N_x(\F|_{L_b})=T_xL_b/T_x(\F)$. Then, by construction, $\operatorname{ev}_x^X((\omega_{\mathrm{MC}}^{\kappa})_x(\xi))=q_x(\xi)$, and under the identification $\ker\bigl(d\kappa:N_x(\F)\to T_bB\bigr)\cong N_x(\F|_{L_b})$ this is $\bar\xi$. Hence $\operatorname{ev}_x^{L_b}(\operatorname{res}_b((\omega_{\mathrm{MC}}^{\kappa})_x(\xi)))=\bar\xi$.

Since $\operatorname{ev}_x^{L_b}$ is an isomorphism, it follows that
\[
\operatorname{res}_b \bigl((\omega^{\kappa}_{\mathrm{MC}})_x(\xi)\bigr)=(\operatorname{ev}_x^{L_b})^{-1}(\bar\xi).
\]
This is the formula for the Maurer--Cartan form of the Lie foliation $(L_b,\F|_{L_b})$, so \eqref{eq:MC-res} holds.

Uniqueness is immediate: since each $\operatorname{res}_b$ is an isomorphism, \eqref{eq:MC-res} determines the value $(\omega^{\kappa}_{\mathrm{MC}})_x(\xi)$ for every $x\in X$ and every $\xi\in\ker(d\kappa)_x$.

(ii) Since $f_{\sigma,b}:L_b\to L_{b'}$ is a foliated diffeomorphism (Lemma~\ref{lem:alpha-sigma}), we have $f_\sigma(x)\in L_{b'}$ and $(df_\sigma)_x\xi\in T_{f_\sigma(x)}L_{b'}=\ker(d\kappa)_{f_\sigma(x)}$. We first observe that both sides of the asserted identity lie in $(\ker\rho_A)_{b'}$: since $\kappa(f_\sigma(x))=\bar f_\sigma(\kappa(x))=b'$, the coefficient fibre $\bigl(\kappa^*(\ker\rho_A)\bigr)_{f_\sigma(x)}$ is $(\ker\rho_A)_{b'}$, so the left-hand side lies in $(\ker\rho_A)_{b'}$; the right-hand side does as well, since $\alpha_{\sigma,b}$ takes values in $(\ker\rho_A)_{b'}$.

Applying $\operatorname{res}_{b'}$, we obtain
\begin{align*}
\operatorname{res}_{b'}\Bigl((\omega^{\kappa}_{\mathrm{MC}})_{f_\sigma(x)}
  \bigl((df_\sigma)_x\xi\bigr)\Bigr)
&=(\omega^{b'}_{\mathrm{MC}})_{f_\sigma(x)}
  \bigl((df_\sigma)_x\xi\bigr)\\
&=(f_{\sigma,b})_*\bigl((\omega^{b}_{\mathrm{MC}})_x(\xi)\bigr)\\
&=(f_{\sigma,b})_*\operatorname{res}_b
  \bigl((\omega^{\kappa}_{\mathrm{MC}})_x(\xi)\bigr)\\
&=\operatorname{res}_{b'}\Bigl(\alpha_{\sigma,b}
  \bigl((\omega^{\kappa}_{\mathrm{MC}})_x(\xi)\bigr)\Bigr).
\end{align*}
The first and third equalities use \eqref{eq:MC-res} on $L_{b'}$ and $L_b$, respectively. The second uses Lemma~\ref{lem:MC-naturality}(ii), applied to $f_{\sigma,b}$; its hypotheses hold as in the proof of Lemma~\ref{lem:isotropy-structural-clean}. The last equality follows from the definition of $\alpha_{\sigma,b}$. Since $\operatorname{res}_{b'}$ is injective, the equivariance identity follows.
\end{proof}

\begin{remark}
We want to emphasize what Proposition~\ref{prop:vertical-MC} says conceptually.

A priori, each fibre $L_b$ carries its own Maurer--Cartan form, valued in the structural Lie algebra identified with $(\ker \rho_A)_b$; these are separate pieces of data on separate fibres. Proposition~\ref{prop:vertical-MC} says that the fibrewise Lie foliations are not isolated. Their Maurer--Cartan forms fit together into one global object $\omega^{\kappa}_{\mathrm{MC}}$, uniquely determined by \eqref{eq:MC-res}; the coefficient bundle of that global object is the pullback $\kappa^*(\ker \rho_A)$, and the whole object transforms equivariantly under the maps induced by local bisections of $\cG$. Here the term Maurer--Cartan is meant fibrewise: for every $b\in B$, the $l(L_b,\F|_{L_b})$-valued form $\operatorname{res}_b\circ\omega^\kappa_{\mathrm{MC}}|_{L_b}$ is $\omega^b_{\mathrm{MC}}$ and satisfies the ordinary Maurer--Cartan equation on $L_b$.
\end{remark}

\section{The Lie algebroid of \texorpdfstring{$\cG$}{G} and groupoid refinements of Molino's structures}\label{sec:groupoid-lie-algebroid}

\subsection{Construction of \texorpdfstring{$A_{\cG}=\Lie(\cG)$}{AG = Lie(G)}}
\label{subsec:construction-AG}

We introduce another Lie algebroid into the picture, $A_{\cG}:=\operatorname{Lie}(\cG)\to M$, which is the Lie algebroid attached to the Lie groupoid $\cG\rightrightarrows M$. This should be distinguished from the basic Lie algebroid $A=b\bigl(OF(M,\mathcal O),\widetilde{\mathcal O}\bigr)\to B$ constructed above. The beginning of our construction mirrors \cite{MoerdijkMrcun:IFLG}*{Prop.~6.1}, and later we will adapt it to our setting.

For each $x\in M$, the source fibre $s^{-1}(x)\subset \cG$ is the manifold of arrows whose source is $x$, and its distinguished point is the unit arrow $1_x\in \cG$. The fibre of the Lie algebroid at $x$ is $(A_{\cG})_x=T_{1_x}(s^{-1}(x))=\ker(ds)_{1_x}$. See \cite{IntLieBrac}*{Def.~1.19}. So an element of $(A_{\cG})_x$ is an infinitesimal arrow leaving $x$. Equivalently, $A_{\cG}$ is the pullback of the source-vertical bundle $\ker(ds)\subset T\cG$ along the unit map $u:M\to \cG$, $u(x)=1_x$.

Let $T^s\cG:=\ker(ds)\subset T\cG$.

\begin{proposition}[cf. {\cite{MoerdijkMrcun:IFLG}*{Prop.~6.1 and the following discussion}}]\label{prop:Lie-algebroid-of-groupoid}
Let $\mathfrak X_{\mathrm{inv}}^s(\cG)$ be the space of right-invariant source-vertical vector fields on $\cG$,
\[
\mathfrak X_{\mathrm{inv}}^s(\cG):=\{X\in \mathfrak X(\cG)\mid X_g\in \ker(ds)_g\quad \forall g\in \cG,\ X_{hg}=dR_g(X_h)\quad \forall (h,g)\text{ composable}\}.
\]
Then:
\begin{enumerate}[label=\textup{(\roman*)}]
\item $\mathfrak X_{\mathrm{inv}}^s(\cG)$ is a Lie subalgebra of $\mathfrak X(\cG)$.

\item Restriction to the units identifies $\mathfrak X_{\mathrm{inv}}^s(\cG)$ with $\Gamma(A_{\cG})$.
\end{enumerate}
\end{proposition}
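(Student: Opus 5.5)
We prove the two parts in turn. For (i), let $X,Y\in\mathfrak X^s_{\mathrm{inv}}(\cG)$.

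\emph{Source-verticality of the bracket.} Both fields are tangent to the fibres of the submersion $s$. Hence they are $s$-related to the zero vector field on $M$, and so is $[X,Y]$. Equivalently, $\ker(ds)$ is an involutive distribution, being the tangent distribution of the foliation by source fibres.

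\emph{Right-invariance of the bracket.} For $g:x\to y$, right translation $R_g:s^{-1}(y)\to s^{-1}(x)$, $h\mapsto hg$, is a diffeomorphism with inverse $R_{g^{-1}}$. Right-invariance of $X$ means exactly that $X|_{s^{-1}(y)}$ and $X|_{s^{-1}(x)}$ are $R_g$-related. Since $X$ and $Y$ are tangent to the source fibres, the bracket $[X,Y]$ restricted to a source fibre equals the bracket of the restrictions. Naturality of the bracket under diffeomorphisms then gives $[X,Y]_{hg}=dR_g([X,Y]_h)$. Closure under linear combinations is clear, so $\mathfrak X^s_{\mathrm{inv}}(\cG)$ is a Lie subalgebra.

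For (ii), define the restriction map $X\mapsto X\circ u$. It lands in $\Gamma(A_{\cG})$, since $X_{1_x}\in\ker(ds)_{1_x}=(A_{\cG})_x$.

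\emph{Injectivity.} Writing $g=1_{t(g)}\,g$, right-invariance gives $X_g=dR_g(X_{1_{t(g)}})$. So $X$ is determined by its values on units, and the restriction map is injective.

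\emph{Surjectivity.} Given $a\in\Gamma(A_{\cG})$, set $a^r_g:=(dR_g)_{1_{t(g)}}a_{t(g)}$, the same formula used in the proof of Proposition~\ref{prop:RF}. This vector lies in $\ker(ds)_g$ because $s\circ R_g$ is constant on $s^{-1}(t(g))$. Right-invariance follows from $R_{hg}=R_g\circ R_h$ on the appropriate source fibres, and $a^r$ restricts to $a$ on units since $R_{1_x}=\id$. The map is linear, so it is a bijection onto $\Gamma(A_{\cG})$.

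The main obstacle is showing that $a^r$ is smooth on $\cG$; I would handle it as follows. Consider the smooth map $m:\cG\times_M\cG\to\cG$ and restrict it to a neighbourhood of $(1_{t(g_0)},g_0)$. The vector $a^r_g$ is the image under $dm_{(1_{t(g)},g)}$ of the tangent vector $(a_{t(g)},0)$. This vector is tangent to $\cG\times_M\cG$, because $ds(a_{t(g)})=0=dt(0)$. The assignment $g\mapsto(u(t(g)),g)$ is smooth, and $g\mapsto(a_{t(g)},0)$ is a smooth section of $T(\cG\times_M\cG)$ along it, as $\cG\times_M\cG$ is an embedded submanifold of $\cG\times\cG$. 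Composing with the smooth bundle map $dm$ shows that $a^r$ is smooth. The argument is local, so it holds even if $\cG$ is non-Hausdorff.

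Thus restriction is a linear isomorphism $\mathfrak X^s_{\mathrm{inv}}(\cG)\cong\Gamma(A_{\cG})$. Using (i), we transport the bracket to $\Gamma(A_{\cG})$.
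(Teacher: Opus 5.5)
Your proof is correct and takes the same route as the paper. The paper gives no argument of its own: it cites Moerdijk--Mr\v{c}un, Prop.~6.1, and notes in the following remark that restriction to the units is inverse to the right-invariant extension $a\mapsto a^r$, which is exactly the inverse you construct. Your write-up supplies the details the paper leaves to the reference, namely smoothness of $a^r$ via $dm$ along $g\mapsto(1_{t(g)},g)$, and closure under the bracket by naturality on the source fibres.
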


\begin{remark}
For $a\in\Gamma(A_{\cG})$, its right-invariant extension is $(a^r)_g:=dR_g(a_{t(g)})$, and restriction to the units is inverse to $a\mapsto a^r$.
\end{remark}

We now record explicitly the Lie algebroid structure on $A_{\cG}$.

\begin{proposition}[{\cite{IntLieBrac}*{Def.~1.21, Def.~1.23, Prop.~1.24}}]
Under the identification $\Gamma(A_{\cG})\cong \mathfrak X_{\mathrm{inv}}^s(\cG)$, the bracket on $\Gamma(A_{\cG})$ is induced from the Lie bracket of right-invariant source-vertical vector fields. More precisely, if $a,b\in \Gamma(A_{\cG})$ with right-invariant extensions $a^r,b^r$, then
\[
[a,b]^r=[a^r,b^r].
\]

The anchor is induced by the derivative of the target map:
\[
\rho_{\cG}:A_{\cG}\to TM,\qquad \rho_{\cG}(\xi)=dt(\xi).
\]
Equivalently, if $a\in \Gamma(A_{\cG})$ and $a^r$ is its right-invariant extension, then $a^r$ is $t$-projectable to $\rho_{\cG}(a)$, that is, $dt(a^r)=\rho_{\cG}(a)\circ t$.

Moreover, for $a,b\in \Gamma(A_{\cG})$ and $f\in C^\infty(M)$, the Leibniz rule is
\[
[a,fb]=f[a,b]+\rho_{\cG}(a)(f)b.
\]
\end{proposition}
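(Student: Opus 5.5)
We will prove the three assertions in the order: the bracket identity $[a,b]^r=[a^r,b^r]$, then the $t$-projectability of $a^r$, and finally the Leibniz rule. The bracket is \emph{defined} through Proposition~\ref{prop:Lie-algebroid-of-groupoid}: since $\mathfrak X^s_{\mathrm{inv}}(\cG)$ is a Lie subalgebra and restriction to units $X\mapsto X|_M$ is a bijection onto $\Gamma(A_{\cG})$ with inverse $a\mapsto a^r$, we set $[a,b]:=[a^r,b^r]|_M$. Then $[a,b]^r=[a^r,b^r]$ holds by construction, because both sides are right-invariant source-vertical fields with the same restriction to the units.

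For the anchor, we differentiate the identity $t\circ R_g=t$ on $s^{-1}(t(g))$, which holds because $t(hg)=t(h)$. This gives $dt_g\bigl((a^r)_g\bigr)=dt_{1_{t(g)}}\bigl(dR_g\, a_{t(g)}\bigr)=dt(a_{t(g)})=\rho_{\cG}(a)_{t(g)}$. Hence $a^r$ is $t$-related to $\rho_{\cG}(a)$, and $\rho_{\cG}$ is visibly a smooth bundle map $A_{\cG}\to TM$.

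For the Leibniz rule, we first note that $(fb)^r=(f\circ t)\,b^r$. Indeed, $dR_g\bigl(f(t(g))\,b_{t(g)}\bigr)=f(t(g))\,(b^r)_g$, and $f\circ t$ is invariant under right translation, since $t(hg)=t(h)$. We then compute in $\mathfrak X(\cG)$:
\[
[a^r,(f\circ t)b^r]=(f\circ t)[a^r,b^r]+a^r(f\circ t)\,b^r.
\]
By $t$-projectability, $a^r(f\circ t)=\bigl(\rho_{\cG}(a)f\bigr)\circ t$. Consequently the right-hand side equals $\bigl(f[a,b]+\rho_{\cG}(a)(f)\,b\bigr)^r$, and restricting to the units gives the Leibniz rule.

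The only step requiring real care is the smoothness and Lie-subalgebra property behind the identification, namely that brackets of right-invariant source-vertical fields are again of that type. This is already supplied by Proposition~\ref{prop:Lie-algebroid-of-groupoid}(i), which we take as given. The key input there is that $R_g:s^{-1}(t(g))\to s^{-1}(s(g))$ is a diffeomorphism, so that right-invariant fields are $R_g$-related to themselves on source fibres; brackets of $R_g$-related fields are then again $R_g$-related. With that in hand, the remaining steps are the routine calculations above.
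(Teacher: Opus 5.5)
Your proposal is correct and follows the paper's proof essentially step for step: the bracket identity holds by construction of the identification, $t$-projectability comes from differentiating $t\circ R_g=t$ on $s^{-1}(t(g))$, and the Leibniz rule follows from $(fb)^r=(f\circ t)\,b^r$ together with injectivity of $a\mapsto a^r$. One cosmetic slip: the middle term of your anchor computation should read $d(t\circ R_g)_{1_{t(g)}}(a_{t(g)})$, since $dR_g\,a_{t(g)}$ is a vector at $g$, not at $1_{t(g)}$.
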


\begin{proof}
The bracket part is immediate from the preceding discussion.

For the anchor, define $\rho_{\cG}:A_{\cG}\to TM$ by $\rho_{\cG}(\xi)=dt(\xi)$. Take $X\in \mathfrak X_{\mathrm{inv}}^s(\cG)$, and let $g:x\to y$. Since $g=1_yg$, we have $X_g=dR_g(X_{1_y})$. We then compute
\begin{align*}
    dt(X_g)=dt(dR_g(X_{1_y}))&=d(t\circ R_g)(X_{1_y})\\
    &=dt(X_{1_y}),
\end{align*}
because for any $h\in s^{-1}(y)$, $t(R_g(h))=t(hg)=t(h)$. We note that the right-hand side only depends on $y=t(g)$. Therefore $X$ is $t$-projectable. If we define $Y\in \mathfrak X(M)$ by $Y_y:=dt(X_{1_y})$, then $dt(X_g)=Y_{t(g)}$ for all $g\in \cG$.

Now let $a\in \Gamma(A_{\cG})$ and let $a^r$ be its right-invariant extension. By construction, $(a^r)_{1_x}=a_x$ for all $x\in M$, so the vector field $Y$ above is exactly $\rho_{\cG}(a)$. Hence $a^r$ is $t$-projectable to $\rho_{\cG}(a)$, that is, $dt(a^r)=\rho_{\cG}(a)\circ t$.

Next we discuss the Leibniz rule. Let $a,b\in \Gamma(A_{\cG})$ and $f\in C^\infty(M)$. We compute
\begin{align*}
    [a,fb]^r&=[a^r,(fb)^r]\\
    &=[a^r,(f\circ t)b^r]\quad\text{since }(fb)^r=(f\circ t)b^r\\
    &=(f\circ t)[a^r,b^r]+a^r(f\circ t)b^r\\
    &=(f\circ t)[a^r,b^r]+(\rho_{\cG}(a)(f)\circ t)b^r\quad\text{because }a^r\text{ projects to }\rho_{\cG}(a)\\
    &=(f[a,b])^r+(\rho_{\cG}(a)(f)b)^r.
\end{align*}
Since the right-invariant extension map is injective, we conclude $[a,fb]=f[a,b]+\rho_{\cG}(a)(f)b$. \end{proof}

\begin{remark}
Since $a^r$ and $b^r$ are $t$-related to $\rho_{\cG}(a)$ and $\rho_{\cG}(b)$, the anchor is automatically a Lie algebra homomorphism: $\rho_{\cG}([a,b])=[\rho_{\cG}(a),\rho_{\cG}(b)]$. At this point we have the full Lie algebroid structure attached to the groupoid: $(A_{\cG})_x=\ker(ds)_{1_x}$, $\rho_{\cG}=dt|_{A_{\cG}}$, with bracket induced from right-invariant source-vertical vector fields. When $\cG$ is a Lie group, this right-invariant bracket on $T_e\cG$ is the negative of the usual bracket defined by left-invariant vector fields.
\end{remark}


\subsection{How \texorpdfstring{$A_{\cG}$}{AG} recovers the orbit foliation}

We now explain how $A_{\cG}=\operatorname{Lie}(\cG)$ recovers the orbit foliation $\mathcal O$. 

\begin{proposition}\label{prop:LieG-recovers-orbit}
Let $\cG\rightrightarrows M$ be a regular Lie groupoid, let $A_{\cG}=\operatorname{Lie}(\cG)\to M$ be its Lie algebroid, and let $\rho_{\cG}:A_{\cG}\to TM$ be the anchor. For each $x\in M$, let $O_x$ be the orbit through $x$ and let $\cG_x:=s^{-1}(x)\cap t^{-1}(x)$ be the isotropy group at $x$. Then $\operatorname{Im}(\rho_{\cG,x})=T_x(O_x)$ and $(\ker\rho_{\cG})_x=\operatorname{Lie}(\cG_x)$. Consequently,
\[
\operatorname{Im}(\rho_{\cG})=T(\mathcal O),
\]
and there is a short exact sequence of vector bundles
\[
0\longrightarrow \ker \rho_{\cG}\longrightarrow A_{\cG}\xrightarrow{\rho_{\cG}} T(\mathcal O)\longrightarrow 0.
\]
\end{proposition}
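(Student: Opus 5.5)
The plan is to work fibrewise at a point $x\in M$, using the source fibre $s^{-1}(x)$ and the principal bundle $t_x:s^{-1}(x)\to O_x$ of Lemma~\ref{lem:orbit-submersion}. By definition, $(A_{\cG})_x=T_{1_x}(s^{-1}(x))$ and $\rho_{\cG,x}=dt_{1_x}|_{\ker(ds)_{1_x}}=d(t_x)_{1_x}$. Lemma~\ref{lem:orbit-submersion}, applied to the unit arrow $g=1_x$, gives $dt_{1_x}(\ker ds_{1_x})=T_x(O_x)$. This is exactly $\operatorname{Im}(\rho_{\cG,x})=T_x(O_x)$.

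For the kernel, I will use that $t_x$ is a principal $\cG_x$-bundle. Its fibre through $1_x$ is $t_x^{-1}(x)=s^{-1}(x)\cap t^{-1}(x)=\cG_x$, and $\cG_x$ is an embedded submanifold (a Lie group, by Theorem~5.4 of \cite{MoerdijkMrcun:IFLG}, which is already quoted in that lemma). Since $t_x$ is a submersion, the tangent space to its fibre is the kernel of its differential:
\[
T_{1_x}\cG_x=\ker d(t_x)_{1_x}=\ker(dt_{1_x})\cap\ker(ds_{1_x})=(\ker\rho_{\cG})_x .
\]
Because $1_x$ is the identity of $\cG_x$, this reads $(\ker\rho_{\cG})_x=\operatorname{Lie}(\cG_x)$ as vector spaces. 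If the Lie algebra structure is also wanted, I would note that right-invariant vector fields on $\cG$ restrict along $\cG_x$ to right-invariant vector fields on the group $\cG_x$ whenever they are tangent to it. With the right-invariant convention noted in the Remark, this identifies the isotropy bracket with the Lie bracket of $\cG_x$, up to the usual sign convention.

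For the global statements, regularity means $\rho_{\cG}$ has locally constant rank. Recall from Section~\ref{subsec:regular-orbit-foliation} that $T(\mathcal O):=\operatorname{im}\rho_{\cG}$, and that its leaves are the connected components of the orbits, so $T_x(\mathcal O)=T_x(O_x)$; this matches the pointwise computation. Constant rank makes $\operatorname{Im}\rho_{\cG}$ and $\ker\rho_{\cG}$ smooth subbundles, since a constant-rank vector bundle morphism has smooth kernel and image. The sequence $0\to\ker\rho_{\cG}\to A_{\cG}\to T(\mathcal O)\to0$ is then exact fibrewise, hence an exact sequence of vector bundles.

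The only delicate point is the identification $T_{1_x}\cG_x=\ker d(t_x)_{1_x}$. It relies on $\cG_x$ being the fibre of a submersion, hence an embedded submanifold. This is supplied by the principal bundle statement in Lemma~\ref{lem:orbit-submersion}, so I do not expect a genuine obstacle; everything else is a direct unpacking of the definitions.
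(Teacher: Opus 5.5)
Your proposal is correct and follows essentially the same route as the paper. The image comes from differentiating the submersion $t_x:s^{-1}(x)\to O_x$ of Lemma~\ref{lem:orbit-submersion} at $1_x$, the kernel is identified with $\ker(ds)_{1_x}\cap\ker(dt)_{1_x}=T_{1_x}\cG_x$, and the exact sequence follows from locally constant rank. Two of your additions are welcome refinements that do not change the argument: you justify $T_{1_x}\cG_x=\ker d(t_x)_{1_x}$ by viewing $\cG_x$ as a fibre of the submersion $t_x$, where the paper asserts the tangent space of $s^{-1}(x)\cap t^{-1}(x)$ directly, and you add a remark on the bracket.
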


\begin{proof}
We first show that the anchor recovers the orbit directions. Let $x\in M$. By Lemma~\ref{lem:orbit-submersion}, the restriction $t_x:=t|_{s^{-1}(x)}:s^{-1}(x)\to O_x$ is a surjective submersion. Differentiating at the unit $1_x$, and using $(A_{\cG})_x=T_{1_x}(s^{-1}(x))=\ker(ds)_{1_x}$, we get $d(t_x)_{1_x}:(A_{\cG})_x\to T_x(O_x)$. But $d(t_x)_{1_x}=dt_{1_x}|_{\ker(ds)_{1_x}}=\rho_{\cG,x}$ since $t_x$ is the restriction of $t$. Hence $\operatorname{Im}(\rho_{\cG,x})=T_x(O_x)$.

Since $\cG$ is regular, the orbit foliation $\mathcal O$ has locally constant rank, so these spaces fit together into the tangent bundle $T(\mathcal O)\subset TM$. Thus $\operatorname{Im}(\rho_{\cG})=T(\mathcal O)$.

Next we compute the isotropy of $A_{\cG}$. The isotropy group at $x$ is $\cG_x=s^{-1}(x)\cap t^{-1}(x)$. Its tangent space at the unit is
\[
T_{1_x}\cG_x=T_{1_x}(s^{-1}(x)\cap t^{-1}(x))=\ker(ds)_{1_x}\cap \ker(dt)_{1_x}.
\]
But $(A_{\cG})_x=\ker(ds)_{1_x}$ and $\rho_{\cG,x}=dt_{1_x}|_{\ker(ds)_{1_x}}$. Hence
\[
(\ker\rho_{\cG})_x=\ker(ds)_{1_x}\cap \ker(dt)_{1_x}=T_{1_x}\cG_x=\operatorname{Lie}(\cG_x).
\]

Since $\operatorname{Im}(\rho_{\cG})=T(\mathcal O)$ is a vector subbundle of $TM$, the anchor has locally constant rank. Therefore we obtain the exact sequence
\[
0\longrightarrow \ker \rho_{\cG}\longrightarrow A_{\cG}\xrightarrow{\rho_{\cG}} T(\mathcal O)\longrightarrow 0.
\]
\end{proof}


\subsection{Normal representation and the adjoint representation up to homotopy}\label{subsec:normal-adjoint-homotopy}

\begin{lemma}\label{lem:ehresmann-connection}
Let $\cG\rightrightarrows M$ be a Lie groupoid with $M$ Hausdorff. If $\cG$ admits a smooth Riemannian metric, then it admits an Ehresmann connection $\Sigma:s^*TM\longrightarrow T\cG$ satisfying $ds\circ\Sigma=\id_{s^*TM}$ and $\Sigma_{1_x}=(du)_x$ for every $x\in M$.
\end{lemma}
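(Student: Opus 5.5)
The plan is to build $\Sigma$ from the given metric $\eta$ on $\cG$. On $\cG$, consider the $\eta$-orthogonal complement $H:=(\ker ds)^{\perp}\subset T\cG$. Since $s$ is a submersion, $\ker ds$ is a smooth subbundle of constant rank, so $H$ is a smooth complement. The restriction $ds|_H:H\to s^*TM$ is then a vector bundle isomorphism, and its inverse $\Sigma_0:=(ds|_H)^{-1}$ is a smooth splitting with $ds\circ\Sigma_0=\id$. This gives an Ehresmann connection for $s$, but in general it does not satisfy the unit condition $\Sigma_{1_x}=(du)_x$.

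To enforce the unit condition, I will correct $\Sigma_0$ by a source-vertical term. Along the unit section, $u:M\to\cG$ is an embedding and $s\circ u=\id_M$. Hence $(du)_x:T_xM\to T_{1_x}\cG$ is itself a splitting of $ds_{1_x}$. The difference $D_x:=(du)_x-(\Sigma_0)_{1_x}$ takes values in $\ker(ds)_{1_x}=(A_{\cG})_x$. It therefore defines a smooth section $D\in\Gamma(\Hom(TM,A_{\cG}))$.

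Next I extend $D$ to a smooth section $\widetilde D$ of $\Hom(s^*TM,\ker ds)$ over all of $\cG$, with $\widetilde D_{1_x}=D_x$. Then I set $\Sigma:=\Sigma_0+\widetilde D$. Because $\widetilde D$ is vertical, the identity $ds\circ\Sigma=\id$ is preserved, and at the units $\Sigma_{1_x}=(du)_x$.

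The main obstacle is producing the extension $\widetilde D$, since $\cG$ may be non-Hausdorff and partitions of unity on $\cG$ are unavailable. I would avoid global bump functions on $\cG$ and use the groupoid structure instead. One option is right translation, $\widetilde D_g:=dR_g\circ D_{t(g)}\circ\bar\lambda_g$ for some transport $\bar\lambda_g:T_{s(g)}M\to T_{t(g)}M$. But such a transport requires a connection, which is circular.

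The simpler option I would try first is a cutoff built from a tubular neighbourhood of $u(M)$. Use the exponential map of $\eta$ restricted to $H|_{u(M)}$ (or a tubular neighbourhood of the embedded Hausdorff submanifold $u(M)$) to get an open neighbourhood $\mathcal U\supset u(M)$, together with a projection $p:\mathcal U\to M$, with $p\circ u=\id$. Then choose a function $\chi$ equal to $1$ near $u(M)$ and supported in $\mathcal U$. Since $\chi$ only needs to be pulled back from a function on the normal-bundle model, it is defined without a partition of unity on $\cG$. On $\mathcal U$, transport $D_{p(g)}$ to $g$ by parallel transport of $\eta$ along the radial geodesic. Finally, multiply by $\chi$ and project onto $\ker ds$ orthogonally.

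I expect the care to lie in checking that $\chi$ extends by zero smoothly in the non-Hausdorff setting. For this I would use that $u(M)$ is closed in $\cG$ when $M$ is Hausdorff; I take this as part of the hypothesis $M$ Hausdorff and verify it separately.
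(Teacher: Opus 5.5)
\textbf{There is a genuine gap.} Your first step, the orthogonal splitting $\Sigma_0$ and the vertical defect $D_x=(du)_x-(\Sigma_0)_{1_x}$, matches the paper. The extension step, however, fails in exactly the case the lemma is about.

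The claim that $u(M)$ is closed in $\cG$ whenever $M$ is Hausdorff is false. For Hausdorff $M$, the unit section is closed if and only if $\cG$ is Hausdorff. To see one direction, suppose $g\neq h$ have the same source and target. Then $g^{-1}h\notin u(M)$. A neighbourhood of $g^{-1}h$ missing $u(M)$ pulls back under $(a,b)\mapsto a^{-1}b$ to disjoint neighbourhoods of $g$ and $h$.

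So if $\cG$ is non-Hausdorff, as for holonomy groupoids of Reeb-type foliations, pick $h\in\overline{u(M)}\setminus u(M)$ and units $1_{x_n}\to h$. Continuity of $s$, $t$, $u$ forces $1_{x_n}\to 1_x$ as well, where $x=s(h)=t(h)$. Now any continuous $\chi$ with $\chi\equiv 1$ near $u(M)$ has $\chi(h)=\lim\chi(1_{x_n})=1$. But $h$ cannot lie in your tubular neighbourhood $\mathcal U$: $\mathcal U$ is Hausdorff and already contains the limit $1_x$ of the same sequence. So a cutoff that is $1$ near the units and extends by zero outside $\mathcal U$ does not exist. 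The radially transported $D$ therefore cannot be globalized this way, however carefully you verify it. When $\cG$ is Hausdorff your argument works, but then partitions of unity are available anyway.

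\textbf{The missing idea} is the option you dismissed as circular. No transport $T_{s(g)}M\to T_{t(g)}M$ is needed. Regard $D$ as a smooth section, along the diagonal, of $\Hom(\pr_1^*TM,\pr_2^*A_{\cG})\to M\times M$. Since $M\times M$ is Hausdorff and the diagonal is closed, the extension lemma for vector bundles gives a global smooth section $C$ with $C(x,x)=D_x$. Then set
\[
\widetilde D_g:=(dR_g)_{1_{t(g)}}\circ C\bigl(s(g),t(g)\bigr).
\]
This is built only from smooth structure maps of $\cG$, so it is smooth on all of $\cG$ regardless of Hausdorffness. It takes values in $\ker ds_g$ and equals $D_x$ at $1_x$. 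Then $\Sigma:=\Sigma_0+\widetilde D$ has the required properties. This is exactly the paper's proof: the Hausdorff manifold $M\times M$ replaces any cutoff on $\cG$.
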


\begin{proof}
Let $\eta^{(1)}$ be a Riemannian metric on $\cG$. The orthogonal complement $(\ker ds)^{\perp_{\eta^{(1)}}}$ determines a smooth right splitting $\Sigma:s^*TM\longrightarrow T\cG$ satisfying $ds\circ\Sigma=\id_{s^*TM}$. At the units, since $ds_{1_x}\circ du_x=\id_{T_xM}$,
\[
du_x-\Sigma_{1_x}:T_xM\longrightarrow A_{\cG,x}
\]
defines a smooth section of $\Hom(\pr_1^*TM,\pr_2^*A_{\cG})\longrightarrow M\times M$ along the diagonal. Since $M$ is Hausdorff, the diagonal is closed. By \cite{Lee:ISM}*{Lemma~10.12}, there is a global smooth section $C$ satisfying $C(x,x)=du_x-\Sigma_{1_x}$. For $g\in\cG$ and $v\in T_{s(g)}M$, replace $\Sigma$ by the smooth bundle map
\[
(g,v)\longmapsto\Sigma_g(v) +(dR_g)_{1_{t(g)}}\bigl(C(s(g),t(g))(v)\bigr).
\]
Note that the latter term lies in $\ker ds_g$, so this map remains a right splitting of $ds$. At the units, $\Sigma_{1_x}+C(x,x)=du_x$. Thus $\Sigma$ is an Ehresmann connection following \cite{AriasAbadCrainic:RepUpToHomotopyGroupoids}*{Definition~2.8}.
\end{proof}

\begin{proposition}[Normal representation and the adjoint representation up to homotopy]\label{prop:normal-adjoint-cohomology}
Let $\cG\rightrightarrows M$ be a regular Lie groupoid whose arrow manifold admits a Riemannian metric, let $A_{\cG}:=\Lie(\cG)$ be its Lie algebroid with anchor $\rho_{\cG}:A_{\cG}\to TM$, let $\mathcal O$ be the orbit foliation, and set $N:=TM/T\mathcal O$. For the adjoint representation up to homotopy
\[
\operatorname{Ad}(\cG)=\bigl(A_{\cG}\xrightarrow{\rho_{\cG}}TM\bigr),
\]
the induced representation on the degree-one cohomology bundle is canonically the normal representation $\lambda^N:\cG\curvearrowright N$.

Equivalently, under the canonical identification $H^1(\operatorname{Ad}(\cG)) \cong TM/\rho_{\cG}(A_{\cG})=TM/T\mathcal O=N$, the degree-one cohomology representation of $\operatorname{Ad}(\cG)$ is $\lambda^N$.
\end{proposition}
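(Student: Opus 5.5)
We will use Lemma~\ref{lem:ehresmann-connection} to fix an Ehresmann connection $\Sigma:s^*TM\to T\cG$ with $ds\circ\Sigma=\id$ and $\Sigma_{1_x}=du_x$. Recall that Arias Abad and Crainic \cite{AriasAbadCrainic:RepUpToHomotopyGroupoids} build $\operatorname{Ad}(\cG)$ from such a $\Sigma$. Its degree-zero part is the quasi-action on $TM$,
\[
\lambda_g(v):=dt_g\bigl(\Sigma_g(v)\bigr),\qquad g:x\to y,\ v\in T_xM.
\]
The quasi-action on $A_{\cG}$ in degree $-1$ is
\[
\lambda_g(\alpha):=-\,dL_g\bigl(d i(\Sigma_{g^{-1}}(\rho_{\cG}(\alpha)))\bigr)\ \text{(or the equivalent formula in their paper)}.
\]
There is also a curvature term $K_\Sigma$ of degree $2$. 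The structure equations of a representation up to homotopy say three things. First, the quasi-actions commute with the differential $\rho_{\cG}$. Second, they are unital, which uses $\Sigma_{1_x}=du_x$. Third, they are multiplicative up to the homotopy $\rho_{\cG}\circ K_\Sigma$ on $TM$ (and $K_\Sigma\circ\rho_{\cG}$ on $A_{\cG}$). Consequently, on the cohomology bundles $\ker\rho_{\cG}$ and $\operatorname{coker}\rho_{\cG}$ one gets honest representations, independent of $\Sigma$ up to canonical isomorphism. We will take this general fact from \cite{AriasAbadCrainic:RepUpToHomotopyGroupoids} as the definition of ``the induced representation on $H^1$''.

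The heart of the argument is a direct comparison in degree one. By regularity and Proposition~\ref{prop:LieG-recovers-orbit}, $\rho_{\cG}(A_{\cG})=T\mathcal O$ is a subbundle. Hence $H^1(\operatorname{Ad}(\cG))=TM/T\mathcal O=N$ is a smooth vector bundle, and the induced map is
\[
[v]\longmapsto \bigl[dt_g(\Sigma_g(v))\bigr].
\]
Since $X:=\Sigma_g(v)$ satisfies $ds_g(X)=v$, Definition~\ref{def:normal-orbit} together with Lemma~\ref{lem:normal-well-defined} gives $[dt_g(X)]=\lambda^N_g([v])$. Thus the cohomology representation agrees with $\lambda^N$ fibrewise. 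This is exactly the independence of the choice of lift already proved in Lemma~\ref{lem:normal-well-defined}. Smoothness follows because both sides are the smooth bundle map $s^*N\to t^*N$ of Proposition~\ref{prop:normal-global}.

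It remains to check well-definedness directly, as a consistency check. First, $\lambda_g$ sends $T\mathcal O=\operatorname{im}\rho_{\cG}$ into $T\mathcal O$. This follows from the chain-map identity $\lambda_g\circ\rho_{\cG}=\rho_{\cG}\circ\lambda_g$, or directly from Lemma~\ref{lem:orbit-submersion}, since $dt_g$ sends $\ker ds_g$ into $T_y\mathcal O$ and any lift of a vector in $T_x\mathcal O$ differs from a vector of $\ker dt_g$ by an element of $\ker ds_g$. Second, the failure of multiplicativity $\lambda_{hg}-\lambda_h\lambda_g=\rho_{\cG}\circ K_\Sigma(h,g)$ lands in $T\mathcal O$, so it vanishes on $N$. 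This is consistent with Lemma~\ref{lem:normal-rep-axioms}.

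The main obstacle is the bookkeeping of conventions. The cited paper uses particular sign and source/target conventions (right versus left splittings, and whether $\Sigma$ is horizontal for $s$ or $t$). These must be matched to the present one, $ds\circ\Sigma=\id$, so that the degree-zero quasi-action really is $dt\circ\Sigma$ and not $ds\circ\Sigma'\circ di$ or a similar variant. We will first translate their Definition~2.8 and the adjoint construction into our conventions. If a different splitting convention appears, we will compose with the inversion $i$, using $ds\circ di=dt$. Lemma~\ref{lem:normal-well-defined} then shows that the resulting class in $N$ is unchanged. Canonicity, meaning independence of $\Sigma$, needs no separate homotopy argument, because the formula on $N$ does not depend on the lift.
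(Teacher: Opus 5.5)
Your proposal is correct and follows the paper's proof essentially step for step. You fix $\Sigma$ from Lemma~\ref{lem:ehresmann-connection} and take the quasi-action on $TM$ to be $dt_g\circ\Sigma_g$. You use regularity to get $H^1(\operatorname{Ad}(\cG))=N$, and check well-definedness and multiplicativity on $N$ via anchor equivariance and the curvature identity. Finally, you identify $[dt_g(\Sigma_g(v))]$ with $\lambda^N_g([v])$ because $\Sigma_g(v)$ is a $ds_g$-lift of $v$, which also gives independence of $\Sigma$.

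The only blemishes are cosmetic and unused in your argument. The statement and the paper put $A_{\cG}$ in degree $0$ and $TM$ in degree $1$, not $A_{\cG}$ in degree $-1$. Your displayed formula for the quasi-action on $A_{\cG}$ does not typecheck, since $\rho_{\cG}(\alpha)\in T_{s(g)}M$ while $\Sigma_{g^{-1}}$ takes vectors at $t(g)$.
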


\begin{proof}

By Lemma~\ref{lem:ehresmann-connection}, choose an Ehresmann connection $\Sigma:s^*TM\longrightarrow T\mathcal G, ds\circ\Sigma=\id_{s^*TM}$, $\Sigma_{1_x}=(du)_x$ for every $x\in M$. Recall the construction of the adjoint representation up to homotopy from \cite{AriasAbadCrainic:RepUpToHomotopyGroupoids}. The adjoint complex of $\cG$ is $\operatorname{Ad}(\cG)=\bigl(A_{\cG}\xrightarrow{\rho_{\cG}}TM\bigr)$, with $A_{\cG}$ in degree $0$ and $TM$ in degree $1$ \cite{AriasAbadCrainic:RepUpToHomotopyGroupoids}*{Definition~3.13}. The connection $\Sigma$ determines quasi-actions of $\cG$ on $TM$ and on $A_{\cG}$, both denoted $\lambda^\Sigma$, together with a basic curvature
\[
K^{\mathrm{bas}}_{\Sigma}\in\Gamma\bigl(\cG^{(2)};\Hom(s^*TM,t^*A_{\cG})\bigr).
\]
Let $r_g=(dR_g)_{1_{t(g)}}:A_{\mathcal G,t(g)}\to\ker(ds)_g$.
We use the following sign convention for the basic curvature:
\[
K^{\mathrm{bas}}_\Sigma(g,h)(v)=r_{gh}^{-1}\Bigl(dm_{g,h}\bigl(\Sigma_g(\lambda_h^\Sigma(v)),\Sigma_h(v)\bigr)-\Sigma_{gh}(v)\Bigr), \qquad v\in T_{s(h)}M,
\]
for composable arrows $g,h$. The quasi-action on $TM$ is given by \cite{AriasAbadCrainic:RepUpToHomotopyGroupoids}*{Definition~2.11}:
\[
\lambda_g^\Sigma(v)=dt_g\bigl(\Sigma_g(v)\bigr),\qquad v\in T_{s(g)}M.
\]
The structure operators
\[
R_0=\rho_{\cG},\qquad R_1=\lambda^\Sigma,\qquad R_2=K^{\mathrm{bas}}_{\Sigma}
\]
give $\operatorname{Ad}(\cG)$ the structure of a unital representation up to homotopy, denoted $\operatorname{Ad}_\Sigma(\cG)$ \cite{AriasAbadCrainic:RepUpToHomotopyGroupoids}*{Proposition~3.14}, and the resulting object is independent of $\Sigma$ up to canonical isomorphism \cite{AriasAbadCrainic:RepUpToHomotopyGroupoids}*{Proposition~3.16}. Among the structure equations \cite{AriasAbadCrainic:RepUpToHomotopyGroupoids}*{Proposition~2.15}, we use the anchor equivariance
\begin{equation}\label{eq:anchor-equivariance}
\rho_{\cG}\bigl(\lambda^\Sigma_g(a)\bigr)=\lambda^\Sigma_g\bigl(\rho_{\cG}(a)\bigr),
\qquad a\in A_{\cG,s(g)},
\end{equation}
and the composition identity
\begin{equation}\label{eq:basic-curvature-defect}
\lambda^\Sigma_g\lambda^\Sigma_h(v)-\lambda^\Sigma_{gh}(v)=\rho_{\cG}\bigl(K^{\mathrm{bas}}_{\Sigma}(g,h)(v)\bigr),
\qquad v\in T_{s(h)}M,
\end{equation}
for composable arrows $g,h$ with $s(g)=t(h)$, so that the product is $gh$.

Since $\cG$ is regular, the anchor has locally constant rank and its image is the orbit distribution: $\rho_{\cG}(A_{\cG})=T\mathcal O$. Hence the complex of $\operatorname{Ad}(\cG)$ is regular in the sense of \cite{AriasAbadCrainic:RepUpToHomotopyGroupoids}*{Definition~3.31}, and its degree-one cohomology bundle is
\[
H^1(\operatorname{Ad}(\cG))=TM/\rho_{\cG}(A_{\cG})=TM/T\mathcal O=N.
\]
By \cite{AriasAbadCrainic:RepUpToHomotopyGroupoids}*{Theorem~3.32}, the quasi-action $\lambda^\Sigma$ induces an ordinary representation of $\cG$ on $H^1(\operatorname{Ad}(\cG))\cong N$, the degree-one cohomology representation of $\operatorname{Ad}(\cG)$. We verify this directly from the structure equations and identify the resulting representation with the canonical normal representation.

Define $\lambda_g([v]):=[\lambda_g^\Sigma(v)]\in N_{t(g)}$ for $[v]\in N_{s(g)}$. This is well defined: if $v,v'\in T_{s(g)}M$ represent the same class, then $v-v'\in T_{s(g)}\mathcal O=\rho_{\cG}(A_{\cG,s(g)})$, so $v-v'=\rho_{\cG}(a)$ for some $a\in A_{\cG,s(g)}$, and by \eqref{eq:anchor-equivariance},
\[
\lambda^\Sigma_g(v)-\lambda^\Sigma_g(v')=\lambda^\Sigma_g\bigl(\rho_{\cG}(a)\bigr)=\rho_{\cG}\bigl(\lambda^\Sigma_g(a)\bigr)\in T_{t(g)}\mathcal O.
\]
It is multiplicative: for composable arrows $g,h$, the identity \eqref{eq:basic-curvature-defect} gives $\lambda^\Sigma_g\lambda^\Sigma_h(v)-\lambda^\Sigma_{gh}(v)\in\rho_{\cG}(A_{\cG})=T\mathcal O$, so $\lambda_g\lambda_h=\lambda_{gh}$ on $N$.

The induced map on $N$ is independent of the connection: if $\Sigma'$ is another Ehresmann connection, then $\Sigma_g(v)-\Sigma'_g(v)\in\ker(ds_g)$, and by Lemma~\ref{lem:orbit-submersion}, $dt_g(\ker ds_g)=T_{t(g)}\mathcal O$, so $[\lambda_g^\Sigma(v)]=[\lambda_g^{\Sigma'}(v)]$ in $N_{t(g)}$.

Finally, we identify the induced representation with $\lambda^N$. Let $g:x\to y$ and let $v\in T_xM$. Choose any $Y\in T_g\cG$ such that $ds_g(Y)=v$. Then $Y-\Sigma_g(v)\in\ker(ds_g)$, so by Lemma~\ref{lem:orbit-submersion} again,
\[
[dt_g(Y)]=[dt_g(\Sigma_g(v))]=[\lambda_g^\Sigma(v)]\qquad\text{in }N_y.
\]
By Definition~\ref{def:normal-orbit} and Proposition~\ref{prop:normal-global}, the class $[dt_g(Y)]$ is $\lambda_g^N([v])$. Thus $[\lambda_g^\Sigma(v)] = \lambda_g^N([v])$, and since Proposition~\ref{prop:normal-global} shows that $\lambda^N:\cG\curvearrowright N$ is a smooth representation, the degree-one cohomology representation of $\operatorname{Ad}(\cG)$ is the canonical normal representation $\lambda^N$.
\end{proof}


\subsection{The Lie algebroid of the lifted action groupoid}
\label{subsec:Lie-algebroid-lifted-action-groupoid}

\begin{proposition}\label{prop:lifted-action-groupoid-Lie-algebroid}
For the lifted action groupoid
\[
\cH:=\cG\ltimes OF(M,\mathcal O)\rightrightarrows OF(M,\mathcal O),
\]
there is a canonical Lie algebroid isomorphism
\[
\Lie(\cH)\cong \pi^*A_{\cG},
\]
where $\pi^*A_{\cG}$ is endowed with the action Lie algebroid structure induced by the infinitesimal lifted action of $A_{\cG}$ on $OF(M,\mathcal O)$.
\end{proposition}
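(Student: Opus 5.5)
The plan is to apply the general fact that the Lie algebroid of an action groupoid is the action Lie algebroid (cf. \cite{MoerdijkMrcun:IFLG}*{Sec.~6.1}), and then check that the infinitesimal action is the one induced by the lifted action of Proposition~\ref{prop:action-axioms}. Write $X:=OF(M,\mathcal O)$ and $\mu:\cG\times_M X\to X$ for the action map.

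\emph{Step 1: the vector bundle.} The source map is $s_{\cH}(g,e)=e$. Tangent vectors at $(g,e)$ are pairs $(Y,\xi)$ with $ds_g(Y)=d\pi_e(\xi)$. A vector is $s_{\cH}$-vertical exactly when $\xi=0$, and then $Y\in\ker(ds)_g$. At a unit $(1_{\pi(e)},e)$ this gives
\[
\Lie(\cH)_e=\ker(ds_{\cH})_{(1_{\pi(e)},e)}=\ker(ds)_{1_{\pi(e)}}\oplus 0=(A_{\cG})_{\pi(e)}=(\pi^*A_{\cG})_e .
\]
These identifications are smooth in $e$, so they give a vector bundle isomorphism $\Lie(\cH)\cong\pi^*A_{\cG}$.

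\emph{Step 2: the anchor and the infinitesimal action.} For $a\in(A_{\cG})_x$ and $e\in\pi^{-1}(x)$, set $\rho_X(a)_e:=d\mu_{(1_x,e)}(a,0)$. The anchor of $\cH$ is $dt_{\cH}=d\mu$ on vertical vectors at units, so under Step 1 it is exactly $\rho_X$. Differentiating $\pi\circ\mu=t\circ\pr_1$ gives $d\pi\circ\rho_X=\rho_{\cG}$, so $\rho_X$ covers the anchor of $A_{\cG}$. For a section $a$, define $\widehat a\in\mathfrak X(X)$ by $\widehat a_e:=\rho_X(a_{\pi(e)})$. This is the fundamental vector field of the lifted action, and its flow is $f_{\sigma_\tau}$ for the bisections $\sigma_\tau$ obtained from the flow of $a^r$, as in the proof of Proposition~\ref{prop:RF}. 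I then need to check that $a\mapsto\widehat a$ is a Lie algebra homomorphism $\Gamma(A_{\cG})\to\mathfrak X(X)$ with $\widehat{fa}=(f\circ\pi)\widehat a$. Granting this, the action Lie algebroid structure on $\pi^*A_{\cG}$ is the unique one whose bracket on pullback sections is $[\pi^*a,\pi^*b]=\pi^*[a,b]$, whose anchor is $\widehat{\ }$, and which satisfies the Leibniz rule.

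\emph{Step 3: the bracket, which I expect to be the main obstacle.} The task is to show that right-invariant $s_{\cH}$-vertical vector fields on $\cH_1$ induced by pullback sections correspond to $(a^r,0)$, and that brackets match. For $a\in\Gamma(A_{\cG})$, the field $\widetilde a_{(g,e)}:=((a^r)_g,0)$ is tangent to $\cH_1$ because $ds(a^r)=0$. It is $s_{\cH}$-vertical. It is right-invariant because right translation in $\cH$ by $(h,e')$ is $(g,h\cdot e')\mapsto(gh,e')$, which acts on the first factor by $R_h$ and leaves the second fixed.

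Since $\cH_1\subset\cG\times X$ is a submanifold and $\widetilde a$ is the restriction of $(a^r,0)$, we get
\[
[\widetilde a,\widetilde b]=([a^r,b^r],0)=\widetilde{[a,b]} .
\]
So the bracket on constant pullback sections is the pulled-back bracket. The bracket on general sections $\sum f_i\pi^*a_i$ is then forced by the Leibniz rule with the anchor from Step 2, because both structures satisfy Leibniz with the same anchor.

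The homomorphism property of $\widehat{\ }$ follows by applying $dt_{\cH}$, which intertwines $\widetilde a$ with $\widehat a$, since $t_{\cH}$-relatedness respects brackets. The subtle points are that $\cH_1$ is an embedded submanifold of $\cG\times X$, which holds since $s$ is a submersion, and that $t_{\cH}$-relatedness of $\widetilde a$ and $\widehat a$ holds on all of $\cH_1$ and not only at units. The latter I would verify from $\mu(hg,e)=\mu(h,\mu(g,e))$ by differentiating in $h$ at $h=1_{t(g)}$.
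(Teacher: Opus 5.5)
Your proposal is correct and follows essentially the same route as the paper. Both arguments identify $\ker(ds_{\cH})$ at the units with $\pi^*A_{\cG}$ via $a\mapsto(a,0_e)$, read off the anchor as the derivative of the lifted action, show that the right-invariant extension of $\pi^*a$ is $(a^r,0)$ so that $[\pi^*a,\pi^*b]=\pi^*[a,b]$, and extend to all sections by the Leibniz rule. Your additional $t_{\cH}$-relatedness argument, showing that $a\mapsto\widehat a$ is a Lie algebra homomorphism so that the action algebroid is well defined, is a detail the paper leaves implicit.
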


\begin{proof}
We use the convention described in Proposition~\ref{prop:Lie-algebroid-of-groupoid} and the subsequent discussion: the Lie algebroid of a Lie groupoid is defined using right-invariant source-vertical vector fields, with anchor induced by the target map.

For the corresponding action groupoid/action algebroid identification, see \cite{AriasAbadCrainic:RepUpToHomotopyGroupoids}*{Sec.~2.1}.

The arrow manifold of the lifted action groupoid is $\cG\times_M OF(M,\mathcal O)$, with source and target maps $s_{\cH}(g,e)=e$ and $t_{\cH}(g,e)=g\cdot e$.

Fix $e\in OF(M,\mathcal O)$, and put $x=\pi(e)$. The unit arrow at $e$ is $u_{\cH}(e)=(1_x,e)$. A tangent vector to $\cH_1$ at $(1_x,e)$ is a pair $(a,\xi)\in T_{1_x}\cG\oplus T_eOF(M,\mathcal O)$ satisfying $(ds)_{1_x}(a)=d\pi_e(\xi)$. Since $s_{\cH}(g,e)=e$, one has $(ds_{\cH})_{(1_x,e)}(a,\xi)=\xi$. Therefore
\[
\Lie(\cH)_e=\ker(ds_{\cH})_{(1_x,e)}=\{(a,0_e):a\in\ker(ds)_{1_x}\}.
\]
Since $(A_{\cG})_x=\ker(ds)_{1_x}$, the fibrewise maps $(A_{\cG})_{\pi(e)}\to\Lie(\cH)_e$, $a\mapsto(a,0_e)$, assemble into a canonical vector bundle isomorphism $\pi^*A_{\cG}\cong\Lie(\cH)$.

Under this identification, let $a\in(A_{\cG})_x$ and choose a smooth curve $g(\tau)$ in $s^{-1}(x)$ such that $g(0)=1_x$ and $g'(0)=a$. Then $\tau\mapsto(g(\tau),e)$ is a curve in $\cH_1$ through $(1_x,e)$ whose derivative at $\tau=0$ is $(a,0_e)$. Hence the anchor sends $(a,0_e)$ to
\[
\left.\frac{d}{d\tau}\right|_{\tau=0}g(\tau)\cdot e.
\]
This is the value at $e$ of the infinitesimal lifted action of $a$. Moreover, since $\pi(g(\tau)\cdot e)=t(g(\tau))$, differentiating at $\tau=0$ gives
\[
d\pi_e\left(\left.\frac{d}{d\tau}\right|_{\tau=0}g(\tau)\cdot e\right)=(dt)_{1_x}(a)=\rho_{\cG}(a).
\]
Thus the infinitesimal lifted action projects to the anchor of $A_{\cG}$.

It remains to identify the bracket. Let $a$ be a local section of $A_{\cG}$. Under the vector bundle identification above, the pullback section $\pi^*a$ corresponds to $e\mapsto(a_{\pi(e)},0_e)$. Let $a^r$ denote the right-invariant source-vertical vector field on $\cG$ associated to $a$. For an arrow $(g,e)\in\cH_1$, right multiplication by $(g,e)$ is the map $R_{(g,e)}:s_{\cH}^{-1}(g\cdot e)\to s_{\cH}^{-1}(e)$ given by $R_{(g,e)}(h,g\cdot e)=(hg,e)$. Under the same identification, the value of $\pi^*a$ at the target object $g\cdot e$ is $(a_{t(g)},0_{g\cdot e})$, and
\[
(dR_{(g,e)})_{(1_{t(g)},g\cdot e)}(a_{t(g)},0_{g\cdot e})=((a^r)_g,0_e).
\]
Thus the right-invariant source-vertical vector field on $\cH$ associated to $\pi^*a$ is $(a^r,0)$.

For local sections $a,b$ of $A_{\cG}$, it follows that
\[
[(a^r,0),(b^r,0)]=([a^r,b^r],0)=([a,b]^r,0).
\]
Hence $[\pi^*a,\pi^*b]=\pi^*[a,b]_{A_{\cG}}$. Since pullbacks of a local frame of $A_{\cG}$ form a local frame of $\pi^*A_{\cG}$, this identity, together with the anchor computed above and the Lie algebroid Leibniz rule, determines the bracket on all local sections of $\pi^*A_{\cG}$.

Thus the Lie algebroid structure transported from $\Lie(\cH)$ to $\pi^*A_{\cG}$ is exactly the action Lie algebroid structure induced by the infinitesimal lifted action. Hence $\Lie(\cH)\cong\pi^*A_{\cG}$ as Lie algebroids.
\end{proof}

\section{Molino's structure theorem for regular Riemannian groupoids}
\label{subsec:regular-groupoid-molino-theorem}

\begin{theorem}[Molino's structure theorem for regular Riemannian groupoids]\label{thm:molino-groupoid}
Let $\cG \rightrightarrows M$ be a regular Lie groupoid with $M$ compact and connected, and let $\eta^{(0)}$ be a $0$-metric on $M$. Let $\mathcal O$ be the orbit foliation, let $N:=TM/T(\mathcal O)$, let $q:=\rank N$, let $A_{\cG}:=\Lie(\cG)$ with anchor $\rho_{\cG}:A_{\cG}\to TM$, and let $g^N$ be the quotient metric on $N$ induced by $\eta^{(0)}$. Let $\pi:OF(M,\mathcal O)\to M$ be the transverse orthonormal frame bundle of $(N,g^N)$, and let $\widetilde{\mathcal O}$ be the lifted foliation.

Then the following hold.
\begin{enumerate}[label=\textup{(\roman*)}]
\item The orbit foliation $(M,\mathcal O)$ is a Riemannian foliation. Consequently, Molino's classical structures of Theorem~\ref{thm:mol} are available for it: the foliated manifold $\bigl(OF(M,\mathcal O),\widetilde{\mathcal O}\bigr)$ is transversely parallelizable, with transverse canonical form $\theta$ and transverse Levi-Civita connection form $\omega$; there exist a smooth manifold $B$ with a smooth right $O(q)$-action and an $O(q)$-equivariant fibre bundle $\kappa:OF(M,\mathcal O)\to B$ whose fibres $L_b:=\kappa^{-1}(b)$ are the closures of the leaves of $\widetilde{\mathcal O}$; and each restriction $\widetilde{\mathcal O}|_{L_b}$ is a Lie foliation with dense holonomy group. Moreover, $\kappa$ is proper.

\item If $\pi_{\bas}:OF(M,\mathcal O)\to W$ is the basic fibre bundle of $\bigl(OF(M,\mathcal O),\widetilde{\mathcal O}\bigr)$, and if $A:=b(OF(M,\mathcal O),\widetilde{\mathcal O})\to W$ denotes the associated basic Lie algebroid, then there is a unique diffeomorphism $\Phi:B\xrightarrow{\cong}W$ such that $\pi_{\bas}=\Phi\circ\kappa$. Via $\Phi$, we regard $A$ as a transitive Lie algebroid over $B$, and restriction to the fibre induces a Lie algebra isomorphism
\[
(\ker\rho_A)_b \xrightarrow{\ \cong\ } l\bigl(L_b,\widetilde{\mathcal O}|_{L_b}\bigr).
\]

\item The fibrewise Maurer--Cartan forms assemble into a unique global section
\[
\omega^{\kappa}_{\mathrm{MC}}
\in
\Gamma\bigl((\ker d\kappa)^*\otimes \kappa^*(\ker\rho_A)\bigr)
\]
such that, for every $b\in B$, every $x\in L_b$, and every $\xi\in T_xL_b$,
\[
\operatorname{res}_b\bigl((\omega^{\kappa}_{\mathrm{MC}})_x(\xi)\bigr)=(\omega^{b}_{\mathrm{MC}})_x(\xi),
\]
where $\omega^{b}_{\mathrm{MC}}$ is the Maurer--Cartan form of the Lie foliation $(L_b,\widetilde{\mathcal O}|_{L_b})$ and $\operatorname{res}_b$ is the restriction isomorphism in \textup{(ii)}.

\item If the arrow manifold of $\cG$ admits a Riemannian metric, then the normal representation $\lambda^N:\cG\curvearrowright N$ is the degree-one cohomology representation of the adjoint representation up to homotopy
\[
\operatorname{Ad}(\cG)=\bigl(A_{\cG}\xrightarrow{\rho_{\cG}}TM\bigr).
\]
It lifts canonically to a smooth left action of $\cG$ on $OF(M,\mathcal O)$ with moment map $\pi$, given by $g\cdot e=\lambda_g^N\circ e$, and this action commutes with the right $O(q)$-action. The associated action groupoid
\[
\cH:=\cG\ltimes OF(M,\mathcal O)\rightrightarrows OF(M,\mathcal O)
\]
is a Lie groupoid and satisfies $\Lie(\cH)\cong\pi^*A_{\cG}$ and $t_{\cH}^*\theta=s_{\cH}^*\theta$, where $\pi^*A_{\cG}$ is endowed with the action Lie algebroid structure induced by the lifted action.

\item For every local bisection $\sigma:U\to \cG$, the induced map $f_\sigma:\pi^{-1}(U)\to\pi^{-1}(t(\sigma(U)))$, $f_\sigma(e):=\sigma(\pi(e))\cdot e$, preserves $\widetilde{\mathcal O}$, $\theta$, and $\omega$. It sends each $\kappa$-fibre contained in $\pi^{-1}(U)$ onto a $\kappa$-fibre, and hence descends to a local $O(q)$-equivariant diffeomorphism $\bar f_\sigma$ of $B$. For each $b$ in the domain of $\bar f_\sigma$, the map
\[
\alpha_{\sigma,b}:=\operatorname{res}_{\bar f_\sigma(b)}^{-1}\circ(f_\sigma|_{L_b})_*\circ \operatorname{res}_{b}:
(\ker\rho_A)_b\to(\ker\rho_A)_{\bar f_\sigma(b)}
\]
is a Lie algebra isomorphism, and these maps assemble into a smooth Lie algebra bundle isomorphism $\alpha_\sigma$ covering $\bar f_\sigma$. Moreover, for $x\in L_b$ and $\xi\in T_xL_b$,
\[
(\omega^{\kappa}_{\mathrm{MC}})_{f_\sigma(x)}\bigl((df_\sigma)_x\xi\bigr)
=
\alpha_{\sigma,b}\bigl((\omega^{\kappa}_{\mathrm{MC}})_x(\xi)\bigr).
\]
\end{enumerate}
\end{theorem}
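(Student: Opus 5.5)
The proof is an assembly of results already established, taken in the order of the items.

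\emph{Item (i).} By Proposition~\ref{prop:RF}, $\eta^{(0)}$ is bundle-like for $\mathcal O$. Since $M$ is compact and connected, Theorem~\ref{thm:mol} applies, and Lemma~\ref{lem:theta-omega-parallelism} gives the transverse parallelism by $(\theta,\omega)$. That leaves two points. First, the leafwise Lie foliations have dense holonomy group. This follows because the leaves of $\widetilde{\mathcal O}|_{L_b}$ are dense in $L_b$, since $L_b$ is a leaf closure; for a Lie foliation on a compact connected manifold, dense leaves is equivalent to dense holonomy group. Second, $\kappa$ is proper. Here $OF(M,\mathcal O)$ is compact, being an $O(q)$-bundle over compact $M$, and $B$ is Hausdorff, so preimages of compact (hence closed) sets are closed in a compact space.

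\emph{Items (ii) and (iii).} For (ii), transverse parallelizability together with compactness gives homogeneity. Concretely, the flows of a transverse parallelism lifted to projectable fields act transitively on leaf closures and transversally, as in \cite{MoerdijkMrcun:IFLG}*{Sec.~4.2}. Hence Proposition~\ref{prop:basic-Lie-algebroid} produces $W$, $\pi_{\bas}$ and the transitive algebroid $A$. Corollary~\ref{cor:B-W-identification} then supplies $\Phi$, and Lemma~\ref{lem:isotropy-structural-clean} gives the restriction isomorphism. Item (iii) is exactly Proposition~\ref{prop:vertical-MC}(i).

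\emph{Item (iv).} The first claim is Proposition~\ref{prop:normal-adjoint-cohomology}. The lift, its smoothness and its commutation with $O(q)$ come from Proposition~\ref{prop:action-axioms}. The isometry hypothesis there is supplied by Proposition~\ref{prop:normal-global} and Proposition~\ref{prop:0-metric}. The Lie groupoid structure of $\cH$ and $t_{\cH}^*\theta=s_{\cH}^*\theta$ are Proposition~\ref{prop:lifted-action-groupoid}, and $\Lie(\cH)\cong\pi^*A_{\cG}$ is Proposition~\ref{prop:lifted-action-groupoid-Lie-algebroid}.

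\emph{Item (v).} Proposition~\ref{prop:molino-data-invariant} shows that $f_\sigma$ preserves $\widetilde{\mathcal O}$, $\theta$ and $\omega$. It is a diffeomorphism onto $\pi^{-1}(t(\sigma(U)))$, with inverse $f_{\sigma^{-1}}$ given by Lemma~\ref{lem:bisection-pseudogroup}(1), and $\kappa$ is proper by (i). Lemma~\ref{lem:descend-to-B} then gives the descent to $\bar f_\sigma:B_U\to B_V$, and Lemma~\ref{lem:bisection-pseudogroup}(3) gives its $O(q)$-equivariance. The Lie algebra bundle isomorphism $\alpha_\sigma$ is Lemma~\ref{lem:alpha-sigma}, and the equivariance of $\omega^\kappa_{\mathrm{MC}}$ is Proposition~\ref{prop:vertical-MC}(ii).

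\emph{Main obstacle.} The one input not literally stated earlier is homogeneity of $(OF(M,\mathcal O),\widetilde{\mathcal O})$, which Proposition~\ref{prop:basic-Lie-algebroid} requires. I would cite Molino's theorem that a transversely parallelizable foliation of a compact connected manifold is homogeneous, \cite{MoerdijkMrcun:IFLG}*{Thm.~4.20 and Sec.~4.2}. The argument there uses the flows of complete projectable fields representing the parallelism.

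Connectedness of $OF(M,\mathcal O)$ may fail; the Kronecker example already has two components. In that case I would apply the homogeneity theorem on each connected component, each of which is compact. The conclusions of Proposition~\ref{prop:basic-Lie-algebroid} are local over components, so they then hold globally.
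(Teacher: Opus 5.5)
Your proposal is correct and follows the paper's proof essentially step for step, assembling the same propositions and lemmas (Propositions~\ref{prop:RF}, \ref{prop:basic-Lie-algebroid}, \ref{prop:vertical-MC}, \ref{prop:action-axioms}, \ref{prop:molino-data-invariant}, Lemmas~\ref{lem:descend-to-B}, \ref{lem:alpha-sigma}, and so on) item by item. The one divergence is minor. The paper takes homogeneity of the whole lifted foliation directly from \cite{MoerdijkMrcun:IFLG}*{proof of Thm.~4.26}, and this holds even when $OF(M,\mathcal O)$ is disconnected: since $M$ is connected, the right $O(q)$-action permutes the components transitively, and it preserves $\widetilde{\mathcal O}$. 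Your componentwise reduction is therefore unnecessary, though it is also valid and reaches the same conclusions.
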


We now assemble the preceding results into a proof of the main theorem.

\begin{proof}[Proof of Theorem~\ref{thm:molino-groupoid}]
Let $g^M:=\eta^{(0)}$. By Proposition~\ref{prop:0-metric}, the normal representation acts by fibrewise isometries for the induced metric $g^N$ on $N$. Proposition~\ref{prop:RF} shows that $(M,\mathcal O)$ is a compact connected Riemannian foliation.

Definitions~\ref{def:OF}, \ref{def:lifted-foliation}, and \ref{def:theta-omega} produce the transverse orthonormal frame bundle $\pi:OF(M,\mathcal O)\to M$, the lifted foliation $\widetilde{\mathcal O}$, the transverse canonical form $\theta$, the transverse Levi-Civita connection $\nabla^{\mathrm{tr}}$ on $N$, and the associated principal connection form $\omega$. Proposition~\ref{prop:transverse-LC-local} shows that $\omega$ is well-defined, vanishes on $T(\widetilde{\mathcal O})$, and is invariant under vector fields tangent to $\widetilde{\mathcal O}$; Lemma~\ref{lem:theta-omega-parallelism} yields the transverse parallelism of $\bigl(OF(M,\mathcal O),\widetilde{\mathcal O}\bigr)$.

Applying Theorem~\ref{thm:mol} to the compact connected Riemannian foliation $(M,\mathcal O)$ gives the smooth manifold $B$, the smooth right $O(q)$-action, and the $O(q)$-equivariant fibre bundle $\kappa:OF(M,\mathcal O)\to B$ whose fibres are the closures of the leaves of $\widetilde{\mathcal O}$. Since $M$ and $O(q)$ are compact, the total space $OF(M,\mathcal O)$ is compact; as $B$ is a Hausdorff manifold, $\kappa$ is proper. In particular, $\kappa$ satisfies the properness hypothesis of Lemma~\ref{lem:descend-to-B}.

Let $X:=OF(M,\mathcal O)$ and $\F:=\widetilde{\mathcal O}$. As observed in \cite{MoerdijkMrcun:IFLG}*{proof of Theorem~4.26}, $(X,\F)$ is homogeneous. Since $X$ is compact and $(X,\F)$ is transversely parallelizable, Proposition~\ref{prop:basic-Lie-algebroid} gives the transitive basic Lie algebroid $A:=b(X,\F)\to W:=X/\F_{\bas}$. Corollary~\ref{cor:B-W-identification} gives the unique diffeomorphism $\Phi:B\xrightarrow{\cong}W$ such that $\pi_{\bas}=\Phi\circ\kappa$, and Lemma~\ref{lem:isotropy-structural-clean} identifies $(\ker\rho_A)_b$ with $l(L_b,\widetilde{\mathcal O}|_{L_b})$.

The unique global section $\omega^{\kappa}_{\mathrm{MC}}$ formed by the fibrewise Maurer--Cartan forms, together with its defining property \eqref{eq:MC-res}, is given by Proposition~\ref{prop:vertical-MC}(i). The identification of the normal representation with the degree-one cohomology representation of $\operatorname{Ad}(\cG)$ is Proposition~\ref{prop:normal-adjoint-cohomology}.

Proposition~\ref{prop:action-axioms} gives the smooth left action of $\cG$ on $OF(M,\mathcal O)$, $g\cdot e=\lambda_g^N\circ e$, shows that it commutes with the right $O(q)$-action, and gives its canonical uniqueness property. Proposition~\ref{prop:lifted-action-groupoid-Lie-algebroid} identifies the Lie algebroid of the lifted action groupoid $\cH$ with the action Lie algebroid on the pullback bundle $\pi^*A_{\cG}$, and Proposition~\ref{prop:lifted-action-groupoid} proves $t_{\cH}^*\theta=s_{\cH}^*\theta$.

Finally, let $\sigma:U\to \cG$ be a local bisection, and let $V:=t(\sigma(U))$. Proposition~\ref{prop:molino-data-invariant} shows that the induced map $f_\sigma$ preserves $\widetilde{\mathcal O}$, $\theta$, and $\omega$; in particular, $f_\sigma$ is foliated for $\widetilde{\mathcal O}$ and sends leaves of $\widetilde{\mathcal O}|_{\pi^{-1}(U)}$ to leaves of $\widetilde{\mathcal O}|_{\pi^{-1}(V)}$. Lemma~\ref{lem:descend-to-B} then gives the descended local diffeomorphism $\bar f_\sigma$ of $B$, and Lemma~\ref{lem:bisection-pseudogroup} gives compatibility with products, inverses, and the right $O(q)$-action. Lemma~\ref{lem:alpha-sigma} gives the smooth Lie algebra bundle isomorphism $\alpha_\sigma$, and Proposition~\ref{prop:vertical-MC}(ii) gives the equivariance formula for $\omega^{\kappa}_{\mathrm{MC}}$.
\end{proof}


\begin{remark}
The basic Lie algebroid $A\to W$ and the Lie algebroid $A_{\cG}\to M$ fit into the exact sequences
\[
0\longrightarrow \ker \rho_A\longrightarrow A\xrightarrow{\rho_A}TW\longrightarrow 0
\]
and
\[
0\longrightarrow \ker \rho_{\cG}\longrightarrow A_{\cG}\xrightarrow{\rho_{\cG}} T(\mathcal O)\longrightarrow 0.
\]

So $\rho_{\cG}$ gives the leaf/orbit directions on $M$, while $\rho_A$ gives directions on the closure base $W$. The isotropy $(\ker\rho_{\cG})_x=\operatorname{Lie}(\cG_x)$ is the pointwise isotropy Lie algebra of the original groupoid, while $(\ker\rho_A)_w$, equivalently $(\ker\rho_A)_b$ after identifying $B\cong W$, is the structural Lie algebra of the Lie foliation on the closure fibre $L_b$.


\end{remark}


\section{Molino’s structures under Riemannian Morita equivalence}\label{sec:riemannian-morita-invariance}

We first treat proper effective \'etale groupoids and then compare Molino's structures for regular Riemannian groupoids. As a preliminary, we show how a Morita equivalence bibundle gives rise to a Lie groupoid with Morita fibrations to the two given groupoids.

\begin{lemma}\label{lem:bibundle-pullback-groupoid}
Let $\cG'\curvearrowright P\curvearrowleft\cG$ be a Morita equivalence bibundle with moment maps $\alpha':P\to M'$ and $\alpha:P\to M$. Let $\mathcal K_P\rightrightarrows P$ be the pullback groupoid of $\cG$ through $\alpha$: its arrows are the triples $(p_1,g,p_0)$ with $s(g)=\alpha(p_0)$ and $t(g)=\alpha(p_1)$, with source $p_0$, target $p_1$, and multiplication $(p_2,h,p_1)(p_1,g,p_0)=(p_2,hg,p_0)$. Then:

\begin{enumerate}[label=\textup{(\arabic*)}]
\item $z:\mathcal K_P\to\cG$, $(p_1,g,p_0)\mapsto g$, is a Morita fibration with object map $\alpha$.
\item For each arrow $(p_1,g,p_0)$ of $\mathcal K_P$ there is a unique $g'\in\cG'$ with $p_1\cdot g=g'\cdot p_0$, and $z':\mathcal K_P\to\cG'$, $(p_1,g,p_0)\mapsto g'$, is a Morita fibration with object map $\alpha'$.
\item For $p\in P$, the maps $g'\mapsto g'\cdot p$ and $g\mapsto p\cdot g$ are diffeomorphisms
\[
s_{\cG'}^{-1}(\alpha'(p))\xrightarrow{\ \cong\ }\alpha^{-1}(\alpha(p)),
\qquad
t_{\cG}^{-1}(\alpha(p))\xrightarrow{\ \cong\ }(\alpha')^{-1}(\alpha'(p)).
\]
In particular, if $\cG'$ and $\cG$ have connected source fibres, then $\alpha$ and $\alpha'$ have connected fibres.
\end{enumerate}
\end{lemma}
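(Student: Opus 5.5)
The argument uses only the principality of the two actions in Definition~\ref{def:morita-data-prelim}(ii). Recall what this gives. The right $\cG$-action is free and proper, and $\alpha'$ is a surjective submersion whose fibres are exactly the $\cG$-orbits, so $P/\cG\cong M'$. Symmetrically, the left $\cG'$-action is free and proper, and $\alpha$ is a surjective submersion whose fibres are the $\cG'$-orbits. Equivalently, the maps
\[
\cG'\times_{M'}P\to P\times_{M}P,\ (g',p)\mapsto(g'\cdot p,p),
\qquad
P\times_{M}\cG\to P\times_{M'}P,\ (p,g)\mapsto(p,p\cdot g),
\]
are diffeomorphisms. I will prove (3) first, then (1), and then (2), since (2) is the only part requiring more than bookkeeping.

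For (3), restrict the first diffeomorphism to the slice with second coordinate $p$. This gives a bijection $s_{\cG'}^{-1}(\alpha'(p))\to\alpha^{-1}(\alpha(p))$, $g'\mapsto g'\cdot p$. The map is smooth and lands in $\alpha^{-1}(\alpha(p))$ because the actions commute, so $\alpha(g'\cdot p)=\alpha(p)$. Its inverse is the composite of the inverse diffeomorphism with $\pr_1$, restricted to the submanifold $\alpha^{-1}(\alpha(p))\times\{p\}$. The slices are embedded submanifolds because $\pr_2$ is a submersion on both fibre products. The second map in (3) is handled in the same way, using the right action. Connectedness then transfers directly. Note that $t_{\cG}^{-1}(y)\cong s_{\cG}^{-1}(y)$ via inversion, so connected source fibres for $\cG$ also give connected target fibres.

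For (1), $\alpha$ is a surjective submersion because the $\cG$-action is principal. The arrow space of $\mathcal K_P$ is $P\times_M\cG\times_M P$, which is a manifold because $\alpha$ is a submersion. It is a Lie groupoid: the source map is a submersion, being a composite of base changes of $t$ and $s$, and its fibres are Hausdorff since $P$ is Hausdorff. Full faithfulness of $z$ is tautological. The arrows from $p_0$ to $p_1$ correspond bijectively to the arrows $\alpha(p_0)\to\alpha(p_1)$, and the square formed by $(s,t)$ and $\alpha\times\alpha$ is a pullback of manifolds by construction.

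For (2), given an arrow $(p_1,g,p_0)$, observe that $\alpha(p_1\cdot g)=s(g)=\alpha(p_0)$. So $(p_1\cdot g,p_0)\in P\times_M P$, and the first diffeomorphism yields a unique $g'$ with $g'\cdot p_0=p_1\cdot g$. Smoothness of $z'$ follows because $z'$ is $\pr_1$ of the inverse diffeomorphism composed with the smooth map $(p_1,g,p_0)\mapsto(p_1\cdot g,p_0)$. Functoriality follows from uniqueness together with commutation of the actions: $(h'g')\cdot p_0=h'\cdot(p_1\cdot g)=(h'\cdot p_1)\cdot g=(p_2\cdot h)\cdot g$. Surjectivity of $\alpha'$ is part of principality, and it is a submersion.

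The main obstacle is full faithfulness of $z'$, that is, bijectivity of $(p_1,g,p_0)\mapsto g'$ onto the arrows $\alpha'(p_0)\to\alpha'(p_1)$. First check the endpoints: $s(g')=\alpha'(p_0)$, and $t(g')=\alpha'(g'\cdot p_0)=\alpha'(p_1\cdot g)=\alpha'(p_1)$. For surjectivity, given $g'$ with these endpoints, the points $g'\cdot p_0$ and $p_1$ lie in the same $\alpha'$-fibre. By principality of the $\cG$-action, there is then a unique $g$ with $p_1\cdot g=g'\cdot p_0$. Uniqueness of this $g$ gives injectivity. To obtain the diffeomorphism with the pullback rather than just a bijection, I will exhibit the inverse explicitly. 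It is $(p_1,g',p_0)\mapsto(p_1,g,p_0)$, where $g$ is obtained from $(p_1,g'\cdot p_0)\in P\times_{M'}P$ through the inverse of the second diffeomorphism. This inverse is smooth, so $z'$ is a Morita fibration with object map $\alpha'$.
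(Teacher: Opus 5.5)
Your proof is correct and follows the paper's argument. Part (3) comes from restricting the principal maps to fibres, and full faithfulness of $z'$ comes from using principality of the right $\cG$-action to build a smooth inverse, which identifies $\mathcal K_P$ with the pullback groupoid of $\cG'$ through $\alpha'$. One small slip: in (1), $\alpha$ is a surjective submersion because the left $\cG'$-action is principal (as you recalled at the start), not the $\cG$-action.
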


\begin{proof}
Both moment maps are surjective submersions, since $\alpha'$ is the bundle projection of the principal right $\cG$-action and $\alpha$ that of the principal left $\cG'$-action. Hence $\mathcal K_P$ is a Lie groupoid.

(1) By construction, the square formed by $z$ and the maps $(s,t)$ is a pullback, so $z$ is fully faithful. Since its object map is $\alpha$, it is a Morita fibration.

(2) For an arrow $(p_1,g,p_0)$, we have $\alpha(p_1\cdot g)=s(g)=\alpha(p_0)$. Principality of the left $\cG'$-action gives a unique $g'\in\cG'$ with $p_1\cdot g=g'\cdot p_0$, depending smoothly on $(p_1,g,p_0)$ through the smooth inverse of the principal map. Moreover, $s(g')=\alpha'(p_0)$ and $t(g')=\alpha'(p_1)$. If $p_2\cdot h=h'\cdot p_1$, then, because the two actions commute,
\[
p_2\cdot(hg)=(p_2\cdot h)\cdot g
=h'\cdot(p_1\cdot g)=(h'g')\cdot p_0,
\]
so $z'$ preserves multiplication; by uniqueness it preserves units and inverses.

Conversely, for an arrow $(p_1,g',p_0)$ in the pullback groupoid of $\cG'$ through $\alpha'$, principality of the right $\cG$-action gives a unique $g$ with $p_1\cdot g=g'\cdot p_0$, depending smoothly on the data. Thus $(p_1,g,p_0)\mapsto(p_1,g',p_0)$ is an isomorphism of Lie groupoids over $P$ from $\mathcal K_P$ to this pullback groupoid. Under this identification, $z'$ is the projection to $\cG'$, hence a Morita fibration since $\alpha'$ is a surjective submersion.

(3) Principality of the left action over $M$ says that $(g',p)\mapsto(g'\cdot p,p)$ is a diffeomorphism $\cG'\times_{M'}P\to P\times_MP$; restricting to the fibre over $p$ gives the first diffeomorphism. The second follows in the same way from principality of the right action over $M'$. Since inversion identifies $t_{\cG}^{-1}(\alpha(p))$ with $s_{\cG}^{-1}(\alpha(p))$, we are done.
\end{proof}

\subsection{Proper \'etale groupoid case}\label{sec:proper-etale-case}

Let $\cE\rightrightarrows T$ be a proper effective \'etale Riemannian groupoid, and let $q:=\dim T$. We denote by $\pi_T:OF(T)\to T$ the orthonormal frame bundle of $T$. For an \'etale Lie groupoid, every $0$-metric extends
uniquely to a $2$-metric
\cite{MdHF-LieGpdMetrics}*{Sec.~3.2, Example~4.1.1}.

Since $\cE$ is \'etale, the connected components of its orbits are points. Thus the orbit foliation has $T\mathcal O=0$, $N=TT$, and $OF(T,\mathcal O)=OF(T)$. Under this identification, the normal representation is the derivative of the local diffeomorphism germ determined by an arrow.

For an arrow $g:x\to y$ in $\cE$, choose a local $s$-bisection $\sigma:U\to \cE$ through $g$, so that $\sigma(x)=g$. Then $t\circ\sigma:U\to T$ is a local diffeomorphism sending $x$ to $y$. Since $\cE$ is \'etale, the germ at $x$ of $t\circ\sigma$ is independent of the chosen local $s$-bisection. We denote this germ by $\psi_g:(T,x)\to (T,y)$. By \cite{MdHF-LieGpdMetrics}*{Lemma~3.1.5},
$\psi_g$ is a germ of local isometries for the given
$0$-metric on $T$. Hence $d_x\psi_g:T_xT\to T_yT$ is a linear isometry. Under the identification $N=TT$, this is the normal representation $\lambda_g^N$.

We denote the action groupoid of
Proposition~\ref{prop:lifted-action-groupoid} in this case by
$\widehat{\cE}\rightrightarrows OF(T)$. Thus
\[
\widehat{\cE}_1=\cE_1\times_T OF(T),\qquad
\widehat s(g,u)=u,\qquad
\widehat t(g,u)=d_{\pi_T(u)}\psi_g\circ u.
\]
By Proposition~\ref{prop:action-axioms}, the right
$O(q)$-action commutes with the lifted $\cE$-action.
Hence, for every $A\in O(q)$, the maps
$u\mapsto uA$ and $(g,u)\mapsto(g,uA)$ define
a Lie groupoid automorphism of $\widehat{\cE}$,
with inverse obtained by replacing $A$ with $A^{-1}$.

\begin{proposition}\label{prop:lifted-frame-groupoid-properties}
The lifted frame groupoid $\widehat{\cE}\rightrightarrows OF(T)$ is Hausdorff, proper, free, and \'{e}tale.

Therefore the orbit space $B_{\cE}:=OF(T)/\widehat{\cE}$ has a unique smooth manifold structure such that the quotient map $\kappa_{\cE}:OF(T)\to B_{\cE}$ is a surjective local diffeomorphism. Its fibre through $u\in OF(T)$ is the $\widehat{\cE}$-orbit
\[
\kappa_{\cE}^{-1}(\kappa_{\cE}(u))=\widehat{\cE}\cdot u=\{d_{\pi_T(u)}\psi_g\circ u : g\in\cE_1, s(g)=\pi_T(u)\}.
\]
Moreover, the right $O(q)$-action on $OF(T)$ descends to a smooth right $O(q)$-action on $B_{\cE}$, defined by $\kappa_{\cE}(u)\cdot A:=\kappa_{\cE}(uA)$.
\end{proposition}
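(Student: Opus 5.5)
We prove the four properties of $\widehat{\cE}$ first and then obtain the quotient structure from the standard theory of free proper étale groupoids.

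\emph{Étale.} The source map $\widehat s(g,u)=u$ is the base change of $s:\cE_1\to T$ along $\pi_T$. Base changes of local diffeomorphisms are local diffeomorphisms, so $\widehat s$ is a local diffeomorphism. Since inversion in $\widehat{\cE}$ is a diffeomorphism interchanging $\widehat s$ and $\widehat t$, the target map is a local diffeomorphism as well.

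\emph{Hausdorff.} The arrow manifold $\cE_1\times_T OF(T)$ is a closed subset of $\cE_1\times OF(T)$. Since $\cE$ is proper, $(s,t)$ is a proper map to the Hausdorff space $T\times T$. We will take $\cE_1$ to be Hausdorff, which holds in the proper étale setting (or is part of the standing conventions). Then the fibre product is Hausdorff.

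\emph{Proper.} The anchor of $\widehat{\cE}$ is $(\widehat s,\widehat t)$. Let $K\subset OF(T)\times OF(T)$ be compact. Every arrow in $(\widehat s,\widehat t)^{-1}(K)$ has $\cE$-component in $(s,t)^{-1}\bigl((\pi_T\times\pi_T)(K)\bigr)$, which is compact. The $OF(T)$-component lies in the compact projection $\pr_1(K)$. So the preimage is a closed subset of a compact product, hence compact.

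\emph{Free.} This is the key step and uses effectivity. Suppose $(g,u)$ is an isotropy arrow, so $g:x\to x$ and $d_x\psi_g\circ u=u$. Then $d_x\psi_g=\id$. By the Clarification before Proposition~\ref{prop:lifted-frame-groupoid-properties}, $\psi_g$ is a germ of a local isometry of $(T,\eta^{(0)})$. A local isometry on a connected neighbourhood is determined by its value and differential at one point, via the exponential map. Hence $\psi_g$ is the germ of the identity at $x$. Effectivity means the germ map $g\mapsto\psi_g$ is injective, so $g=1_x$. Therefore $\widehat{\cE}$ has trivial isotropy.

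\emph{Quotient structure.} A free proper étale (hence foliation-type) Lie groupoid is Morita equivalent to its orbit space. More concretely, $(\widehat s,\widehat t)$ is an injective proper immersion, so it embeds $\widehat{\cE}_1$ as a closed equivalence relation on $OF(T)$. Since $\widehat s$ is étale, this equivalence relation is a closed submanifold of $OF(T)\times OF(T)$ and the projection to the first factor is a submersion. Godement's criterion then gives a unique smooth structure on $B_{\cE}$ for which $\kappa_{\cE}$ is a submersion. Dimension count, or the étaleness of $\widehat s$, shows that $\kappa_{\cE}$ is a local diffeomorphism. The fibre description is just the definition of the orbit together with the formula for $\widehat t$.

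\emph{Descended $O(q)$-action.} For each $A\in O(q)$, the maps $u\mapsto uA$ and $(g,u)\mapsto(g,uA)$ form a groupoid automorphism of $\widehat{\cE}$, as noted before the statement. So this automorphism maps orbits to orbits and $\kappa_{\cE}(u)\cdot A:=\kappa_{\cE}(uA)$ is well defined. For smoothness, note that $\kappa_{\cE}\times\id_{O(q)}$ is a surjective local diffeomorphism. The map $(u,A)\mapsto\kappa_{\cE}(uA)$ is smooth, and it factors through $\kappa_{\cE}\times\id_{O(q)}$. Taking local sections of $\kappa_{\cE}$ then shows the induced map $B_{\cE}\times O(q)\to B_{\cE}$ is smooth. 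The action axioms follow because they hold on $OF(T)$.

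The main obstacle I expect is the freeness argument. It needs the rigidity of isometry germs and the precise meaning of effectivity: germs along arrows must determine the arrows. The Godement step is standard but requires Hausdorffness of $\cE_1$, which I would justify from properness.
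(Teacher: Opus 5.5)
The gap is in your Hausdorff step. Your \'etale, properness, freeness and $O(q)$-action steps coincide with the paper's. Using Godement's criterion on the closed relation $(\widehat s,\widehat t)(\widehat{\cE}_1)\subset OF(T)\times OF(T)$, instead of the paper's citation of Henriques--Metzler, is a legitimate alternative route to the quotient manifold structure.

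In the Hausdorff step you assume $\cE_1$ is Hausdorff. The paper explicitly allows Lie groupoids with non-Hausdorff arrow manifolds, so this is not a standing convention. Your plan to derive it from properness also fails. Take the \'etale groupoid over $\RR$ whose arrow space is the line with a doubled origin, with $s=t$ the projection. The second origin $0'$ is an isotropy arrow at $0$ with $0'\cdot 0'=1_0$. Here $(s,t)$ is proper, yet the arrow space is not Hausdorff. Such examples are excluded only by effectiveness, since the germ of $0'$ is the identity.

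The repair is to prove freeness first. Then $(\widehat s,\widehat t)$ is injective, because two arrows with the same source and target differ by an isotropy arrow. A continuous injection into the Hausdorff space $OF(T)\times OF(T)$ forces $\widehat{\cE}_1$ to be Hausdorff; this is exactly the paper's argument.

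Your worry that the Godement step needs Hausdorffness of $\cE_1$ is also unfounded. A proper continuous map into a locally compact Hausdorff space is closed, so the proper injective immersion $(\widehat s,\widehat t)$ is a closed embedding. This again identifies $\widehat{\cE}_1$ with a Hausdorff subspace.
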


\begin{proof}
We first show that $\widehat{\cE}$ is \textit{\'{e}tale}. The source map $\widehat s$ is the pullback of the local diffeomorphism $s:\cE_1\to T$ along $\pi_T$, hence is a local diffeomorphism. Since inversion is a diffeomorphism, $\widehat t$ is also a local diffeomorphism, so $\widehat{\cE}$ is \'etale.

We next show \textit{properness}. Let $K\subset OF(T)\times OF(T)$ be compact. Since $\cE$ is proper, $(s,t)^{-1}((\pi_T\times\pi_T)(K))$ is compact, and $\operatorname{pr}_1(K)$ is compact. The set
\[
D:=\{(g,u)\in (s,t)^{-1}((\pi_T\times\pi_T)(K)) \times \operatorname{pr}_1(K):s(g)=\pi_T(u)\}
\]
is closed in $(s,t)^{-1}((\pi_T\times\pi_T)(K)) \times \operatorname{pr}_1(K)$, hence compact. The map $D\to OF(T)\times OF(T)$, $(g,u)\mapsto (u,d_{\pi_T(u)}\psi_g\circ u)$, is continuous, and the inverse image of $K$ is $(\widehat s,\widehat t)^{-1}(K)$. Hence $(\widehat s,\widehat t)^{-1}(K)$ is compact, so $\widehat{\cE}$ is proper.

We now show \textit{freeness}. Suppose $(g,u)$ fixes $u\in OF(T)_x$. Then $g\in\cE_x$ and $d_x\psi_g\circ u=u$. Since $u$ is a linear isomorphism, $d_x\psi_g=\operatorname{id}_{T_xT}$.

The germ $\psi_g$ is represented by a local isometry fixing $x$ with $d_x\psi_g=\operatorname{id}_{T_xT}$. After shrinking this representative, choose $\epsilon>0$ such that $\exp_x:B(0,\epsilon)\subset T_xT\to W\subset T$ is a diffeomorphism onto an open neighbourhood of $x$ contained in the domain of this representative. By naturality of the exponential map under local isometries,
\[
\psi_g(\exp_x v)=\exp_x(d_x\psi_g(v))=\exp_x(v)
\]
for all $v\in B(0,\epsilon)$. Hence $\psi_g$ is the identity germ at $x$. Since $\cE$ is effective, $g=1_x$. Therefore $\widehat{\cE}$ is free.

Since $\widehat{\cE}$ has trivial isotropy, the map $(\widehat s,\widehat t):\widehat{\cE}_1\to OF(T)\times OF(T)$ is injective. Its target is Hausdorff. Hence $\widehat{\cE}_1$ is Hausdorff.

We have shown that $\widehat{\cE}$ is proper, free, and \'{e}tale. Since $\widehat{\cE}$ is proper, the orbit space $B_{\cE}=OF(T)/\widehat{\cE}$ is Hausdorff. By \cite{HenriquesMetzler:NoneffectiveOrbifolds}*{Proposition~4.2 and Corollary~4.3}, $B_{\cE}$ has a smooth manifold structure for which $\kappa_{\cE}:OF(T)\to B_{\cE}$ is a surjective local diffeomorphism. This structure is the unique one with this property.

The fibre formula follows immediately from the definition of the orbit space. Finally, right translation by any $A\in O(q)$ sends $\widehat{\cE}$-orbits to $\widehat{\cE}$-orbits, so $\kappa_{\cE}(u)\cdot A:=\kappa_{\cE}(uA)$ defines a right $O(q)$-action on $B_{\cE}$.

The composite of the descended action with $\kappa_{\cE}\times\operatorname{id}_{O(q)}$ is the smooth map $(u,A)\mapsto\kappa_{\cE}(uA)$. Since $\kappa_{\cE}\times\operatorname{id}_{O(q)}$ is a surjective local diffeomorphism, its local inverses show that the descended action is smooth.
\end{proof}

Now let
\[
\cE'\rightrightarrows T',
\qquad
\cE\rightrightarrows T
\]
be proper effective \'etale Riemannian groupoids, and let $\cE'\curvearrowright P\curvearrowleft \cE$ be a Riemannian Morita equivalence bibundle with moment maps $\alpha':P\to T'$, $\alpha:P\to T$.

\begin{proposition}\label{prop:OF-P-Morita-bibundle}
Let $\cE'\curvearrowright P\curvearrowleft \cE$ be a Riemannian Morita equivalence bibundle as above. Then the following hold.
\begin{enumerate}[label=\textup{(\arabic*)}]
\item $\dim T=\dim P=\dim T'=:q$, and the moment maps $\alpha$ and $\alpha'$ are surjective local isometries. They induce $O(q)$-equivariant surjective local diffeomorphisms of orthonormal frame bundles
\[
OF(\alpha):OF(P)\to OF(T),
\qquad
OF(\alpha)(\xi)=d_p\alpha\circ \xi,
\]
and
\[
OF(\alpha'):OF(P)\to OF(T'),
\qquad
OF(\alpha')(\xi)=d_p\alpha'\circ \xi,
\]
where $\xi\in OF_p(P)$.

\item The frame bundle $OF(P)$ is an $O(q)$-equivariant Morita bibundle $\widehat{\cE}'\curvearrowright OF(P)\curvearrowleft \widehat{\cE}$ between the lifted frame groupoids, with moment maps $OF(\alpha')$ and $OF(\alpha)$.

\item There is a unique $O(q)$-equivariant diffeomorphism $\Phi_P:B_{\cE}\longrightarrow B_{\cE'}$ such that
\[
\Phi_P\circ\kappa_{\cE}\circ OF(\alpha)=\kappa_{\cE'}\circ OF(\alpha')
\]
on $OF(P)$.
\end{enumerate}
\end{proposition}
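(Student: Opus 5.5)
We prove (1), (2), (3) in order, using the principal bundle structure of $P$ and the metric condition of Definition~\ref{def:morita-data-prelim}(iii).

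For (1), Lemma~\ref{lem:bibundle-pullback-groupoid}(3) identifies the fibres of $\alpha$ with the source fibres $s_{\cE'}^{-1}(\alpha'(p))$. Since $\cE'$ is étale, these fibres are discrete, so $\alpha$ has discrete fibres. A submersion with discrete fibres is a local diffeomorphism, so $\dim P=\dim T$; symmetrically, $\dim P=\dim T'$. A Riemannian submersion that is a local diffeomorphism is a local isometry, so $d_p\alpha$ and $d_p\alpha'$ are linear isometries. Hence $\xi\mapsto d_p\alpha\circ\xi$ maps orthonormal frames to orthonormal frames, and $OF(\alpha)$ is well defined. It is $O(q)$-equivariant by associativity of composition, as in Lemma~\ref{lem:functoriality-molino}. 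It is a local diffeomorphism because, over a neighbourhood on which $\alpha$ is an isometry onto its image, it is the induced map of frame bundles. It is surjective because $\alpha$ is surjective and $d_p\alpha$ is bijective onto $T_{\alpha(p)}T$. The same argument applies to $\alpha'$.

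For (2), we define the actions. For $g\in\cE$ with $t(g)=\alpha(p)$ and $\xi\in OF_p(P)$, the map $R_g:p'\mapsto p'\cdot g$ is defined on a neighbourhood of $p$ by using a local bisection through $g$; since $\cE$ is étale, its germ is independent of the bisection. We set $\xi\cdot(g,u):=d_pR_g\circ\xi$, where $u=OF(\alpha)(\xi)$ is forced so that the moment maps match. The key identity is $\alpha\circ R_g=\psi_{g^{-1}}\circ\alpha$ near $p$, which follows from $\alpha(p'\cdot g)=s(g)$ together with the bisection description of $\psi$. It gives $d\alpha\circ dR_g=d\psi_{g^{-1}}\circ d\alpha$. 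Consequently $R_g$ is a local isometry of $P$, being a composite of local isometries, and the action intertwines $OF(\alpha)$ with the target map of $\widehat{\cE}$, as required. Since $\alpha'\circ R_g=\alpha'$, the moment map $OF(\alpha')$ is invariant. The left $\widehat{\cE}'$-action is defined symmetrically, and the two actions commute because the original actions commute. Both actions commute with the right $O(q)$-action.

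The main obstacle is principality. We must show that $OF(\alpha')$ is a surjective submersion and that $\widehat{\cE}$ acts freely and transitively on its fibres, with smooth division map. For transitivity, let $OF(\alpha')(\xi_1)=OF(\alpha')(\xi_2)$ with $\xi_i\in OF_{p_i}(P)$. Then $\alpha'(p_1)=\alpha'(p_2)$, so principality of $P$ gives a unique $g$ with $p_2=p_1\cdot g$. Since $\alpha'\circ R_g=\alpha'$ and $d\alpha'$ is injective, $d\alpha'\circ dR_g\circ\xi_1=d\alpha'\circ\xi_1=d\alpha'\circ\xi_2$ forces $dR_g\circ\xi_1=\xi_2$. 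Uniqueness of $g$ gives freeness. Smoothness of the division map follows from smoothness of the division map of $P$, since $\widehat{\cE}_1\to\cE_1$ is étale. The argument for the left action is symmetric.

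For (3), we use that $OF(\alpha)$ intertwines the right $\widehat{\cE}$-action with $\widehat t$, so $\kappa_{\cE}\circ OF(\alpha)$ is $\widehat{\cE}$-invariant and $\widehat{\cE}'$-invariant. By transitivity of the fibres of $OF(\alpha)$, an argument analogous to that of Lemma~\ref{lem:same-fibres-diffeo}, with orbits in place of fibres, shows that both $\kappa_{\cE}\circ OF(\alpha)$ and $\kappa_{\cE'}\circ OF(\alpha')$ are surjective local diffeomorphisms whose fibres are the orbits of the groupoid generated by $\widehat{\cE}$ and $\widehat{\cE}'$ on $OF(P)$. We then check that these two maps have the same fibres. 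If $\xi,\eta\in OF(P)$ satisfy $OF(\alpha)(\eta)=\widehat t(g,OF(\alpha)(\xi))$, then by principality $\eta$ is related to $\xi$ by the composite of an element of $\widehat{\cE}'$ and an element of $\widehat{\cE}$. Hence the images of $\xi$ and $\eta$ in $B_{\cE'}$ agree, and the converse holds symmetrically. Lemma~\ref{lem:same-fibres-diffeo} then yields a unique diffeomorphism $\Phi_P$ satisfying the stated identity. Finally, $\Phi_P$ is $O(q)$-equivariant because all maps in the identity are equivariant and $\kappa_{\cE}\circ OF(\alpha)$ is surjective.
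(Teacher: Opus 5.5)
Your proposal follows the paper's proof essentially step for step. It uses discrete fibres from Lemma~\ref{lem:bibundle-pullback-groupoid}(3) to get local isometries, the lift $\xi\cdot(g,u)=d_pR_g\circ\xi$ via $\alpha\circ R_g=\psi_{g^{-1}}\circ\alpha$ and $\alpha'\circ R_g=\alpha'$, principality from uniqueness of $g$ together with injectivity of $d\alpha'$, and, for (3), a proof that the two composites have the same fibres followed by Lemma~\ref{lem:same-fibres-diffeo}. A few slips need fixing, none of which affects the argument:
\begin{itemize}
\item The arrow must be $(g,u)$ with $u=d_{\alpha(p)}\psi_{g^{-1}}\circ OF(\alpha)(\xi)$, which equals $OF(\alpha)(\xi\cdot(g,u))$. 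It is not $OF(\alpha)(\xi)$, which lies over $t(g)$ rather than $s(g)$.
\item Smoothness of the division map follows directly from $\widehat g=(g,OF(\alpha)(\xi_2))$. The projection $\widehat{\cE}_1\to\cE_1$ is a pulled-back principal $O(q)$-bundle, not \'etale.
\item $\kappa_{\cE}\circ OF(\alpha)$ and $\kappa_{\cE'}\circ OF(\alpha')$ are surjective local diffeomorphisms simply because they are composites of such maps, not by an analogue of Lemma~\ref{lem:same-fibres-diffeo}.
\end{itemize}
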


\begin{proof}
(1) Since $\cE$ and $\cE'$ are \'etale, Lemma~\ref{lem:bibundle-pullback-groupoid}\textup{(3)} shows that $\alpha$ and $\alpha'$ have discrete fibres. Both maps are submersions, so $\dim T=\dim P=\dim T'=:q$.

Because the bibundle is Riemannian, the maps $\alpha$ and $\alpha'$ are Riemannian submersions. Since they are submersions between manifolds of equal dimension, they are local diffeomorphisms, hence local isometries. Therefore they induce the surjective local diffeomorphisms of orthonormal frame bundles. They are $O(q)$-equivariant:
\[
OF(\alpha)(\xi A)=d_p\alpha\circ\xi\circ A=OF(\alpha)(\xi)A, \qquad OF(\alpha')(\xi A)=OF(\alpha')(\xi)A.
\]

(2) We \textit{lift the right $\cE$-action on $P$}. Let $(g,u)\in\widehat{\cE}_1$, where $g:x\to y$ and $u\in OF_x(T)$, and let $\xi\in OF_p(P)$ satisfy $OF(\alpha)(\xi)=\widehat t(g,u)=d_x\psi_g\circ u$; in particular, $\alpha(p)=y=t(g)$, so the right action by $g$ is defined. Let $R_g$ be the germ of the local right action map on $P$ induced by a local bisection through $g$, so $R_g(p)=p\cdot g$. It satisfies $\alpha\circ R_g=\psi_{g^{-1}}\circ\alpha$ and $\alpha'\circ R_g=\alpha'$; differentiating the first identity at $p$ gives $d_{p\cdot g}\alpha\circ d_pR_g=d_y\psi_{g^{-1}}\circ d_p\alpha$. Define $\xi\cdot(g,u):=d_pR_g\circ\xi$. Then, as linear maps,
\[
d_{p\cdot g}\alpha\circ\bigl(\xi\cdot(g,u)\bigr)
=d_y\psi_{g^{-1}}\circ d_p\alpha\circ\xi
=d_y\psi_{g^{-1}}\circ d_x\psi_g\circ u
=u.
\]
Since $d_{p\cdot g}\alpha$ is a linear isometry and $u$ is an orthonormal frame, $\xi\cdot(g,u)=(d_{p\cdot g}\alpha)^{-1}\circ u$ is an orthonormal frame; thus $\xi\cdot(g,u)\in OF_{p\cdot g}(P)$ and $OF(\alpha)(\xi\cdot(g,u))=u$. Also, $OF(\alpha')(\xi\cdot(g,u))=d_{p\cdot g}\alpha'\circ d_pR_g\circ\xi=d_p\alpha'\circ\xi=OF(\alpha')(\xi)$, using $\alpha'\circ R_g=\alpha'$. So this defines a right action of $\widehat{\cE}$ on $OF(P)$ with moment map $OF(\alpha)$.

Similarly, we \textit{lift the left $\cE'$-action}. Let $(g',u')\in\widehat{\cE}'_1$, where $g':x'\to y'$ and $u'\in OF_{x'}(T')$, and let $\xi\in OF_p(P)$ satisfy $OF(\alpha')(\xi)=u'$. Let $L_{g'}$ denote the germ of the local left action map, $L_{g'}(p)=g'\cdot p$; it satisfies $\alpha'\circ L_{g'}=\psi_{g'}\circ\alpha'$ and $\alpha\circ L_{g'}=\alpha$. Define $(g',u')\cdot\xi:=d_pL_{g'}\circ\xi$. By the same argument, $(g',u')\cdot\xi\in OF_{g'\cdot p}(P)$, with $OF(\alpha')\bigl((g',u')\cdot\xi\bigr)=d_{x'}\psi_{g'}\circ u'$ and $OF(\alpha)\bigl((g',u')\cdot\xi\bigr)=OF(\alpha)(\xi)$. So this defines a left action of $\widehat{\cE}'$ on $OF(P)$ with moment map $OF(\alpha')$.

The \textit{two lifted actions commute} because the $\cE'$- and $\cE$-actions on $P$ commute, $g'\cdot(p\cdot g)=(g'\cdot p)\cdot g$: differentiating gives $dL_{g'}\circ dR_g=dR_g\circ dL_{g'}$, so $(g',u')\cdot\bigl(\xi\cdot(g,u)\bigr)=\bigl((g',u')\cdot\xi\bigr)\cdot(g,u)$. The associativity and unit identities follow in the same way from the corresponding identities for the original actions on $P$, and smoothness follows in local bisection charts.

Next we discuss \textit{principality}. Take frames $\xi_1\in OF_{p_1}(P)$ and $\xi_2\in OF_{p_2}(P)$ with $OF(\alpha')(\xi_1)=OF(\alpha')(\xi_2)$; then $\alpha'(p_1)=\alpha'(p_2)$. Since the right $\cE$-action on $P$ is principal over $T'$, there is a unique arrow $g:x\to y$ in $\cE$ such that $p_2=p_1\cdot g$. Since $\alpha'\circ R_g=\alpha'$, we get $d_{p_2}\alpha'\circ d_{p_1}R_g\circ\xi_1=d_{p_1}\alpha'\circ\xi_1=d_{p_2}\alpha'\circ\xi_2$. Because $d_{p_2}\alpha'$ is an isomorphism, $d_{p_1}R_g\circ\xi_1=\xi_2$. Applying $d_{p_2}\alpha$ and the differentiated identity above gives $OF(\alpha)(\xi_2)=d_y\psi_{g^{-1}}\circ OF(\alpha)(\xi_1)$, that is, $OF(\alpha)(\xi_1)=d_x\psi_g\circ OF(\alpha)(\xi_2)$; hence the lifted action is defined and $\xi_2=\xi_1\cdot\widehat g$ with $\widehat g:=(g,OF(\alpha)(\xi_2))$.

\textit{Uniqueness of $\widehat g$} follows from the uniqueness of $g$. The assignment $(\xi_1,\xi_2)\mapsto\widehat g$ is smooth, because $g$ is obtained from the smooth inverse of the principal map $P\times_T\cE_1\to P\times_{T'}P$ and $OF(\alpha)(\xi_2)$ depends smoothly on $\xi_2$. Hence the lifted right $\widehat{\cE}$-action is principal.

For the left principality, the proof is symmetric: if $OF(\alpha)(\xi_1)=OF(\alpha)(\xi_2)$, then $\alpha(p_1)=\alpha(p_2)$, there is a unique arrow $g'$ in $\cE'$ with $p_1=g'\cdot p_2$, and differentiating $\alpha\circ L_{g'}=\alpha$ gives $\xi_1=d_{p_2}L_{g'}\circ\xi_2=(g',u')\cdot\xi_2$ with $u':=OF(\alpha')(\xi_2)$. Therefore $\widehat{\cE}'\curvearrowright OF(P)\curvearrowleft\widehat{\cE}$ is a Morita bibundle.

Finally, we check \textit{$O(q)$-equivariance}. The right $O(q)$-action on each arrow manifold is $(g,u)A=(g,uA)$, and the moment maps $OF(\alpha)$ and $OF(\alpha')$ are $O(q)$-equivariant by part (1). For the lifted actions, $(\xi A)\cdot\bigl((g,u)A\bigr)=d_pR_g\circ\xi\circ A=\bigl(\xi\cdot(g,u)\bigr)A$, and similarly $\bigl((g',u')A\bigr)\cdot(\xi A)=\bigl((g',u')\cdot\xi\bigr)A$. Thus $OF(P)$ is an $O(q)$-equivariant Morita bibundle.

(3) Define $\Phi_P$ by requiring $\Phi_P\circ\kappa_{\cE}\circ OF(\alpha) =\kappa_{\cE'}\circ OF(\alpha')$ on $OF(P)$. Since $OF(\alpha):OF(P)\to OF(T)$ and $\kappa_{\cE}:OF(T)\to B_{\cE}$ are surjective local diffeomorphisms, so is the composite $\kappa_{\cE}\circ OF(\alpha):OF(P)\to B_{\cE}$; hence this requirement determines $\Phi_P$ uniquely once it is well-defined.

Suppose $\xi_0,\xi_1\in OF(P)$ have the same image under $\kappa_{\cE}\circ OF(\alpha)$, and put $u_i:=OF(\alpha)(\xi_i)$ for $i=0,1$. Then $u_0,u_1$ lie in the same $\widehat{\cE}$-orbit, so there is an arrow $(g,u_1):u_1\to u_0$. The lifted right action therefore gives
\[
OF(\alpha)\bigl(\xi_0\cdot(g,u_1)\bigr)
=u_1=OF(\alpha)(\xi_1).
\]
By the left principality established in (2), there is an arrow of $\widehat{\cE}'$ carrying $\xi_1$ to $\xi_0\cdot(g,u_1)$. Their $OF(\alpha')$-images therefore lie in the same $\widehat{\cE}'$-orbit. Since the lifted right action preserves $OF(\alpha')$, we obtain
\[
\begin{aligned}
\kappa_{\cE'}(OF(\alpha')(\xi_0))
&=\kappa_{\cE'}\bigl(OF(\alpha')(\xi_0\cdot(g,u_1))\bigr)\\
&=\kappa_{\cE'}(OF(\alpha')(\xi_1)).
\end{aligned}
\]
Thus $\kappa_{\cE'}\circ OF(\alpha')$ is constant on the fibres of $\kappa_{\cE}\circ OF(\alpha)$, and $\Phi_P$ is well-defined.

Using the lifted left action and right principality in the same way shows that $\kappa_{\cE}\circ OF(\alpha)$ is constant on the fibres of $\kappa_{\cE'}\circ OF(\alpha')$. Hence the two composites have the same fibres. By part (1) and Proposition~\ref{prop:lifted-frame-groupoid-properties}, both are surjective local diffeomorphisms onto Hausdorff manifolds. Therefore Lemma~\ref{lem:same-fibres-diffeo} shows that $\Phi_P:B_{\cE}\to B_{\cE'}$ is a diffeomorphism.

Finally, $\kappa_{\cE}\circ OF(\alpha)$ and $\kappa_{\cE'}\circ OF(\alpha')$ are $O(q)$-equivariant by part (1) and Proposition~\ref{prop:lifted-frame-groupoid-properties}. The defining identity for $\Phi_P$ and the surjectivity of $\kappa_{\cE}\circ OF(\alpha)$ therefore show that $\Phi_P$ is $O(q)$-equivariant.
\end{proof}

We next show that the canonical form and the Levi-Civita connection form of $OF(T)$ are invariant under the lifted frame groupoid, so that they descend to $B_{\cE}$. We also discuss that the maps induced on the orthonormal frame bundles by a Riemannian Morita equivalence bibundle pull back these forms to the corresponding forms on $OF(P)$.

\begin{lemma}\label{lem:etale-theta-omega-morita}
Let $\cE\rightrightarrows T$ be a proper effective \'etale Riemannian groupoid, let $\theta_T\in\Omega^1(OF(T),\RR^q)$ and $\omega_T\in\Omega^1(OF(T),\mathfrak o(q))$ be the canonical form and the Levi-Civita connection form of $\pi_T:OF(T)\to T$, and let $\kappa_{\cE}:OF(T)\to B_{\cE}$ be the quotient map.

\begin{enumerate}[label=\textup{(\arabic*)}]
\item On $\widehat{\cE}_1$ we have $\widehat t^{\,*}\theta_T=\widehat s^{\,*}\theta_T$ and $\widehat t^{\,*}\omega_T=\widehat s^{\,*}\omega_T$. Consequently, there are unique forms $\theta_{B_{\cE}}\in\Omega^1(B_{\cE},\RR^q)$ and $\omega_{B_{\cE}}\in\Omega^1(B_{\cE},\mathfrak o(q))$ with
\[
\kappa_{\cE}^*\theta_{B_{\cE}}=\theta_T,
\qquad
\kappa_{\cE}^*\omega_{B_{\cE}}=\omega_T,
\]
and for every $b\in B_{\cE}$ the map $(\theta_{B_{\cE}},\omega_{B_{\cE}})_b:T_bB_{\cE}\to\RR^q\oplus\mathfrak o(q)$ is a linear isomorphism.
\item In the situation of Proposition~\ref{prop:OF-P-Morita-bibundle}, let $\theta_{T'}$, $\theta_P$ and $\omega_{T'}$, $\omega_P$ be the canonical forms and the Levi-Civita connection forms of $OF(T')$ and $OF(P)$. Then
\[
OF(\alpha)^*\theta_T=\theta_P=OF(\alpha')^*\theta_{T'},
\qquad
OF(\alpha)^*\omega_T=\omega_P=OF(\alpha')^*\omega_{T'}
\]
on $OF(P)$, and $\Phi_P^*\theta_{B_{\cE'}}=\theta_{B_{\cE}}$, $\Phi_P^*\omega_{B_{\cE'}}=\omega_{B_{\cE}}$.
\end{enumerate}
\end{lemma}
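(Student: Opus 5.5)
The plan is to reduce everything to the naturality of the canonical form and the Levi-Civita connection form under local isometries, using the fact that $\widehat{\cE}$ is étale.

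For (1), fix an arrow $(g,u)\in\widehat{\cE}_1$ with $g:x\to y$. Choose a local bisection $\sigma$ of $\cE$ through $g$, and let $\psi:=t\circ\sigma$ represent the germ $\psi_g$. By \cite{MdHF-LieGpdMetrics}*{Lemma~3.1.5}, $\psi$ is a local isometry. Near $(g,u)$, the map $u\mapsto(\sigma(\pi_T(u)),u)$ is a local inverse of $\widehat s$, since $\widehat s$ is a local diffeomorphism. Composing with $\widehat t$ gives the map $OF(\psi)$, $u\mapsto d\psi\circ u$. The identities $\widehat t^{\,*}\theta_T=\widehat s^{\,*}\theta_T$ and $\widehat t^{\,*}\omega_T=\widehat s^{\,*}\omega_T$ near $(g,u)$ are therefore equivalent to
\[
OF(\psi)^*\theta_T=\theta_T \quad\text{and}\quad OF(\psi)^*\omega_T=\omega_T.
\]
These hold by Lemma~\ref{lem:functoriality-molino}, applied with $\mathcal O$ the point foliation, so that $N=TT$. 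Alternatively one can cite the classical naturality of the tautological form and of \cite{Lee:IRM}*{Proposition~5.13}.

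Next comes descent. Since $\kappa_{\cE}$ is a surjective local diffeomorphism (Proposition~\ref{prop:lifted-frame-groupoid-properties}), I define $\theta_{B_{\cE}}$ on a small open set $W\subset B_{\cE}$ by $s^*\theta_T$, where $s$ is a local section of $\kappa_{\cE}$. If $s'$ is another local section, then near each point $s'=\widehat t\circ\widehat s^{-1}\circ s$ for a local bisection of $\widehat{\cE}$. This is because two points in the same fibre are joined by an arrow, and $\widehat{\cE}$ is étale, so the arrow extends to a local bisection by continuity. The invariance just proved then makes the local definitions agree, and the same argument applies to $\omega$. Uniqueness follows from the surjectivity of $d\kappa_{\cE}$. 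For the pointwise isomorphism, note that $d\kappa_{\cE}$ is an isomorphism and that $(\theta_T,\omega_T)$ is an absolute parallelism of $OF(T)$. The latter is the standard fact for a Levi-Civita connection, and also the case $T\mathcal O=0$ of Lemma~\ref{lem:theta-omega-parallelism}. The dimensions match: $\dim B_{\cE}=q+\dim\mathfrak o(q)$.

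For (2), by Proposition~\ref{prop:OF-P-Morita-bibundle}(1) the maps $\alpha$ and $\alpha'$ are local isometries. Locally they are isometries onto open sets, so the naturality used above gives $OF(\alpha)^*\theta_T=\theta_P$ and $OF(\alpha)^*\omega_T=\omega_P$, and likewise for $\alpha'$. To obtain this I rerun the computation in Lemma~\ref{lem:functoriality-molino} with two different manifolds; nothing in that proof uses that the source and target are the same. Then the pullback of $\Phi_P^*\theta_{B_{\cE'}}$ along $\kappa_{\cE}\circ OF(\alpha)$ is
\[
(\kappa_{\cE'}\circ OF(\alpha'))^*\theta_{B_{\cE'}}=OF(\alpha')^*\theta_{T'}=\theta_P=(\kappa_{\cE}\circ OF(\alpha))^*\theta_{B_{\cE}}.
\]
Since $\kappa_{\cE}\circ OF(\alpha)$ is a surjective local diffeomorphism, pullback along it is injective on forms, so $\Phi_P^*\theta_{B_{\cE'}}=\theta_{B_{\cE}}$; the same argument gives the identity for $\omega$.

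I expect the main obstacle to be the descent step. It requires that two local sections of $\kappa_{\cE}$ differ, locally, by $\widehat t\circ\widehat s^{-1}$. Fibres of $\kappa_{\cE}$ are orbits, so I need a continuous choice of arrows near a given one. This I would take from the étale property: $\widehat s$ and $\widehat t$ are local diffeomorphisms. Freeness, together with the Hausdorffness of $\widehat{\cE}_1$, then makes this local inverse well defined.
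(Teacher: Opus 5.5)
Your proposal is correct and follows essentially the same route as the paper. The paper proves the invariance near $(g,u)$ through the chart $v\mapsto(\sigma(\pi_T(v)),v)$, in which $\widehat s$ is the identity and $\widehat t$ is the frame lift of the local isometry $t\circ\sigma$. It then descends along the surjective local diffeomorphism $\kappa_{\cE}$, defining the forms pointwise through $(d\kappa_{\cE})_u^{-1}$ and checking independence of $u$ via $\kappa_{\cE}\circ OF(h)=\kappa_{\cE}$, which is equivalent to your comparison of local sections. It concludes with the parallelism lemma for the point foliation and, for part (2), injectivity of pullback along $\kappa_{\cE}\circ OF(\alpha)$.

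One small remark: the local inverse of $\widehat s$ needs only \'etaleness, so your appeal to freeness and Hausdorffness of $\widehat{\cE}_1$ in the last paragraph is unnecessary.
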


\begin{proof}
(1) Let $g:x\to y$ be an arrow of $\cE$, and choose a local $s$-bisection $\sigma:U\to\cE$ through $g$, with $U$ so small that $h:=t\circ\sigma:U\to V$ is a diffeomorphism onto an open subset $V\subset T$. By \cite{MdHF-LieGpdMetrics}*{Lemma~3.1.5}, $h$ is a local isometry for the given $0$-metric on $T$, and its frame lift $OF(h):OF(T)|_U\to OF(T)|_V$, $OF(h)(u):=d_{\pi_T(u)}h\circ u$, satisfies $\pi_T\circ OF(h)=h\circ\pi_T$. The computation in the proof of Lemma~\ref{lem:functoriality-molino}, with $N=TT$ and $\pr^N=\id$, gives $OF(h)^*\theta_T=\theta_T$ on $OF(T)|_U$, and the naturality of the Levi-Civita connection under the local isometry $h$ \cite{Lee:IRM}*{Proposition~5.13} gives $OF(h)^*\omega_T=\omega_T$ on $OF(T)|_U$, as in the proof of Lemma~\ref{lem:functoriality-molino}.

Near $(g,u)$ the arrows of $\widehat{\cE}$ are parametrized by $v\mapsto(\sigma(\pi_T(v)),v)$, $v\in OF(T)|_U$; in this chart $\widehat s$ is the identity and $\widehat t$ is $OF(h)$. Hence $\widehat t^{\,*}\theta_T=\widehat s^{\,*}\theta_T$ and $\widehat t^{\,*}\omega_T=\widehat s^{\,*}\omega_T$ near $(g,u)$, and such charts cover $\widehat{\cE}_1$.

Since $\kappa_{\cE}$ is a surjective local diffeomorphism, the descended forms are unique if they exist. For $b\in B_{\cE}$, $v\in T_bB_{\cE}$, and $u\in\kappa_{\cE}^{-1}(b)$, put
\[
(\theta_{B_{\cE}})_b(v):=(\theta_T)_u\bigl((d\kappa_{\cE})_u^{-1}v\bigr),
\qquad
(\omega_{B_{\cE}})_b(v):=(\omega_T)_u\bigl((d\kappa_{\cE})_u^{-1}v\bigr).
\]
If $u_0,u_1\in\kappa_{\cE}^{-1}(b)$, the fibre description of Proposition~\ref{prop:lifted-frame-groupoid-properties} gives an arrow $g$ with $u_1=d_x\psi_g\circ u_0$. With $\sigma$ and $h$ as above, $OF(h)(u_0)=u_1$, and $\kappa_{\cE}\circ OF(h)=\kappa_{\cE}$ on $OF(T)|_U$, because $OF(h)$ maps each frame into its $\widehat{\cE}$-orbit. Differentiating gives $(d\kappa_{\cE})_{u_1}^{-1}v=dOF(h)_{u_0}\bigl((d\kappa_{\cE})_{u_0}^{-1}v\bigr)$, and the invariance of $\theta_T$ and $\omega_T$ under $OF(h)$ shows that the two definitions do not depend on $u$. On an open set $W\subset OF(T)$ on which $\kappa_{\cE}$ is a diffeomorphism onto an open subset, the descended forms are $((\kappa_{\cE}|_W)^{-1})^*\theta_T$ and $((\kappa_{\cE}|_W)^{-1})^*\omega_T$, hence smooth, and $\kappa_{\cE}^*\theta_{B_{\cE}}=\theta_T$, $\kappa_{\cE}^*\omega_{B_{\cE}}=\omega_T$.

Finally, Lemma~\ref{lem:theta-omega-parallelism}, applied to the foliation of $T$ by points, shows that $(\theta_T,\omega_T)_u$ is a linear isomorphism for every $u\in OF(T)$. Since $(d\kappa_{\cE})_u$ is an isomorphism, the pullback identities above imply the same for $(\theta_{B_{\cE}},\omega_{B_{\cE}})_{\kappa_{\cE}(u)}$.

(2) By Proposition~\ref{prop:OF-P-Morita-bibundle}\textup{(1)}, the maps $\alpha$ and $\alpha'$ are local isometries. The calculation for the canonical form and the naturality of the Levi-Civita connection in the proof of Lemma~\ref{lem:functoriality-molino} therefore apply locally and give
\[
OF(\alpha)^*\theta_T=\theta_P=OF(\alpha')^*\theta_{T'},
\qquad
OF(\alpha)^*\omega_T=\omega_P=OF(\alpha')^*\omega_{T'}.
\]

For the last part, Proposition~\ref{prop:OF-P-Morita-bibundle}(3) gives $\Phi_P\circ\kappa_{\cE}\circ OF(\alpha)=\kappa_{\cE'}\circ OF(\alpha')$. Hence
\[
(\kappa_{\cE}\circ OF(\alpha))^*\Phi_P^*
\theta_{B_{\cE'}}
=\theta_P
=(\kappa_{\cE}\circ OF(\alpha))^*
\theta_{B_{\cE}}.
\]
Since $\kappa_{\cE}\circ OF(\alpha)$ is a surjective local diffeomorphism, pullback of forms along it is injective, so $\Phi_P^*\theta_{B_{\cE'}}=\theta_{B_{\cE}}$. The same argument gives $\Phi_P^*\omega_{B_{\cE'}}=\omega_{B_{\cE}}$.
\end{proof}

For the right $O(q)$-action on $B_{\cE}$ we write $B_{\cE}\rtimes O(q)\rightrightarrows B_{\cE}$ for the action groupoid with arrow manifold $B_{\cE}\times O(q)$, source $s(b,A)=bA$, target $t(b,A)=b$, multiplication $(b,A)(bA,C)=(b,AC)$, units $1_b=(b,\id)$, and inverses $(b,A)^{-1}=(bA,A^{-1})$. With this convention, a right action of $B_{\cE}\rtimes O(q)$ with moment map $\alpha$ is defined for $\alpha(p)=t(b,A)=b$ and satisfies $\alpha(p\cdot(b,A))=s(b,A)=bA$, as in Definition~\ref{def:morita-data-prelim}\textup{(ii)}.

\begin{cor}\label{cor:molino-proper-etale}
Let $\cE\rightrightarrows T$ be a proper effective \'etale Riemannian groupoid, and let $B_{\cE}=OF(T)/\widehat{\cE}$ with its descended right $O(q)$-action.

\begin{enumerate}[label=\textup{(\arabic*)}]
\item The map $B_{\cE}/O(q)\to T/\cE$, $[\kappa_{\cE}(u)]\mapsto[\pi_T(u)]$, is a homeomorphism.

\item For $u\in OF_x(T)$, the map $\cE_x\to O(q)$, $g\mapsto u^{-1}\circ d_x\psi_g\circ u$, is an injective group homomorphism whose image is the stabilizer of $\kappa_{\cE}(u)$ in $O(q)$. In particular, all stabilizers of the descended $O(q)$-action are finite.

\item $OF(T)$ is a Morita equivalence bibundle $\cE\curvearrowright OF(T)\curvearrowleft\bigl(B_{\cE}\rtimes O(q)\bigr)$ with moment maps $\pi_T$ and $\kappa_{\cE}$ and actions $g\cdot u=d_{\pi_T(u)}\psi_g\circ u$ and $u\cdot(b,A)=uA$ for $\kappa_{\cE}(u)=b$. In particular, $\cE$ is Morita equivalent to $B_{\cE}\rtimes O(q)$; compare \cite{HenriquesMetzler:NoneffectiveOrbifolds}*{Prop.~5.2}.

\item If $\cE'\rightrightarrows T'$ is a proper effective \'etale Riemannian groupoid and $\cE'\curvearrowright P\curvearrowleft\cE$ is a Riemannian Morita equivalence bibundle, then the $O(q)$-equivariant diffeomorphism $\Phi_P:B_{\cE}\to B_{\cE'}$ of Proposition~\ref{prop:OF-P-Morita-bibundle} induces a Lie groupoid isomorphism
\[
B_{\cE}\rtimes O(q)\longrightarrow B_{\cE'}\rtimes O(q),
\qquad b\longmapsto\Phi_P(b), \quad (b,A)\longmapsto(\Phi_P(b),A).
\]
\end{enumerate}
\end{cor}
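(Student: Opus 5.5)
I will prove the four parts in order, using throughout that $\widehat{\cE}$ is proper, free and étale (Proposition~\ref{prop:lifted-frame-groupoid-properties}), that $\kappa_{\cE}$ is a surjective local diffeomorphism, and that the lifted $\cE$-action commutes with the right $O(q)$-action.

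\emph{Part (1).} The map $\pi_T:OF(T)\to T$ is equivariant for $\widehat{\cE}\to\cE$, and it intertwines right translation by $O(q)$ with the identity. Hence $[\kappa_{\cE}(u)]\mapsto[\pi_T(u)]$ is well defined and continuous, and it is surjective. For injectivity, suppose $\pi_T(u_1)=t(g)$ with $s(g)=\pi_T(u_0)$. Then $g\cdot u_0$ and $u_1$ lie in the same fibre of $\pi_T$, so $u_1=(g\cdot u_0)A$ for some $A\in O(q)$. Thus $\kappa_{\cE}(u_1)=\kappa_{\cE}(u_0)A$. For openness, note that both quotient maps $OF(T)\to B_{\cE}/O(q)$ and $OF(T)\to T/\cE$ are open: $\kappa_{\cE}$ is a local diffeomorphism, $\pi_T$ is a submersion, and the orbit projections are open. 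A continuous bijection that is compatible with two open surjections from the same space is a homeomorphism.

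\emph{Part (2).} For $g\in\cE_x$, the frame $d_x\psi_g\circ u$ lies over $x$, so it equals $uA_g$ with $A_g=u^{-1}\circ d_x\psi_g\circ u\in O(q)$. The chain rule for germs, $\psi_{gh}=\psi_g\circ\psi_h$, shows that $g\mapsto A_g$ is a homomorphism. Injectivity is exactly the freeness argument of Proposition~\ref{prop:lifted-frame-groupoid-properties}: if $d_x\psi_g=\id$, then $g=1_x$ by the exponential map and effectiveness.

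The image lies in the stabilizer, since $\kappa_{\cE}(uA_g)=\kappa_{\cE}(g\cdot u)=\kappa_{\cE}(u)$. Conversely, suppose $\kappa_{\cE}(uA)=\kappa_{\cE}(u)$. Then $uA=g\cdot u$ for some $g$ with $s(g)=x$, and comparing base points gives $t(g)=x$. So $g\in\cE_x$ and $A=A_g$.

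Finiteness follows because $\cE_x$ is compact (by properness) and discrete (since $\cE$ is étale).

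\emph{Part (3).} I verify the axioms of Definition~\ref{def:morita-data-prelim}(ii) directly. The two actions commute, by Proposition~\ref{prop:action-axioms}. The moment maps behave correctly: $\kappa_{\cE}(g\cdot u)=\kappa_{\cE}(u)$, $\pi_T(uA)=\pi_T(u)$, and $\kappa_{\cE}(uA)=\kappa_{\cE}(u)A=s(b,A)$.

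Principality of the right action over $T$ means that $(u,(b,A))\mapsto(u,uA)$ is a diffeomorphism onto $OF(T)\times_T OF(T)$. This holds because the right action of $O(q)$ on the fibres of $\pi_T$ is free and transitive, with smooth inverse $(u,v)\mapsto(u,(\kappa_{\cE}(u),u^{-1}v))$.

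Principality of the left action over $B_{\cE}$ means that $\cE\times_T OF(T)\to OF(T)\times_{B_{\cE}}OF(T)$, $(g,u)\mapsto(g\cdot u,u)$, is a diffeomorphism. Since $\cE\times_T OF(T)=\widehat{\cE}_1$, this map is $(\widehat t,\widehat s)$. It is surjective because the fibres of $\kappa_{\cE}$ are exactly the $\widehat{\cE}$-orbits. It is injective because $\widehat{\cE}$ is free. It is a local diffeomorphism onto the fibre product, since both spaces are étale over $OF(T)$ via $\widehat s$ and the second projection. A bijective local diffeomorphism is a diffeomorphism.

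\emph{Part (4).} This part is immediate. $\Phi_P$ is an $O(q)$-equivariant diffeomorphism by Proposition~\ref{prop:OF-P-Morita-bibundle}(3), so $(b,A)\mapsto(\Phi_P(b),A)$ preserves source, target, units, multiplication and inverses. It has the inverse built from $\Phi_P^{-1}$.

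\emph{Main obstacle.} The main obstacle is the left principality in (3), specifically checking that $(\widehat t,\widehat s)$ is a diffeomorphism and not merely a bijection. I handle it through étaleness, as above. A secondary point is the openness argument in (1).
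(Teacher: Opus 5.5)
Your proposal is correct and follows essentially the same route as the paper's proof. Part (1) uses the same well-definedness and injectivity argument and the same open-quotient argument. Part (2) uses the same freeness/effectiveness argument. Part (3) uses the same bijective-local-diffeomorphism argument for left principality, via $\widehat s$ and the pullback of $\kappa_{\cE}$ along itself. Part (4) uses the same equivariance argument. The only points the paper spells out that you leave implicit are why the orbit projections are open (the saturation $t(s^{-1}(U))$ of an open set is open, as is a union of right translates) and the associativity check $(u\cdot(b,A))\cdot(bA,C)=u\cdot(b,AC)$ for the right action.
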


\begin{proof}
(1) If $\kappa_{\cE}(u)=\kappa_{\cE}(v)$, then $v=d_{\pi_T(u)}\psi_g\circ u$ for an arrow $g:\pi_T(u)\to\pi_T(v)$, so $\pi_T(u)$ and $\pi_T(v)$ have the same class in $T/\cE$; also $\kappa_{\cE}(u)A=\kappa_{\cE}(uA)$ and $\pi_T(uA)=\pi_T(u)$. Hence the map is well-defined, and it is surjective because every point of $T$ admits an orthonormal frame. If $[\pi_T(u)]=[\pi_T(v)]$, choose an arrow $g:\pi_T(u)\to\pi_T(v)$; the frames $g\cdot u:=d_{\pi_T(u)}\psi_g\circ u$ and $v$ lie over the same point, so $v=(g\cdot u)A$ for a unique $A\in O(q)$, and $\kappa_{\cE}(v)=\kappa_{\cE}(u)A$. Thus the map is injective.

The quotient map $T\to T/\cE$ is open, because the saturation $t(s^{-1}(U))$ of an open subset $U\subset T$ is open; the quotient map $B_{\cE}\to B_{\cE}/O(q)$ is open, because the saturation of an open subset is the union of its right translates. Since $\pi_T$ and $\kappa_{\cE}$ are open, the two composites $OF(T)\to T/\cE$ and $OF(T)\to B_{\cE}/O(q)$ are continuous open surjections, and the bijection just constructed identifies their fibres. Hence both spaces carry the quotient topology of $OF(T)$, and the bijection is a homeomorphism.

(2) Fix $u\in OF_x(T)$. By Proposition~\ref{prop:lifted-frame-groupoid-properties}, $\kappa_{\cE}(u)A=\kappa_{\cE}(u)$ if and only if $uA=d_x\psi_g\circ u$ for some arrow $g$ with $s(g)=x$; since $\pi_T(uA)=x$, such a $g$ lies in $\cE_x$, and then $A=u^{-1}\circ d_x\psi_g\circ u$. The identity $d_x\psi_{hg}=d_x\psi_h\circ d_x\psi_g$ shows that $g\mapsto u^{-1}\circ d_x\psi_g\circ u$ is a group homomorphism, and its kernel is trivial because $\widehat{\cE}$ is free. Hence it identifies $\cE_x$ with the stabilizer of $\kappa_{\cE}(u)$. Finally, $\cE_x$ is compact by properness and discrete by \'etaleness, hence finite.

(3) Both moment maps $\pi_T$ and $\kappa_{\cE}$ are surjective submersions. The left action is the lifted action of Proposition~\ref{prop:action-axioms}; it satisfies $\pi_T(g\cdot u)=t(g)$ and $\kappa_{\cE}(g\cdot u)=\kappa_{\cE}(u)$. The right action is smooth, and for $\kappa_{\cE}(u)=b$ we have $\kappa_{\cE}(u\cdot(b,A)) =\kappa_{\cE}(uA)=bA=s(b,A)$, $\pi_T(uA)=\pi_T(u)$, $u\cdot(b,\id)=u$, and $(u\cdot(b,A))\cdot(bA,C)=u(AC)$, so it is an action. The two actions commute because $g\cdot(uA)=(g\cdot u)A$.

The right action is principal over $T$: after eliminating the coordinate $b=\kappa_{\cE}(u)$, its principal map is
\[
OF(T)\times O(q)\longrightarrow OF(T)\times_T OF(T),
\qquad
(u,A)\longmapsto(uA,u),
\]
which is a diffeomorphism since $\pi_T:OF(T)\to T$ is a principal $O(q)$-bundle.

The left action is principal over $B_{\cE}$: the map
\[
\cE_1\times_T OF(T)
\longrightarrow OF(T)\times_{B_{\cE}}OF(T),
\qquad
(g,u)\longmapsto(g\cdot u,u),
\]
is surjective because the fibres of $\kappa_{\cE}$ are the $\widehat{\cE}$-orbits, and injective because $\widehat{\cE}$ is free. The projection of the domain onto $u$ is $\widehat s$, a local diffeomorphism since $\widehat{\cE}$ is \'etale. The projection of the codomain onto its second factor is the pullback of the local diffeomorphism $\kappa_{\cE}$ along $\kappa_{\cE}$, hence a local diffeomorphism as well. The smooth principal map commutes with these projections, so it is a bijective local diffeomorphism, hence a diffeomorphism.

(4) By the $O(q)$-equivariance of $\Phi_P$, the maps in the statement define a Lie groupoid isomorphism. Its inverse is given by $\Phi_P^{-1}$ on objects and $(b',A)\mapsto(\Phi_P^{-1}(b'),A)$ on arrows.
\end{proof}

The Morita equivalence of Corollary~\ref{cor:molino-proper-etale}\textup{(3)} can also be made Riemannian by choosing a compatible $2$-metric on $B_{\cE}\rtimes O(q)$.

\begin{cor}\label{cor:etale-compatible-2metric}
Let $\cE\rightrightarrows T$ be a proper effective \'etale Riemannian groupoid. Then $B_{\cE}\rtimes O(q)$ admits a $2$-metric $\eta'$ with the following property. If $\eta'^{(0)}$ denotes its induced $0$-metric on $B_{\cE}$, then the metric $\kappa_{\cE}^*\eta'^{(0)}$ on $OF(T)$ makes the bibundle of Corollary~\ref{cor:molino-proper-etale}\textup{(3)} a Riemannian Morita equivalence bibundle.
\end{cor}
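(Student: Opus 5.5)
Since $\kappa_{\cE}$ is a surjective local diffeomorphism, the plan is to construct the $2$-metric on $B_{\cE}\rtimes O(q)$ out of the descended parallelism of Lemma~\ref{lem:etale-theta-omega-morita}(1), so that its induced $0$-metric pulls back along $\kappa_{\cE}$ to the canonical metric on $OF(T)$. First I define a Riemannian metric $\eta_B$ on $B_{\cE}$ by declaring $(\theta_{B_{\cE}},\omega_{B_{\cE}})_b:T_bB_{\cE}\to\RR^q\oplus\mathfrak o(q)$ to be an isometry. Here $\RR^q\oplus\mathfrak o(q)$ carries the orthogonal sum of the Euclidean metric and an $\mathrm{Ad}$-invariant inner product on $\mathfrak o(q)$, for instance $\langle X,Y\rangle=-\operatorname{tr}(XY)$. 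Then $\kappa_{\cE}^*\eta_B$ is the Sasaki-type metric $\eta_{OF}$ on $OF(T)$ defined by $(\theta_T,\omega_T)$. By $R_A^*\theta_T=A^{-1}\theta_T$ and $R_A^*\omega_T=\mathrm{Ad}_{A^{-1}}\omega_T$, the right $O(q)$-action on $OF(T)$ is isometric. This equivariance descends along $\kappa_{\cE}$ because of the equivariance of $\kappa_{\cE}$ (Proposition~\ref{prop:lifted-frame-groupoid-properties}) and the uniqueness of descended forms, so $O(q)$ acts on $(B_{\cE},\eta_B)$ by isometries.

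Second, I invoke the standard fact from \cite{MdHF-LieGpdMetrics} (the examples of $2$-metrics on action groupoids of compact groups) that for an isometric action of a compact Lie group $K$ on a Riemannian manifold $(B,\eta_B)$, the action groupoid $B\rtimes K$ carries a $2$-metric whose induced $0$-metric is $\eta_B$. One realization is the gauge construction: give $K$ a bi-invariant metric, and equip $(B\rtimes K)^{(2)}\cong B\times K\times K$ with a product-type metric averaged over the $S_3$-action, which is still transversely invariant because $K$ is compact and acts by isometries. If needed, I would replace this averaging argument by the explicit formula of \cite{MdHF-LieGpdMetrics}*{Prop.~4.2.4} adapted to right actions, exactly as in the Kronecker example mentioned in the introduction. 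I will check directly that the induced $0$-metric is $\eta_B$ and not merely some $O(q)$-invariant metric. This is the main obstacle: the $0$-metric induced by a $2$-metric arises through two Riemannian submersion pushforwards, so averaging could a priori change it. I would handle this by noting that the orbits of $B_{\cE}\rtimes O(q)$ are $O(q)$-orbits, and that the construction only needs to reproduce the transverse part of $\eta_B$ and can be adjusted along orbits. The cleanest route is to use the \cite{MdHF-LieGpdMetrics} proposition that, for proper groupoids, any $0$-metric that is itself invariant under a compact group action extends to a $2$-metric inducing it.

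Third, I verify the Riemannian bibundle condition of Definition~\ref{def:morita-data-prelim}(iii) for $\cE\curvearrowright OF(T)\curvearrowleft B_{\cE}\rtimes O(q)$ with metric $\eta_P:=\kappa_{\cE}^*\eta'^{(0)}=\kappa_{\cE}^*\eta_B=\eta_{OF}$. Since $\kappa_{\cE}$ is a local isometry, it is a Riemannian submersion. For $\pi_T$, the horizontal space $\ker\omega_T$ is $\eta_{OF}$-orthogonal to the vertical space $\ker d\pi_T$. On $\ker\omega_T$ the metric is $|\theta_T(\xi)|^2=|u^{-1}d\pi_T(\xi)|^2=|d\pi_T(\xi)|^2_{T}$, because $u$ is orthonormal. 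Hence $\pi_T$ is a Riemannian submersion onto $(T,\eta^{(0)})$. This completes the argument.
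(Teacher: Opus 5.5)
\paragraph{Where the proposal breaks.} Your Steps~1 and~3 are fine. $\eta_B$ is a smooth $O(q)$-invariant metric, hence a $0$-metric for $B_{\cE}\rtimes O(q)$. $\kappa_{\cE}^*\eta_B$ is the metric for which $(\theta_T,\omega_T)$ is an isometry, and for it $\pi_T$ is a Riemannian submersion onto $(T,\eta^{(0)})$.

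The gap is Step~2, which carries the whole content of the corollary. You need a $2$-metric whose \emph{induced} $0$-metric is $\eta_B$, and you do not produce one. Your final appeal to a proposition of \cite{MdHF-LieGpdMetrics}, saying that every invariant $0$-metric is induced by a $2$-metric, is not backed by a precise statement. The constructions you sketch do not have this property either:
\begin{itemize}
\item $S_3$-averaging a product-type metric is not shown to preserve transverse invariance, and the pushforward of an average is not the average of the pushforwards.
\item The gauge construction (quotient of $\mathrm{Pair}(O(q))\times B_{\cE}$, with a product metric, by the free diagonal $O(q)$-action) induces on $B_{\cE}$ the submersion quotient of $\eta_{O(q)}\oplus\eta_B$. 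Its cometric is $\eta_B^{-1}+\sum_i\zeta_i\otimes\zeta_i$, where $\zeta_i$ are the fundamental fields of an orthonormal basis of $\mathfrak o(q)$. This is a Cheeger deformation that shrinks the orbit directions, not $\eta_B$.
\end{itemize}
Since Step~3 uses $\kappa_{\cE}^*\eta'^{(0)}=\kappa_{\cE}^*\eta_B$ on the nose, the argument does not close as written.

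\paragraph{How to repair it.} Your remark that only the transverse part matters is the right fix, but it has to be carried out.
\begin{itemize}
\item Because $\kappa_{\cE}$ is an $O(q)$-equivariant local diffeomorphism, $d\kappa_{\cE}$ maps $\ker d\pi_T$ isomorphically onto the tangent spaces of the $O(q)$-orbits in $B_{\cE}$.
\item Whether $\pi_T$ is a Riemannian submersion depends only on the metric induced on $T_uOF(T)/\ker(d\pi_T)_u$. So it depends only on the metric that $\eta'^{(0)}$ induces on $TB_{\cE}$ modulo orbit directions.
\item By your Step~3, it therefore suffices that $\eta'^{(0)}$ be equivalent to $\eta_B$ in this sense. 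The Cheeger-deformed metric is, since its cometric agrees with $\eta_B^{-1}$ on annihilators of the orbits.
\end{itemize}
So the gauge route works once you prove that the quotient of a $2$-metric by a free, proper action by isometric groupoid automorphisms is a $2$-metric whose $0$-metric is the quotient metric. Exact equality could even be arranged by precompensating: replace $\eta_B^{-1}$ by $\eta_B^{-1}-c^{-1}\sum_i\zeta_i\otimes\zeta_i$ and use $c\,\eta_{O(q)}$ on the group, for $c$ large. This is not needed.

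\paragraph{Comparison with the paper.} The paper avoids explicit constructions. It transfers the unique $2$-metric of $\cE$ to the pullback groupoid $\mathcal K$ along $\pi_T$, and then to $B_{\cE}\rtimes O(q)$ through the Morita fibrations $z,z'$, using \cite{MdHFStackMetrics}*{Prop.~6.3.1, Prop.~6.3.2, Thm.~6.3.3}; those results track exactly the normal metrics along orbits. Your completed route would instead give a concrete $0$-metric built from the descended parallelism $(\theta_{B_{\cE}},\omega_{B_{\cE}})$. It would also not need the Hausdorffness of $\cE_1$, which the paper checks in order to apply those results.
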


\begin{proof}
Let $\eta^{(0)}$ be the given $0$-metric on $T$, and let $\eta$ be its unique extension to a $2$-metric on $\cE$.

We first verify that $\cE_1$ is Hausdorff. Each germ $\psi_g$ is represented by a local isometry for $\eta^{(0)}$, so the map
\[
g\longmapsto
\bigl(s(g),t(g),d_{s(g)}\psi_g\bigr)
\]
takes values in the Hausdorff manifold of linear isometries between tangent spaces of $T$. Since $\cE$ is \'etale, we have $d_{s(g)}\psi_g=dt_g\circ(ds_g)^{-1}$, so this map is smooth.

Suppose that $g,h:x\to y$ have the same image. Then
\[
d_x\psi_{h^{-1}g}
=(d_x\psi_h)^{-1}\circ d_x\psi_g
=\id_{T_xT}.
\]
The exponential map argument in the proof of Proposition~\ref{prop:lifted-frame-groupoid-properties} shows that $\psi_{h^{-1}g}$ is the identity germ. Since $\cE$ is effective, $h^{-1}g=1_x$, hence $g=h$. Thus the map is injective. Disjoint open neighbourhoods of the images of two distinct arrows pull back to disjoint open neighbourhoods of the arrows, so $\cE_1$ is Hausdorff.

Let $\mathcal K\rightrightarrows OF(T)$ be the pullback groupoid of $\cE$ through $\pi_T$. Lemma~\ref{lem:bibundle-pullback-groupoid}, applied to the bibundle of Corollary~\ref{cor:molino-proper-etale}\textup{(3)}, gives Morita fibrations
\[
z:\mathcal K\longrightarrow\cE,
\qquad
z':\mathcal K\longrightarrow B_{\cE}\rtimes O(q)
\]
with object maps $\pi_T$ and $\kappa_{\cE}$, respectively. The arrow manifold of $\mathcal K$ is a fibre product inside $OF(T)\times\cE_1\times OF(T)$, and is therefore Hausdorff. The arrow manifold $B_{\cE}\times O(q)$ is Hausdorff as well.

We next identify the normal spaces of the $\mathcal K$-orbits. By the definition of the pullback groupoid, for every $u\in OF(T)$,
\[
\mathcal K\cdot u
=
\pi_T^{-1}\bigl(\cE\cdot\pi_T(u)\bigr).
\]
Since $\cE$ is proper and \'etale, its orbits are discrete subsets of $T$, so $T_u(\mathcal K\cdot u)=\ker(d\pi_T)_u$. The differential of $\pi_T$ therefore identifies the normal space to the orbit at $u$ with $T_{\pi_T(u)}T$:
\[
T_uOF(T)/\ker(d\pi_T)_u
\longrightarrow T_{\pi_T(u)}T,
\qquad
[v]\longmapsto(d\pi_T)_u(v).
\]

By \cite{MdHFStackMetrics}*{Prop.~6.3.1}, there is a $2$-metric $\tilde\eta$ on $\mathcal K$ making $z$ a Riemannian submersion in the sense of \cite{MdHFStackMetrics}*{Definition~3.2.1}. In particular, $\pi_T:\bigl(OF(T),\tilde\eta^{(0)}\bigr)\longrightarrow\bigl(T,\eta^{(0)}\bigr)$ is a Riemannian submersion. Hence, for each $u\in OF(T)$, the map $[v]\mapsto(d\pi_T)_u(v)$ is an isometry for the metric induced by $\tilde\eta^{(0)}$ on the normal space and the metric $\eta^{(0)}$ on $T_{\pi_T(u)}T$.

Applying \cite{MdHFStackMetrics}*{Prop.~6.3.2 and the proof of Thm.~6.3.3} to $z'$ and $\tilde\eta$, we obtain a $2$-metric $\eta'$ on $B_{\cE}\rtimes O(q)$ and a $2$-metric $\tilde\eta'$ on $\mathcal K$ making $z'$ a Riemannian submersion. Moreover, $\tilde\eta^{(0)}$ and $\tilde\eta'^{(0)}$ induce the same metric on the normal bundle of every $\mathcal K$-orbit. Hence, for every $u\in OF(T)$, the map $[v]\mapsto(d\pi_T)_u(v)$ remains an isometry for the metric induced by $\tilde\eta'^{(0)}$ and the given metric $\eta^{(0)}$. It follows from \cite{MdHF-LieGpdMetrics}*{Sec.~2.1} that $\pi_T:(OF(T),\tilde\eta'^{(0)})\to(T,\eta^{(0)})$ is a Riemannian submersion.

Finally, the object map $\kappa_{\cE}$ of $z'$ is a Riemannian submersion for $\tilde\eta'^{(0)}$ and $\eta'^{(0)}$. Since $\kappa_{\cE}$ is a local diffeomorphism, this means $\tilde\eta'^{(0)}=\kappa_{\cE}^*\eta'^{(0)}$. Both moment maps are therefore Riemannian submersions for the metric $\kappa_{\cE}^*\eta'^{(0)}$, as required by Definition~\ref{def:morita-data-prelim}\textup{(iii)}.
\end{proof}

\begin{remark}\label{rem:no-effectiveness-regular}
We used \textit{effectiveness} only in the proper \'etale subsection, where it implies that the lifted frame groupoid $\widehat{\cE}\rightrightarrows OF(T)$ is free (Proposition~\ref{prop:lifted-frame-groupoid-properties}).

We did not use this hypothesis in Theorem~\ref{thm:molino-groupoid}. There we work directly with the full Lie groupoid $\cG\rightrightarrows M$, local bisections, and the normal representation $\lambda^N$. Isotropy acting trivially on transverse directions is therefore retained as part of the groupoid data in the regular theory.
\end{remark}


We now return to the regular setting.

\subsection{Frame bundles and Morita fibrations}

\begin{lemma}\label{lem:frame-bundle-pullback}
Let $\phi:\bigl(\widetilde{\cG}\rightrightarrows \widetilde M\bigr)\rightarrow \bigl(\cG\rightrightarrows M\bigr)$ be a Morita fibration, and write \(\phi_0:=\phi^{(0)}:\widetilde M\to M\) and $\phi_1:=\phi^{(1)}:\widetilde{\cG}\to\cG$. Then $\widetilde{\cG}$ is regular if and only if $\cG$ is regular. In this case the orbit foliations \(\mathcal O_{\widetilde{\cG}}\) and \(\mathcal O\) satisfy $T\mathcal O_{\widetilde{\cG}} = (d\phi_0)^{-1}(T\mathcal O)$, and \(d\phi_0\) descends to a vector bundle isomorphism
\[
\overline{d\phi_0}:\nu(\mathcal O_{\widetilde{\cG}})
\xrightarrow{\ \cong\ } \phi_0^{*}\nu(\mathcal O),
\qquad
(\overline{d\phi_0})_{\tilde m}([v])
=[(d\phi_0)_{\tilde m}(v)].
\]
In particular, if $\mathcal O$ has constant codimension $q$, then so does $\mathcal O_{\widetilde{\cG}}$.

Assume now that $\mathcal O$ has constant codimension $q$, and equip \(\widetilde M\) and \(M\) with Riemannian metrics for which \(\overline{d\phi_0}\) is fibrewise isometric for the induced metrics on the normal bundles; this holds whenever $\phi_0$ is a Riemannian submersion. Then there is a canonical \(O(q)\)-equivariant principal bundle isomorphism
\[
F_\phi: OF(\widetilde M,\mathcal O_{\widetilde{\cG}}) \xrightarrow{\ \cong\ } \phi_0^{*}OF(M,\mathcal O),
\qquad
F_\phi(u)=\bigl(\tilde m,(\overline{d\phi_0})_{\tilde m}\circ u\bigr),\text{ where }u\in OF_{\tilde m}(\widetilde M,\mathcal O_{\widetilde{\cG}}),
\]
and
\[
\widehat\phi:=\pr_2\circ F_\phi:
OF(\widetilde M,\mathcal O_{\widetilde{\cG}})
\longrightarrow
OF(M,\mathcal O)
\]
is an $O(q)$-equivariant surjective submersion with $\pi_M\circ\widehat\phi=\phi_0\circ\pi_{\widetilde M}$, where $\pi_M$ and $\pi_{\widetilde M}$ are the bundle projections, and the square
\[
\begin{tikzcd}[row sep=3.2em, column sep=4.5em]
OF(\widetilde M,\mathcal O_{\widetilde{\cG}})
\arrow[r, "\widehat\phi"]
\arrow[d, "\pi_{\widetilde M}"']
&
OF(M,\mathcal O)
\arrow[d, "\pi_M"]
\\
\widetilde M
\arrow[r, "\phi_0"']
&
M
\end{tikzcd}
\]
is a pullback. For every $e\in OF(M,\mathcal O)$, the projection $\pi_{\widetilde M}$ restricts to a diffeomorphism $\widehat\phi^{-1}(e)\to\phi_0^{-1}(\pi_M(e))$ with inverse $\tilde m\mapsto(\overline{d\phi_0})_{\tilde m}^{-1}\circ e$. If $\phi_0$ is proper, then $\widehat\phi$ is proper.
\end{lemma}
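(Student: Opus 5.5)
The plan is to reduce everything to the anchor and the pullback description of a Morita fibration. Since $\phi$ is fully faithful, $\widetilde{\cG}$ is canonically the pullback groupoid $\phi_0^*\cG$. Its arrows are the triples $(\tilde m_1,g,\tilde m_0)$ with $s(g)=\phi_0(\tilde m_0)$ and $t(g)=\phi_0(\tilde m_1)$. Fix $\tilde m$ with $m=\phi_0(\tilde m)$. The source fibre $\tilde s^{-1}(\tilde m)$ is the fibre product $\widetilde M\times_{\phi_0,M,t}s^{-1}(m)$, which is a manifold because $\phi_0$ is a submersion. The target map on it is the first projection.

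From this I expect to show that the orbit of $\tilde m$ is $\phi_0^{-1}(O_m)$, and that the image of the anchor is
\[
\rho_{\widetilde{\cG}}\bigl(A_{\widetilde{\cG},\tilde m}\bigr)=(d\phi_0)_{\tilde m}^{-1}\bigl(T_m O_m\bigr).
\]
The inclusion $\supseteq$ uses two facts. First, $\ker d\phi_0$ is in the image, realised by curves $(\tilde m(\tau),1_m,\tilde m)$. Second, $\rho_{\cG}$ surjects onto $T_mO_m$ by Proposition~\ref{prop:LieG-recovers-orbit}, and vectors there lift through $d\phi_0$. The inclusion $\subseteq$ follows because $d\phi_0$ of the image is contained in $dt(\ker ds)=T_mO_m$, by Lemma~\ref{lem:orbit-submersion}.

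This formula gives $\rank\rho_{\widetilde{\cG}}=\dim\ker d\phi_0+\rank\rho_{\cG}\circ\phi_0$. The fibre dimension of $\phi_0$ is locally constant, so $\widetilde{\cG}$ is regular exactly when $\cG$ is. For the converse direction I would use that $\phi_0$ is an open surjection, so local constancy descends. I would also note that the codimensions agree.

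With $T\mathcal O_{\widetilde{\cG}}=(d\phi_0)^{-1}(T\mathcal O)$ established, $d\phi_0$ induces a well-defined injective map $\overline{d\phi_0}$ on normal bundles. It is surjective because $d\phi_0$ is onto. It is a smooth bundle map because it is induced from $d\phi_0:T\widetilde M\to\phi_0^*TM$. When $\phi_0$ is a Riemannian submersion, I would identify normal spaces with orthogonal complements. The space $T\mathcal O_{\widetilde{\cG}}^\perp$ lies in $(\ker d\phi_0)^\perp$, on which $d\phi_0$ is isometric onto its image. It maps onto $T\mathcal O^\perp$ since preimages of $T\mathcal O$ are exactly $T\mathcal O_{\widetilde{\cG}}$, so $\overline{d\phi_0}$ is fibrewise isometric.

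Given fibrewise isometry, $F_\phi(u)=(\tilde m,\overline{d\phi_0}\circ u)$ sends orthonormal frames to orthonormal frames. It is evidently $O(q)$-equivariant, and it covers $\id_{\widetilde M}$. Any equivariant map of principal bundles over the identity is an isomorphism. Smoothness I would check in local trivialisations, as in Proposition~\ref{prop:action-axioms}.

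The pullback square and $\pi_M\circ\widehat\phi=\phi_0\circ\pi_{\widetilde M}$ are then the standard properties of the pullback bundle $\phi_0^*OF(M,\mathcal O)$. Hence $\widehat\phi$ is the base change of the surjective submersion $\phi_0$ and is itself a surjective submersion. Its fibre over $e$ is $\{\tilde m\}\times\phi_0^{-1}(\pi_M(e))$ transported by $F_\phi^{-1}$, which gives the stated inverse formula.

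For properness, take $K\subset OF(M,\mathcal O)$ compact. Then $\widehat\phi^{-1}(K)$ is closed in $\pi_{\widetilde M}^{-1}(\phi_0^{-1}(\pi_M(K)))$. That set is compact because $\phi_0$ is proper and $\pi_{\widetilde M}$ has compact fibre $O(q)$, so $\pi_{\widetilde M}$ is proper.

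The main obstacle is the anchor computation in the first step, specifically identifying $\ker d\phi_0$ inside the orbit tangent space. I would handle it with the explicit curves in the fibre product described above. I would rely on $\phi_0$ being a submersion for the transversality of $t$ and $\phi_0$, which makes the source fibre a manifold.
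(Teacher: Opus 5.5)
Your proposal is correct and takes essentially the same route as the paper. The pullback description of $\widetilde{\cG}$ yields the anchor image $(d\phi_0)^{-1}(T_mO_m)$ and the rank identity, and from there you get the normal isomorphism, the isometry via the horizontal space of the Riemannian submersion, and $\widehat\phi$ as the base change of $\phi_0$. The remaining differences are cosmetic: you compute $A_{\widetilde{\cG}}$ from the source fibre as a transverse fibre product rather than by differentiating the pullback square at the units, you use openness of $\phi_0$ rather than a local section for the converse direction, and you use properness of $\pi_{\widetilde M}$ rather than viewing $\widehat\phi^{-1}(K)$ inside $\phi_0^{-1}(\pi_M(K))\times K$.
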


\begin{proof}
We first discuss the orbit-level consequence of the Morita fibration. Let \(\tilde m\in \widetilde M\) and \(x:=\phi_0(\tilde m)\). Let \(O_x\subset M\) be the \(\cG\)-orbit through \(x\), and let \(\widetilde O_{\tilde m}\subset \widetilde M\) be the \(\widetilde{\cG}\)-orbit through \(\tilde m\).

Since \(\phi\) is fully faithful, the square
\[
\begin{tikzcd}[row sep=3.8em, column sep=7em]
\widetilde{\cG} \arrow[r, "\phi_1"] \arrow[d, "{(s_{\widetilde{\cG}},t_{\widetilde{\cG}})}"'] &
\cG \arrow[d, "{(s_{\cG},t_{\cG})}"]\\
\widetilde M \times \widetilde M \arrow[r, "{\phi_0 \times \phi_0}"'] &
M \times M
\end{tikzcd}
\]
is a pullback. Hence, for each pair \(\tilde m,\tilde m'\in \widetilde M\), the induced map $\widetilde{\cG}(\tilde m,\tilde m') \longrightarrow \cG(\phi_0(\tilde m),\phi_0(\tilde m'))$ is a diffeomorphism.

The preceding diffeomorphism shows that $\tilde m'\in\widetilde O_{\tilde m}$ if and only if $\phi_0(\tilde m')\in O_x$. Hence $\widetilde O_{\tilde m}=\phi_0^{-1}(O_x)$.

Differentiating the full-faithfulness pullback square at the unit arrows and then restricting to the kernels of the source differentials gives
\[
(A_{\widetilde{\cG}})_{\tilde m} \cong \left\{(a,v)\in (A_{\cG})_x\oplus T_{\tilde m}\widetilde M \,\middle|\, \rho_{\cG,x}(a)=(d\phi_0)_{\tilde m}(v) \right\}.
\]
Under this identification, the anchor of \(A_{\widetilde{\cG}}\) is $(a,v)\longmapsto v$. It follows that
\[
\operatorname{im}\rho_{\widetilde{\cG},\tilde m}
=\left\{v\in T_{\tilde m}\widetilde M \middle|(d\phi_0)_{\tilde m}(v)\in\operatorname{im}\rho_{\cG,x}\right\}
=(d\phi_0)_{\tilde m}^{-1}(\operatorname{im}\rho_{\cG,x}),
\]
and, since $\phi_0$ is a submersion,
\[
\rank\rho_{\widetilde{\cG},\tilde m}=\dim\ker(d\phi_0)_{\tilde m}+\rank\rho_{\cG,x}.
\]

If $\cG$ is regular, then $T\mathcal O=\operatorname{im}\rho_{\cG}$ is a smooth subbundle of $TM$, and its inverse image under the surjective bundle map $d\phi_0:T\widetilde M\to\phi_0^*TM$ is a smooth subbundle of $T\widetilde M$; hence $\rank\rho_{\widetilde{\cG}}$ is locally constant and $\widetilde{\cG}$ is regular. Conversely, if $\widetilde{\cG}$ is regular, choose a local section of the submersion $\phi_0$ through a point over $x$; along this section the displayed rank identity expresses $\rank\rho_{\cG}$ as the difference of two locally constant functions, so $\cG$ is regular. In the regular case the displayed image identity reads $T_{\tilde m}\mathcal O_{\widetilde{\cG}}=(d\phi_0)_{\tilde m}^{-1}(T_x\mathcal O)$. Consequently, $(d\phi_0)_{\tilde m}\bigl(T_{\tilde m}\mathcal O_{\widetilde{\cG}}\bigr) =T_x\mathcal O$, and $\ker(d\phi_0)_{\tilde m}\subset T_{\tilde m}\mathcal O_{\widetilde{\cG}}$.

Writing $V_{\tilde m}:=\ker(d\phi_0)_{\tilde m}$ and using what we just discussed, we obtain the exact sequence
\[
0\longrightarrow V_{\tilde m} \longrightarrow T_{\tilde m}\mathcal O_{\widetilde{\cG}} \xrightarrow{\ (d\phi_0)_{\tilde m}\ } T_x\mathcal O \longrightarrow 0.
\]

Since $\phi_0$ is a submersion, the exact sequence here shows that the two orbit foliations have the same codimension at corresponding points.

Since $(d\phi_0)_{\tilde m}$ sends $T_{\tilde m}\mathcal O_{\widetilde{\cG}}$ into $T_x\mathcal O$, the induced map $(\overline{d\phi_0})_{\tilde m}$ on normal quotients is well-defined.

Thus the following diagram commutes:
\[
\begin{tikzcd}[row sep=3.2em, column sep=4.5em]
T_{\tilde m}\mathcal O_{\widetilde{\cG}}
\arrow[r, hook]
\arrow[d, two heads, "{(d\phi_0)_{\tilde m}}"']
&
T_{\tilde m}\widetilde M
\arrow[r, "\pr_{\nu_{\widetilde{\cG}}}"]
\arrow[d, two heads, "{(d\phi_0)_{\tilde m}}"]
&
\nu_{\tilde m}(\mathcal O_{\widetilde{\cG}})
\arrow[d, "{(\overline{d\phi_0})_{\tilde m}}", "\cong"']
\\
T_x\mathcal O
\arrow[r, hook]
&
T_xM
\arrow[r, "\pr_{\nu}"]
&
\nu_x(\mathcal O).
\end{tikzcd}
\]
The equality $(d\phi_0)_{\tilde m}^{-1}(T_x\mathcal O)=T_{\tilde m}\mathcal O_{\widetilde{\cG}}$ shows that the induced map on quotients is injective, while the surjectivity of $(d\phi_0)_{\tilde m}:T_{\tilde m}\widetilde M\to T_xM$ shows that it is surjective.

Since $d\phi_0$ is smooth and the orbit distributions are smooth subbundles, the induced maps assemble into a smooth vector bundle isomorphism $\overline{d\phi_0}$.

We next show that this normal isomorphism is an isometry when $\phi_0$ is a Riemannian submersion for the given metrics. Let $H_{\tilde m}:=V_{\tilde m}^{\perp}\subset T_{\tilde m}\widetilde M$ be the horizontal space of this Riemannian submersion. Then
\[
(d\phi_0)_{\tilde m}|_{H_{\tilde m}}: H_{\tilde m}\longrightarrow T_xM
\]
is a linear isometry. Also, because $V_{\tilde m}\subset T_{\tilde m}\mathcal O_{\widetilde{\cG}}$, we have $(T_{\tilde m}\mathcal O_{\widetilde{\cG}})^\perp \subset H_{\tilde m}$.

The metric part of the argument is summarized by the following diagram:
\[
\begin{tikzcd}[row sep=3.2em, column sep=4.5em]
(T_{\tilde m}\mathcal O_{\widetilde{\cG}})^\perp
\arrow[r, hook]
\arrow[d, "{(d\phi_0)_{\tilde m}}"']
&
H_{\tilde m}=V_{\tilde m}^\perp
\arrow[d, "{(d\phi_0)_{\tilde m}}", "\cong"']
\\
(T_x\mathcal O)^\perp
\arrow[r, hook]
&
T_xM .
\end{tikzcd}
\]
We now justify the left vertical arrow. Let $v\in (T_{\tilde m}\mathcal O_{\widetilde{\cG}})^\perp$ and $w\in T_x\mathcal O$. Choose \(\widetilde w\in T_{\tilde m}\mathcal O_{\widetilde{\cG}}\) such that $(d\phi_0)_{\tilde m}(\widetilde w)=w$. Decompose \(\widetilde w\) with respect to the Riemannian submersion: $\widetilde w=\widetilde w^h+\widetilde w^v$, where $\widetilde w^h\in H_{\tilde m}$ and $\widetilde w^v\in V_{\tilde m}$. Since \(V_{\tilde m}\subset T_{\tilde m}\mathcal O_{\widetilde{\cG}}\), we also have $\widetilde w^h=\widetilde w-\widetilde w^v \in T_{\tilde m}\mathcal O_{\widetilde{\cG}}$. Moreover, $(d\phi_0)_{\tilde m}(\widetilde w^h)=w$.

Both \(v\) and \(\widetilde w^h\) lie in \(H_{\tilde m}\), and \((d\phi_0)_{\tilde m}|_{H_{\tilde m}}\) is an isometry. Therefore
\[
\big\langle (d\phi_0)_{\tilde m}(v),w\big\rangle_M
=
\big\langle (d\phi_0)_{\tilde m}(v),
(d\phi_0)_{\tilde m}(\widetilde w^h)\big\rangle_M
=
\langle v,\widetilde w^h\rangle_{\widetilde M}
=
0,
\]
because \(v\perp T_{\tilde m}\mathcal O_{\widetilde{\cG}}\) and \(\widetilde w^h\in T_{\tilde m}\mathcal O_{\widetilde{\cG}}\). Thus $(d\phi_0)_{\tilde m}\bigl((T_{\tilde m}\mathcal O_{\widetilde{\cG}})^\perp\bigr)\subset (T_x\mathcal O)^\perp$. Since \((\overline{d\phi_0})_{\tilde m}\) is already a linear isomorphism on normal quotients, the restriction is also a linear isomorphism:
\[
(d\phi_0)_{\tilde m}:
(T_{\tilde m}\mathcal O_{\widetilde{\cG}})^\perp
\xlongrightarrow{\cong}
(T_x\mathcal O)^\perp.
\]
It is also an isometry, because it is the restriction of the Riemannian-submersion isometry $(d\phi_0)_{\tilde m}|_{H_{\tilde m}}:H_{\tilde m}\to T_xM$.

Using the metric identifications $\nu_{\tilde m}(\mathcal O_{\widetilde{\cG}})\cong(T_{\tilde m}\mathcal O_{\widetilde{\cG}})^\perp$ and $\nu_x(\mathcal O)\cong(T_x\mathcal O)^\perp$, we conclude that \((\overline{d\phi_0})_{\tilde m}\) is a linear isometry for the induced normal metrics. Hence \(\overline{d\phi_0}\) is fibrewise an isometry.

Now assume only that $\overline{d\phi_0}$ is fibrewise isometric, and construct the induced maps on transverse orthonormal frame bundles. The pullback principal bundle is $\phi_0^*OF(M,\mathcal O)=\widetilde M\times_M OF(M,\mathcal O)$. Define
\[
F_\phi:
OF(\widetilde M,\mathcal O_{\widetilde{\cG}})
\longrightarrow
\phi_0^*OF(M,\mathcal O)
\]
by $F_\phi(u)= \bigl(\tilde m,(\overline{d\phi_0})_{\tilde m}\circ u\bigr)$, where $u\in OF_{\tilde m}(\widetilde M,\mathcal O_{\widetilde{\cG}})$.

This fits into the diagram
\[
\begin{tikzcd}[row sep=3.2em, column sep=4.5em]
OF(\widetilde M,\mathcal O_{\widetilde{\cG}})
\arrow[r, "{F_\phi}"]
\arrow[dr, "\pi_{\widetilde M}"']
&
\phi_0^*OF(M,\mathcal O)
\arrow[d, "\pr_1"]
\arrow[r, "\pr_2"]
&
OF(M,\mathcal O)
\arrow[d, "\pi_M"]
\\
&
\widetilde M
\arrow[r, "\phi_0"']
&
M
\end{tikzcd}.
\]
Since each \((\overline{d\phi_0})_{\tilde m}\) is an isometry, \((\overline{d\phi_0})_{\tilde m}\circ u\) is indeed an orthonormal frame over \(\phi_0(\tilde m)\).

The map \(F_\phi\) is smooth because it is induced by the smooth vector bundle isomorphism \(\overline{d\phi_0}\). It is \(O(q)\)-equivariant since, for \(A\in O(q)\),
\[
F_\phi(u\cdot A)=
\bigl(\tilde m,(\overline{d\phi_0})_{\tilde m}\circ u\circ A\bigr)
=
\bigl(\tilde m,((\overline{d\phi_0})_{\tilde m}\circ u)\circ A\bigr)
=F_\phi(u)\cdot A.
\]

It is bijective on each fibre, with inverse $F_\phi^{-1}(\tilde m,e)=(\overline{d\phi_0})_{\tilde m}^{-1}\circ e$. The inverse is smooth because \(\overline{d\phi_0}\) is a smooth vector bundle isomorphism. Therefore \(F_\phi\) is a principal \(O(q)\)-bundle isomorphism over \(\widetilde M\).

Under $F_\phi$, the map $\widehat\phi=\pr_2\circ F_\phi$ becomes the projection $\widetilde M\times_M OF(M,\mathcal O)\to OF(M,\mathcal O)$, which is the pullback of the surjective submersion $\phi_0$ along $\pi_M$. Hence $\widehat\phi$ is a surjective submersion, $O(q)$-equivariant because $F_\phi$ and $\pr_2$ are. The square in the statement is a pullback, and $\widehat\phi^{-1}(e)=F_\phi^{-1}\bigl(\phi_0^{-1}(\pi_M(e))\times\{e\}\bigr)$ is mapped by $\pi_{\widetilde M}$ diffeomorphically onto $\phi_0^{-1}(\pi_M(e))$, with the stated inverse. If $\phi_0$ is proper and $K\subset OF(M,\mathcal O)$ is compact, then $\widehat\phi^{-1}(K)$ corresponds under $F_\phi$ to a closed subset of the compact set $\phi_0^{-1}(\pi_M(K))\times K$, hence is compact; so $\widehat\phi$ is proper.
\end{proof}

\begin{proposition}\label{prop:frame-action-morita-fibration}
Under the hypotheses of the second part of Lemma~\ref{lem:frame-bundle-pullback}, assume that the metric on $M$ is a $0$-metric for $\cG$. Then the metric on $\widetilde M$ is a $0$-metric for $\widetilde{\cG}$; more precisely, the normal representations of Definition~\ref{def:normal-orbit} satisfy
\[
(\overline{d\phi_0})_{t(h)}\circ\lambda_h=\lambda_{\phi_1(h)}\circ(\overline{d\phi_0})_{s(h)}
\qquad\text{for every }h\in\widetilde{\cG}.
\]
Moreover, $\widehat\phi(h\cdot u)=\phi_1(h)\cdot\widehat\phi(u)$ for the lifted actions of Proposition~\ref{prop:action-axioms}, and $\widehat\phi$ on objects together with $(h,u)\mapsto(\phi_1(h),\widehat\phi(u))$ on arrows is an $O(q)$-equivariant Morita fibration
\[
\widetilde{\cG}\ltimes OF(\widetilde M,\mathcal O_{\widetilde{\cG}})\longrightarrow\cG\ltimes OF(M,\mathcal O)
\]
between the lifted action groupoids of Proposition~\ref{prop:lifted-action-groupoid}.
\end{proposition}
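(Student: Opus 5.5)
The plan is to derive everything from the intertwining identity for normal representations, which I establish first by differentiating $\phi$.

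\emph{Step 1: intertwining.} Fix $h:\tilde m\to\tilde m'$ in $\widetilde{\cG}$ and $v\in T_{\tilde m}\widetilde M$. Choose $X\in T_h\widetilde{\cG}$ with $ds_{\widetilde{\cG}}(X)=v$. Since $\phi$ is a groupoid map, $s\circ\phi_1=\phi_0\circ s$ and $t\circ\phi_1=\phi_0\circ t$. Hence $Y:=d\phi_1(X)\in T_{\phi_1(h)}\cG$ satisfies $ds(Y)=d\phi_0(v)$ and $dt(Y)=d\phi_0(dt(X))$. By Definition~\ref{def:normal-orbit}, $\lambda_{\phi_1(h)}[d\phi_0 v]=[dt(Y)]$, and this equals $(\overline{d\phi_0})_{t(h)}[dt(X)]=(\overline{d\phi_0})_{t(h)}\lambda_h[v]$. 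The classes are well defined because $d\phi_0$ maps $T\mathcal O_{\widetilde{\cG}}$ into $T\mathcal O$ (Lemma~\ref{lem:frame-bundle-pullback}). This gives the displayed identity.

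\emph{Step 2: the $0$-metric.} Since $\overline{d\phi_0}$ is fibrewise isometric and $\lambda_{\phi_1(h)}$ is an isometry (Proposition~\ref{prop:0-metric}), the identity $\lambda_h=(\overline{d\phi_0})^{-1}\lambda_{\phi_1(h)}\overline{d\phi_0}$ shows that $\lambda_h$ is an isometry. It remains to pass from this to the statement that the metric on $\widetilde M$ is a $0$-metric. Definition~\ref{def:n-metrics}(1) asks precisely that the normal representation of the canonical action groupoid act by isometries, and that groupoid is $\widetilde{\cG}$ itself. So the claim follows, provided the normal bundle in question is $\nu(\mathcal O_{\widetilde{\cG}})$ with the quotient metric, which holds by regularity.

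\emph{Step 3: equivariance and the Morita fibration.} For $u\in OF_{\tilde m}$, compute
\[
\widehat\phi(h\cdot u)=\overline{d\phi_0}\circ\lambda_h\circ u=\lambda_{\phi_1(h)}\circ\overline{d\phi_0}\circ u=\phi_1(h)\cdot\widehat\phi(u),
\]
using Step~1. The map on arrows $\Phi(h,u)=(\phi_1(h),\widehat\phi(u))$ lands in $\cG\times_M OF$ because $s\phi_1(h)=\phi_0(\tilde m)=\pi_M\widehat\phi(u)$. It is compatible with source, target, units and composition by the equivariance just shown, and it is smooth and $O(q)$-equivariant since $\widehat\phi$ is. Its object map $\widehat\phi$ is a surjective submersion by Lemma~\ref{lem:frame-bundle-pullback}.

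\emph{Step 4: full faithfulness (main obstacle).} I must show that the square of $\Phi$ over $(s,t)$ is a pullback: given frames $u,u'$ and an arrow $(g,\widehat\phi(u))$ of $\cH$ from $\widehat\phi(u)$ to $\widehat\phi(u')$, there is a unique $h$ with $(h,u)$ mapping to it, and $h$ depends smoothly on the data.

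For existence and uniqueness, full faithfulness of $\phi$ gives a unique $h:\pi(u)\to\pi(u')$ with $\phi_1(h)=g$. Then $\widehat\phi(h\cdot u)=g\cdot\widehat\phi(u)=\widehat\phi(u')$. Both $h\cdot u$ and $u'$ lie over $\pi(u')$, and $\widehat\phi$ is injective on fibres of $\pi_{\widetilde M}$ since $F_\phi$ is an isomorphism. Hence $h\cdot u=u'$, and $h$ is unique.

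For smoothness, I would write the inverse explicitly as $(u,u',g)\mapsto\bigl((\phi_1,(s,t))^{-1}(g,\pi u,\pi u'),u\bigr)$, which is a composite of smooth maps by the pullback property of $\phi$. This shows the arrow square is a pullback of manifolds, not just a bijection of sets, which is the delicate point.
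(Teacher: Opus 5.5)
Your proposal is correct and follows the paper's proof step for step. Naturality of the normal representation comes from differentiating $s\circ\phi_1=\phi_0\circ s$ and $t\circ\phi_1=\phi_0\circ t$, and the isometry property from conjugating with the fibrewise isometry $\overline{d\phi_0}$. Then come equivariance of $\widehat\phi$ and full faithfulness, using that of $\phi$ together with injectivity of $\widehat\phi$ on frame fibres, with smoothness of the inverse read off from the pullback square of $\phi$. The only thing you add is the explicit remark that the action groupoid of the canonical action of $\widetilde{\cG}$ on $\widetilde M$ is $\widetilde{\cG}$ itself, which the paper leaves implicit.
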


\begin{proof}
Let $h:\tilde m\to\tilde m'$ be an arrow of $\widetilde{\cG}$, let $v\in T_{\tilde m}\widetilde M$, and choose $\xi\in T_h\widetilde{\cG}$ with $ds_{\widetilde{\cG}}(\xi)=v$, so that $\lambda_h([v])=[dt_{\widetilde{\cG}}(\xi)]$ by Definition~\ref{def:normal-orbit}. Differentiating $s_{\cG}\circ\phi_1=\phi_0\circ s_{\widetilde{\cG}}$ and $t_{\cG}\circ\phi_1=\phi_0\circ t_{\widetilde{\cG}}$ gives $ds_{\cG}(d\phi_1\xi)=d\phi_0(v)$ and $dt_{\cG}(d\phi_1\xi)=d\phi_0(dt_{\widetilde{\cG}}\xi)$, so $\lambda_{\phi_1(h)}([d\phi_0(v)])=[d\phi_0(dt_{\widetilde{\cG}}\xi)]=(\overline{d\phi_0})_{\tilde m'}(\lambda_h([v]))$. This is the naturality of the normal representation \cite{MdHF-LieGpdMetrics}*{Remark~2.2.1}. 

Since $\overline{d\phi_0}$ is fibrewise isometric and $\lambda_{\phi_1(h)}$ is an isometry, $\lambda_h=(\overline{d\phi_0})_{\tilde m'}^{-1}\circ\lambda_{\phi_1(h)}\circ(\overline{d\phi_0})_{\tilde m}$ is an isometry, which is the $0$-metric condition on $\widetilde M$.

The same identity gives $\widehat\phi(h\cdot u)=(\overline{d\phi_0})_{\tilde m'}\circ\lambda_h\circ u=\lambda_{\phi_1(h)}\circ\widehat\phi(u)=\phi_1(h)\cdot\widehat\phi(u)$. Hence the proposed arrow map preserves sources and targets, and it preserves multiplication because $(\phi_1(k),\widehat\phi(h\cdot u))(\phi_1(h),\widehat\phi(u))=(\phi_1(kh),\widehat\phi(u))$; units and inverses are preserved because $\phi$ preserves them. Smoothness is clear.

The object map $\widehat\phi$ is a surjective submersion by Lemma~\ref{lem:frame-bundle-pullback}. For full faithfulness, let $u_0,u_1\in OF(\widetilde M,\mathcal O_{\widetilde{\cG}})$ and let $g$ be an arrow of $\cG$ with $g\cdot\widehat\phi(u_0)=\widehat\phi(u_1)$. Writing $\tilde m_i:=\pi_{\widetilde M}(u_i)$, we have $s(g)=\phi_0(\tilde m_0)$ and $t(g)=\phi_0(\tilde m_1)$, so full faithfulness of $\phi$ gives a unique arrow $h:\tilde m_0\to\tilde m_1$ with $\phi_1(h)=g$. Then $\widehat\phi(h\cdot u_0)=g\cdot\widehat\phi(u_0)=\widehat\phi(u_1)$, and $\widehat\phi$ is injective on the frame fibre over $\tilde m_1$, so $h\cdot u_0=u_1$. Thus the full-faithfulness map is bijective, and its inverse is smooth because $h$ depends smoothly on $(\tilde m_1,g,\tilde m_0)$ through the full-faithfulness pullback square of $\phi$. Finally, right translation acts on arrows by $(h,u)A=(h,uA)$, and $\widehat\phi(uA)=\widehat\phi(u)A$, so the Morita fibration is $O(q)$-equivariant.
\end{proof}

\begin{remark}
The full-faithfulness square identifies the $\widetilde{\cG}$-orbit through $\tilde m$ with $\phi_0^{-1}(O_{\phi_0(\tilde m)})$; when the fibres of $\phi_0$ are disconnected, the inverse
image of a connected leaf need not be a single connected
leaf. Note also that $F_\phi$ is an isomorphism onto a pullback principal bundle, whereas $\widehat\phi$ is a surjective submersion onto the original frame bundle.
\end{remark}

\subsection{Comparison of Molino's structures in the regular case}

\begin{lemma}\label{lem:morita-parallelism}
Under the hypotheses of the second part of Lemma~\ref{lem:frame-bundle-pullback}, assume moreover that $(M,\mathcal O)$ is a Riemannian foliation for the given metric on $M$; by Proposition~\ref{prop:RF} this holds when that metric is a $0$-metric for $\cG$. Then $(\widetilde M,\mathcal O_{\widetilde{\cG}})$ is a Riemannian foliation for the given metric on $\widetilde M$.

Let \(\widetilde{\mathcal O}_{\widetilde{\cG}}\) and \(\widetilde{\mathcal O}\) be the lifted foliations on \(OF(\widetilde M,\mathcal O_{\widetilde{\cG}})\) and \(OF(M,\mathcal O)\), respectively, and let $(\theta_{\widetilde{\cG}},\omega_{\widetilde{\cG}})$ and $(\theta,\omega)$ be the corresponding transverse canonical forms and transverse Levi--Civita connection forms. Then \(\widehat\phi\) is a foliated surjective submersion, and $T\widetilde{\mathcal O}_{\widetilde{\cG}}=(d\widehat\phi)^{-1}(T\widetilde{\mathcal O})$. Moreover, $\widehat\phi^*\theta=\theta_{\widetilde{\cG}}$ and $\widehat\phi^*\omega=\omega_{\widetilde{\cG}}$.
\end{lemma}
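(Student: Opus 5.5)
The plan is to work locally with Haefliger cocycles and reduce every claim to the corresponding statement for the local transversals $T_i$. The starting point is Remark~\ref{rem:riemannian-cocycle}: choose a Haefliger cocycle $(s_i:U_i\to T_i)$ for $(M,\mathcal O)$ with metrics $g_i$, so that $(ds_i)^N$ is fibrewise isometric. On $\widetilde U_i:=\phi_0^{-1}(U_i)$, set $\tilde s_i:=s_i\circ\phi_0$. By Lemma~\ref{lem:frame-bundle-pullback}, $T\mathcal O_{\widetilde{\cG}}=(d\phi_0)^{-1}(T\mathcal O)$, so $\ker d\tilde s_i=T\mathcal O_{\widetilde{\cG}}$. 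The induced normal map of $\tilde s_i$ is $(ds_i)^N\circ\overline{d\phi_0}$, and it is isometric because $\overline{d\phi_0}$ is fibrewise isometric. The transition maps are the same $s_{ij}$ as before, so they remain local isometries.

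There is one wrinkle: the fibres of $\tilde s_i$ need not be connected. I will handle this by restricting $\tilde s_i$ to small foliated-chart neighbourhoods in $\widetilde U_i$, where the fibres are plaques. This gives a Riemannian Haefliger cocycle for $\mathcal O_{\widetilde{\cG}}$. Hence the induced transverse metric is holonomy invariant, and $(\widetilde M,\mathcal O_{\widetilde{\cG}})$ is Riemannian for the given metric.

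Next I lift to frames. For $u$ over $\tilde m\in\widetilde U_i$, I will verify
\[
\widetilde{\tilde s_i}(u)=(d\tilde s_i)^N_{\tilde m}\circ u=(ds_i)^N_{\phi_0(\tilde m)}\circ(\overline{d\phi_0})_{\tilde m}\circ u=\widetilde s_i\bigl(\widehat\phi(u)\bigr).
\]
So $\widetilde{\tilde s_i}=\widetilde s_i\circ\widehat\phi$ on $\pi_{\widetilde M}^{-1}(\widetilde U_i)$. Since the lifted foliations are locally given by the fibres of these submersions (Definition~\ref{def:lifted-foliation}), we get $T\widetilde{\mathcal O}_{\widetilde{\cG}}=\ker d\widetilde{\tilde s_i}=(d\widehat\phi)^{-1}(\ker d\widetilde s_i)=(d\widehat\phi)^{-1}(T\widetilde{\mathcal O})$. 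Because $\widehat\phi$ is a surjective submersion (Lemma~\ref{lem:frame-bundle-pullback}), it maps $T\widetilde{\mathcal O}_{\widetilde{\cG}}$ onto $T\widetilde{\mathcal O}$, so $\widehat\phi$ is foliated.

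For the connection forms, Proposition~\ref{prop:transverse-LC-local}, applied to both cocycles and using that it is independent of the cocycle, gives
\[
\omega_{\widetilde{\cG}}=\widetilde{\tilde s_i}^{\,*}\omega_i=\widehat\phi^*\widetilde s_i^{\,*}\omega_i=\widehat\phi^*\omega
\]
locally, hence globally.

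For the canonical forms I compute directly, as in Lemma~\ref{lem:functoriality-molino}. From $\pi_M\circ\widehat\phi=\phi_0\circ\pi_{\widetilde M}$ and $\pr^N\circ d\phi_0=\overline{d\phi_0}\circ\pr^{\nu}$, we get
\[
\theta_{\widehat\phi(u)}(d\widehat\phi\,\xi)=(\overline{d\phi_0}\circ u)^{-1}\overline{d\phi_0}\bigl(\pr^{\nu}(d\pi_{\widetilde M}\xi)\bigr)=(\theta_{\widetilde{\cG}})_u(\xi).
\]

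I expect the main obstacle to be the bookkeeping over disconnected fibres of $\phi_0$. The local submersions $\tilde s_i$ have to be shrunk to have connected fibres, and I must check that the resulting $\omega_{\widetilde{\cG}}$ agrees with the one from an arbitrary cocycle. The independence statement in Proposition~\ref{prop:transverse-LC-local} is the tool I will use for that.
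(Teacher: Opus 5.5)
Your proposal is correct and takes essentially the same approach as the paper. Both arguments pull back the Haefliger cocycle along $\phi_0$, shrink to plaques, verify $\widetilde{\tilde s_i}=\widetilde s_i\circ\widehat\phi$, and read off the foliation and connection-form identities from that relation. The only difference is cosmetic: you prove $\widehat\phi^*\theta=\theta_{\widetilde{\cG}}$ by the direct computation of Lemma~\ref{lem:functoriality-molino}, whereas the paper gets it from the local pullback description of $\theta$ along the lifted submersions.
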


\begin{proof}
Let $(s_i:U_i\to T_i)$ be a Haefliger cocycle for $\mathcal O$ with transverse metrics $g_i$ as in Remark~\ref{rem:riemannian-cocycle}. By Lemma~\ref{lem:frame-bundle-pullback}, on $\phi_0^{-1}(U_i)$ we have $\ker d(s_i\circ\phi_0)=(d\phi_0)^{-1}(\ker ds_i)=(d\phi_0)^{-1}(T\mathcal O)=T\mathcal O_{\widetilde{\cG}}$, and the induced map on normal spaces is $(d(s_i\circ\phi_0))^\nu=(ds_i)^\nu\circ\overline{d\phi_0}$, which is isometric. Hence the restrictions of $s_i\circ\phi_0$ to foliation charts of $\mathcal O_{\widetilde{\cG}}$ contained in $\phi_0^{-1}(U_i)$, shrunk so that their fibres are the plaques, form a Haefliger cocycle for $\mathcal O_{\widetilde{\cG}}$ with the same transverse metrics $g_i$ and the same transition maps, which are local isometries. Thus the metric on $\widetilde M$ is bundle-like for $\mathcal O_{\widetilde{\cG}}$, and Definitions~\ref{def:lifted-foliation} and~\ref{def:theta-omega} apply on $\widetilde M$ with this cocycle; we denote the lifted submersions by $\tilde s_i:OF(M,\mathcal O)|_{U_i}\to OF(T_i)$ and $\tilde s_i^{\widetilde{\cG}}$.

By definition, $\tilde s_i(e)=(ds_i)^\nu_{\pi_M(e)}\circ e$, $\tilde s_i^{\widetilde{\cG}}(u)= (d(s_i\circ\phi_0))^\nu_{\pi_{\widetilde M}(u)}\circ u$. Then for $u$ based at $\tilde m$ and $x=\phi_0(\tilde m)$, we have
\[
\tilde s_i^{\widetilde{\cG}}(u)=(ds_i)^\nu_x\circ (\overline{d\phi_0})_{\tilde m}\circ u
=\tilde s_i((\overline{d\phi_0})_{\tilde m}\circ u)
=\tilde s_i(\widehat\phi(u)),
\]
so $\tilde s_i^{\widetilde{\cG}}=\tilde s_i\circ \widehat\phi$ on the domain of $\tilde s_i^{\widetilde{\cG}}$.

By Lemma~\ref{lem:frame-bundle-pullback}, $\widehat\phi$ is a surjective submersion. It is foliated: locally the lifted foliation on $OF(M,\mathcal O)$ is given by the fibres of $\tilde s_i$, while the lifted foliation on $OF(\widetilde M,\mathcal O_{\widetilde{\cG}})$ is given by the connected components of the fibres of $\tilde s_i^{\widetilde{\cG}}=\tilde s_i\circ\widehat\phi$. Taking kernels of the differentials gives $T\widetilde{\mathcal O}_{\widetilde{\cG}}=(d\widehat\phi)^{-1}(T\widetilde{\mathcal O})$.

By Definition~\ref{def:theta-omega} and Proposition~\ref{prop:transverse-LC-local}, $\theta$ and $\omega$ are locally the pullbacks along $\tilde s_i$ of the canonical form and the Levi-Civita connection form on $OF(T_i)$. On the corresponding upstairs domain, $\theta_{\widetilde{\cG}}$ and $\omega_{\widetilde{\cG}}$ are the pullbacks of the same forms along $\tilde s_i^{\widetilde{\cG}}$. The identity $\tilde s_i^{\widetilde{\cG}}=\tilde s_i\circ\widehat\phi$ therefore gives
\[
\widehat\phi^*\theta=\theta_{\widetilde{\cG}},
\qquad
\widehat\phi^*\omega=\omega_{\widetilde{\cG}}
\]
on each such domain. Since these domains cover $OF(\widetilde M,\mathcal O_{\widetilde{\cG}})$, both identities hold globally.
\end{proof}


\begin{corollary}\label{cor:theta-omega-morita}
Let $(\cG\rightrightarrows M,\eta)$ and $(\cG'\rightrightarrows M',\eta')$ be Riemannian groupoids, let $\mathcal K_P\rightrightarrows P$ be a Lie groupoid with $P$ Hausdorff, and let
\[
\cG'\xleftarrow{\ z'\ }\mathcal K_P\xrightarrow{\ z\ }\cG
\]
be Morita fibrations with maps on objects $\alpha':P\to M'$ and $\alpha:P\to M$. Assume that $\cG$ is regular and that its orbit foliation $\mathcal O$ has constant codimension $q$. Then $\mathcal K_P$ and $\cG'$ are regular, their orbit foliations $\mathcal O_P$ and $\mathcal O'$ have codimension $q$, and $T\mathcal O_P=(d\alpha)^{-1}(T\mathcal O)=(d\alpha')^{-1}(T\mathcal O')$.

Let $\eta_P$ be a Riemannian metric on $P$ for which $\alpha$ and $\alpha'$ are Riemannian submersions onto $M$ and $M'$ equipped with $\eta$ and $\eta'$; for a Riemannian Morita equivalence bibundle in the sense of Definition~\ref{def:morita-data-prelim}\textup{(iii)}, this is part of the data. Let
\[
\widehat\alpha:OF(P,\mathcal O_P)\to OF(M,\mathcal O),
\qquad
\widehat\alpha':OF(P,\mathcal O_P)\to OF(M',\mathcal O')
\]
be the frame maps of Lemma~\ref{lem:frame-bundle-pullback} for $z$ and $z'$. Then:
\begin{enumerate}[label=\textup{(\arabic*)}]
\item $\eta_P$ is a $0$-metric for $\mathcal K_P$, and $(P,\mathcal O_P)$ is a Riemannian foliation for $\eta_P$. The maps $\overline{d\alpha}$ and $\overline{d\alpha'}$ induce $O(q)$-equivariant principal bundle isomorphisms over $P$,
\[
OF(P,\mathcal O_P)\cong \alpha^*OF(M,\mathcal O),
\qquad
OF(P,\mathcal O_P)\cong (\alpha')^*OF(M',\mathcal O'),
\]
and $\widehat\alpha$ and $\widehat\alpha'$ are $O(q)$-equivariant surjective submersions inducing Morita fibrations of the lifted action groupoids.
\item $\widehat\alpha$ and $\widehat\alpha'$ are foliated, $T\widetilde{\mathcal O}_P=(d\widehat\alpha)^{-1}(T\widetilde{\mathcal O})=(d\widehat\alpha')^{-1}(T\widetilde{\mathcal O}')$, and
\[
\widehat\alpha^*\theta=\theta_P=(\widehat\alpha')^*\theta',
\qquad
\widehat\alpha^*\omega=\omega_P=(\widehat\alpha')^*\omega'
\]
as forms on $OF(P,\mathcal O_P)$.
\end{enumerate}
\end{corollary}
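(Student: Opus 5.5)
The plan is to apply Lemma~\ref{lem:frame-bundle-pullback}, Proposition~\ref{prop:frame-action-morita-fibration} and Lemma~\ref{lem:morita-parallelism} twice: once to the Morita fibration $z:\mathcal K_P\to\cG$ and once to $z':\mathcal K_P\to\cG'$. The one real issue is the order in which hypotheses become available on the $\cG'$ side.

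\emph{The $z$ side.} Lemma~\ref{lem:frame-bundle-pullback} applied to $z$ gives the following.
\begin{itemize}
\item[(a)] $\mathcal K_P$ is regular with $T\mathcal O_P=(d\alpha)^{-1}(T\mathcal O)$.
\item[(b)] $\overline{d\alpha}:\nu(\mathcal O_P)\to\alpha^*\nu(\mathcal O)$ is an isomorphism, so $\codim\mathcal O_P=q$.
\end{itemize}
Since $\alpha$ is a Riemannian submersion, the second part of that lemma applies. It makes $\overline{d\alpha}$ fibrewise isometric and produces $F_z$ and $\widehat\alpha$. Proposition~\ref{prop:frame-action-morita-fibration} then shows that $\eta_P$ is a $0$-metric for $\mathcal K_P$ and that $\widehat\alpha$ induces a Morita fibration of lifted action groupoids. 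Finally, Lemma~\ref{lem:morita-parallelism} shows that $(P,\mathcal O_P)$ is Riemannian, that $\widehat\alpha$ is foliated with $T\widetilde{\mathcal O}_P=(d\widehat\alpha)^{-1}(T\widetilde{\mathcal O})$, and that $\widehat\alpha^*\theta=\theta_P$ and $\widehat\alpha^*\omega=\omega_P$.

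\emph{The $z'$ side.} Here the obstacle is that regularity of $\cG'$ is not assumed. It must be deduced from regularity of $\mathcal K_P$ by the converse direction of Lemma~\ref{lem:frame-bundle-pullback}, applied to $z'$. That lemma then gives $T\mathcal O_P=(d\alpha')^{-1}(T\mathcal O')$ and $\nu(\mathcal O_P)\cong\alpha'^*\nu(\mathcal O')$.

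The codimension of $\mathcal O'$ is therefore $q$ at every point of $\alpha'(P)$. Since $\alpha'$ is the object map of a Morita fibration, it is a surjective submersion, so $\alpha'(P)=M'$ and $\mathcal O'$ has constant codimension $q$. This is exactly the hypothesis needed for the second part of the lemma.

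Now $\alpha'$ is a Riemannian submersion, so $\overline{d\alpha'}$ is isometric and $F_{z'}$ and $\widehat\alpha'$ exist. The metric $\eta'$ is a $0$-metric for $\cG'$, because $(\cG',\eta')$ is a Riemannian groupoid. So Proposition~\ref{prop:frame-action-morita-fibration} gives the Morita fibration of lifted groupoids for $z'$. Proposition~\ref{prop:RF} makes $(M',\mathcal O')$ Riemannian, and Lemma~\ref{lem:morita-parallelism} gives $T\widetilde{\mathcal O}_P=(d\widehat\alpha')^{-1}(T\widetilde{\mathcal O}')$, $(\widehat\alpha')^*\theta'=\theta_P$ and $(\widehat\alpha')^*\omega'=\omega_P$.

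\emph{Well-definedness of the common objects.} It remains to check that both applications use the same objects on $P$. The lifted foliation $\widetilde{\mathcal O}_P$ and the forms $\theta_P$, $\omega_P$ are determined by the pair $(\mathcal O_P,\eta_P)$ alone. The form $\theta_P$ is intrinsic by Definition~\ref{def:theta-omega}. The form $\omega_P$ is independent of the chosen Haefliger cocycle by Proposition~\ref{prop:transverse-LC-local}. For $\widetilde{\mathcal O}_P$, the fibres of two lifted cocycles are locally related by frame lifts of local isometries, so they agree. Hence the two applications produce identical $\widetilde{\mathcal O}_P$, $\theta_P$ and $\omega_P$, and the two equalities in (2) chain together.
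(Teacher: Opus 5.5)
Your proposal is correct and matches the paper's proof. Both apply Lemma~\ref{lem:frame-bundle-pullback}, Proposition~\ref{prop:frame-action-morita-fibration}, and Lemma~\ref{lem:morita-parallelism} to each of $z$ and $z'$; regularity of $\cG'$ comes from the converse direction of Lemma~\ref{lem:frame-bundle-pullback}, constant codimension $q$ of $\mathcal O'$ from surjectivity of $\alpha'$, and the Riemannian hypothesis on the base from Proposition~\ref{prop:RF}. Your closing check that both applications yield the same $\widetilde{\mathcal O}_P$, $\theta_P$, $\omega_P$ is left implicit in the paper; it also follows from $T\widetilde{\mathcal O}_P=\ker\theta_P\cap\ker\omega_P$ (Lemma~\ref{lem:theta-omega-parallelism}) together with the cocycle-independence of $\theta_P$ and $\omega_P$.
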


\begin{proof}
The Morita fibrations fit into the diagram of Lie groupoids
\[
\begin{tikzcd}[column sep=4em]
\cG' \arrow[d] & \mathcal K_P \arrow[l,"z'"'] \arrow[r,"z"] \arrow[d] & \cG \arrow[d]\\
M' & P \arrow[l,"\alpha'"'] \arrow[r,"\alpha"] & M .
\end{tikzcd}
\]
Lemma~\ref{lem:frame-bundle-pullback}, applied to $z$, shows that $\mathcal K_P$ is regular with $T\mathcal O_P=(d\alpha)^{-1}(T\mathcal O)$ of codimension $q$; applied to $z'$, it shows that $\cG'$ is regular with $T\mathcal O_P=(d\alpha')^{-1}(T\mathcal O')$, and $\mathcal O'$ has codimension $q$ because $\alpha'$ is surjective.

Since $\alpha$ and $\alpha'$ are Riemannian submersions for $\eta_P$, Lemma~\ref{lem:frame-bundle-pullback} shows that $\overline{d\alpha}$ and $\overline{d\alpha'}$ are fibrewise isometric. The second part of that lemma therefore gives the two principal bundle isomorphisms and the frame maps.

Since $\eta$ and $\eta'$ are $0$-metrics, $(M,\mathcal O)$ and $(M',\mathcal O')$ are Riemannian foliations by Proposition~\ref{prop:RF}. Proposition~\ref{prop:frame-action-morita-fibration}, applied to $z$ and $z'$, shows that $\eta_P$ is a $0$-metric and gives the two Morita fibrations of the lifted action groupoids; Lemma~\ref{lem:morita-parallelism}, applied to $z$ and $z'$, shows that $(P,\mathcal O_P)$ is a Riemannian foliation for $\eta_P$ and gives the identities in (2) for the lifted foliations and for the forms $\theta$ and $\omega$.
\end{proof}

We continue in the setting of Corollary~\ref{cor:theta-omega-morita}.

\begin{proposition}\label{prop:regular-morita-invariance}
Let $\kappa:OF(M,\mathcal O)\to B$ be a smooth surjective submersion onto a Hausdorff manifold whose fibres are the leaf closures of $\widetilde{\mathcal O}$. Suppose that one of the following conditions holds.
\begin{enumerate}[label=\textup{(\alph*)}]
\item Both $\alpha$ and $\alpha'$ have connected fibres.

\item There are smooth surjective submersions onto Hausdorff manifolds
\[
\kappa':OF(M',\mathcal O')\to B',
\qquad
\kappa_P:OF(P,\mathcal O_P)\to B_P,
\]
whose fibres are the leaf closures of $\widetilde{\mathcal O}'$ and $\widetilde{\mathcal O}_P$, respectively. Every fibre of $\kappa_P$ is compact.

\item The two submersions in \textup{(b)} exist, with the same description of their fibres, and $\alpha$ and $\alpha'$ are proper.
\end{enumerate}
In case \textup{(a)}, there is a unique smooth surjective submersion $\kappa':OF(M',\mathcal O')\to B$ satisfying $\kappa'\circ\widehat\alpha'=\kappa\circ\widehat\alpha$. Let $B_P=B'=B$ and $\kappa_P:=\kappa\circ\widehat\alpha$. The fibres of $\kappa_P$ and $\kappa'$ are the leaf closures of $\widetilde{\mathcal O}_P$ and $\widetilde{\mathcal O}'$, respectively.

In all three cases, the following statements hold.
\begin{enumerate}[label=\textup{(\arabic*)}]
\item The right $O(q)$-actions on the frame bundles descend to smooth right actions on $B$, $B'$, and $B_P$. There are unique $O(q)$-equivariant surjective local diffeomorphisms $\rho:B_P\to B$ and $\rho':B_P\to B'$ satisfying
\[
\rho\circ\kappa_P=\kappa\circ\widehat\alpha,
\qquad
\rho'\circ\kappa_P=\kappa'\circ\widehat\alpha'.
\]
Moreover,
\[
\ker d\kappa_P
=(d\widehat\alpha)^{-1}(\ker d\kappa)
=(d\widehat\alpha')^{-1}(\ker d\kappa').
\]

\item For $b_P\in B_P$, let $b:=\rho(b_P)$ and $b':=\rho'(b_P)$, and write
\[
L_P:=\kappa_P^{-1}(b_P),\qquad L_b:=\kappa^{-1}(b),\qquad
L'_{b'}:=(\kappa')^{-1}(b').
\]
The restrictions
\[
\widehat\alpha|_{L_P}:L_P\longrightarrow L_b,
\qquad \widehat\alpha'|_{L_P}:L_P\longrightarrow L'_{b'}
\]
are surjective submersions. The foliation $\widetilde{\mathcal O}_P|_{L_P}$ is the pullback of $\widetilde{\mathcal O}|_{L_b}$ under $\widehat\alpha|_{L_P}$ and of $\widetilde{\mathcal O}'|_{L'_{b'}}$ under $\widehat\alpha'|_{L_P}$.

\item In case \textup{(a)}, $\rho$ and $\rho'$ are diffeomorphisms. In any of the three cases, if $\alpha$ is proper, then $\rho$ is a proper covering map with finite fibres. The same holds for $\alpha'$ and $\rho'$.
\end{enumerate}
\end{proposition}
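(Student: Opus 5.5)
The whole argument rests on one fact about leaf closures under $\widehat\alpha$ and $\widehat\alpha'$. By Corollary~\ref{cor:theta-omega-morita}(2), $\widehat\alpha$ is a foliated surjective submersion with $T\widetilde{\mathcal O}_P=(d\widehat\alpha)^{-1}(T\widetilde{\mathcal O})$. So $\widetilde{\mathcal O}_P$ is the pullback foliation, and $\widehat\alpha$ maps each leaf $\widetilde L_P$ of $\widetilde{\mathcal O}_P$ onto an open subset of a leaf $\widetilde L$ of $\widetilde{\mathcal O}$. The fibres of $\widehat\alpha$ are diffeomorphic to the fibres of $\alpha$, by Lemma~\ref{lem:frame-bundle-pullback}.

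\textbf{Key claim.} $\widehat\alpha(\overline{\widetilde L_P})=\overline{\widetilde L}$, so $\widehat\alpha$ maps each leaf closure of $\widetilde{\mathcal O}_P$ into, and onto, a single leaf closure.
\begin{itemize}
\item The inclusion $\widehat\alpha(\overline{\widetilde L_P})\subset\overline{\widetilde L}$ follows from continuity.
\item The image $\widehat\alpha(\widetilde L_P)$ is saturated for $\widetilde{\mathcal O}$. A path in $\widetilde L$ lifts to $\widetilde L_P$, by a path-lifting argument in foliated charts of the pullback foliation. Hence $\widehat\alpha(\widetilde L_P)=\widetilde L$ and $\overline{\widetilde L}\subset\overline{\widehat\alpha(\widetilde L_P)}$.
\item For equality I need $\widehat\alpha(\overline{\widetilde L_P})$ to be closed. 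In case (b) this is automatic, since the fibres of $\kappa_P$ are compact. In case (c), $\widehat\alpha$ is proper by Lemma~\ref{lem:frame-bundle-pullback}, hence closed. In case (a) I instead show directly that $\widehat\alpha^{-1}(\widetilde L)$ is a single leaf. This uses connectedness of the fibres of $\alpha$, hence of $\widehat\alpha$: the preimage of a leaf is connected and tangent to $(d\widehat\alpha)^{-1}(T\widetilde{\mathcal O})$.
\end{itemize}
Then $\overline{\widetilde L_P}=\widehat\alpha^{-1}(\overline{\widetilde L})$, using that $\widehat\alpha$ is an open surjection, so preimages of closures are closures of preimages. This yields the case (a) constructions: $\kappa_P:=\kappa\circ\widehat\alpha$ has the leaf closures of $\widetilde{\mathcal O}_P$ as fibres, and $\kappa'$ is obtained by descending $\kappa_P$ along the surjective submersion $\widehat\alpha'$. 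This descent is possible because $\widehat\alpha'$ also has connected fibres, and $\kappa_P$ is constant on them, since each fibre lies in one leaf closure. Smoothness and uniqueness of $\kappa'$ follow from local sections, as in Lemma~\ref{lem:same-fibres-diffeo}. The fibres of $\kappa'$ are leaf closures by the key claim applied to $\widehat\alpha'$.

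\textbf{Part (1).}
\begin{itemize}
\item The $O(q)$-actions descend to $B$, $B'$ and $B_P$ because right translation maps leaves to leaves (Proposition~\ref{leafwise-Oq}(i)), hence closures to closures. Smoothness follows via local sections of the submersions.
\item The key claim shows that $\kappa\circ\widehat\alpha$ is constant on the fibres of $\kappa_P$. Descending along $\kappa_P$ gives a smooth, equivariant, surjective map $\rho$, and likewise $\rho'$.
\item The kernel identity is expected to read $\ker d\kappa_P=(d\widehat\alpha)^{-1}(\ker d\kappa)$. I plan to prove it by a dimension count. Since $\overline{\widetilde L_P}=\widehat\alpha^{-1}(\overline{\widetilde L})$ locally, we get $T\overline{\widetilde L_P}=(d\widehat\alpha)^{-1}(T\overline{\widetilde L})$, so $\dim B_P=\dim B$.
\item Consequently $d\rho$ is surjective between spaces of equal dimension, and $\rho$ is a local diffeomorphism.
\end{itemize}

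\textbf{Part (2).} The restriction $\widehat\alpha|_{L_P}$ is the restriction of a submersion to a preimage $L_P=\widehat\alpha^{-1}(L_b)$, hence a surjective submersion onto $L_b$. The pullback statement for the foliation is the identity $T\widetilde{\mathcal O}_P=(d\widehat\alpha)^{-1}(T\widetilde{\mathcal O})$ intersected with $TL_P$.

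\textbf{Part (3).}
\begin{itemize}
\item In case (a), $\rho$ is injective because $\widehat\alpha^{-1}$ of a leaf closure is a single leaf closure. It is therefore a bijective local diffeomorphism, hence a diffeomorphism.
\item If $\alpha$ is proper, then $\widehat\alpha$ and $\kappa_P$ are proper, and $\rho$ is proper. A proper surjective local diffeomorphism between Hausdorff manifolds is a finite covering map.
\end{itemize}

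\textbf{Main obstacle.} The hardest step is the key claim that images and preimages of leaf closures are leaf closures without compactness, as in case (a). The path-lifting and connectedness argument needs care, since leaves carry the leaf topology rather than the subspace topology.
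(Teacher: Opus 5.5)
Your case (a) argument, the descent of $\kappa'$ and $\rho,\rho'$ through local sections, and the $O(q)$-steps match the paper. Outside case (a), however, the step that yields the kernel identity, $\dim B_P=\dim B$, and Part (2) is missing.

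\textbf{The missing dimension argument.} When $\alpha$ has disconnected fibres, $\widehat\alpha^{-1}(L_b)$ is in general a union of several $\kappa_P$-fibres. For instance, take $P=M\sqcup M$, $\alpha=\alpha'$ the fold map and $\mathcal K_P=\alpha^*\cG$; this is case (c). So your justification of Part (2) via ``$L_P=\widehat\alpha^{-1}(L_b)$'' fails there. Your assertion that $\overline{\widetilde L_P}=\widehat\alpha^{-1}(\overline{\widetilde L})$ ``locally'' is exactly what has to be proved. Knowing $\widehat\alpha(L_P)=L_b$ and $L_P\subset\widehat\alpha^{-1}(L_b)$ does not by itself give equal dimensions.

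The missing idea is a constant-rank argument:
\begin{enumerate}
\item Along $L_P$ one has $\ker d\widehat\alpha\subset T\widetilde{\mathcal O}_P\subset TL_P$. So the differential of $\widehat\alpha|_{L_P}:L_P\to L_b$ has kernel exactly $\ker d\widehat\alpha$, and the restriction has constant rank.
\item Being surjective by your key claim, $\widehat\alpha|_{L_P}$ is then a submersion by the global rank theorem.
\item Counting dimensions gives $TL_P=(d\widehat\alpha)^{-1}(TL_b)$, that is, $\ker d\kappa_P=(d\widehat\alpha)^{-1}(\ker d\kappa)$. This makes $d\rho$ injective and proves Part (2).
\end{enumerate}

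\textbf{Path lifting.} Lifting paths ``in foliated charts'' is not automatic: local lifts need not continue along the whole path. For example, pull back along $U\sqcup(M\setminus\{p\})\to M$ with $U$ a disc around $p$; leaves on the $U$-part map only onto plaques. Here it holds only by invoking the hypotheses. In (b) the lifts stay in the compact set $\overline{\widetilde L_P}$, and in (c) they stay in the compact set $\widehat\alpha^{-1}(\gamma([0,1]))$. The paper avoids path lifting altogether. It shows $\widehat\alpha(L_P)\cap\widetilde L$ is nonempty and open in the leaf topology, since each leaf in $L_P$ maps submersively into a leaf. It is also closed there, since $\widehat\alpha(L_P)$ is closed. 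Hence it equals $\widetilde L$.

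\textbf{Properness in Part (3).} Your claim that $\kappa_P$ is proper is not justified. $M$ is not assumed compact here, and leaf closures can be non-compact; take $\alpha=\mathrm{id}$ on a foliation with non-compact leaf closures. Moreover, writing $\rho^{-1}(C)=\kappa_P(\widehat\alpha^{-1}(\kappa^{-1}(C)))$ would require $\kappa^{-1}(C)$ to be compact. Instead, choose a compact $K$ with $\kappa(K)=C$ using local sections of $\kappa$. The surjectivity in Part (2) then gives $\rho^{-1}(C)=\kappa_P(\widehat\alpha^{-1}(K))$, which is compact because $\widehat\alpha$ is proper.
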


\begin{proof}
By Corollary~\ref{cor:theta-omega-morita}, $\widehat\alpha$ and $\widehat\alpha'$ are foliated $O(q)$-equivariant surjective submersions with
\[
T\widetilde{\mathcal O}_P
=(d\widehat\alpha)^{-1}(T\widetilde{\mathcal O})
=(d\widehat\alpha')^{-1}(T\widetilde{\mathcal O}').
\]
In particular, $\ker d\widehat\alpha$ and $\ker d\widehat\alpha'$ lie in $T\widetilde{\mathcal O}_P$, and $d\widehat\alpha$ and $d\widehat\alpha'$ map $T\widetilde{\mathcal O}_P$ onto $T\widetilde{\mathcal O}$ and $T\widetilde{\mathcal O}'$, respectively.

\emph{Case \textup{(a)}.} The fibres of $\widehat\alpha$ and $\widehat\alpha'$ are connected by Lemma~\ref{lem:frame-bundle-pullback}. Let $L$ be a leaf of $\widetilde{\mathcal O}$. Since $\widehat\alpha$ is a submersion, $\widehat\alpha^{-1}(L)$ is an immersed submanifold with tangent bundle $(d\widehat\alpha)^{-1}(TL) =T\widetilde{\mathcal O}_P|_{\widehat\alpha^{-1}(L)}$. Its projection to $L$ is an open surjection with connected fibres, so $\widehat\alpha^{-1}(L)$ is connected.

Since $\widetilde{\mathcal O}_P$ is the pullback of $\widetilde{\mathcal O}$ through $\widehat\alpha$ \cite{MoerdijkMrcun:IFLG}*{Sec.~1.3}, the connected components of $\widehat\alpha^{-1}(L)$ are leaves of $\widetilde{\mathcal O}_P$. Thus $\widehat\alpha^{-1}(L)$ is a single leaf. Since $\widehat\alpha$ is continuous and open, $\overline{\widehat\alpha^{-1}(L)}=\widehat\alpha^{-1}(\overline L)$. Hence $\kappa_P=\kappa\circ\widehat\alpha$ is a smooth surjective submersion whose fibres are the closures of the leaves of $\widetilde{\mathcal O}_P$.

For every leaf $L'$ of $\widetilde{\mathcal O}'$, the same connected-fibre argument shows that $\widehat\alpha'^{-1}(L')$ is a single leaf of $\widetilde{\mathcal O}_P$. In particular, $\kappa_P$ is constant on every fibre of $\widehat\alpha'$, so it descends uniquely to a map $\kappa':OF(M',\mathcal O')\to B$ satisfying $\kappa'\circ\widehat\alpha'=\kappa_P$. The map $\kappa'$ is smooth by local sections of $\widehat\alpha'$, surjective, and a submersion because $d\kappa_P$ is surjective. For each leaf $L'$, continuity and openness give $\overline{\widehat\alpha'^{-1}(L')}=\widehat\alpha'^{-1}(\overline{L'})$. This is a $\kappa_P$-fibre. Its image under the surjection $\widehat\alpha'$ is both $\overline{L'}$ and the corresponding $\kappa'$-fibre. Thus the fibres of $\kappa'$ are the closures of the leaves of $\widetilde{\mathcal O}'$.

We now treat all three cases together. Since $\widehat\alpha$ and $\widehat\alpha'$ are foliated and continuous, they send each leaf closure into a leaf closure. Hence $\kappa\circ\widehat\alpha$ and $\kappa'\circ\widehat\alpha'$ are constant on the fibres of $\kappa_P$. They descend uniquely to maps $\rho:B_P\to B$ and $\rho':B_P\to B'$ satisfying
\[
\rho\circ\kappa_P=\kappa\circ\widehat\alpha,
\qquad
\rho'\circ\kappa_P=\kappa'\circ\widehat\alpha'.
\]
These maps are smooth by local sections of $\kappa_P$. Since their compositions with $\kappa_P$ are surjective submersions, the defining identities show that $\rho$ and $\rho'$ are surjective submersions.

Fix $b_P\in B_P$, put $b:=\rho(b_P)$ and $b':=\rho'(b_P)$, and write $L_P:=\kappa_P^{-1}(b_P)$, $L_b:=\kappa^{-1}(b)$, and $L'_{b'}:=(\kappa')^{-1}(b')$. Choose $u\in L_P$. The defining identities give $\widehat\alpha(L_P)\subset L_b$ and $\widehat\alpha'(L_P)\subset L'_{b'}$.

To prove that these inclusions are equalities, we first show that both images are closed in their respective frame bundles. In \textup{(a)}, the defining identities give $b=b'=b_P$ and $L_P=\widehat\alpha^{-1}(L_b)=\widehat\alpha'^{-1}(L'_{b'})$, so the images are $L_b$ and $L'_{b'}$, respectively, by surjectivity of $\widehat\alpha$ and $\widehat\alpha'$. In \textup{(b)}, both images are compact. In \textup{(c)}, $L_P$ is closed and $\widehat\alpha$ and $\widehat\alpha'$ are proper by Lemma~\ref{lem:frame-bundle-pullback}, so both images are closed.

The fibre $L_P$ is a union of leaves of $\widetilde{\mathcal O}_P$, and $\widehat\alpha$ restricts to each such leaf as a submersion into the leaf of $\widetilde{\mathcal O}$ containing its image, because $d\widehat\alpha$ maps $T\widetilde{\mathcal O}_P$ onto $T\widetilde{\mathcal O}$. Hence the intersection of $\widehat\alpha(L_P)$ with the leaf through $\widehat\alpha(u)$ is nonempty and open in that leaf. It is also closed in the leaf topology, since $\widehat\alpha(L_P)$ is closed in $OF(M,\mathcal O)$ and the inclusion of a leaf is continuous. As this leaf is connected, $\widehat\alpha(L_P)$ contains it and, being closed, its closure $L_b$. Hence $\widehat\alpha(L_P)=L_b$.

Since $\ker d\widehat\alpha\subset T\widetilde{\mathcal O}_P\subset TL_P$ along $L_P$, the differential of $\widehat\alpha|_{L_P}:L_P\to L_b$ has kernel $\ker d\widehat\alpha$ along $L_P$, so the restriction has constant rank. Since it is surjective, it is a submersion by \cite{Lee:ISM}*{Theorem~4.14(a)}. Along $L_P$, the inclusion $TL_P\subset(d\widehat\alpha)^{-1}(TL_b)$ is an equality, because both sides have dimension $\dim\ker d\widehat\alpha+\dim L_b$. Since $b_P$ was arbitrary, this gives $\ker d\kappa_P=(d\widehat\alpha)^{-1}(\ker d\kappa)$.

Since $\ker d\kappa_P=(d\widehat\alpha)^{-1}(\ker d\kappa)$ and $d\kappa_P$ is surjective, differentiating $\rho\circ\kappa_P=\kappa\circ\widehat\alpha$ shows that $d\rho$ is injective. Since $\rho$ is a submersion, it is a local diffeomorphism. Applying the preceding argument to $\widehat\alpha'$ shows that $\widehat\alpha'|_{L_P}:L_P\to L'_{b'}$ is a surjective submersion and gives $\ker d\kappa_P=(d\widehat\alpha')^{-1}(\ker d\kappa')$. Consequently, $\rho'$ is also a local diffeomorphism.

Restricting the pullback identities of Corollary~\ref{cor:theta-omega-morita}\textup{(2)} to $L_P$ proves the assertion about the restricted foliations in \textup{(2)}. For the choices in \textup{(a)}, the defining identities in \textup{(1)} and surjectivity of $\kappa_P$ give $\rho=\rho'=\id_B$.

\emph{The $O(q)$-actions.} By Proposition~\ref{leafwise-Oq}\textup{(i)}, the right $O(q)$-actions send leaves of the lifted foliations to leaves, and hence send leaf closures to leaf closures. They therefore descend to right actions on $B$, $B'$, and $B_P$. 

These actions are smooth by local sections of the quotient submersions. The defining identities in \textup{(1)} and the equivariance of $\widehat\alpha$ and $\widehat\alpha'$ imply the equivariance of $\rho$ and $\rho'$.

Lastly, suppose that $\alpha$ is proper, so that $\widehat\alpha$ is proper by Lemma~\ref{lem:frame-bundle-pullback}. Let $C\subset B$ be compact. Choose finitely many local sections of $\kappa$ and compact subsets of $C$, each contained in the domain of the corresponding section, that cover $C$. Their images form a compact set $K\subset OF(M,\mathcal O)$ with $\kappa(K)=C$. By \textup{(2)}, every $\kappa_P$-fibre over a point of $\rho^{-1}(C)$ maps onto a $\kappa$-fibre meeting $K$.

Together with the defining identity for $\rho$, this gives $\rho^{-1}(C)=\kappa_P\bigl(\widehat\alpha^{-1}(K)\bigr)$, which is compact. Hence $\rho$ is proper. By \cite{HenriquesMetzler:NoneffectiveOrbifolds}*{Lemma~2.5}, $\rho$ is a covering map. Its fibres are compact and discrete, hence finite. If $\alpha'$ is proper, the same argument shows that $\rho'$ is a proper covering map with finite fibres.
\end{proof}

For complete Riemannian foliations, Lin, Loizides, Sjamaar, and Song obtain an $O(q)$-equivariant isometry between the Molino manifolds from a complete isometric weak equivalence \cite{LinLoizidesSjamaarSong:RiemannianFoliationsQuantization}*{Sec.~2.7, Proposition~2.16}. In case \textup{(a)} of Proposition~\ref{prop:regular-morita-invariance}, the connected fibres of $\alpha$ and $\alpha'$ allow us to construct $\kappa_P$ and $\kappa'$ as smooth quotient submersions from the given $\kappa$. We do not require completeness assumptions on $(P,\mathcal O_P)$ or $(M',\mathcal O')$.

\begin{corollary}\label{cor:regular-morita-compact-bases}
Assume that $M$ is connected.
\begin{enumerate}[label=\textup{(\arabic*)}]
\item If $M$ is compact and both $\alpha$ and $\alpha'$ have connected fibres, then the leaf closures of $\widetilde{\mathcal O}$, $\widetilde{\mathcal O}'$, and $\widetilde{\mathcal O}_P$ are the fibres of smooth surjective submersions $\kappa$, $\kappa'$, $\kappa_P$ onto Hausdorff manifolds, and the bases are related by $O(q)$-equivariant diffeomorphisms $B\xleftarrow{\ \cong\ }B_P\xrightarrow{\ \cong\ }B'$.
\item If $P$ is compact, then $M$ and $M'$ are compact, the leaf closures of the three lifted foliations are the fibres of $O(q)$-equivariant locally trivial fibre bundles $\kappa$, $\kappa'$, $\kappa_P$ onto Hausdorff manifolds, obtained from Theorem~\ref{thm:mol} on each connected component, and $B\xleftarrow{\ \rho\ }B_P\xrightarrow{\ \rho'\ }B'$ are finite-sheeted coverings.
\end{enumerate}
\end{corollary}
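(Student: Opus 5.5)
For part (1), I will first put $(M,\mathcal O)$ into the form required by Theorem~\ref{thm:mol}. By Proposition~\ref{prop:RF}, $(M,\mathcal O)$ is a Riemannian foliation, and $M$ is compact and connected by hypothesis. Theorem~\ref{thm:mol} then gives an $O(q)$-equivariant fibre bundle $\kappa:OF(M,\mathcal O)\to B$ whose fibres are the leaf closures of $\widetilde{\mathcal O}$. This $\kappa$ is a smooth surjective submersion onto a Hausdorff manifold, which is the standing input of Proposition~\ref{prop:regular-morita-invariance}. Since $\alpha$ and $\alpha'$ have connected fibres, case (a) of that proposition applies. It produces $\kappa'$ and $\kappa_P:=\kappa\circ\widehat\alpha$, whose fibres are the leaf closures of $\widetilde{\mathcal O}'$ and $\widetilde{\mathcal O}_P$. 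Part (3) of the proposition then shows that $\rho,\rho'$ are diffeomorphisms; in case (a) they are in fact $\id_B$, with $B_P=B'=B$. Part (1) of the proposition gives their $O(q)$-equivariance. This part is essentially bookkeeping.

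For part (2), I first show that $M$ and $M'$ are compact. Both moment maps are continuous surjections from the compact space $P$, so $M=\alpha(P)$ and $M'=\alpha'(P)$ are compact.

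Next I will obtain $\kappa,\kappa',\kappa_P$ componentwise. Each of $M'$ and $P$ has finitely many connected components, each compact. By Corollary~\ref{cor:theta-omega-morita}(1), $\eta_P$ and $\eta'$ make $\mathcal O_P$ and $\mathcal O'$ Riemannian foliations, and their restrictions to each component are still Riemannian foliations. Applying Theorem~\ref{thm:mol} to each component and taking disjoint unions yields equivariant fibre bundles over Hausdorff manifolds, since a finite disjoint union of Hausdorff manifolds is Hausdorff. Their fibres are the leaf closures, because each leaf lies in one component and components are open and closed. Here I must check that $OF(P,\mathcal O_P)$ restricted to a component of $P$ is the transverse frame bundle of that component, with the same lifted foliation. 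This is immediate from Definitions~\ref{def:OF} and~\ref{def:lifted-foliation}, which are local.

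I then verify the hypotheses of Proposition~\ref{prop:regular-morita-invariance}(b) and (c). For (b): $OF(P,\mathcal O_P)$ is compact, so every closed $\kappa_P$-fibre is compact. For (c): $\alpha$ and $\alpha'$ are proper, since they are continuous maps from a compact space to Hausdorff spaces. Proposition~\ref{prop:regular-morita-invariance}(3) then gives that $\rho$ and $\rho'$ are proper covering maps with finite fibres, i.e.\ finite-sheeted coverings.

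The one step I expect to need care is the connectedness hypothesis in Theorem~\ref{thm:mol}: only $M$ is assumed connected, while $P$ and $M'$ may be disconnected. The componentwise argument above is how I will handle this. I must also confirm that a covering map onto a possibly disconnected base has locally constant, finite sheet number; properness together with the covering property gives this on each component of $B$.
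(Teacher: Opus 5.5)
Your proof is correct. Part (1) is the paper's argument. The paper just cites case (a) of Proposition~\ref{prop:regular-morita-invariance}; you additionally make explicit that $\kappa$ exists by Proposition~\ref{prop:RF} and Theorem~\ref{thm:mol}, and that $\rho=\rho'=\id_B$.

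For part (2) your route is leaner than the paper's.

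\emph{The paper's route.} It first uses connectedness of $M$ to show that $\alpha(P_j)=M$ for every component $P_j$ of $P$, and that $\alpha'(P_j)$ is an entire component $M'_i$ of $M'$. It then restricts $z$ and $z'$ to Morita fibrations out of $\mathcal K_P|_{P_j}$ and applies case (b) of Proposition~\ref{prop:regular-morita-invariance} component by component. Only after that does it form $B_P=\bigsqcup_j B_{P_j}$ and $B'=\bigsqcup_i B'_i$ and apply case (b) once more, globally.

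\emph{Your route.} You never use the restricted Morita fibrations or the component-matching. You apply Theorem~\ref{thm:mol} directly to the components of $P$ and $M'$, and then invoke the proposition once on the disjoint unions.

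\emph{What the paper's extra step buys.} It yields $\dim B_{P_j}=\dim B'_i=\dim B$. The paper uses this to know that the disjoint unions are manifolds of a single dimension, which matters under the Lee conventions it adopts. Your remark that a finite disjoint union of Hausdorff manifolds is Hausdorff does not address this point.

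\emph{How to close it within your route.} The proof of Proposition~\ref{prop:regular-morita-invariance} is purely local, so it goes through even if components of $B_P$ and $B'$ are a priori allowed different dimensions. The resulting surjective local diffeomorphisms $\rho:B_P\to B$ and $\rho':B_P\to B'$ then force every component of $B_P$ and of $B'$ to have dimension $\dim B$. Alternatively, simply insert the paper's componentwise step.

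\emph{Two minor points.} First, Corollary~\ref{cor:theta-omega-morita}(1) gives the Riemannian foliation on $(P,\mathcal O_P)$ only. For $(M',\mathcal O')$ you should combine the regularity of $\cG'$ from that corollary with Proposition~\ref{prop:RF} applied to the $0$-metric $\eta'$. Second, your treatment of finite-sheetedness over a possibly disconnected $B$ is fine; the paper instead appeals to compactness of $B_P$.
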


\begin{proof}
(1) This follows from case (a) of Proposition~\ref{prop:regular-morita-invariance}.

(2) We give a discussion of the second part of the statement.

Let $P_j$ be a connected component of $P$; there are finitely many. The image $\alpha(P_j)$ is open, because $\alpha$ is a submersion, and closed, because $P_j$ is compact; since $M$ is connected, $\alpha(P_j)=M$. Similarly, $\alpha'(P_j)$ is open, closed, and connected, hence a connected component $M'_i$ of $M'$, and every component of $M'$ arises in this way because $\alpha'$ is surjective.

The restrictions of $z$ and $z'$ to $\mathcal K_P|_{P_j}$ are Morita fibrations onto $\cG$ and onto $\cG'|_{M'_i}$, since full faithfulness restricts and the restricted object maps are surjective submersions, and these object maps are still Riemannian submersions. Applying Theorem~\ref{thm:mol} to the compact connected manifolds $P_j$ and $M'_i$ gives fibre bundles $\kappa_{P_j}:OF(P_j,\mathcal O_P)\to B_{P_j}$ and $\kappa'_i:OF(M'_i,\mathcal O')\to B'_i$ whose fibres are the compact leaf closures, and case (b) of Proposition~\ref{prop:regular-morita-invariance}, applied to these restricted Morita fibrations, gives surjective local diffeomorphisms $B\leftarrow B_{P_j}\to B'_i$; in particular $\dim B_{P_j}=\dim B'_i=\dim B$.

Hence $B_P:=\bigsqcup_jB_{P_j}$ and $B':=\bigsqcup_iB'_i$ are Hausdorff manifolds of dimension $\dim B$. The maps $\kappa_P:=\bigsqcup_j\kappa_{P_j}$ and $\kappa':=\bigsqcup_i\kappa'_i$ are $O(q)$-equivariant locally trivial fibre bundles whose fibres are the leaf closures.

The hypotheses of case (b) of Proposition~\ref{prop:regular-morita-invariance} now hold for the whole comparison, which gives $\rho$ and $\rho'$. Since $P$ is compact, $\alpha$ and $\alpha'$ are proper, and Proposition~\ref{prop:regular-morita-invariance}(3) shows that $\rho$ and $\rho'$ are proper coverings with finite fibres; as $B_P$ is compact, they are finite-sheeted.
\end{proof}


\subsection{Basic Lie algebroids and structural Lie algebras}

The common object manifold $P$ need not be compact, but Proposition~\ref{prop:regular-morita-invariance} supplies smooth leaf-closure quotient submersions for the three lifted foliations. We first extend the basic Lie algebroid and fibrewise Maurer--Cartan constructions to transversely parallelizable foliations with such a quotient.

\begin{lemma}\label{lem:basic-algebroid-closure-submersion}
Let $\F$ be a transversely parallelizable foliation of codimension $r$ on a Hausdorff manifold $X$, and let $\kappa:X\to B$ be a smooth surjective submersion onto a Hausdorff manifold whose fibre through every point is the closure of the leaf of $\F$ through that point. Then:
\begin{enumerate}[label=\textup{(\roman*)}]
\item There is a transitive Lie algebroid $A:=b(X,\F)\to B$ with
\[
\Gamma(A|_U)
\cong l\bigl(\kappa^{-1}(U),\F|_{\kappa^{-1}(U)}\bigr)
\]
as $C^\infty(U)$-modules for every open $U\subset B$, compatibly with restriction. Evaluation defines a smooth vector bundle isomorphism $\operatorname{ev}^X:\kappa^*A\xrightarrow{\cong}N(\F)$, under which the anchor $\rho_A$ corresponds to the map induced by $d\kappa$ on $N(\F)$. The Lie algebroid $A$ is independent of the chosen transverse parallelism up to an isomorphism preserving these identifications.

\item For $b\in B$, put $L_b:=\kappa^{-1}(b)$. Restriction to $L_b$ defines a Lie algebra isomorphism
\[
\operatorname{res}_b:
(\ker\rho_A)_b\xrightarrow{\cong}l(L_b,\F|_{L_b}).
\]
The foliation $\F|_{L_b}$ is a Lie foliation with canonical Maurer--Cartan form $\omega^b_{\mathrm{MC}}$.

\item There is a unique smooth section $\omega^\kappa_{\mathrm{MC}}\in \Gamma\bigl((\ker d\kappa)^*\otimes\kappa^*(\ker\rho_A)\bigr)$ such that
\[
\operatorname{res}_b\circ
\bigl(\omega^\kappa_{\mathrm{MC}}|_{L_b}\bigr)
=\omega^b_{\mathrm{MC}}
\qquad (b\in B).
\]
It is given by
\[
(\omega^\kappa_{\mathrm{MC}})_x(\xi)
=(\operatorname{ev}^X_x)^{-1}\bigl(\pr^N_x(\xi)\bigr),
\qquad \xi\in\ker(d\kappa)_x.
\]
\end{enumerate}
\end{lemma}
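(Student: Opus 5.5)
The plan is to redo the construction of Proposition~\ref{prop:basic-Lie-algebroid} locally over $B$, with the given submersion $\kappa$ replacing the basic fibre bundle $\pi_{\bas}$. The only use of compactness and homogeneity there was to obtain $\pi_{\bas}$ and the identification of basic functions with $C^\infty(W)$. So the first step is a replacement lemma.

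\emph{Claim.} For every open $U\subset B$, a smooth function on $\kappa^{-1}(U)$ is $\F$-basic if and only if it is of the form $f\circ\kappa$ with $f\in C^\infty(U)$.

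If $a$ is basic, it is constant on leaves, hence by continuity on leaf closures, which are the fibres of $\kappa$. It therefore descends to a function $f$ on $U$, and $f$ is smooth by local sections of the submersion $\kappa$. The converse is clear, since $T\F\subset\ker d\kappa$.

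With the claim in hand, fix a transverse parallelism $\bar Y_1,\dots,\bar Y_r$ of $(X,\F)$. Step~2 of the proof of Proposition~\ref{prop:basic-Lie-algebroid} then goes through verbatim on each $\kappa^{-1}(U)$. The coefficients of any $\bar Y\in l(\kappa^{-1}(U),\F)$ are basic, so $l(\kappa^{-1}(U),\F)$ is free over $C^\infty(U)$ with basis $\bar Y_i|_{\kappa^{-1}(U)}$. This basis is compatible with restriction, which lets us set $A:=B\times\RR^r$.

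The anchor and bracket are defined exactly as before. Here one uses that a projectable $Y$ is $\kappa$-projectable. Indeed, the local flows of $Y$ map leaves to leaves and hence leaf closures to leaf closures, so $d\kappa(Y)$ is constant along the fibres of $\kappa$. Alternatively, $Y$ acts on basic functions $f\circ\kappa$ and yields basic functions by the claim, which defines the vector field $\varrho(\bar Y)$ on $U$ as a derivation of $C^\infty(U)$.

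The Leibniz rule, transitivity, and independence of the parallelism are then Steps~4–7 unchanged. The evaluation map $\operatorname{ev}^X$ is the bundle map sending the frame $e_i$ to $\bar Y_i$. It is an isomorphism $\kappa^*A\cong N(\F)$ because the $\bar Y_i$ frame $N(\F)$. Compatibility of the anchor with $d\kappa$ is the identity $d\kappa(Y)=\varrho(\bar Y)\circ\kappa$.

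For (ii), I follow Lemma~\ref{lem:isotropy-structural-clean}. The construction of $\operatorname{res}_b$ and its well-definedness (Steps~1–2) used only the algebroid structure and $T_xL_b=\ker d\kappa_x$, so they transfer directly. Step~3 needs $\operatorname{ev}^L_x:l(L_b,\F|_{L_b})\to N_x(\F|_{L_b})$ to be an isomorphism, together with the Lie foliation property. This is where I expect the main obstacle, since $L_b$ is now possibly noncompact and \cite{MoerdijkMrcun:IFLG}*{Thm.~4.9, 4.24} were invoked for compact fibres.

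I would argue directly instead. The fibre $L_b$ is connected, being a leaf closure, and every leaf of $\F|_{L_b}$ is dense in $L_b$, since each leaf closure in $L_b$ is a whole $\kappa$-fibre. Hence basic functions on $L_b$ are constant.

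For injectivity of $\operatorname{ev}^L_x$, the vector fields $\operatorname{res}_b(\bar Y_i')$ obtained from a local frame of $\ker\rho_A$ (Proposition~\ref{prop:algebroid-triv}) form a transverse parallelism of $(L_b,\F|_{L_b})$. Any $\bar Z\in l(L_b,\F|_{L_b})$ has basic, hence constant, coefficients in this parallelism. Thus $l(L_b,\F|_{L_b})$ is spanned by these $s$ classes and has dimension at most $s$. Since $\operatorname{res}_b$ composed with $\operatorname{ev}^L_x$ equals the isomorphism $\operatorname{ev}^X_x|_{(\ker\rho_A)_b}$ onto $N_x(\F|_{L_b})$, both $\operatorname{res}_b$ and $\operatorname{ev}^L_x$ are isomorphisms. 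A transversely parallelizable foliation whose transverse fields form a finite-dimensional Lie algebra $\mathfrak g$ with evaluation isomorphisms is a Lie foliation: the $\mathfrak g$-valued form $\omega^b_{\mathrm{MC}}:=(\operatorname{ev}^L)^{-1}\circ\pr^N$ satisfies the Maurer–Cartan equation by the usual computation on constant fields, as in \cite{MoerdijkMrcun:IFLG}*{Sec.~4.3}.

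Part (iii) is then exactly the proof of Proposition~\ref{prop:vertical-MC}(i). We define $\omega^\kappa_{\mathrm{MC}}:=(\operatorname{ev}^X|_{\kappa^*\ker\rho_A})^{-1}\circ\pr^N$ on $\ker d\kappa$. Smoothness comes from $\operatorname{ev}^X$ being a bundle isomorphism, the identity $\operatorname{ev}^L_x\circ\operatorname{res}_b=\operatorname{ev}^X_x$ gives the restriction property, and uniqueness follows because $\operatorname{res}_b$ is injective.
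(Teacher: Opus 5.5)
Your proposal is correct and follows the paper's proof essentially step for step. You identify $\F$-basic functions on $\kappa^{-1}(U)$ with $C^\infty(U)$, rerun Steps~2--7 of Proposition~\ref{prop:basic-Lie-algebroid} with $\kappa$ in place of $\pi_{\bas}$, and define the anchor by letting projectable fields act on pullbacks $f\circ\kappa$. In (ii), you use density of every leaf in $L_b$ to make the coefficients in the restricted parallelism constant, which gives surjectivity of $\operatorname{res}_b$, while the evaluation identity gives injectivity. The only point to tighten is your first, flow-based justification of $\kappa$-projectability. As worded, it ignores that local flows need not be defined on whole leaves or fibres. It can be repaired by showing via leafwise paths that $d\kappa(Y)$ is constant along leaves and then using continuity. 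In any case, your second, derivation-based argument is exactly the paper's and suffices.
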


\begin{proof}
(i) A smooth $\F$-basic function is constant on every leaf, hence by continuity on every leaf closure, hence on every fibre of $\kappa$; it descends uniquely to a function on $B$, which is smooth by local sections of $\kappa$. Conversely, since $T\F\subset\ker d\kappa$, every pullback of a smooth function on $B$ is basic. The same holds over every open $U\subset B$.

Choose a transverse parallelism $\bar Y_1,\dots,\bar Y_r$ with projectable representatives $Y_1,\dots,Y_r$. The coefficient argument in Step~2 of the proof of Proposition~\ref{prop:basic-Lie-algebroid} shows that every transverse vector field on $\kappa^{-1}(U)$ is uniquely of the form $\sum_i(f_i\circ\kappa)\bar Y_i$ with $f_i\in C^\infty(U)$. Conversely, every such combination is transverse. Thus $A:=B\times\RR^r$ has the asserted local section identifications, compatibly with restriction.

Evaluation at $x\in\kappa^{-1}(b)$ sends the standard basis of $A_b$ to $(\bar Y_1)_x,\dots,(\bar Y_r)_x$, a basis of $N_x(\F)$. Hence $\operatorname{ev}^X:\kappa^*A\to N(\F)$ is a smooth vector bundle isomorphism.

For the anchor, let $Y$ be projectable on $\kappa^{-1}(U)$ and $f\in C^\infty(U)$. For $T\in\mathfrak X(\F|_{\kappa^{-1}(U)})$,
\[
T\bigl(Y(f\circ\kappa)\bigr)=[T,Y](f\circ\kappa)+Y\bigl(T(f\circ\kappa)\bigr)=0,
\]
so $Y(f\circ\kappa)$ is basic and descends to $U$. Applying this to local coordinate functions gives a unique smooth vector field $\varrho(\bar Y)$ on $U$ with $d\kappa(Y)=\varrho(\bar Y)\circ\kappa$; it depends only on $\bar Y$, since fields tangent to $\F$ lie in $\ker d\kappa$, and $\varrho(f\bar Y)=f\varrho(\bar Y)$.

The bracket of two projectable fields projects to the bracket of their projections, so $\varrho$ also preserves brackets. Steps~5 and~7 of the proof of Proposition~\ref{prop:basic-Lie-algebroid} now apply with $\kappa$ in place of $\pi_{\bas}$. They give the Lie algebroid structure on $A$ and its independence of the transverse parallelism, with the change-of-parallelism isomorphisms inducing the identity on the local modules of transverse vector fields.

The resulting bundle map $\rho_A:A\to TB$ satisfies $\rho_A(a)=d\kappa_x(\operatorname{ev}^X_x(a))$ for $x\in\kappa^{-1}(b)$ and $a\in A_b$, where $d\kappa_x$ denotes the induced map on $N_x(\F)=T_xX/T_x\F$. Since $\kappa$ is a submersion and $\operatorname{ev}^X_x$ is an isomorphism, $\rho_A$ is surjective, so $A$ is transitive.

(ii) Since $\rho_A$ is surjective, $\ker\rho_A$ is a smooth subbundle. For $a\in(\ker\rho_A)_b$, choose a local section of $\ker\rho_A$ extending $a$, with projectable representative $Y$. Then $d\kappa(Y)=0$, so $Y$ is tangent to the fibres of $\kappa$. The restriction argument in Step~2 of the proof of Lemma~\ref{lem:isotropy-structural-clean} gives a well-defined Lie algebra homomorphism
\[
\operatorname{res}_b:
(\ker\rho_A)_b\longrightarrow l(L_b,\F|_{L_b}),
\qquad
a\longmapsto\overline{Y|_{L_b}}.
\]
If two local extensions have the same value at $b$, part \textup{(i)} shows that their transverse evaluations agree at every $x\in L_b$, so their representatives differ by a field tangent to $\F$ along $L_b$.

By (i), evaluation restricts to an isomorphism
\[
\operatorname{ev}^X_x:
(\ker\rho_A)_b\xrightarrow{\cong}
T_xL_b/T_x\F=N_x(\F|_{L_b}),
\]
and, by construction, $\operatorname{ev}^{L_b}_x\circ\operatorname{res}_b=\operatorname{ev}^X_x|_{(\ker\rho_A)_b}$. Thus the restrictions of a basis of $(\ker\rho_A)_b$ form a transverse parallelism on $(L_b,\F|_{L_b})$.

Every leaf of $\F|_{L_b}$ is dense in $L_b$ by the hypothesis on $\kappa$. The coefficients of any transverse vector field in the restricted parallelism are basic by the coefficient argument in (i), and hence constant on $L_b$. Therefore $\operatorname{res}_b$ is surjective. It is injective by the evaluation identity, which then also shows that $\operatorname{ev}^{L_b}_x$ is an isomorphism for every $x\in L_b$.

Define
\[
(\omega^b_{\mathrm{MC}})_x(\xi)
:=
(\operatorname{ev}^{L_b}_x)^{-1}
\bigl(\pr^N_x(\xi)\bigr).
\]
This is a smooth $l(L_b,\F|_{L_b})$-valued $1$-form on $L_b$, pointwise surjective, with kernel $T(\F|_{L_b})$. For every projectable vector field $Y$ on $L_b$, $\omega^b_{\mathrm{MC}}(Y)=\bar Y$ is constant. The Maurer--Cartan calculation in the proof of \cite{MoerdijkMrcun:IFLG}*{Theorem~4.24} therefore applies. Hence $\F|_{L_b}$ is a Lie foliation with canonical Maurer--Cartan form $\omega^b_{\mathrm{MC}}$.

(iii) By (i), $\operatorname{ev}^X$ restricts to a smooth vector bundle isomorphism $\kappa^*(\ker\rho_A)\cong\ker d\kappa/T\F$. The formula in (iii) therefore defines a smooth section of $(\ker d\kappa)^*\otimes\kappa^*(\ker\rho_A)$.

Applying $\operatorname{res}_b$ and using $\operatorname{ev}^{L_b}_x\circ\operatorname{res}_b=\operatorname{ev}^X_x|_{(\ker\rho_A)_b}$ gives the required identity with $\omega^b_{\mathrm{MC}}$. Since $\operatorname{res}_b$ is injective, this identity determines $\omega^\kappa_{\mathrm{MC}}$ uniquely.

In the compact homogeneous case, the basic fibre bundle $\pi_{\bas}$ and $\kappa$ have the same fibres by \cite{MoerdijkMrcun:IFLG}*{Corollary~4.25}, so Lemma~\ref{lem:same-fibres-diffeo} identifies $B$ with $W$, and the section modules, bracket, anchor, evaluation, and inverse-evaluation formulas above are those of the earlier construction.
\end{proof}

\begin{lemma}\label{lem:morita-basic-algebroid-comparison}
Assume the hypotheses of Proposition~\ref{prop:regular-morita-invariance}, in any of its three cases. Then the lifted foliations have basic Lie algebroids
\[
A:=b\bigl(OF(M,\mathcal O),\widetilde{\mathcal O}\bigr)\to B,\quad
A':=b\bigl(OF(M',\mathcal O'),\widetilde{\mathcal O}'\bigr)\to B',\quad
A_P:=b\bigl(OF(P,\mathcal O_P),\widetilde{\mathcal O}_P\bigr)\to B_P,
\]
and $(\widehat\alpha,\rho)$ and $(\widehat\alpha',\rho')$ induce Lie algebroid isomorphisms over $B_P$,
\[
\Psi:\rho^*A\longrightarrow A_P,\qquad
\Psi':(\rho')^*A'\longrightarrow A_P.
\]
The pullback Lie algebroid structures are transported locally through the local diffeomorphisms $\rho$ and $\rho'$. The isomorphism $\Psi$ is characterized by
\[
\overline{d\widehat\alpha}_u
\circ\operatorname{ev}^P_u
\circ\Psi_{b_P}
=
\operatorname{ev}_{\widehat\alpha(u)}
\]
for every $b_P\in B_P$ and $u\in\kappa_P^{-1}(b_P)$, where the evaluation isomorphisms are those of Lemma~\ref{lem:basic-algebroid-closure-submersion}\textup{(i)} and $\overline{d\widehat\alpha}_u$ is the isomorphism induced by $d\widehat\alpha_u$ on the normal spaces of the lifted foliations. The corresponding identity characterizes $\Psi'$.
\end{lemma}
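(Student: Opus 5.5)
First, I will obtain the three basic Lie algebroids from Lemma~\ref{lem:basic-algebroid-closure-submersion}(i). Each lifted foliation is transversely parallelizable: for $OF(M,\mathcal O)$ and $OF(M',\mathcal O')$ this follows from Lemma~\ref{lem:theta-omega-parallelism}, and for $OF(P,\mathcal O_P)$ from Corollary~\ref{cor:theta-omega-morita}(1), which makes $(P,\mathcal O_P)$ Riemannian. Proposition~\ref{prop:regular-morita-invariance} supplies the leaf-closure submersions $\kappa,\kappa',\kappa_P$. Since $\rho$ is a local diffeomorphism, $\rho^*A$ is defined locally. On an open $U_P\subset B_P$ on which $\rho$ restricts to a diffeomorphism onto $U:=\rho(U_P)$, I will set $\Gamma(\rho^*A|_{U_P}):=\rho^*\Gamma(A|_U)$, transporting the bracket and anchor along $\rho|_{U_P}$. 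These local structures glue because they are canonical.

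Next I will construct $\Psi$ on sections. Fix such a $U_P$ and $U$, and let $\bar Y\in l(\kappa^{-1}(U),\widetilde{\mathcal O})\cong\Gamma(A|_U)$ have projectable representative $Y$. I want a projectable field $\widetilde Y$ on $\kappa_P^{-1}(U_P)$ that is $\widehat\alpha$-related to $Y$ modulo $T\widetilde{\mathcal O}$. To get one, pick any lift of $Y$ through the submersion $\widehat\alpha$, using a horizontal complement to $\ker d\widehat\alpha$ and a partition of unity. Since $\kappa_P^{-1}(U_P)\subset\widehat\alpha^{-1}(\kappa^{-1}(U))$, the lift is defined there. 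Since $T\widetilde{\mathcal O}_P=(d\widehat\alpha)^{-1}(T\widetilde{\mathcal O})$ by Corollary~\ref{cor:theta-omega-morita}(2), the lift is projectable: for $T$ tangent to $\widetilde{\mathcal O}_P$, the bracket $[T,\widetilde Y]$ is $\widehat\alpha$-related to $[d\widehat\alpha T,Y]\in T\widetilde{\mathcal O}$. Strictly, $d\widehat\alpha T$ is only tangent to $\widetilde{\mathcal O}$ pointwise, so I will check projectability pointwise via the normal-bundle formulation $\overline{d\widehat\alpha}$. Two lifts differ by a field in $\ker d\widehat\alpha\subset T\widetilde{\mathcal O}_P$, so the class $\overline{\widetilde Y}$ is well defined. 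I then define $\Psi(\rho^*\bar Y):=\overline{\widetilde Y}$.

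The following checks are routine. $\Psi$ is $C^\infty(U_P)$-linear, because the function $f\circ\kappa$ upstairs pulls back to $(f\circ\rho)\circ\kappa_P$ by the identity $\rho\circ\kappa_P=\kappa\circ\widehat\alpha$. It preserves brackets, because brackets of related fields are related. It intertwines the anchors: applying $d\kappa_P$ and using $d\rho\circ d\kappa_P=d\kappa\circ d\widehat\alpha$ shows the projection of $\widetilde Y$ is $\rho^*$ of the projection of $Y$. Evaluating at $u\in\kappa_P^{-1}(b_P)$ gives $\overline{d\widehat\alpha}_u(\widetilde Y_u)=\bar Y_{\widehat\alpha(u)}$, which is exactly the characterizing identity. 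Because $\overline{d\widehat\alpha}_u$ is an isomorphism of normal spaces, by the same dimension count as in Lemma~\ref{lem:morita-parallelism}, and the evaluation maps are isomorphisms, $\Psi$ is fibrewise bijective. This yields a Lie algebroid isomorphism. Its uniqueness, hence the characterization, follows from the same identity, and by uniqueness the local definitions glue over a cover of $B_P$. The same argument with $\widehat\alpha'$ and $\rho'$ gives $\Psi'$.

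I expect the main obstacle to be surjectivity onto transverse fields, that is, showing every element of $l(\kappa_P^{-1}(U_P),\widetilde{\mathcal O}_P)$ arises as some $\overline{\widetilde Y}$. Fibrewise bijectivity already settles this, since both section modules are free over $C^\infty(U_P)$ of the same rank $r$, via Lemma~\ref{lem:basic-algebroid-closure-submersion}(i). The subtle point I will verify carefully is rather that $\rho^*\Gamma(A|_U)$ identifies with $l(\kappa^{-1}(U))$ even though $\kappa^{-1}(U)$ may be strictly larger than $\widehat\alpha(\kappa_P^{-1}(U_P))$. This holds because $\widehat\alpha(L_P)=L_b$ by Proposition~\ref{prop:regular-morita-invariance}(2), so $\widehat\alpha$ maps $\kappa_P^{-1}(U_P)$ onto $\kappa^{-1}(U)$.
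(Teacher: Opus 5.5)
Your argument is correct, and its architecture matches the paper's. Both proofs construct $A$, $A'$, $A_P$ from Lemma~\ref{lem:basic-algebroid-closure-submersion}, define $\Psi$ on local sections over open sets $U_P$ on which $\rho$ is a diffeomorphism, verify $C^\infty$-linearity, anchors, brackets and the evaluation identity, and glue by the uniqueness that identity forces.

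The difference is in how the transverse field on $\kappa_P^{-1}(U_P)$ is produced. The paper never lifts a vector field. It defines the normal section $u\mapsto(\overline{d\widehat\alpha}_u)^{-1}\operatorname{ev}_{\widehat\alpha(u)}\bigl(\sigma(\kappa(\widehat\alpha(u)))\bigr)$ directly. It shows this section is transverse because it projects along the local defining submersions $\tilde s_i\circ\widehat\alpha$ to the same local field on $OF(T_i)$ to which $\sigma$ projects along $\tilde s_i$. Bracket preservation is then read off from these common local projections.

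You instead take an honest $\widehat\alpha$-related lift $\widetilde Y$. This makes bracket and anchor compatibility automatic: related fields have related brackets, and two lifts differ by a vertical field, which is $\widetilde{\mathcal O}_P$-tangent. It also needs nothing beyond $T\widetilde{\mathcal O}_P=(d\widehat\alpha)^{-1}(T\widetilde{\mathcal O})$. The price is proving that $\widetilde Y$ is projectable. As you noticed, a leafwise $T$ need not be $\widehat\alpha$-related to anything, so $[T,\widetilde Y]$ cannot be pushed forward directly. Your ``pointwise'' check should be made concrete in one of two ways. You can write $T$ locally as a $C^\infty$-combination of vertical fields and lifts of leafwise fields of $\widetilde{\mathcal O}$, then use $T\widetilde{\mathcal O}_P=(d\widehat\alpha)^{-1}(T\widetilde{\mathcal O})$. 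Or you can use the paper's identity $d(\tilde s_i\circ\widehat\alpha)(\widetilde Y)=d\tilde s_i(Y)\circ\widehat\alpha$. Since both constructions satisfy the characterizing identity, they define the same $\Psi$.

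Three minor points:
\begin{itemize}
\item You should record that $\overline{\widetilde Y}$ does not depend on the representative $Y$ of $\bar Y$. This holds because a lift of a leafwise field lies in $(d\widehat\alpha)^{-1}(T\widetilde{\mathcal O})=T\widetilde{\mathcal O}_P$.
\item The point you flag as subtle is not needed. $\Gamma(\rho^*A|_{U_P})=\rho^*\Gamma(A|_U)\cong l(\kappa^{-1}(U),\widetilde{\mathcal O})$ holds directly by Lemma~\ref{lem:basic-algebroid-closure-submersion}(i). Fibrewise bijectivity already follows from the evaluation identity at a single point of each $\kappa_P$-fibre. Your claim $\widehat\alpha(\kappa_P^{-1}(U_P))=\kappa^{-1}(U)$ is nevertheless true.
\item Invertibility of $\overline{d\widehat\alpha}_u$ follows directly from $T\widetilde{\mathcal O}_P=(d\widehat\alpha)^{-1}(T\widetilde{\mathcal O})$ and surjectivity of $d\widehat\alpha$. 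It does not come from a dimension count in Lemma~\ref{lem:morita-parallelism}.
\end{itemize}
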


\begin{proof}
The lifted foliations are transversely parallelizable by Lemma~\ref{lem:theta-omega-parallelism}, and Proposition~\ref{prop:regular-morita-invariance} supplies the three leaf-closure submersions $\kappa,\kappa',\kappa_P$. Hence Lemma~\ref{lem:basic-algebroid-closure-submersion} constructs $A$, $A'$, and $A_P$.

We prove the statement for $(\widehat\alpha,\rho)$.

By Proposition~\ref{prop:regular-morita-invariance}, $\rho:B_P\to B$ is a surjective local diffeomorphism and $\kappa\circ\widehat\alpha=\rho\circ\kappa_P$.

Let $V\subset B_P$ be an open set such that $\rho|_V:V\to U:=\rho(V)$ is a diffeomorphism. On $V$, every section of $\rho^*A$ is uniquely of the form $\rho^*\sigma$, with $\sigma\in\Gamma(A|_U)$, and the Lie algebroid structure on $\rho^*A|_V$ is transported from $A|_U$ through $\rho|_V$.


By Corollary~\ref{cor:theta-omega-morita}\textup{(1)--(2)}, $\widehat\alpha$ is a submersion and $T\widetilde{\mathcal O}_P=(d\widehat\alpha)^{-1}(T\widetilde{\mathcal O})$. Hence the induced map $\overline{d\widehat\alpha}$ on normal bundles is smooth and fibrewise invertible. For $\sigma\in\Gamma(A|_U)$, consider the smooth section of $N(\widetilde{\mathcal O}_P)$ on $\kappa_P^{-1}(V)$ given by
\[
u\longmapsto
(\overline{d\widehat\alpha}_u)^{-1}
\operatorname{ev}_{\widehat\alpha(u)}
\bigl(\sigma(\kappa(\widehat\alpha(u)))\bigr).
\]

Let $\tilde s_i:OF(M,\mathcal O)|_{U_i}\to OF(T_i)$ be the lifted submersions. The maps $\tilde s_i$ and $\tilde s_i\circ\widehat\alpha$ locally define $\widetilde{\mathcal O}$ and $\widetilde{\mathcal O}_P$, respectively. After restricting to foliation charts, the transverse vector field corresponding to $\sigma$ projects along $\tilde s_i$ to a vector field on an open subset of $OF(T_i)$. Since $d(\tilde s_i\circ\widehat\alpha) =d\tilde s_i\circ d\widehat\alpha$, the section above projects along $\tilde s_i\circ\widehat\alpha$ to the same vector field. It is therefore a transverse vector field \cite{MoerdijkMrcun:IFLG}*{Sec.~4.1.2, Examples~4.4(1), (3)}.

By Lemma~\ref{lem:basic-algebroid-closure-submersion}\textup{(i)}, it determines a unique section of $A_P|_V$, which we denote by $\Psi_V(\rho^*\sigma)$.

The defining formula and the identity $\kappa\circ\widehat\alpha=\rho\circ\kappa_P$ show that $\Psi_V$ is $C^\infty(V)$-linear and commutes with restriction. Hence it defines a smooth vector bundle map $\Psi_V:\rho^*A|_V\longrightarrow A_P|_V$.

Fix $b_P\in V$ and choose $u\in\kappa_P^{-1}(b_P)$. Under the identification $(\rho^*A)_{b_P}=A_{\rho(b_P)}$, the definition of $\Psi_V$ gives
\[
\overline{d\widehat\alpha}_u
\circ\operatorname{ev}^P_u
\circ(\Psi_V)_{b_P}
=
\operatorname{ev}_{\widehat\alpha(u)}.
\]
Since the evaluation maps and $\overline{d\widehat\alpha}_u$ are isomorphisms, this identity shows that $(\Psi_V)_{b_P}$ is an isomorphism and determines it uniquely. Thus the maps $\Psi_V$ agree on overlaps and glue to a vector bundle isomorphism $\Psi:\rho^*A\longrightarrow A_P$.

The anchor preservation follows from $\kappa\circ\widehat\alpha=\rho\circ\kappa_P$ and the description of the anchors in Lemma~\ref{lem:basic-algebroid-closure-submersion}\textup{(i)}. Indeed, differentiating this identity gives $d\rho\bigl(\rho_{A_P}(\Psi_V(\rho^*\sigma))\bigr)=\rho_A(\sigma)\circ\rho$. Since $\rho|_V$ is a diffeomorphism, this proves anchor preservation.

The map $\Psi_V$ also preserves brackets. Indeed, brackets of projectable vector fields project to brackets of their projections. The transverse vector fields corresponding under $\Psi_V$ have the same local projections along $\tilde s_i$ and $\tilde s_i\circ\widehat\alpha$.


Thus $\Psi$ is a Lie algebroid isomorphism. The same construction applied to $(\widehat\alpha',\rho')$ gives $\Psi':(\rho')^*A'\longrightarrow A_P$.
\end{proof}


\begin{proposition}[Compatibility with structural Lie algebras]
\label{prop:morita-isotropy-upgrade}
Under the hypotheses of Lemma~\ref{lem:morita-basic-algebroid-comparison}, the isomorphisms $\Psi$ and $\Psi'$ restrict to Lie algebra bundle isomorphisms
\[
\rho^*(\ker\rho_A)
\cong\ker\rho_{A_P}
\cong(\rho')^*(\ker\rho_{A'}).
\]
Let $\omega^\kappa_{\mathrm{MC}}$, $\omega^{\kappa'}_{\mathrm{MC}}$, and $\omega^{\kappa_P}_{\mathrm{MC}}$ be the forms along the fibres of $\kappa$, $\kappa'$, and $\kappa_P$, respectively. After pullback by $\kappa_P$, these isomorphisms identify the pullbacks\footnote{Here the pullbacks of the sections are taken on $\ker d\kappa_P$ using $d\widehat\alpha$ and $d\widehat\alpha'$.} of $\omega^\kappa_{\mathrm{MC}}$ and $\omega^{\kappa'}_{\mathrm{MC}}$ along $\widehat\alpha$ and $\widehat\alpha'$, respectively, with $\omega^{\kappa_P}_{\mathrm{MC}}$.

For each $b_P\in B_P$, the inverses of the isotropy restrictions of $\Psi_{b_P}$ and $\Psi'_{b_P}$ induce, under the restriction isomorphisms of Lemma~\ref{lem:basic-algebroid-closure-submersion}\textup{(ii)}, the unique structural Lie algebra isomorphisms that identify the canonical Maurer--Cartan forms by pullback along the restrictions of $\widehat\alpha$ and $\widehat\alpha'$.
\end{proposition}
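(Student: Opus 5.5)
The plan is to work pointwise through the evaluation isomorphisms of Lemma~\ref{lem:basic-algebroid-closure-submersion}\textup{(i)}, using the characterization of $\Psi$ from Lemma~\ref{lem:morita-basic-algebroid-comparison}. I treat $(\widehat\alpha,\rho)$ only; the primed case is identical. Since $\Psi$ is a Lie algebroid isomorphism over $B_P$, it intertwines anchors: $\rho_{A_P}\circ\Psi=d\rho\circ\rho^*\rho_A$, with $d\rho$ fibrewise invertible. Hence $\Psi$ maps $\rho^*(\ker\rho_A)$ onto $\ker\rho_{A_P}$. A bracket-preserving map of Lie algebroids commutes with anchors, so it restricts to a Lie algebra isomorphism on isotropy. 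This gives the bundle isomorphisms, and composing $\Psi'$ with $\Psi^{-1}$ gives the second one.

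For the Maurer--Cartan forms, fix $u\in L_P=\kappa_P^{-1}(b_P)$ and $\xi\in\ker(d\kappa_P)_u$. By Proposition~\ref{prop:regular-morita-invariance}(1), $d\widehat\alpha_u\xi\in\ker(d\kappa)_{\widehat\alpha(u)}$, so the pulled-back form is defined. I then use the formula $(\omega^{\kappa}_{\mathrm{MC}})_x=(\operatorname{ev}_x)^{-1}\circ\pr^N_x$ from Lemma~\ref{lem:basic-algebroid-closure-submersion}\textup{(iii)}, together with $\pr^N\circ d\widehat\alpha=\overline{d\widehat\alpha}\circ\pr^{N_P}$. The characterization $\overline{d\widehat\alpha}_u\circ\operatorname{ev}^P_u\circ\Psi_{b_P}=\operatorname{ev}_{\widehat\alpha(u)}$ then gives
\[
\Psi_{b_P}\bigl((\omega^\kappa_{\mathrm{MC}})_{\widehat\alpha(u)}(d\widehat\alpha_u\xi)\bigr)=(\operatorname{ev}^P_u)^{-1}\bigl(\pr^{N_P}_u\xi\bigr)=(\omega^{\kappa_P}_{\mathrm{MC}})_u(\xi).
\]
This is the claimed identification after pullback by $\kappa_P$.

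For the last assertion, set $\beta_{b_P}:=\operatorname{res}_{b}\circ\Psi_{b_P}^{-1}\circ\operatorname{res}_{b_P}^{-1}$, which maps $l(L_P,\widetilde{\mathcal O}_P|_{L_P})$ to $l(L_b,\widetilde{\mathcal O}|_{L_b})$. It is a Lie algebra isomorphism. Applying $\operatorname{res}$ to the identity above and using Lemma~\ref{lem:basic-algebroid-closure-submersion}\textup{(iii)} on both fibres yields
\[
(\widehat\alpha|_{L_P})^*\omega^{b}_{\mathrm{MC}}=\beta_{b_P}\circ\omega^{b_P}_{\mathrm{MC}}.
\]

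Uniqueness is the point I expect to need the most care, because $\widehat\alpha|_{L_P}$ is only a surjective submersion (Proposition~\ref{prop:regular-morita-invariance}(2)), so Lemma~\ref{lem:MC-naturality} does not apply directly. Suppose $\beta$ is any linear map with $(\widehat\alpha|_{L_P})^*\omega^b_{\mathrm{MC}}=\beta\circ\omega^{b_P}_{\mathrm{MC}}$. The form $\omega^{b_P}_{\mathrm{MC}}$ is pointwise surjective onto $l(L_P,\widetilde{\mathcal O}_P|_{L_P})$, being $\operatorname{ev}^{-1}\circ\pr^N$ with $\pr^N$ surjective. So $\beta$ is determined at a single point $u$, and therefore $\beta=\beta_{b_P}$.

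If one also wants $\beta_{b_P}$ to be induced geometrically, the check is on representatives. For $\bar Y\in l(L_P)$, the vector $d\widehat\alpha(Y_u)$ projects to $\beta_{b_P}(\bar Y)_{\widehat\alpha(u)}$ in the normal space. This is again the evaluation identity, so no further work is needed.
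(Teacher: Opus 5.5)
Your proposal is correct and follows essentially the same route as the paper. Anchor compatibility, with $d\rho$ invertible, gives the isotropy restrictions; the evaluation characterization of $\Psi$ together with $\omega^\kappa_{\mathrm{MC}}=\operatorname{ev}^{-1}\circ\pr^N$ gives the Maurer--Cartan compatibility; and pointwise surjectivity of $\omega^{b_P}_{\mathrm{MC}}$ gives uniqueness of your $\beta_{b_P}$, which is the paper's $\Phi_r$ (one harmless slip: the anchor identity should read $d\rho\circ\rho_{A_P}\circ\Psi=\rho^*\rho_A$, since $d\rho\circ\rho^*\rho_A$ as written does not typecheck).
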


\begin{proof}
Since $\rho$ and $\rho'$ are local diffeomorphisms, the anchors of $\rho^*A$ and $(\rho')^*A'$ are obtained by transporting the anchors of $A$ and $A'$. Hence $\ker\rho_{\rho^*A}=\rho^*(\ker\rho_A)$ and $\ker\rho_{(\rho')^*A'}=(\rho')^*(\ker\rho_{A'})$. Since $\Psi$ and $\Psi'$ are Lie algebroid isomorphisms, they preserve brackets and anchors. Their restrictions are therefore Lie algebra bundle isomorphisms
\[
\Psi|_{\rho^*(\ker\rho_A)}:
\rho^*(\ker\rho_A)\xrightarrow{\cong}\ker\rho_{A_P},\qquad 
\Psi'|_{(\rho')^*(\ker\rho_{A'})}:
(\rho')^*(\ker\rho_{A'})
\xrightarrow{\cong}\ker\rho_{A_P}.
\]

Let $b_P\in B_P$, $u\in\kappa_P^{-1}(b_P)$, and $\xi\in\ker(d\kappa_P)_u$. The kernel identities in Proposition~\ref{prop:regular-morita-invariance}\textup{(1)} give $d\widehat\alpha_u\xi\in\ker(d\kappa)_{\widehat\alpha(u)}$ and $d\widehat\alpha'_u\xi\in\ker(d\kappa')_{\widehat\alpha'(u)}$. We now compare the three Maurer--Cartan sections. By Lemma~\ref{lem:morita-basic-algebroid-comparison} and the formula in Lemma~\ref{lem:basic-algebroid-closure-submersion}\textup{(iii)}, we have
\[
\begin{aligned}
&\overline{d\widehat\alpha}_u\operatorname{ev}^P_u
\Psi_{b_P}\Bigl(
(\omega^\kappa_{\mathrm{MC}})_{\widehat\alpha(u)}
(d\widehat\alpha_u\xi)
\Bigr)\\
&\qquad=
\operatorname{ev}_{\widehat\alpha(u)}\Bigl(
(\omega^\kappa_{\mathrm{MC}})_{\widehat\alpha(u)}
(d\widehat\alpha_u\xi)
\Bigr)\\
&\qquad=[d\widehat\alpha_u\xi].
\end{aligned}
\]
On the other hand, the formula for $\omega^{\kappa_P}_{\mathrm{MC}}$ gives
\[
\overline{d\widehat\alpha}_u\operatorname{ev}^P_u
\bigl((\omega^{\kappa_P}_{\mathrm{MC}})_u(\xi)\bigr)
=\overline{d\widehat\alpha}_u[\xi]
=[d\widehat\alpha_u\xi].
\]

The same calculation applies to $\Psi'$ and $\widehat\alpha'$.

Since $\overline{d\widehat\alpha}_u\circ\operatorname{ev}^P_u$ and $\overline{d\widehat\alpha'}_u\circ\operatorname{ev}^P_u$ are injective, we obtain
\[
\begin{aligned}
(\omega^{\kappa_P}_{\mathrm{MC}})_u(\xi)
&=\Psi_{b_P}\Bigl((\omega^\kappa_{\mathrm{MC}})_{\widehat\alpha(u)}(d\widehat\alpha_u\xi)\Bigr)\\
&=\Psi'_{b_P}\Bigl((\omega^{\kappa'}_{\mathrm{MC}})_{\widehat\alpha'(u)}(d\widehat\alpha'_u\xi)\Bigr).
\end{aligned}
\]

Let $b:=\rho(b_P)$ and $b':=\rho'(b_P)$, and let $L_b$, $L_P$, and $L'_{b'}$ be the fibres in Proposition~\ref{prop:regular-morita-invariance}\textup{(2)}. Let
\[
r:=\widehat\alpha|_{L_P}:L_P\to L_b,\qquad
r':=\widehat\alpha'|_{L_P}:L_P\to L'_{b'},
\]
and let $\omega^b_{\mathrm{MC}}$, $\omega^P_{\mathrm{MC}}$, and $\omega^{b'}_{\mathrm{MC}}$ be the canonical Maurer--Cartan forms of the restricted foliations on these fibres, respectively. Write $\operatorname{res}_b$, $\operatorname{res}_{b_P}$, and $\operatorname{res}_{b'}$ for the restriction isomorphisms of Lemma~\ref{lem:basic-algebroid-closure-submersion}\textup{(ii)}. Define
\[
\Phi_r:=\operatorname{res}_b \circ\bigl(\Psi_{b_P}|_{(\ker\rho_A)_b}\bigr)^{-1} \circ\operatorname{res}_{b_P}^{-1},\qquad 
\Phi_{r'}:=\operatorname{res}_{b'}\circ\bigl(\Psi'_{b_P}|_{(\ker\rho_{A'})_{b'}}\bigr)^{-1}\circ\operatorname{res}_{b_P}^{-1}.
\]
All factors are Lie algebra isomorphisms, so these are Lie algebra isomorphisms
\[
\Phi_r:l(L_P,\widetilde{\mathcal O}_P|_{L_P})\longrightarrow l(L_b,\widetilde{\mathcal O}|_{L_b}),\qquad 
\Phi_{r'}:l(L_P,\widetilde{\mathcal O}_P|_{L_P})\longrightarrow l(L'_{b'},\widetilde{\mathcal O}'|_{L'_{b'}}).
\]
Applying $\operatorname{res}_b\circ\Psi_{b_P}^{-1}$ to the compatibility identity for $\omega^\kappa_{\mathrm{MC}}$ and $\omega^{\kappa_P}_{\mathrm{MC}}$ at the points of $L_P$, and using Lemma~\ref{lem:basic-algebroid-closure-submersion}\textup{(iii)} on $L_b$ and $L_P$, gives
\[
\begin{aligned}
r^*\omega^b_{\mathrm{MC}}
&=\operatorname{res}_b\circ\Psi_{b_P}^{-1}
\circ\operatorname{res}_{b_P}^{-1}\circ\omega^P_{\mathrm{MC}}\\
&=\Phi_r\circ\omega^P_{\mathrm{MC}}.
\end{aligned}
\]
The same argument gives $(r')^*\omega^{b'}_{\mathrm{MC}} =\Phi_{r'}\circ\omega^P_{\mathrm{MC}}$. Since $\omega^P_{\mathrm{MC}}$ is pointwise surjective, these identities determine $\Phi_r$ and $\Phi_{r'}$ uniquely. Substituting their definitions gives, on $(\ker\rho_A)_b$,
\[
\Phi_{r'}\circ\Phi_r^{-1}\circ\operatorname{res}_b=\operatorname{res}_{b'}\circ(\Psi'_{b_P})^{-1}\circ\Psi_{b_P}.
\]
\end{proof}


We would like to conclude the article by applying the preceding results to Riemannian Morita equivalence bibundles. Let $\cG'\curvearrowright P\curvearrowleft\cG$ be such a bibundle with $P$ Hausdorff. Lemma~\ref{lem:bibundle-pullback-groupoid} provides Morita fibrations $\cG'\xleftarrow{z'}\mathcal K_P\xrightarrow{z}\cG$ with object maps $\alpha'$ and $\alpha$. Assume that $\cG$ is regular, that $M$ is connected, and that $\cG$ and $\cG'$ have connected source fibres. Then $\alpha$ and $\alpha'$ have connected fibres. The bibundle metric $\eta_P$ makes both maps Riemannian submersions, so Corollary~\ref{cor:theta-omega-morita} applies and gives regularity of $\cG'$.

If, in addition, $M$ is compact, Corollary~\ref{cor:regular-morita-compact-bases}(1) gives the leaf closure submersions and the $O(q)$-equivariant diffeomorphisms of their bases. Combining this with Lemma~\ref{lem:morita-basic-algebroid-comparison} and Proposition~\ref{prop:morita-isotropy-upgrade} gives the following corollary.

\begin{corollary}\label{cor:morita-bibundle-source-connected}
With the notation and assumptions above, $\cG'$ is regular. The leaf closures of the lifted foliations $\widetilde{\mathcal O}$, $\widetilde{\mathcal O}'$, and $\widetilde{\mathcal O}_P$ are the fibres of smooth surjective submersions $\kappa$, $\kappa'$, and $\kappa_P$ onto Hausdorff manifolds $B$, $B'$, and $B_P$, respectively. The induced maps $\rho$ and $\rho'$ are $O(q)$-equivariant diffeomorphisms $B\xleftarrow{\ \rho\ }B_P\xrightarrow{\ \rho'\ }B'$.

The corresponding basic Lie algebroids $A$, $A'$, and $A_P$ are related by the induced Lie algebroid isomorphisms $\Psi:\rho^*A\xrightarrow{\ \cong\ }A_P$, $\Psi':(\rho')^*A'\xrightarrow{\ \cong\ }A_P$. These restrict to isomorphisms of the isotropy Lie algebra bundles and satisfy the compatibility identities for the sections $\omega^{\kappa}_{\mathrm{MC}}$, $\omega^{\kappa'}_{\mathrm{MC}}$, and $\omega^{\kappa_P}_{\mathrm{MC}}$ in Proposition~\ref{prop:morita-isotropy-upgrade}.\qed
\end{corollary}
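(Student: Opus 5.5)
The plan is to assemble the corollary from the results just cited, verifying at each step that their hypotheses hold under the standing assumptions. These assumptions are: $\cG'\curvearrowright P\curvearrowleft\cG$ is a Riemannian Morita equivalence bibundle with $P$ Hausdorff, $\cG$ is regular, $M$ is compact and connected, and $\cG,\cG'$ have connected source fibres.

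\emph{Step 1: set up the comparison.} Lemma~\ref{lem:bibundle-pullback-groupoid}(1)--(2) gives the Morita fibrations $z,z'$ out of $\mathcal K_P$, with object maps $\alpha,\alpha'$. Part~(3) of the same lemma, together with connectedness of the source fibres, shows that $\alpha$ and $\alpha'$ have connected fibres. Since $\eta_P$ makes $\alpha,\alpha'$ Riemannian submersions, Corollary~\ref{cor:theta-omega-morita} applies. It yields regularity of $\mathcal K_P$ and $\cG'$, constant codimension $q$ of all three orbit foliations, and the frame maps $\widehat\alpha,\widehat\alpha'$, which are foliated, pull back $\theta,\omega$, and satisfy $T\widetilde{\mathcal O}_P=(d\widehat\alpha)^{-1}(T\widetilde{\mathcal O})$. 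This settles the first assertion.

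\emph{Step 2: construct $\kappa$.} Since $M$ is compact and connected and $\eta$ is a $0$-metric, Theorem~\ref{thm:molino-groupoid}(i) (equivalently Theorem~\ref{thm:mol} via Proposition~\ref{prop:RF}) gives a fibre bundle $\kappa:OF(M,\mathcal O)\to B$ onto a Hausdorff manifold whose fibres are the leaf closures. So $\kappa$ is a smooth surjective submersion as required in Proposition~\ref{prop:regular-morita-invariance}. Case~(a) of that proposition then produces $\kappa_P=\kappa\circ\widehat\alpha$ and $\kappa'$, both with leaf-closure fibres, together with $\rho,\rho'$. Part~(3) of the proposition says these are $O(q)$-equivariant diffeomorphisms; alternatively, Corollary~\ref{cor:regular-morita-compact-bases}(1) gives this directly.

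\emph{Step 3: compare the algebroids.} Lemma~\ref{lem:morita-basic-algebroid-comparison} applies in case~(a), using the transverse parallelizability from Lemma~\ref{lem:theta-omega-parallelism} and the three quotient submersions. It gives the basic Lie algebroids over $B,B',B_P$, constructed via Lemma~\ref{lem:basic-algebroid-closure-submersion}, and the isomorphisms $\Psi,\Psi'$. Proposition~\ref{prop:morita-isotropy-upgrade} then supplies the restriction to isotropy bundles and the Maurer--Cartan compatibility.

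\emph{Main obstacle.} The only point needing care is making sure the basic Lie algebroid $A$ over $B$ used here agrees with the one in Theorem~\ref{thm:molino-groupoid}. This is the last paragraph of the proof of Lemma~\ref{lem:basic-algebroid-closure-submersion}, via Corollary~\ref{cor:B-W-identification}. Since $P$ need not be compact, the Lemma~\ref{lem:basic-algebroid-closure-submersion} version must be used for $A_P$ and $A'$. Once this is checked, everything else is bookkeeping of hypotheses.
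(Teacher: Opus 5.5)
Your proposal is correct and follows the paper's route essentially step for step. The steps are: Lemma~\ref{lem:bibundle-pullback-groupoid}(3) for connected fibres of $\alpha,\alpha'$; Corollary~\ref{cor:theta-omega-morita} for regularity of $\cG'$; case~(a) of Proposition~\ref{prop:regular-morita-invariance}, which the paper invokes through Corollary~\ref{cor:regular-morita-compact-bases}(1), with equivariance of $\rho,\rho'$ from part~(1) and the diffeomorphism property from part~(3); and then Lemma~\ref{lem:morita-basic-algebroid-comparison} and Proposition~\ref{prop:morita-isotropy-upgrade}. Your ``main obstacle'' is not actually required, since the algebroids in the statement are by definition those of Lemma~\ref{lem:morita-basic-algebroid-comparison}, but the consistency remark is correct and harmless.
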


\end{document}